\newtheorem{Thm}{Theorem}[section]
\newtheorem{Prop}[Thm]{Proposition}
\newtheorem{Cor}[Thm]{Corollary}
\newtheorem{Lem}[Thm]{Lemma}
\newtheorem{Def}[Thm]{Definition}
\theoremstyle{definition}
\newtheorem{Ex}[Thm]{Example}
\newtheorem{Rem}[Thm]{Remark}
\newtheorem{Assum}[Thm]{Assumption}
\newcommand{\bfa}{{\bf a}}
\newcommand{\bb}{{\bf b}}
\newcommand{\bi}{{\bf i}}
\newcommand{\bI}{{\bf I}}
\newcommand{\nn}{{\bf n}}
\newcommand{\zz}{{\bf z}}
\newcommand{\C}{\mathbb{C}}
\newcommand{\D}{\mathbb{D}}
\newcommand{\HH}{\mathbb{H}}
\newcommand{\R}{\mathbb{R}}
\newcommand{\Z}{\mathbb{Z}}
\newcommand{\N}{\mathbb{N}}
\newcommand{\Q}{\mathbb{Q}}
\newcommand{\PP}{\mathbb{P}}
\newcommand{\bS}{\mathbb{S}}
\newcommand{\cC}{\mathscr{C}}
\newcommand{\cb}{\mathfrak{b}}
\newcommand{\Hol}{\mathop{\mathrm{Hol}}\nolimits}
\newcommand{\vh}{\vec{h}}
\newcommand{\ve}{\vec{e}}
\newcommand{\SL}{\mathop{\mathrm{SL}}\nolimits}
\newcommand{\GL}{\mathop{\mathrm{GL}}\nolimits}
\newcommand{\U}{\mathop{\mathrm{U}}\nolimits}
\newcommand{\Sp}{\mathop{\mathrm{Sp}}\nolimits}
\newcommand{\open}{\mathrm{open}}
\newcommand{\Conv}{\mathrm{Conv}}
\newcommand{\Span}{\mathrm{Span}}
\begin{document}
\title{Local Calabi--Yau manifolds of type $\widetilde{A}$ via SYZ mirror symmetry}  
\author{Atsushi Kanazawa \ \ \ Siu-Cheong Lau}
\date{}
\maketitle


\begin{abstract}
We carry out the SYZ program for the local Calabi--Yau manifolds of type $\widetilde{A}$ by developing an equivariant SYZ theory for the toric Calabi--Yau manifolds of infinite-type. 
Mirror geometry is shown to be expressed in terms of the Riemann theta functions and generating functions of open Gromov--Witten invariants, 
whose modular properties are found and studied in this article.  
Our work also provides a mathematical justification for a mirror symmetry assertion of the physicists Hollowood--Iqbal--Vafa \cite{HIV}. 
\end{abstract}



\section{Introduction} \label{section: Intro}

The aim of the present article is to investigate SYZ mirror symmetry of the local Calabi--Yau manifolds of type $\widetilde{A}$.  
This class of Calabi--Yau manifolds serves as local models of higher dimensional versions of Schoen's Calabi--Yau 3-folds \cite{Sch}, 
whose mirror symmetry was partially verified by Hosono--Saito--Stienstra \cite{HSS}.
We construct their SYZ mirror manifolds and deduce their modular properties by developing an equivariant SYZ theory for the toric Calabi--Yau manifolds of infinite-type. 
Mirror symmetry for this class of Calabi--Yau manifolds was studied in the beautiful works of Gross--Siebert \cite{GS2} in algebraic geometry, 
while we focus on symplectic aspects and open Gromov--Witten invariants in this article. 
The main contribution of the present article is threefold. 
The first is to provides a new class of SYZ mirror pairs of local Calabi--Yau manifolds, which will be useful in the study of the compact case, 
by the precise calculation of mirror maps in terms of open Gromov--Witten invariants and the relationship to the Riemann theta functions.  
The second is to reveal some interesting links between modular properties and quantum corrections in SYZ mirror symmetry in these special but important cases. 
The third is to provide a mathematical justification for a mirror symmetry assertion of the physicists Hollowood--Iqbal--Vafa \cite{HIV}. 

\subsection*{Main target geometries}
The easiest case of the main target geometries is the local Calabi--Yau surface of type $\widetilde{A}_{d-1}$ for $d \geq 1$. 
It is the total space of the elliptic fibration over the unit disc $\D=\{|z|<1\} \subset \C$, 
where all fibers are smooth except for the central fiber, which is a nodal union of $d$ rational curves forming a cycle. 
\begin{figure}[htbp]
 \begin{center} 
  \includegraphics[width=80mm]{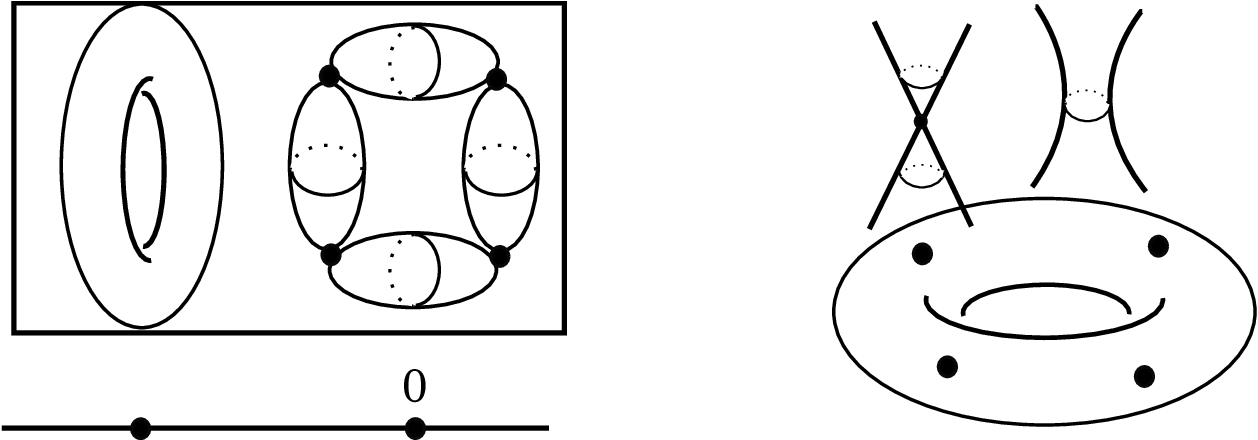}
 \end{center}
  \caption{Local Calabi--Yau surface $\widetilde{A}_{d-1}$ and its SYZ mirror} 
\label{fig:K3}
\end{figure}
We construct the SYZ mirrors of the local Calabi--Yau surfaces $\widetilde{A}_{d-1}$ and express them by modular objects. 
For instance, the most fundamental surface $\widetilde{A}_0$ has the SYZ mirror given in terms of the Jacobi theta function $\vartheta$:
$$ uv = \phi(q) \cdot \vartheta \left( \zeta - \frac{\tau}{2}; \tau \right) $$
where $q = e^{2\pi i \tau}$ and $\zeta \in \C/\langle 1, \tau \rangle$ for $\tau \in \HH$, and $u$ and $v$ are sections of certain line bundles on the elliptic curve $\C/\langle 1, \tau \rangle$. 
In other words, the SYZ mirror is the total space of a conic fibration over the elliptic curve $\C/\langle 1, \tau \rangle$.  
The function $\phi(q)$ is the generating function of the open Gromov--Witten invariants of a Lagrangian torus fiber of a SYZ fibration of $\widetilde{A}_0$. 
It is a crucial object giving the quantum corrections in the framework of SYZ mirror symmetry \cite{SYZ} and 
we will prove an open analog of the Yau--Zaslow formula \cite{YZ}, namely $\phi(q) =  \frac{q^{\frac{1}{24}}}{\eta(q)}$ 
where $\eta(q)$ is the Dedekind eta function (Corollary \ref{cor:rootYZ}).  


The above surface geometry has a natural extension to the higher dimensions.  
Namely, for $({\bf d})=(d_1,\ldots,d_l) \in \Z^{l}_{\ge 1}$, 
we consider a crepant resolution $X_{({\bf d})}$ of the multiple fiber product $\widetilde{A}_{d_1-1} \times_\D \ldots \times_\D \widetilde{A}_{d_l-1}$. 
We call such a manifold a {\it local Calabi--Yau manifold of type $\widetilde{A}$}, which serves as a local model of a higher dimensional analog of Schoen's Calabi--Yau 3-fold \cite{Sch}.  
We will prove that the SYZ mirror of a local Calabi--Yau manifold of type $\widetilde{A}$ has a beautiful expression in terms of the Riemann theta functions with characteristics. 
To illustrate this, let us state the main theorem for the local Calabi--Yau 3-fold $X_{(1,1)}$.

\begin{Thm}[Theorem \ref{thm:CY3/Z^2}] \label{thm:mirIntro}
The SYZ mirror of the local Calabi--Yau 3-fold $X_{(1,1)}$ is given by, for $(\zeta_1, \zeta_2) \in \C^2/(\Z^2\oplus\Omega \Z^2)$, 
\begin{equation} \label{eq:mir11}
uv = \Delta(\Omega) \cdot 
\Theta_2
\left(\zeta_1- \frac{\tau}{2}, \zeta_2- \frac{\rho}{2}; \Omega\right)
\end{equation}
where $\Theta_2$ is the genus $2$ Riemann theta function, $\Omega:=\begin{bmatrix}
                \rho   &  \sigma  \\
                 \sigma& \tau  \\
\end{bmatrix}$ lies in the Siegel upper half-plane, and $\Delta(\Omega)$ is the generating function of the open Gromov--Witten invariants of a Lagrangian torus fiber of a SYZ fibration of  $X_{(1,1)}$. 
\end{Thm}

As before $u$ and $v$ are sections of suitable line bundles on the abelian surface $\C^2/(\Z^2\oplus\Omega \Z^2)$. 
Thus Equation \eqref{eq:mir11} defines a conic fibration over $\C^2/(\Z^2\oplus\Omega \Z^2)$ with discriminant being the genus $2$ curve, 
called the {\it mirror curve}, given by the zero locus of $\Theta_2$.
The function $\Delta(\Omega)$ is a 3-dimensional analog of the generating function $\phi(q)$ 
and its higher dimensional generalizations are also considered in Theorem \ref{thm:mir_X_1}. 
They have interesting modular properties (Proposition \ref{prop:modular}), and we anticipate that they are closely related with the higher genus Siegel modular forms. 



In the physics literature \cite{HIV}, Hollowood--Iqbal--Vafa constructed the local Calabi--Yau 3-fold $X_{(1,1)}$ in a physical way 
and asserted that its mirror is given by the zero locus of the genus $2$ Riemann theta function. 
Their supporting arguments are based on $3$ different techniques: matrix models, geometric engineering and instanton calculus. 
They relate $X_{(1,1)}$ with the geometry of the resolved conifold $\mathcal{O}_{\PP^1}(-1)^{\oplus2}$, 
whose mirror curve is a $4$-punctured $\PP^1$ (Hori--Iqbal--Vafa \cite{HIV2}).   
 \begin{figure}[htbp]
 \begin{center} 
  \includegraphics[width=130mm]{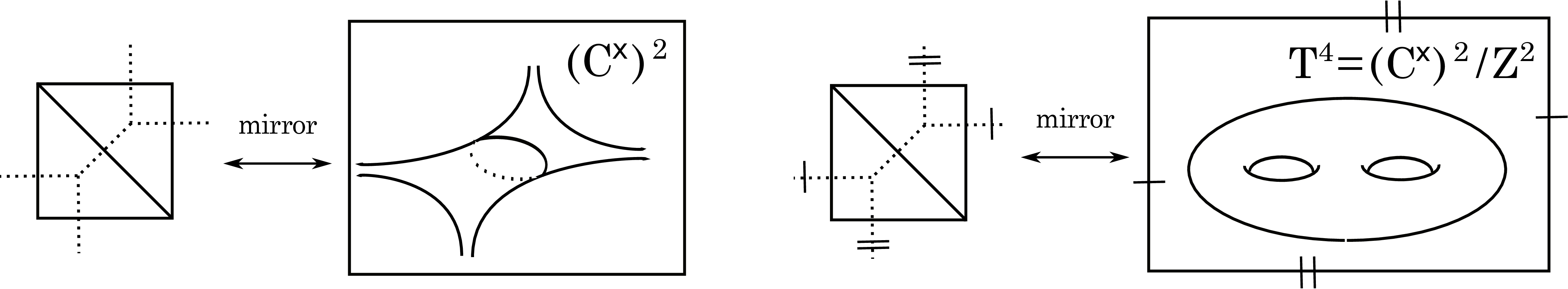}
 \end{center}
  \caption{Mirror correspondence for $\mathcal{O}_{\PP^1}(-1)^{\oplus2}$ and $X_{(1,1)}$}
\label{fig:MirrorCorresp4}
\end{figure}
As we will see in Section \ref{section: 3-fold}, $X_{(1,1)}$ is intuitively constructed by gluing the opposite sides of the toric web diagram of $\mathcal{O}_{\PP^1}(-1)^{\oplus2}$.    
Accordingly its mirror curve becomes a genus 2 curve constructed by gluing the punctures of the $4$-punctured $\PP^1$ in pairs.  

In this article we take the geometric SYZ approach to derive the mirrors, and our result agrees with the work \cite{HIV} of Hollowood--Iqbal--Vafa for $X_{(1,1)}$. 
The crucial advantage of our mathematical work is that not only the mirror is constructed geometrically, 
but also we obtain a closed formula of the generating function $\Delta(\Omega)$ of the open Gromov--Witten invariants, 
which has interesting modular properties but was not captured in the physics argument. 

\begin{Rem} \label{rem:AAK}
The SYZ mirror construction in the reverse direction was carried out by Abouzaid--Auroux--Katzarkov \cite[Theorem 10.4]{AAK}. 
Namely, they consider the zero locus $H$ of the Riemann theta function in an abelian variety $V$ and take the blowup of $V \times \C$ along $H \times \{0\}$ 
(which plays the role of the conic fibration above).  
Their general theory applied to this example provides a Lagrangian fibration on the total space and its SYZ mirror, 
which turns out to be the local Calabi--Yau manifold of type $\widetilde{A}$.  
In their situation there are no non-constant holomorphic spheres in the conic fibration. 
On the other hand, there are non-trivial holomorphic spheres in our local Calabi-Yau manifolds, which lead us to the open Gromov--Witten generating function $\Delta(\Omega)$.
\end{Rem}

\subsection*{Varieties of general type}
A version of homological mirror symmetry for the varieties of general-type was formulated by Kapustin--Katzarkov--Orlov--Yotov \cite{KKOY} and 
it was proven for genus $2$ and higher curves by Seidel \cite{Sei} and Efimov \cite{Efi} respectively. 
As an application of the SYZ mirror pairs, we obtain the Landau--Ginzburg mirrors of general-type hypersurfaces in the polarized abelian varieties.  
They were speculated in \cite{Sei-Spec}, and they were also derived from the SYZ construction of Abouzaid--Auroux--Katzarkov \cite[Theorem 10.4]{AAK}.   

First, observe that the critical locus of the canonical holomorphic function $w:X_{(1,1)} \to \D$, 
which is the $1$-skeleton of the local Calabi--Yau 3-fold $X_{(1,1)}$, 
is the union of $3$ $\PP^1$'s forming a `$\theta$'-shape. 
This is in agreement with the work of Seidel \cite{Sei}\footnote{
Note that A- and B-models are swapped in our work. Namely the genus $2$ curve appear in the B-model.} 
stating that the mirror of a genus $2$ curve is exactly such a Landau--Ginzburg model $w:X_{(1,1)} \to \D$.
 
In general the SYZ mirror of the local Calabi--Yau $(l+1)$-fold $X_{({\bf d})}$ is a conic fibration $X_{({\bf d})}^\vee \rightarrow A^\vee$ over a $({\bf d})$-polarized abelian variety $A^\vee$,  
with discriminant locus being a general-type hypersurface defined by the $({\bf d})$-polarization. 
Then it is shown that the generic fiber $A$ of $w:X_{({\bf d})} \to \D$ is mirror symmetric to the base $A^\vee$ of the conic fibration $X_{({\bf d})}^\vee \rightarrow A^\vee$ 
(Section \ref{section:fiber-base mirror dual}). 
Moreover, the Landau--Ginzburg model $w:X_{({\bf d})} \to \D$ (or its critical locus furnished with perverse structure explained by Gross--Katzarkov--Ruddat \cite{GKR})  
serves as a mirror of the discriminant locus of the conic fibration $X_{({\bf d})}^\vee \rightarrow A^\vee$.   
\begin{center}
 \begin{tabular}{c|c|c} 
 $(l+1)$-dim  & total space CY manifold $X_{({\bf d})}$   & total space CY manifold $X_{({\bf d})}^\vee$ \\ \hline
 $l$-dim        & fiber abelian variety $A$ &  base abelian variety $A^\vee$ \\ \hline
$(l-1)$-dim & perverse critical locus & dicsriminant locus \\ 
 \end{tabular}
 \end{center}
We anticipate that this mirror correspondence between the perverse critical locus of a family near the large complex structure limit 
and the discriminant locus of a conic fibration holds in a more general setting (Section \ref{section: speculation}). 
Mirror pairs of perverse curves in Hodge-theoretic aspects can be found in the work of Ruddat \cite{Rud}. 


\subsection*{SYZ mirror symmetry}
The main technique in the present work is the SYZ mirror construction. 
It was introduced in the celebrated work of Strominger--Yau--Zaslow \cite{SYZ} and gave a geometric recipe to construct a mirror manifold of a Calabi--Yau manifold 
by taking the fiberwise torus dual of a Lagrangian torus fibration.   
The quantum corrections are captured by counting holomorphic discs bounded by the Lagrangian torus fibers. 
However, there are in general several major difficulties in realizing this SYZ construction. 
First, it requires the existence of a (special) Lagrangian fibration of the Calabi--Yau manifold. 
Second, in general the moduli spaces of holomorphic discs have highly technical obstructions \cite{FOOO}, and wall-crossing of the disc counting occurs \cite{Aur}.
The work of Gross--Siebert \cite{GS} gives a reconstruction of the mirror using tropical geometry, 
which provides a combinatorial recipe to compute the quantum corrections order-by-order. 
On the other hand we shall use the symplectic formulation in this article which uses disc enumeration and Gromov--Witten theory.

One crucial observation is that a local Calabi--Yau manifold of type $\widetilde{A}$ can be realized 
as the quotients of an open subset of a toric Calabi--Yau manifold of infinite-type by the discrete group $\Z^l$.  
In fact, this is the classical toric realization of degenerations of abelian varieties 
discussed by Mumford \cite{Mum}, Ash--Mumford--Rapoport--Tai \cite{AMRT} and Gross--Siebert \cite{GS2}, which is also explained in \cite[Section 8.4]{ABC}.  
Thanks to this realization, we can carry out an equivariant SYZ theory for the toric Calabi--Yau manifolds of infinite-type,  
generalizing the previous SYZ construction for the toric Calabi--Yau manifolds of finite-type \cite{CLL,LLW,CLT,CCLT}.



\subsection*{Related works}
As mentioned above in Remark \ref{rem:AAK}, the SYZ mirror construction in the reversed direction was carried out by Abouzaid--Auroux--Katzarkov \cite{AAK}. 
Also mirror symmetry for the local Calabi--Yau threefold $X_{(1,1)}$ was derived physically by Hollowood--Iqbal--Vafa \cite{HIV}. 
The additional features in this article are open Gromov--Witten generating functions and mirror maps.

The appearance of the theta functions in SYZ mirror symmetry is natural and coherent with the previous literature.
From the early works of Fukaya \cite{Fuk} and Gross--Siebert \cite{GS2}, 
it is well understood that the theta functions appear naturally in mirror symmetry for the abelian varieties. 
The relation follows from SYZ and homological mirror symmetry \cite{Kon}. 
Namely, holomorphic line bundles over an abelian variety are mirror to Lagrangian sections of the mirror abelian variety,  
and theta functions are mirror to intersection points of the Lagrangian sections.  
This principle is greatly generalized in the work of Gross--Hacking--Keel--Siebert \cite{GHKS} to obtain canonical functions from toric degenerations 
by mirroring a combinatorial version of the intersection theory between the Lagrangian sections (which receives quantum corrections from scattering diagrams).
In the present article we take another perspective. 
Instead of mirror symmetry for an abelian variety itself, we consider the total space of a degeneration of abelian varieties as our target manifold and construct its SYZ mirror. 
In addition to the theta functions, we obtain the generating functions of certain open Gromov--Witten invariants 
which have important meanings to mirror maps of the total space.




\subsection*{Structure of Article}
Section \ref{section: toric infinite-type} lays foundations of the toric Calabi--Yau manifolds of infinite-type. 
Section \ref{section: SYZ mirror} develops (equivariant) SYZ mirror symmetry for the toric Calabi--Yau manifolds of infinite-type and their quotients by symmetries.  
Sections \ref{section: Calabi--Yau}-\ref{section: higher-dim} apply the above theories to the local Calabi--Yau manifolds of type $\widetilde{A}$, 
and also discuss mirror symmetry for varieties of general type. 
At the first reading, the reader can start with Sections \ref{section: Calabi--Yau}-\ref{section: higher-dim} for concrete examples and come back to the first few sections for the mathematical foundation.


\subsection*{Acknowledgements} 
The authors are grateful to Naichung Conan Leung and Shing-Tung Yau for useful discussions and encouragement. 
The first author benefited from many conversations with Amer Iqbal, Charles Doran, Shinobu Hosono and Yuecheng Zhu.  
The second author is very thankful to Mark Gross and Eric Zaslow for useful discussions and explaining the toric description of $\widetilde{A}_0$ dated back to 2011.
He also thanks Helge Ruddat for explaining his work on mirrors of general-type varieties using toric Calabi--Yau geometries and discussing the $3$-dimensional honeycomb tiling also in 2011. 
The first author was supported by the Harvard CMSA and the second author is supported by Boston University.


\section{Toric Calabi--Yau manifolds of infinite-type and symmetries} \label{section: toric infinite-type}
We shall make the Calabi-Yau geometry of type $\tilde{A}$ explicit by constructing it as a quotient of toric Calabi--Yau manifolds of infinite-type.
 Toric Calabi--Yau geometries discussed in most existing literature are of finite-type, namely they have only finitely many toric coordinate charts. 
Indeed the SYZ construction in \cite{CLL,CCLT} has natural generalizations to the toric Calabi--Yau manifolds of infinite-type (Definition \ref{def: infinite-type}).  

In this section we build the foundations for the toric manifolds of infinite-type, 
preparing for the next section where we extend the techniques in \cite{CLL,CCLT} to construct the SYZ mirrors of infinite-type and their quotients by toric symmetries.  Some parts of this section may appear to be technical.  The readers may want to skim over this section and quickly move to Section \ref{section: Calabi--Yau} for the local CY geometry of type $\tilde{A}$.

There are several subtle points compared with toric geometries of finite-type.  
First, there is a convergence issue for defining a toric K\"ahler metric. 
Indeed the toric K\"ahler metric is well-defined only in a neighborhood of the toric divisors. 
Thus more precisely the SYZ mirror is constructed for this neighborhood rather than the whole space. 
Second, the K\"ahler moduli space is of $\infty$-dimensions since there are infinitely many linearly independent toric curve classes. 
Third, as we shall see in Section \ref{section: SYZ mirror}, there are infinitely many terms in the equation of the SYZ mirror, since there are infinitely many toric divisors. 
We thus need to carefully make sense of the SYZ mirrors by working over rings of formal Laurent series.


\subsection{Toric manifolds of infinite-type and K\"ahler metrics}
Let $M$ and $N$ be dual lattices, and $N_{\R}:=N \otimes_\Z \R$ and similar for $M_\R$.  Let $n=\dim N_\R \geq 2$. 
We denote by $X_\Sigma$ the toric variety associated to a rational fan $\Sigma \subset N_\R$. 
We denote by $\Sigma(1)$ the set of primitive generator of rays in $\Sigma$. 
Throughout the article we assume the following.

\begin{Assum}
The support $|\Sigma| \subset N_\R$ is convex.  Also every cone of $\Sigma$ is contained in a maximal cone.
The toric manifold $X_\Sigma$ is smooth of dimension $n$, which is equivalent to say that each maximal cone of $\Sigma$ is generated by a basis of $N$.  
\end{Assum}

\begin{Def}[Toric manifold of infinite-type] \label{def: infinite-type}
A toric manifold $X_\Sigma$ is said to be of finite-type if its fan $\Sigma \subset N_\R$ consists of finitely many cones.  Otherwise it is said to be of infinite-type.  
\end{Def}


\begin{Ex}
Take $N=\Z^3$.  Let $\Sigma$ be the fan whose maximal cones are given by, for $n \in \Z$, 
$$
\R_{\geq 0}\langle (0,0,1), (n,1,1), (n+1,1,1) \rangle.
$$  
Then $X_\Sigma$ is smooth and the support $|\Sigma|$ is given by $\R_{\geq 0} \left(\{(x,1,1):x\in\R\} \cup (0,0,1) \right)$, which is convex (while $|\Sigma| - \{0\}$ is not open).
\end{Ex}

\begin{Ex}
Take $N=\Z^3$.  Let $\Sigma$ be the fan whose maximal cones are given, for $n \in \Z_{\geq 0}$, 
$$
\R_{\geq 0}\langle (-1,0,1), (0,-1,n), (0,-1,n+1) \rangle, \ \ \ \R_{\geq 0}\langle (1,0,1), (0,-1,n), (0,-1,n+1) \rangle. 
$$ 
Then $X_\Sigma$ is smooth and the support $|\Sigma|$ is given by  
$$
|\Sigma| = \R_{\geq 0} \langle (-1,0,1), (0,-1,1), (1,0,1)\rangle - \R_{> 0} \langle (-1,0,1), (1,0,1) \rangle,
$$
which is not convex.
\end{Ex}

\begin{Ex}
Take $N=\Z^2$. Let $\Sigma$ the fan such that $|\Sigma|=N_\R$ and $\Sigma(1)$ consists of 
$(0,1)$, $(0,-1)$, $(1,n)$, $(-1,m)$ for $m,n \in \Z$. 
Then the rays $\R_{> 0}(0,1)$ and $\R_{> 0}(0,-1)$ are not contained in any maximal cones. 
\end{Ex}

We make the following useful observations.

\begin{Lem} \label{lem:finite}
For a finite subset $\{v_i\}_{i=1}^k \subset \Sigma(1)$, there are only finitely many rays (and hence finitely many cones) of $\Sigma$ 
contained in the cone $\R_{\geq 0}\langle v_1,\ldots,v_k \rangle \subset N_\R$.
\end{Lem}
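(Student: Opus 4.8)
The plan is to separate a soft reduction from the combinatorial heart, and to attack the heart by induction on $\dim N_\R$, with convexity of $|\Sigma|$ playing the essential role.

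\textbf{Soft reductions.} The parenthetical ``hence finitely many cones'' is automatic once the number of rays is bounded: by smoothness together with the standing assumption that every cone of $\Sigma$ lies in a maximal cone, every cone of $\Sigma$ is simplicial and is recovered from its set of rays, and a cone contained in $C:=\R_{\geq 0}\langle v_1,\dots,v_k\rangle$ has all its rays in $C$; so if $r$ rays of $\Sigma$ lie in $C$ then at most $2^r$ cones of $\Sigma$ do. I would also record two elementary facts: (a) each maximal cone of $\Sigma$ is generated by a $\Z$-basis of $N$, hence has exactly $n$ rays, with faces spanned by subsets of these; (b) if $\rho\in\Sigma(1)$ and $\rho\subseteq\tau$ for a cone $\tau\in\Sigma$, then $\rho=\rho\cap\tau$ is a common face of $\rho$ and $\tau$ by the fan axioms, so $\rho$ is one of the rays of $\tau$. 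Granting (b), it suffices to show that $C$ is covered by \emph{finitely many} maximal cones of $\Sigma$: if $C\subseteq\tau_1\cup\dots\cup\tau_N$ and $\rho\in\Sigma(1)$ lies in $C$, then $\rho=\bigcup_j(\rho\cap\tau_j)$ writes the ray $\rho$ as a finite union of intervals based at the origin, so some $\rho\cap\tau_j$ is unbounded, i.e.\ $\rho\subseteq\tau_j$; then $\rho$ is one of the $n$ rays of $\tau_j$, and hence at most $nN$ rays of $\Sigma$ lie in $C$.

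\textbf{The combinatorial heart.} I would prove, by induction on $n=\dim N_\R$ (the cases $n\le 1$ being trivial), that $C$ is covered by finitely many maximal cones of $\Sigma$. Suppose not. The set $C\cap S^{n-1}$ is compact and is covered by the closed sets $\tau\cap C\cap S^{n-1}$, as $\tau$ ranges over maximal cones of $\Sigma$; since no finite subfamily covers it, there is a point $\widehat w\in C\cap S^{n-1}$ every neighbourhood of which meets infinitely many of these sets (otherwise each point of $C\cap S^{n-1}$ has a neighbourhood meeting only finitely many of them, and a finite subcover of $C\cap S^{n-1}$ gives a finite subfamily). Let $\tau^{*}\in\Sigma$ be the unique cone with $\widehat w\in\operatorname{relint}(\tau^{*})$, and set $d=\dim\tau^{*}$. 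If $d=n$, then $\operatorname{int}(\tau^{*})$ is an open neighbourhood of $\widehat w$; a maximal cone $\tau$ with a point in $\tau\cap\operatorname{int}(\tau^{*})$ satisfies $\tau\cap\tau^{*}=\tau^{*}$, hence $\tau=\tau^{*}$, so only one such $\tau$ can occur --- a contradiction. If $d<n$, write $\tau^{*}=\R_{\geq0}\langle e_1,\dots,e_d\rangle$ as a face of a unimodular maximal cone $\sigma_0=\R_{\geq0}\langle e_1,\dots,e_n\rangle$, and pass to the star fan $\Sigma':=\operatorname{Star}(\tau^{*})$ in $N'_{\R}:=N_{\R}/\Span(\tau^{*})$, with $q\colon N_{\R}\to N'_{\R}$ the projection. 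Since $\tau^{*}$ is a face of a unimodular cone, $N':=N/(N\cap\Span\tau^{*})$ is a lattice of rank $n-d<n$; smoothness and the ``every cone in a maximal cone'' condition pass to $\Sigma'$, and $|\Sigma'|$ equals the image under $q$ of the local cone of $|\Sigma|$ at a point of $\operatorname{relint}(\tau^{*})$, hence is convex because a local cone of a convex set is convex. One then argues that the infinitely many maximal cones of $\Sigma$ accumulating at $\widehat w$ have $\tau^{*}$ as a face, so that they descend to infinitely many \emph{distinct} maximal cones of $\Sigma'$, and that the relevant descended rays and cones are confined to a single cone of the form $\R_{\geq0}\langle$ finitely many rays of $\Sigma'\rangle$; the inductive hypothesis, together with the reduction above applied in rank $n-d$, then yields a contradiction.

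\textbf{Expected main obstacle.} The two delicate points are both in the case $d<n$. First, the identification of $|\Sigma'|$ with the projected local cone, and hence the convexity of $|\Sigma'|$ --- elementary but fiddly. More seriously: showing that a maximal cone of $\Sigma$ whose points come arbitrarily close to $\widehat w\in\operatorname{relint}(\tau^{*})$ must actually have $\tau^{*}$ as a face --- equivalently, that $\operatorname{Star}(\tau^{*})$ fills a neighbourhood of $\widehat w$ inside $|\Sigma|$ --- and, along the same lines, that the descended data stays inside a finitely generated cone of $\Sigma'$ to which the inductive hypothesis applies. This is exactly where the convexity of $|\Sigma|$ must be invoked, and where one must be careful because the fan need not be locally finite (indeed it need not be, even under the standing assumptions). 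Everything else --- facts (a), (b), the reduction to finitely many maximal cones, and the base case $d=n$ --- is routine.
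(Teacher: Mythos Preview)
Your approach is substantially more elaborate than the paper's and, as you yourself flag, leaves the decisive step unresolved. The paper argues directly by contradiction using only compactness: if infinitely many rays of $\Sigma$ lie in $C$, their intersections with the unit sphere accumulate at some $p_0 \in C \cap S^{n-1}$; convexity of $|\Sigma|$ forces $p_0 \in |\Sigma|$, so $p_0$ lies in some cone $c \in \Sigma$. One then considers the union of maximal cones containing $c$ and asserts that for large $j$ the point $p_{i_j}$ lands in the relative interior of one of these cones, which has dimension $>1$; this contradicts the fan axiom, since a ray of $\Sigma$ cannot meet the relative interior of a higher-dimensional cone of $\Sigma$. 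There is no induction on $n$, no passage to star fans, and no reformulation in terms of maximal cones covering $C$.

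Your reduction to ``$C$ is covered by finitely many maximal cones'' is valid but buys nothing: the obstacle you identify --- that maximal cones accumulating at $\widehat w \in \operatorname{relint}(\tau^*)$ must have $\tau^*$ as a face, equivalently that $\operatorname{Star}(\tau^*)$ fills a neighbourhood of $\widehat w$ in $|\Sigma|$ --- is precisely the step the paper's direct argument also needs (there phrased as ``for $j$ big enough, $p_{i_j}$ falls in the union of maximal cones containing $c$''). Your inductive scheme wraps this same local step in additional machinery (convexity of $|\Sigma'|$, injectivity of the descent, confinement of the descended data to a finitely generated cone) without making the step itself any easier. Since the contradiction in the direct argument follows immediately from the fan axiom once that neighbourhood statement is in hand, the induction and the star-fan descent are pure overhead. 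Drop them and argue as the paper does; if you are uneasy about the neighbourhood step, attack it head-on rather than postponing it through a quotient.
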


\begin{proof}
Suppose not. The rays are in one-to-one correspondence with points in the unit sphere $\bS \subset N_\R$ (with respect to an arbitrary metric). 
Then there are infinitely many points $\{p_i\}_{i=1}^\infty$ (corresponding to rays of $\Sigma$) contained in the compact region $\R_{\geq 0}\langle v_1,\ldots,v_k \rangle \cap \bS$. 
Thus there exists a subsequence $\{p_{i_j}\}$ converging to $p_0 \in \R_{\geq 0}\langle v_1,\ldots,v_k \rangle \cap \bS$.  
Since $\Sigma$ is convex, $p_0$ is contained in the support $|\Sigma|$.  In particular $p_0$ is contained in a certain cone $c$ of $\Sigma$.  Consider the union of all maximal cones containing $c$. 
By the assumption that every cone is in a maximal cone, for $j$ big enough, $p_{i_j}$ falls in the (relative) interior of one of these cones with $\dim > 1$. 
This is a contradiction. 
\end{proof}

\begin{Lem} \label{lem:adj}
Suppose that $|\Sigma| - \{0\}$ is open.  Each ray of $\Sigma$ is adjacent to finitely many rays. 
Two rays are said to be adjacent if they are the boundaries of a common $2$-cone.  
\end{Lem}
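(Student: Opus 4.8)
The plan is to reduce the statement to Lemma \ref{lem:finite} by exhibiting, for a fixed ray $v \in \Sigma(1)$, a finite set of rays whose generated cone contains every ray adjacent to $v$. First I would fix a ray $\R_{\geq 0} v$ with $v \in \Sigma(1)$. Since $|\Sigma|-\{0\}$ is open and $v \in |\Sigma|$, the point $v$ lies in the interior of $|\Sigma|$, so there is an open neighborhood $U$ of $v$ in $N_\R$ contained in $|\Sigma|$; equivalently, a small closed ball $\bar B_\epsilon(v) \subset |\Sigma|$. By the Assumption every cone of $\Sigma$ lies in a maximal cone, and each maximal cone is simplicial of dimension $n$ (generated by a basis of $N$); the finitely many maximal cones meeting $\bar B_\epsilon(v)$ — finite by the compactness argument of Lemma \ref{lem:finite} applied to a cone $\R_{\geq 0}\langle w_1, \ldots, w_k\rangle$ covering $\bar B_\epsilon(v)$ — cover $\bar B_\epsilon(v)$, and their union is a neighborhood of $v$ spanned by finitely many rays $w_1, \ldots, w_m$ of $\Sigma$.

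Next I would argue that any ray $\R_{\geq 0} v'$ adjacent to $\R_{\geq 0} v$ — i.e.\ spanning a $2$-cone $\sigma$ of $\Sigma$ together with $v$ — must itself lie in the cone $C := \R_{\geq 0}\langle w_1, \ldots, w_m \rangle$. The key point is that a small segment of $\sigma$ emanating from $v$, say $v + t(v' - v)$ for $t \in [0, \delta]$, lies in $\bar B_\epsilon(v)$ for $\delta$ small, hence in one of the maximal cones $\tau$ enumerated above. Since $\tau$ is a cone and contains an open segment of $\sigma$ based near $v$, it contains the whole $2$-cone $\sigma$ (a convex polyhedral cone containing a relatively open subset of a face-subcone of $\Sigma$ that meets the interior of $\tau$ must contain that subcone — here one uses that $\sigma$ is a face of some maximal cone and the fan is a fan, so $\sigma \cap \tau$ is again a face; an open segment forces $\sigma \subseteq \tau$). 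In particular $v' \in \tau \subseteq C$, so $v' \in \Sigma(1) \cap C$.

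Finally, $C = \R_{\geq 0}\langle w_1, \ldots, w_m \rangle$ is a cone generated by the finite subset $\{w_1, \ldots, w_m\} \subset \Sigma(1)$, so Lemma \ref{lem:finite} applies and gives that only finitely many rays of $\Sigma$ are contained in $C$. Since every ray adjacent to $\R_{\geq 0} v$ is among these, $\R_{\geq 0} v$ has only finitely many adjacent rays, as claimed.

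The step I expect to be the main obstacle is the geometric claim in the second paragraph that a maximal cone containing a relatively open sub-segment of the $2$-cone $\sigma$ near $v$ must contain all of $\sigma$. This needs the fan axioms carefully: $\sigma$ and $\tau$ intersect in a common face, and a common face containing an interior-relative segment of $\sigma$ has to be $\sigma$ itself; one should phrase this cleanly (perhaps: $\sigma \cap \tau$ is a face of $\sigma$ of dimension $\geq 2$, hence equals $\sigma$). Everything else is a routine compactness-plus-finiteness argument reusing Lemma \ref{lem:finite}.
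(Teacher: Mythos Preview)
Your self-identified ``main obstacle'' in step 3 is actually fine: if $p=(1-t_0)v+t_0v'$ lies in both the relative interior of the $2$-cone $\sigma$ and in a maximal cone $\tau$, then the fan axiom forces $\sigma\cap\tau$ to be a face of $\sigma$ containing a relative-interior point, hence $\sigma\cap\tau=\sigma$ and $\sigma\subseteq\tau$. That part is clean.

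The genuine gap is in step 2, where you assert that only finitely many maximal cones meet $\bar B_\epsilon(v)$, ``by the compactness argument of Lemma~\ref{lem:finite} applied to a cone $\R_{\geq 0}\langle w_1,\ldots,w_k\rangle$ covering $\bar B_\epsilon(v)$.'' Lemma~\ref{lem:finite} bounds the number of cones of $\Sigma$ \emph{contained} in a finitely generated subcone; it says nothing about cones of $\Sigma$ that merely \emph{meet} a compact set. A maximal cone $\tau$ touching $\bar B_\epsilon(v)$ can have its generating rays arbitrarily far from $v$, so $\tau$ need not sit inside your cone $\R_{\geq 0}\langle w_1,\ldots,w_k\rangle$, and Lemma~\ref{lem:finite} gives you no control over such $\tau$. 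In fact, ``finitely many maximal cones contain $\R_{\geq 0}v$'' is \emph{equivalent} to the statement you are trying to prove (pass to the quotient fan $\Sigma/\R_{\geq 0}v$: adjacent rays correspond to rays there, and maximal cones containing $v$ correspond to maximal cones there; finiteness of one forces finiteness of the other). So your reduction is circular.

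The paper's proof avoids this by working directly with the $2$-cones rather than detouring through maximal cones: it intersects the adjacent $2$-cones with a small transverse sphere $\partial B\cong S^{n-2}$, obtains a limit point $p$, uses convexity to place $\R_{\geq 0}(l\cup\{p\})$ inside some cone of $\Sigma$, and then observes that the nearby $2$-cones would have to fall into the relative interior of a higher-dimensional cone---impossible for cones of a fan. This is the same compactness mechanism as in Lemma~\ref{lem:finite}, but applied to the $2$-cones themselves rather than to a putative finite cover by maximal cones.
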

\begin{proof}
Since $|\Sigma|-\{0\}$ is open, for every ray $l$ there exists a codimension $1$ ball $B$ transverse to $l$ with $l \subset \R_{\geq 0} \cdot B \subset |\Sigma|$.  For $B$ small enough it does not intersect with any rays other than $l$.  (Otherwise $l$ would be the limit of a sequence of rays not equal to $l$, which cannot be the case since a ray cannot lie in the relative interior of any cone other than itself.)  Suppose there are infinitely many distinct rays adjacent to $l$. 
The intersections of the corresponding $2$-cones with $\R_{\geq 0} \cdot B$ give infinitely many distinct points in the sphere $\partial B\cong S^{n-2}$.  
Then there exists a limit point $p$ in $\partial B$.  Since $|\Sigma|$ is convex, $\R_{\geq 0} (l \cup \{p\}) \subset \Sigma$.  Hence $\R_{\geq 0} (l \cup \{p\})$ lies in a certain cone of $\Sigma$. 
Then the sequence of $2$-cones limiting to $\R_{\geq 0} (l \cup \{p\})$ eventually falls into the relative interior of a certain cone, which cannot be the case.  
\end{proof}

\begin{Prop}
A toric manifold of infinite-type is non-compact.
\end{Prop}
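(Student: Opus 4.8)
The plan is to show that a toric manifold of infinite-type has infinitely many torus-fixed points, and then observe that a compact toric manifold can only have finitely many of them. Recall that the torus-fixed points of $X_\Sigma$ are in bijection with the maximal cones of $\Sigma$ (here maximal cones are $n$-dimensional since each is generated by a basis of $N$ by the smoothness assumption). So the real content is: if $\Sigma$ has infinitely many cones, then it has infinitely many maximal cones.

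First I would argue that $\Sigma$ has infinitely many rays. If $\Sigma$ had only finitely many rays $v_1,\dots,v_k$, then every cone of $\Sigma$ would be contained in $\R_{\geq 0}\langle v_1,\dots,v_k\rangle$, and Lemma \ref{lem:finite} would force $\Sigma$ to have only finitely many cones, contradicting that $X_\Sigma$ is of infinite-type. Hence $\Sigma(1)$ is infinite. Next, since every cone (in particular every ray) is contained in a maximal cone by the Assumption, each of the infinitely many rays lies in some maximal cone; and since each maximal cone, being simplicial of dimension $n$, contains only $n$ rays, infinitely many rays can only be accommodated by infinitely many maximal cones. Therefore $X_\Sigma$ has infinitely many torus-fixed points.

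Finally, a compact toric manifold $X$ of dimension $n$ has finitely many torus-fixed points: for instance its $\C^*$-fixed-point set for a generic one-parameter subgroup is the same finite set, or more simply the fixed points index the vertices of the (finite) moment polytope / maximal cones of a complete fan, and compactness is standard to contradict here. Since infinitely many fixed points is incompatible with compactness, $X_\Sigma$ is non-compact.

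The main obstacle, and the only place needing care, is the passage ``infinitely many cones $\Rightarrow$ infinitely many rays'': one must invoke Lemma \ref{lem:finite} correctly, noting that it applies to \emph{finite} subsets of $\Sigma(1)$, so the argument is genuinely a contrapositive — finitely many rays would give, via Lemma \ref{lem:finite} applied to all of $\Sigma(1)$, finitely many cones. An alternative route that sidesteps fixed points entirely is to build an unbounded sequence of points directly: pick rays $l_1, l_2, \dots$ and points $x_j$ in the associated orbits escaping to infinity, but making this rigorous still essentially reduces to the same combinatorial dichotomy, so the fixed-point argument above is cleaner.
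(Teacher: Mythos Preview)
Your proof is correct, but it takes a genuinely different route from the paper's. The paper argues by assuming compactness, which is equivalent to the fan $\Sigma$ being complete; it then picks $n$ generators $v_1,\dots,v_n$ forming a basis of $N_\R$ and, by completeness, one more generator $v_{n+1}$ with $-v_{n+1}\in\R_{>0}\langle v_1,\dots,v_n\rangle$, so that $\R_{\geq 0}\langle v_1,\dots,v_{n+1}\rangle = N_\R$. This covers all of $N_\R$ by the $\binom{n+1}{n}$ cones $\R_{\geq 0}\langle v_{i_1},\dots,v_{i_n}\rangle$, and Lemma~\ref{lem:finite} applied to each gives finitely many cones of $\Sigma$ in total, contradicting infinite-type.

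So both arguments hinge on Lemma~\ref{lem:finite}, but in opposite directions: the paper uses completeness to exhibit a \emph{specific} finite set of generators whose span already covers $|\Sigma|$, whereas you use the contrapositive of Lemma~\ref{lem:finite} to show $\Sigma(1)$ must be infinite, then pass to infinitely many maximal cones and hence infinitely many torus-fixed points. Your route has the advantage of never invoking the equivalence ``compact $\Leftrightarrow$ complete fan'', relying instead on the elementary topological fact that a discrete closed subset of a compact space is finite; the paper's route stays entirely within the combinatorics of the fan and avoids any appeal to the fixed-point structure. One small caution: your step~4 justification (``vertices of the finite moment polytope / maximal cones of a complete fan'') is close to circular, since finiteness of the maximal cones is essentially the content; the clean version is simply that torus-fixed points are isolated (by smoothness) and form a closed set, hence are finite in a compact $X$.
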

\begin{proof}
Suppose the toric manifold is compact, which is equivalent to the condition that the fan is complete.  Let $v_1,\ldots,v_n$ be generators of the fan which is also a basis of $N_\R$.  Since the fan is complete, there exists a generator $v_{n+1}$ such that $-v_{n+1} \in \R_{>0} \langle v_1,\ldots,v_n\rangle$.  Then $\R_{\geq 0} \langle v_1,\ldots,v_{n+1} \rangle = N_\R$, and so $N_\R$ is the union of the cones generated by $v_{i_1},\ldots,v_{i_n}$, where $i_k \in \{1,\ldots,n+1\}$ are pairwise distinct.  By Lemma \ref{lem:finite} each of these cones only contains finitely many cones in $\Sigma$.  Thus $\Sigma$ only has finitely many cones.
\end{proof}

\begin{Prop}
There exists a set of real numbers $\{c_v \}_{v \in \Sigma(1)}$ such that there is an injective map from the set of cones of $\Sigma$ to the set of faces of
$$
P := \bigcap_{v\in \Sigma(1)} \{ y \in M_\R \ | \ l_v(y) := (v,y) - c_v \geq 0 \}. 
$$
Here a face of $P$ is defined to be a non-empty (closed) subset of $P$ cut out by a finitely many affine hyperplanes in $M_\R$. 
(A face can be $P$ itself, which corresponds to the $0$-cone of $\Sigma$.)  Such a $P$ is called a dual of $\Sigma$.
\end{Prop}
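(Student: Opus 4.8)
The plan is to construct the constants $\{c_v\}_{v\in\Sigma(1)}$ so that the support function of $\Sigma$ is encoded by $P$, mimicking the finite-type case but being careful about convergence and the infinitely many defining inequalities. First I would fix a reference maximal cone $\sigma_0 \in \Sigma$ with $\sigma_0 = \R_{\geq 0}\langle v_1,\dots,v_n\rangle$, where $v_1,\dots,v_n$ is a basis of $N$; put $c_{v_i}=0$ for these, so that the dual face of $\sigma_0$ is forced to contain the point $0 \in M_\R$. Then I would propagate the values of $c_v$ across adjacent maximal cones using the usual wall-crossing recursion: if $\sigma = \R_{\geq 0}\langle v_1,\dots,\widehat{v_j},\dots,v_n,w\rangle$ is obtained from $\sigma_0$ by replacing $v_j$ with the new generator $w$, then the smoothness wall relation $w + v_j = \sum_{i\neq j} a_i v_i$ with $a_i\in\Z$ determines $c_w$ uniquely from the $c_{v_i}$ by the requirement that $l_w$ vanish on the vertex dual to $\sigma$. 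Iterating this over a path of adjacent maximal cones reaching any given ray defines $c_v$ for all $v\in\Sigma(1)$; one must check this is independent of the path, which follows because any two paths differ by crossing the boundaries of a cone of codimension $\geq 2$, and around each such cone the local structure is that of a smooth toric manifold of finite-type (Lemma \ref{lem:finite}), where the statement is classical.

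Next I would define, for each cone $\sigma\in\Sigma$, the candidate dual face $F_\sigma := \{ y\in P : l_v(y)=0 \text{ for all } v\in\sigma\cap\Sigma(1)\}$. The construction guarantees $F_\sigma \neq \emptyset$: by the path-independent propagation, the affine-linear functionals $l_v$ for $v$ ranging over the generators of a single maximal cone have a common zero (a single point, since these generators form a basis of $N$), and that point satisfies $l_{v'}\geq 0$ for all other $v'$ precisely because the $c_{v'}$ were built from the support function of $\Sigma$, which is convex on $|\Sigma|$; convexity of $|\Sigma|$ is exactly where the Assumption is used. For non-maximal $\sigma$, $F_\sigma$ contains $F_\tau$ for any maximal $\tau \supseteq \sigma$, hence is non-empty, and it is cut out by finitely many hyperplanes, so it is a face in the stated sense. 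Finally I would verify injectivity: if $F_\sigma = F_{\sigma'}$, then the set of $v\in\Sigma(1)$ on which $l_v$ vanishes identically on that face recovers the generators of $\sigma$ (respectively $\sigma'$) — here one uses that $l_v$ is strictly positive on the relative interior of the dual face of any cone not containing $v$, again a consequence of strict convexity of the support function across walls — so $\sigma = \sigma'$. The $0$-cone maps to all of $P$.

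I expect the main obstacle to be the convergence/well-definedness issue hidden in "$P := \bigcap_{v\in\Sigma(1)}\{\dots\}$": there are infinitely many half-spaces, so a priori $P$ could be empty or could fail to have the expected local structure near a point where infinitely many rays accumulate. The key point to nail down is that near any point $y_0 \in M_\R$ only finitely many of the inequalities $l_v(y)\geq 0$ are "active" in a neighborhood — equivalently, that for the candidate vertex $p_\sigma$ dual to a maximal cone $\sigma$, we have $l_v(p_\sigma) > 0$ for all but finitely many $v$, with a uniform positive lower bound away from the finite exceptional set. This should follow from Lemma \ref{lem:finite} together with the fact that the $c_v$ grow in a controlled (piecewise-linear, convex) way relative to $v$, so that rays far from $\sigma$ contribute inequalities that are slack at $p_\sigma$; making this quantitative is the real content. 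Everything else — the wall-crossing recursion, path-independence, the face correspondence — is a routine adaptation of the finite-type toric dictionary once this local finiteness is in hand.
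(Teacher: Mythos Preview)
Your wall-crossing recursion has a gap: the relation $w + v_j = \sum_{i\neq j} a_i v_i$ does \emph{not} pin down $c_w$ from the $c_{v_i}$. The vertex $p_\sigma$ dual to $\sigma$ is the common zero of the $l_{v_i}$ for $i\neq j$ together with $l_w$; the first $n-1$ equations already cut out a line in $M_\R$, and varying $c_w$ slides $p_\sigma$ along that line, so there is a genuine one-parameter freedom --- the length of the edge from $p_{\sigma_0}$ to $p_\sigma$, i.e.\ the amount of bend of the support function across the wall. If you force $c_w + c_{v_j} = \sum_{i\neq j} a_i c_{v_i}$ you recover the \emph{linear} function on $|\Sigma|$, all candidate vertices collapse to $0$, and injectivity fails outright. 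If instead you make an arbitrary positive choice of bend at each wall, then path-independence around a codimension-$2$ cone becomes a nontrivial closing condition equivalent to the existence of a strictly convex support function on the star of that cone; smoothness and convexity of $|\Sigma|$ alone do not guarantee this (already in finite type there are smooth non-projective toric varieties), so invoking ``local finite-type structure'' via Lemma~\ref{lem:finite} does not close the loop.

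The paper avoids this coherence problem by a different, more flexible construction. It enumerates $\Sigma(1)=\{v_i\}_{i\geq 1}$ and, at stage $k$, uses Lemma~\ref{lem:finite} to see that only finitely many cones of $\Sigma$ lie inside $\R_{\geq 0}\langle v_1,\dots,v_k\rangle$; one then invokes the finite-type statement on that sub-fan, choosing the newly appearing $c_v$'s with whatever freedom is needed so that the cone-to-face map stays injective while keeping the previously fixed $c_v$'s untouched. No global support function is ever assembled and no cycle of walls is ever traversed. Your local-finiteness worry --- that infinitely many half-spaces might be active near a single point of $P$ --- is legitimate, but it is not addressed in this proposition by either approach; the paper defers it to the next proposition under the additional exhaustion hypothesis.
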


\begin{proof}
Let $\Sigma(1)=\{v_i\}_{i \in \Z_{>0}}$. First consider the cone $\R_{\geq 0} \langle v_1, v_2, v_3 \rangle$. 
By Lemma \ref{lem:finite}, there are only finitely many top-dimensional cones of $\Sigma$ contained in $\R_{\geq 0} \langle v_1, v_2, v_3 \rangle$.  
They combine to give a finite-type sub-fan $\Sigma'$ (which may not be convex), 
and so we can choose $c_v \in \R$, where $v$ are primitive generators of $\Sigma'$, 
such that there is an injective map from the set of cones of $\Sigma'$ to the set of faces of the polytope $P' := \{l_v \geq 0: v \in \Sigma'(1)\}$. 

Now consider the cone $\R_{\geq 0} \langle v_1, v_2, v_3, v_4 \rangle$, which again contains only finitely many top-dimensional cones of $\Sigma$. They combine to give a finite-type sub-fan $\Sigma''$ and $\Sigma'$ above is a subfan of $\Sigma''$. 
We can choose $c_v \in \R$ for $v$ being primitive generators of $\Sigma''$ but not of $\Sigma'$, 
such that there is an injective map from the set of cones of $\Sigma''$ to the set of faces of the polytope $P''$, 
which extends the above injective map.  Inductively all the $\{c_v\}_{v\in \Sigma(1)}$ are fixed.
It is easy to see that the map from cones of $\Sigma$ to faces of $P$ is injective.  
\end{proof}

In the above choice of $c_v$ it may happen that the correspondence between the cones of $\Sigma$ and the faces of $P$ is not bijective. 
The following exhaustion condition helps to derive good properties of $P$, which are important for constructing K\"ahler metrics.

\begin{Def}[Exhaustion condition] \label{def:exhaustion}
An exhaustion of a fan $\Sigma$ by finite-type closed convex fans is a sequence of fans $\Sigma_1 \subset \Sigma_2 \subset \ldots$ such that $\bigcup_{i=1}^\infty \Sigma_i = \Sigma$, each $\Sigma_i$ has finitely many rays and $|\Sigma_i|$ is closed and convex.
\end{Def}

\begin{Prop} \label{prop: open covering}
Suppose that $\Sigma$ admits an exhaustion by finite-type closed convex fans $\{\Sigma_i\}$.  Then the dual $P$ can be made to satisfy the following conditions:
\begin{enumerate}
\item For every compact subset $R \subset M_\R$, there are only finitely many facets of $P$ which intersect $R$.
\item Each boundary point $p \in \partial P$ belongs to a facet of $P$, where a facet is a codimension $1$ face.
\item There exists open covering $\{U_v\}_{v \in \Sigma(1)}$ of the facets of $P$, where $U_v$ is an open neighborhood of the boundary stratum $P \cap \{l_v = 0\}$, 
such that each $p \in P$ intersects only finitely many $U_v$'s. 
\end{enumerate}
Such a $P$ is called a dual polyhedral set of $\Sigma$.
\end{Prop}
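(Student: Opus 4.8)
The plan is to refine the construction of $P$ from the previous proposition, but now choosing the constants $\{c_v\}$ compatibly with the given exhaustion $\{\Sigma_i\}$, and then to build the open cover $\{U_v\}$ by hand from the geometry of the exhausting polytopes. First I would set $P_i := \{y \in M_\R \mid l_v(y) \geq 0 \text{ for all } v \in \Sigma_i(1)\}$. Since each $|\Sigma_i|$ is closed and convex and $\Sigma_i$ is a genuine finite-type fan, standard toric geometry lets us choose the $c_v$ (for $v$ newly appearing in $\Sigma_i$) so that $P_i$ is a polyhedron whose normal fan contains $\Sigma_i$, with a bijection between cones of $\Sigma_i$ and faces of $P_i$; and we may arrange $P_1 \supset P_2 \supset \ldots$ with $P = \bigcap_i P_i$. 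The key point, which I would extract from convexity of $|\Sigma_i|$, is that $P_i$ differs from $P_{i+1}$ only by ``cutting off'' regions near the new facets, and that these new facets recede to infinity: concretely, for any compact $R$, only finitely many of the hyperplanes $\{l_v = 0\}$ meet $R$, because the rays $v \in \Sigma_i(1) \setminus \Sigma_{i-1}(1)$ point into directions that force $\{l_v = 0\}$ to lie outside a ball whose radius grows with $i$ (one uses Lemma \ref{lem:finite} to see that within any fixed cone $\R_{\geq 0}\langle v_1,\ldots,v_k\rangle$ only finitely many rays occur, so the ``new'' rays escape every such cone). This gives item (1).

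For item (2), I would argue that $P = \bigcap_i P_i$ and that near any boundary point $p \in \partial P$, only finitely many constraints $l_v$ are active or nearly active by item (1), so $P$ locally coincides with a finite-type polyhedron, whose boundary points always lie on facets; one checks that the constant $c_v$ can be perturbed within the inductive construction so that no facet of the finite models degenerates in the limit (this is where the exhaustion hypothesis, as opposed to the weaker setup of the previous proposition, is essential — it prevents infinitely many hyperplanes from accumulating along a boundary stratum). For item (3), I would construct $U_v$ directly: take $U_v$ to be the intersection of $P$ with an open half-space slightly enlarging $\{l_v < \epsilon_v\}$ for a sequence $\epsilon_v \to 0$ chosen rapidly decreasing as $v$ ranges over later and later $\Sigma_i(1)$. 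The covering property follows because every boundary point lies on some facet $P \cap \{l_v = 0\}$ by (2), hence in $U_v$; interior points are covered once one also throws in (or simply notes that $P$ itself, being the face corresponding to the $0$-cone, is contained in a suitably large $U_v$, or handles the interior trivially since the statement is really about a neighborhood of the facets). The local finiteness of $\{U_v\}$ at each $p$ reduces again to item (1) plus the rapid decay of $\epsilon_v$: only finitely many $l_v$ take small values at $p$.

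I expect the main obstacle to be item (3) together with making the three conditions simultaneously compatible: the constants $c_v$ must be chosen once and for all in the inductive step, yet they must serve both to make the facets of $P$ match the rays of $\Sigma$ (no collapsing in the limit) and to allow the $\epsilon_v$-neighborhoods $U_v$ to be locally finite. The resolution is to interleave the choices — at stage $i$, after fixing $c_v$ for the new rays (using convexity of $|\Sigma_i|$ to keep the corresponding facets nonempty and of the right dimension), also fix $\epsilon_v$ small enough that $U_v$ is disjoint from a prescribed compact exhausting region $R_{i-1}$ of $M_\R$, so that any fixed $p$ lies in $R_{i_0}$ for some $i_0$ and hence meets $U_v$ only for the finitely many $v \in \Sigma_{i_0}(1)$. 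A secondary technical point is verifying that ``face of $P$'' in the sense of the previous proposition (cut out by finitely many affine hyperplanes) is consistent with ``facet = codimension-$1$ face'' here; this follows from item (1), since locally $P$ is a finite intersection of half-spaces, so its faces are the usual polyhedral faces. I would present items (1)–(3) in that order, deriving (2) and (3) as consequences of the careful construction underlying (1).
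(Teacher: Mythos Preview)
Your overall architecture is right and matches the paper's, but there is a genuine gap in your justification of item~(1), and it propagates to item~(3). You write that the new facets recede to infinity ``because the rays $v \in \Sigma_i(1) \setminus \Sigma_{i-1}(1)$ point into directions that force $\{l_v = 0\}$ to lie outside a ball whose radius grows with $i$,'' citing Lemma~\ref{lem:finite}. But the position of the hyperplane $\{l_v = 0\} = \{(v,y) = c_v\}$ is governed by $c_v$, not by the direction of $v$; Lemma~\ref{lem:finite} constrains rays in $N_\R$, not affine hyperplanes in $M_\R$. Nothing in your inductive choice of $c_v$ (which you pin down only by ``normal fan of $P_i$ contains $\Sigma_i$'') forces the new facets to be far from any fixed compact set. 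And without that, your item~(3) collapses too: if a new facet meets $R_{i-1}$, then $U_v \supset P \cap \{l_v = 0\}$ meets $R_{i-1}$ for every $\epsilon_v > 0$, so shrinking $\epsilon_v$ cannot rescue local finiteness.

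The paper closes this gap by making the recession an explicit part of the choice: fix a linear metric on $M_\R$ and a vertex $p_0 \in P_1$, and at stage $k$ take $c_v \ll 0$ for each new ray so that the resulting facet has distance at least $k-1$ from $p_0$ (convexity of $|\Sigma_{k-1}|$ is exactly what guarantees you can push the new facet arbitrarily far while keeping the face lattice correct). With this single quantitative condition, item~(1) is immediate, item~(2) follows because a boundary point not lying on any facet would be a limit of facets with distances tending to zero, and item~(3) becomes trivial: take a \emph{uniform} $\epsilon = 1/2$ neighborhood of every facet, and observe that any fixed $p$ has only finitely many facets within distance $1/2$. Your variable-$\epsilon_v$ scheme and compact exhaustion $R_i$ are then unnecessary complications.
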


\begin{proof}
The numbers $c_v$ in the definition of $P$ are chosen by induction on the exhausting finite closed convex fans $\Sigma_i$. 
We fix an arbitrary linear metric on $M_\R$. 
First we have the dual polyhedral set $P_1$ for $\Sigma_1$, and fix one of the vertices $p_0 \in P_1$. 
Since $\Sigma_1$ is closed and convex, the faces of $P_1$ are one-to-one corresponding to the cones of $\Sigma_1$. 
Then we choose $c_v$ for generators $v$ of rays in $\Sigma_2 - \Sigma_1$ to get the dual polyhedral set $P_2$. 
Since $\Sigma_1$ is convex, we can require that the distances of the newly added facets from $p_0$ are at least $1$ by taking $c_v << 0$. 
Similarly we choose $c_v$ for generators $v$ of $\Sigma_k - \Sigma_{k-1}$ in so that the distances of the newly added facets from $p_0$ are at least $k-1$. 
Inductively we obtain the desired dual $P$.

For any point $p \in P = \bigcap_{k=1}^\infty P_k$, the distances of facets in $P_k - P_{k-1}$ tend to infinity as $k \to \infty$.
Now suppose there is a point $p \in \partial P$ which does not belong to any facet of $P$. Then $p$ does not belong to any facet of $P_k$ for any $k$.  Since $p \in \partial P$, there exists a sequence of facets of $P_k$ (where $k$ varies) whose distances with $p$ tend to $0$, which is impossible by construction.  Thus (2) is satisfied.

For each facet whose normal is $v$, take $U_v$ to be all the points in $M_\R$ whose distance with the facet is less than $\epsilon = 1/2$.  Then for every $p \in P$, since there are only finitely many facets whose distance with $p$ is less than $\epsilon$, $p$ intersects only finitely many $U_v$.
\end{proof}

Up to this point, $X = X_\Sigma$ is merely a complex manifold. 
For the purpose of SYZ construction, we need a K\"ahler structure and a Lagrangian fibration on $X$. 
In \cite{Gui}, Guillemin constructed for a toric manifold $X$ of finite-type a toric K\"ahler metric via the potential 
$$
G(y) := \frac{1}{2} \sum_{v\in \Sigma(1)} \left(l_v(y) \log l_v(y)\right)
$$
on the interior $P^\circ$, where $c_v$ are constants involved in the definition of the dual polytope $P$ (see \cite[Section 4.9]{Gui} and \cite[Section 3]{Abreu} for details).  
Then $P^\circ \cong \R^n$ by the Legendre transform sending $y \in P^\circ$ to $\partial_y G(y) \in \R^n$, 
and $G$ becomes a function $F$ on $\R^n$. 
The torus-invariant K\"ahler metric on $X$ is given by $\omega = 2 i \partial \bar{\partial} F$ 
when restricted to the open torus $(\C^\times)^n \subset X$, where $F$ is treated as a function on $(\C^\times)^n$ via pullback by $\log|\cdot|: (\C^\times)^n \to \R^n$.  
The asymptotic behavior of $G$ ensures that $\omega$ extends to a K\"ahler form on the whole space $X$. 
Furthermore the torus action gives a moment map $X \to P$ which serves as a Lagrangian fibration. 

Unfortunately the function $G$ does not make sense for a toric manifold of infinite-type since the series on the RHS does not converge. 
Instead, we define a toric K\"ahler form around a toric neighborhood of the toric divisors. 

\begin{Def}[K\"ahler potential] \label{def:metric}
Assume the exhaustion condition. 
Given an open covering $\{U_v\}$ of $P$ as in Prop \ref{prop: open covering}, 
we choose a non-negative function $\rho_v$ on $\R^n$ which is supported on $U_v$ and equals to $1$ in a smaller neighborhood of the boundary stratum $P \cap \{l_v = 0\}$.  Now define
$$
\widetilde{G}(y) := \frac{1}{2} \sum_{v \in \Sigma(1)} \rho_v(y) (l_v(y) \log l_v(y)), 
$$
which is a finite sum for each fixed $y \in P^\circ$. 
\end{Def}
Since $\widetilde{G}$ has the same asymptotic behavior as $G$ at each boundary point, 
it gives the desired K\"ahler potential on a toric neighborhood $X^o$ of the union of divisors $\bigcup_{v\in \Sigma(1)} D_v \subset X$ through the Legendre transform. 
Moreover, with respect to the K\"ahler metric, we have the moment map $X^o \to P$, whose image is a neighborhood of $\partial P$ in $P$. 
From now on we always equip $X^o$ with the above K\"ahler metric. 
We define $H^*(X,\Z)$ to be the dual of $H_*(X,\Z)$, which could be of $\infty$-rank. 

\begin{Def}
We denote by $H_i(X,\Z)$ the singular homology of $X$.  
Then $H^i(X,\Z) := \mathrm{Hom}(H_i(X,\Z), \Z)$.  
Similarly $H^i(X,T) := \mathrm{Hom}(H_i(X,T),\Z)$ is defined as the dual of the relative homology, where $T$ is a moment map fiber of $X$.
\end{Def}

We have the exact sequence 
$$0 \longrightarrow H_2(X,\C) \longrightarrow H_2(X,T)_\C \longrightarrow H_1(T,\C) \longrightarrow 0$$
and its dual 
$$ 0 \longrightarrow H^1(T,\C) \longrightarrow H^2(X,T)_\C \longrightarrow H^2(X,\C) \longrightarrow 0. $$

In the context of toric manifolds of infinite-type, the vector spaces $H_2(X,\C)$, $H_2(X,T)_\C$ and their duals are $\infty$-dimensional. 
In fact, we have $H_2(X,T)_\C = \bigoplus_i \C \cdot \beta_i$ where $\beta_i$ are the basic disc classes (the readers are referred to \cite{CO} for basic holomorphic discs in a toric manifold).  
It consists of finite linear combinations of basic disc classes. 
On the other hand, the dual $H^2(X,T)_\C = \prod_i \C \cdot D_i$ consists of formal infinite linear combinations of the toric prime divisors $D_i$.
The basis $\{\beta_i\}$ and $\{D_i\}$ are dual to each other. 
By a toric divisor we mean an element of $H^2(X,T)$. 
For instance, the toric anti-canonical divisor of $X$ is $K_X = \sum_i D_i$, which has infinitely many terms if $X$ is of infinite-type. 

Next we shall define the K\"ahler moduli space of a toric manifold (of infinite-type). 

\begin{Prop} \label{prop:H_2^eff}
$H_2(X,\Z)$ is spanned by the toric curve classes, which are represented by rational curves given by the toric strata of $X$.  
\end{Prop}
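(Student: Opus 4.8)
The plan is to reduce the infinite-type statement to the classical finite-type fact that the homology of a smooth toric variety is generated by torus-invariant curve classes, using the exhaustion of $X$ by open toric subvarieties of finite-type. First I would invoke the exhaustion condition: write $\Sigma = \bigcup_i \Sigma_i$ with each $\Sigma_i$ a finite-type closed convex fan, and let $X_i := X_{\Sigma_i}$ be the corresponding open toric submanifold of $X$, so that $X = \bigcup_i X_i$ as an increasing union of open sets. Since singular homology commutes with direct limits of spaces under open inclusions (any compact cycle representative lands in some $X_i$), we have $H_2(X,\Z) = \varinjlim_i H_2(X_i,\Z)$. Thus it suffices to show that each class in $H_2(X_i,\Z)$ is, inside $X$, a $\Z$-combination of toric curve classes; this follows once we know $H_2(X_i,\Z)$ itself is generated by the toric $1$-strata of $X_i$, because each such stratum is (the closure of) a torus orbit corresponding to a codimension-one cone of $\Sigma_i \subset \Sigma$, hence is a toric curve in $X$.

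The next step is the finite-type input. For a smooth (possibly non-compact, non-complete) toric variety $X_i$, $H_2(X_i,\Z)$ is generated by the classes of the $\PP^1$'s (or $\A^1$'s — but those are nullhomologous, so one keeps only the complete ones, equivalently only the codimension-one cones that are interiors of the union of two maximal cones) arising as closures of codimension-one torus orbits. One clean way to see this: the torus $T_N \cong (\C^\times)^n$ acts on $X_i$ with finitely many orbits, giving a filtration of $X_i$ by orbit closures; the associated spectral sequence / excision argument shows that $H_2$ is generated by fundamental classes of the $2$-dimensional orbit closures, which are exactly the toric curves. Alternatively one can cite that $X_i$ retracts onto a finite CW complex whose $2$-cells are indexed by these orbit closures. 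I would state this as a lemma and either prove it via the orbit stratification or cite Fulton's toric geometry text.

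Finally I would assemble the two pieces: given $\gamma \in H_2(X,\Z)$, pick $i$ with $\gamma$ in the image of $H_2(X_i,\Z) \to H_2(X,\Z)$, write its preimage as $\sum_j a_j [C_j^{(i)}]$ with $C_j^{(i)}$ toric curves of $X_i$, and push forward: each $C_j^{(i)}$ maps to a toric curve of $X$, so $\gamma = \sum_j a_j [C_j]$ in $H_2(X,\Z)$, as desired. The main obstacle is making the first step fully rigorous in the infinite-type setting — namely that $X = \bigcup_i X_i$ really is an increasing open cover with $X_i$ open in $X$ (this uses that a sub-fan gives an open toric subvariety, which is standard) and that singular homology of $X$ is the colimit of the $H_2(X_i,\Z)$; the latter is routine but worth a sentence, since it is exactly the point where non-compactness and infinitely many charts interact. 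Everything else is a bookkeeping consequence of the finite-type fact.
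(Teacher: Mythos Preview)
Your strategy---exhaust by finite-type open toric subvarieties $X_i$ and use $H_2(X,\Z) = \varinjlim H_2(X_i,\Z)$---is genuinely different from the paper's. The paper instead inducts on dimension: since $|\Sigma|$ is convex and not all of $N_\R$, it sits in a closed half-space; choosing a one-parameter subgroup whose ray lies in the open half-space and flowing by the associated $\C^\times$-action pushes every $2$-cycle into the toric boundary $\bigcup_i D_i$, and one repeats on each $D_i$ (whose fan again has convex support), eventually reaching a complete toric variety where the statement is classical.

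There is a real gap in your ``finite-type input''. The claim that for an arbitrary smooth, possibly non-complete, finite-type toric variety $H_2$ is generated by the $2$-dimensional orbit closures is false: take $X_i = (\C^\times)^2$, whose fan is $\{0\}$; then $H_2(X_i,\Z)\cong\Z$ but there are no toric curves whatsoever. Your orbit-stratification spectral sequence argument cannot work in this generality precisely because the open orbits $(\C^\times)^k$ themselves contribute to $H_2$---the strata are not cells. What rescues the approach is the extra hypothesis that each $|\Sigma_i|$ is a full-dimensional strongly convex cone (which one can arrange when building the exhaustion): then $X_i$ is semi-projective, retracts onto its compact core, and the generation statement does hold. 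But establishing that for non-complete $X_i$ is essentially the paper's $\C^\times$-flow and dimension induction restricted to $X_i$, so your route does not really sidestep that argument. You should either state the full-dimensional-convex hypothesis on the $\Sigma_i$ explicitly and justify the semi-projective case carefully, or run the $\C^\times$-flow directly on $X$ as the paper does.
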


\begin{proof}
It is well-known that the statement holds when $\Sigma$ is complete (in which case $X_\Sigma$ is of finite-type). 
Now consider the case when $\Sigma$ is not complete.
Since $|\Sigma|$ is convex, it is contained in a closed half space in $\R^n$ (or otherwise it is complete).  Consider a ray which is contained in the open half space.  By taking limit of its corresponding $\C^\times$-action, any rational curve in $X$ can be moved to the union of toric divisors $\bigcup_i D_i$.

Thus it suffices to consider a curve contained in a toric divisor $D$. 
The toric fan of $D$ is obtained by taking the quotient of all the cones containing $v$ along the direction of $v$, where $v$ is the generator of $\Sigma$ corresponding to $D$. 
Thus the support of the toric fan of $D$ is still convex. 
Then we can run the above argument again for the toric variety $D$.
Inductively, we must end up with a complete fan in which the statement holds.
\end{proof}

Given a basis $\{\alpha_l: l \in \Z_{>0}\}$ of $H_2(X,\Z)$, an element of $H_2(X,\Z)$ is a finite linear combination of $\alpha_l$.  On the other hand, $H^2(X,\Z)$ consists of infinite linear combination of $T_l$ where $\{T_l: l \in \Z_{>0}\}$ is the dual basis.

Let $\{C_i: i \geq 0\}$ be the set of all irreducible toric rational curves which correspond to $(n-1)$-dimensional cones in $\Sigma$ that do not lie in the boundary of the support $|\Sigma|$. 
We define a formal variable $q_i$ corresponding to each $C_i$, which can be interpreted as the exponential of the complexified symplectic area of $C_i$.  
Then $q_i = \exp 2\pi \bi (C_i,\cdot)$ defines a function on $H^2(X,\C)/H^2(X,\Z)$.
By Proposition \ref{prop:H_2^eff} we have $H_2(X,\Z) = \mathrm{Span}(\{C_i:i\geq 0\}) / R$ where $R$ is spanned by the linear relations among $\{C_i: i \geq 0\}$.  

\begin{Def}[K\"ahler moduli space] \label{def:K-mod}
We define the multiplicative relation
$$
\prod_{a_i > 0} q_i^{a_i} \sim \prod_{a_j < 0} q_j^{a_j}
$$
associated to each linear relation $\sum_{i=1}^k a_i C_i \in R$. 
Let $\C[[q_1,\ldots]]^f \subset \C[[q_1,\ldots]]$ be the subring consisting of formal series having finitely many terms in each equivalent class under the above multiplicative relations. 
Define $I \subset \C[[q_1,\ldots]]^f$ to be the ideal generated by all the multiplicative relations. 
The K\"ahler moduli space of $X$ is defined to be $\mathrm{Spec} (\C[[q_1,\ldots]]^f/I)$. 
The variables $\{q_i\}_i$ are called the K\"ahler parameters of $X$. 
\end{Def}

The K\"ahler moduli space can be interpreted as a formal neighborhood of the large volume limit in a certain partial compactification of $H^2(X,\C)/H^2(X,\Z)$.  When there are finitely many K\"ahler parameters, $\C[[q_1,\ldots]]^f$ coincides with $\C[[q_1,\ldots]]$. 
For toric manifolds of infinite-type there might be infinitely many toric rational curves in a given homology class.  We shall restrict to $\C[[q_1,\ldots]]^f/I$ in order to talk about convergence of formal series. 

\begin{Lem} \label{lem:neg-int}
Let $X$ be a toric manifold of infinite-type with $|\Sigma| - \{0\}$ entirely contained in an open half space of $N_\R$. 
For any non-zero class $\alpha \in H_2(X,\Z)$, there exists a toric divisor with $D \cdot \alpha < 0$.  In particular, any non-constant holomorphic sphere in $X$ is contained in a toric divisor. 
\end{Lem}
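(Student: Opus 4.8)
My plan is to prove the contrapositive of the first assertion: if $D\cdot\alpha\ge 0$ for every toric divisor $D$, then $\alpha=0$. The two ingredients are the balancing relation for toric curve classes and the positive linear functional cutting out the half-space.

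First I would record the balancing relation. For an interior wall $\tau$ (an $(n-1)$-dimensional cone lying in exactly two maximal cones, $\R_{\ge 0}\langle\tau,u_+\rangle$ and $\R_{\ge 0}\langle\tau,u_-\rangle$), smoothness gives the wall relation $u_++u_-=\sum_{\rho\in\tau(1)}c_\rho\,u_\rho$ with $c_\rho\in\Z$, while the standard toric intersection numbers are $D_{u_\pm}\cdot C_\tau=1$, $D_\rho\cdot C_\tau=-c_\rho$ for $\rho\in\tau(1)$, and $D_w\cdot C_\tau=0$ otherwise; hence $\sum_{w\in\Sigma(1)}(D_w\cdot C_\tau)\,w=0$ in $N$. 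By Proposition \ref{prop:H_2^eff} every class $\alpha\in H_2(X,\Z)$ is a finite $\Z$-combination of such $C_\tau$, each of which pairs nontrivially with only finitely many toric divisors by the formula above, so by linearity
\[ \sum_{w\in\Sigma(1)}(D_w\cdot\alpha)\,w=0\quad\text{in }N, \]
a finite sum. Now assume $D_w\cdot\alpha\ge 0$ for all $w\in\Sigma(1)$. The hypothesis provides $m_0\in M_\R\setminus\{0\}$ with $\langle m_0,v\rangle>0$ for every $v\in|\Sigma|\setminus\{0\}$, in particular for every $w\in\Sigma(1)$; pairing the displayed identity with $m_0$ gives $\sum_w(D_w\cdot\alpha)\langle m_0,w\rangle=0$, a sum of non-negative terms, whence $D_w\cdot\alpha=0$ for all $w$.

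It then remains to deduce $\alpha=0$, i.e. that the toric divisor classes separate points of $H_2(X,\Z)$; I expect this to be the main obstacle. For a smooth \emph{complete} toric variety it is immediate: $H^2$ is generated by the $[D_w]$ and Poincar\'e duality makes $H_2\times H^2\to\Z$ perfect. To reach that case I would reuse the degeneration from the proof of Proposition \ref{prop:H_2^eff}: choosing a ray $\rho_0\in\Sigma(1)$ (every ray lies in the open half-space) with primitive generator $v_0\in|\Sigma|$, the limit map $r(x)=\lim_{t\to 0}\lambda_{v_0}(t)\cdot x$ along the one-parameter subgroup given by $v_0$ is defined on all of $X$, is homotopic to the identity, and pushes $X$ into $\bigcup_w D_w$; recursing into the toric strata, each of which again has convex fan support, one ends on complete toric strata where the separation holds. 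The delicate point is that a $2$-cycle carried by $\bigcup_w D_w$ does not decompose canonically into classes carried by individual $D_w$'s, so the recursion must be organised --- for instance by induction on the dimension and on the number of strata carrying a nonzero component of $\alpha$ --- so that the vanishing $D_w\cdot\alpha=0$ is inherited by the pieces.

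Finally, the ``in particular'' follows at once: if $f\colon\PP^1\to X$ is non-constant then $f_*[\PP^1]\ne 0$ (its symplectic area is positive), so the first part yields a toric divisor $D$ with $\deg f^*\mathcal{O}(D)=D\cdot f_*[\PP^1]<0$, forcing the pullback of the defining section of $D$ to vanish identically, so $f(\PP^1)\subseteq D$. Independently of the first part one can also argue directly: if $f(\PP^1)$ is contained in no $D_w$ it meets the open torus, and the pole--zero vectors $u_i\in N$ of the torus coordinates at the finitely many points $p_i$ where $f(\PP^1)$ leaves the torus satisfy $\sum_i u_i=0$ while each $-u_i\in|\Sigma|\setminus\{0\}$ gives $\langle m_0,u_i\rangle<0$, a contradiction; hence $f(\PP^1)\subseteq\bigcup_w D_w$, and irreducibility puts it inside a single $D_w$.
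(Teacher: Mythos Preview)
Your core argument coincides with the paper's --- both pair the balancing identity $\sum_w(D_w\cdot\alpha)\,w=0$ against a linear functional $m_0$ cutting out the half-space --- but you make the separation step (``$D_w\cdot\alpha=0$ for all $w$ $\Rightarrow$ $\alpha=0$'') far harder than necessary. The paper does not recurse into toric strata; it simply uses the exact sequence $0\to H_2(X,\Z)\to H_2(X,T)\to H_1(T,\Z)\to 0$ already recorded in the section. Under the inclusion into $H_2(X,T)=\bigoplus_w\Z\beta_w$ one has $\alpha=\sum_w(D_w\cdot\alpha)\,\beta_w$, so the intersection numbers with toric divisors determine $\alpha$ completely (the paper invokes exactly this in the proof of Lemma~\ref{lem:rel}). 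With that in hand the argument is one line: the $a_w:=D_w\cdot\alpha$ are $\ge 0$, they satisfy $\sum_w a_w v_w=0$ since $\alpha$ has zero boundary, and pairing with $m_0$ forces all $a_w=0$, hence $\alpha=0$. Your recursion proposal is not wrong in spirit, but the ``delicate point'' you flag is a genuine bookkeeping headache that the exact sequence sidesteps entirely. The ``in particular'' part of your proof is fine, and your alternative pole-vector argument for it is correct as well.
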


\begin{proof}
Assume not.  Then $\alpha = \sum_{j} a_j \beta_j$ for a finite collection of basic disc classes $\beta_j$ and $a_j \in \Z_{>0}$.  Thus $\sum_{j} a_j v_j = 0$ where $v_j$ are the corresponding primitive generators.  The cone generated by all the $v_j$ forms a vector space, and hence is contained in the boundary of the half space.  But this contradicts that $|\Sigma| - \{0\}$ is entirely contained in the open half space.

For a non-constant holomorphic sphere, it has negative intersection with a toric divisor implies that it is contained in that divisor.
\end{proof}

In particular, if $|\Sigma| - \{0\}$ is open, the condition in the above lemma is satisfied.  It ensures that any curve cannot escape to infinity as shown in the proposition below.  It is important for having a well-defined Gromov--Witten theory.

\begin{Prop} \label{prop:cpt}
Let $X$ be a toric manifold of infinite-type with $|\Sigma| - \{0\}$ being open. 
Let $\alpha$ be a rational curve class in $X$.  There exists a compact subset $S \subset X$ such that any rational curves in $\alpha$ is contained in $S$.
\end{Prop}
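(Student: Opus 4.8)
The plan is to reduce the statement to the finitely many toric divisors that a given rational curve can possibly meet, and then use compactness of moment polytopes of finite-type toric pieces. First I would invoke Lemma \ref{lem:neg-int}: since $|\Sigma|-\{0\}$ is open, every non-constant holomorphic sphere in the class $\alpha$ lies inside some toric prime divisor $D_v$. More precisely, a connected rational curve in class $\alpha$ decomposes into irreducible components, each of which has negative intersection with (hence lies in) at least one toric divisor. The key finiteness input is that only finitely many toric divisors are relevant: writing $\alpha=\sum_j a_j\beta_j$ in terms of basic disc classes with $a_j\neq 0$ for finitely many $j$, the intersection number $D_v\cdot\alpha$ is nonzero only for $v$ adjacent to one of the finitely many generators $v_j$, and by Lemma \ref{lem:adj} each $v_j$ is adjacent to only finitely many rays. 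So there is a finite set $\mathcal{V}\subset\Sigma(1)$ such that any irreducible component of a curve in class $\alpha$ is contained in $\bigcup_{v\in\mathcal{V}} D_v$.

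Next I would pass to each such divisor $D_v$. By the description in the proof of Proposition \ref{prop:H_2^eff}, the toric fan of $D_v$ is the star quotient of $\Sigma$ along $v$, and its support is again convex; moreover the curve classes of $D_v$ push forward into $H_2(X,\Z)$, so a component in class $\alpha'\leq\alpha$ (in the obvious partial order on effective classes) sweeps out, under further degeneration, only finitely many toric curves of $D_v$. Iterating the argument inside $D_v$ — taking star quotients along successive rays — the process terminates when one reaches a complete fan, by Lemma \ref{lem:finite} (a cone generated by finitely many rays contains only finitely many cones of $\Sigma$). Hence the irreducible toric rational curves $C_i$ that can appear as components of a degeneration of a curve in class $\alpha$ form a \emph{finite} set $\{C_{i_1},\dots,C_{i_N}\}$, and correspondingly only finitely many toric strata are involved. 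Let $\Sigma_0\subset\Sigma$ be a finite-type convex subfan containing all these cones together with the cones $\R_{\ge0}v$ for $v\in\mathcal{V}$; such a $\Sigma_0$ exists by Lemma \ref{lem:finite} and the exhaustion hypothesis. Then $X_{\Sigma_0}$ is a finite-type toric manifold, its moment image under the Guillemin metric is a genuine (finite) polytope, hence compact, and the corresponding preimage $S:=$ (closure of the moment-map preimage of a large compact set exhausting this polytope) is a compact subset of $X$.

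Finally I would argue that every rational curve in class $\alpha$ — not merely the torus-invariant degenerations — is contained in $S$. This is where the torus-action degeneration argument of Proposition \ref{prop:H_2^eff} is used quantitatively: acting by the one-parameter subgroups corresponding to rays in the open half space moves an arbitrary rational curve $C$ in class $\alpha$ to a torus-invariant (possibly reducible, nodal) limit whose components are among the finitely many toric curves above; since the symplectic areas $\omega(\alpha)=\sum_i a_i\,\omega(C_i)$ and the areas of the individual $C_i$ are fixed, and the homology class is preserved along the degeneration, the original curve $C$ already meets only the divisors $D_v$, $v\in\mathcal{V}$, and has bounded image. Concretely, the moment map sends $C$ into the union of the (compact) facets $P\cap\{l_v=0\}$ for $v\in\mathcal{V}$ together with a bounded neighborhood determined by $\omega(\alpha)$, which is a compact subset of $P$; its preimage in $X^o$ is the desired $S$. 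The main obstacle I anticipate is making the last step fully rigorous: one must control \emph{all} rational curves, not just torus-fixed ones, and show that the degeneration argument genuinely confines the curve rather than merely its limit — this requires either a Gromov-compactness-type estimate bounding the symplectic area of each component of $C$ (so that no component can stretch into the infinite-type directions where areas of toric curves are unbounded below in the half-space) or an appeal to the fact, implicit in Lemma \ref{lem:neg-int}, that a holomorphic sphere of area $\le\omega(\alpha)$ cannot leave the finite-type region $X_{\Sigma_0}$.
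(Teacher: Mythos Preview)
Your proposal is considerably more elaborate than the paper's argument, and the gap you yourself flag in the final paragraph is genuine and not closed by what precedes it. The paper sidesteps this entirely with a much simpler observation that you miss: since $|\Sigma|-\{0\}$ is open, the ray corresponding to the distinguished divisor $D$ (the one with $D\cdot\alpha<0$ supplied by Lemma~\ref{lem:neg-int}) lies in the interior of $|\Sigma|$, and therefore $D$ itself is \emph{compact}. Once you know this, the argument is two lines: any rational curve in class $\alpha$ has a non-constant component with negative intersection with $D$, hence contained in $D$; since the whole curve is connected, touches the compact set $D$, and has fixed symplectic area $\omega(\alpha)$, it lies in a fixed compact neighborhood of $D$.

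Your route through finite subfans, iterated star quotients, and torus-degeneration to toric configurations is unnecessary, and more seriously, the degeneration argument only controls the \emph{limit} curve, not the original one --- exactly the difficulty you identify. There is also an earlier slip: your claim that every irreducible component of a curve in class $\alpha$ lies in $\bigcup_{v\in\mathcal{V}} D_v$ is not justified. A component $C'$ has its own class $[C']$, and the divisor $D_{v'}$ with $D_{v'}\cdot[C']<0$ need not satisfy $D_{v'}\cdot\alpha\neq 0$, since other components can cancel. The paper only needs \emph{one} component to land in the single fixed $D$, which follows directly from $D\cdot\alpha<0$ and additivity over components. The compactness of that one $D$ then does all the work; you never need to enumerate or bound the other components' locations a priori.
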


\begin{proof}
By Lemma \ref{lem:neg-int}, any rational curve is contained in the toric divisors, and in particular they are contained in the neighborhood $X^o$ where K\"ahler metric is defined.
Moreover $\alpha$ has a negative intersection number with a certain toric divisor $D$, and hence any rational curve in $\alpha$ has a non-constant sphere component contained in $D$.  Since $|\Sigma| - \{0\}$ is open, the primitive generator corresponding to $D$ lies in the interior and so $D$ is compact.  Since the symplectic area of any rational curve in $\alpha$ is a fixed constant, it is contained a certain fixed compact set containing $D$.
\end{proof}

\begin{Cor} \label{cor:fin}
Let $X$ be a toric manifold of infinite-type with $|\Sigma| - \{0\}$ being open.  There are only finitely many toric irreducible curves in the same class.  Thus $\C[[q_1,\ldots]]^f = \C[[q_1,\ldots]]$.
\end{Cor}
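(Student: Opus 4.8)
The plan is to bootstrap off Proposition \ref{prop:cpt}, which already provides a fixed compact set $S \subset X$ containing every rational curve in a given class $\alpha$. The statement to be proved is that only finitely many \emph{toric} irreducible rational curves lie in the class $\alpha$, and hence that the subring $\C[[q_1,\ldots]]^f$ of series with finitely many terms per equivalence class coincides with the full ring $\C[[q_1,\ldots]]$. The key point is that toric irreducible rational curves correspond to the $C_i$'s, i.e., to $(n-1)$-dimensional cones of $\Sigma$ not lying on the boundary of $|\Sigma|$, so the problem reduces to a combinatorial finiteness statement about such cones.

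First I would recall that each toric irreducible rational curve $C_i$ is the closure of the orbit corresponding to an $(n-1)$-dimensional cone $\sigma_i$, and that, by Proposition \ref{prop:cpt}, $C_i \subset S$. Since $S$ is compact and the $C_i$ meet only finitely many of the toric charts intersecting $S$, it suffices to bound the cones $\sigma_i$: the curve $C_i$ passes through the torus-fixed points corresponding to the maximal cones containing $\sigma_i$, and these fixed points must lie in the compact set $S$. Concretely, under the moment map $X^o \to P$ the curve $C_i$ maps to a compact edge of $P$ (an edge joining two vertices), and the vertices correspond to maximal cones, the edge to $\sigma_i$. So I would argue: the image of $S$ under the moment map is contained in a fixed compact region $R \subset M_\R$, hence $C_i$ corresponds to an edge of $P$ meeting $R$; by part (1) of Proposition \ref{prop: open covering}, only finitely many facets of $P$ meet $R$, and since an edge is an intersection of facets (in a smooth, hence simple, polyhedron each edge lies on exactly $n-1$ facets), there are only finitely many such edges, hence finitely many $C_i$ in any bounded region, hence finitely many in the class $\alpha$.

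Alternatively, and perhaps more cleanly, I would avoid the polyhedron and argue directly with cones via Lemma \ref{lem:finite}: by Lemma \ref{lem:neg-int}, any rational curve in $\alpha$ has negative intersection with some toric divisor $D_v$, and is therefore contained in $D_v$, whose primitive generator $v$ lies in the interior of $|\Sigma|$ so that $D_v$ is compact. The compact divisor $D_v$ is itself a toric variety whose fan has finitely many rays (being the star of $v$, controlled by Lemma \ref{lem:adj} or directly by compactness), so $D_v$ contains only finitely many toric curves. Since the symplectic area pairing is positive and $\alpha$ is a fixed class, $\alpha$ can only be realized inside finitely many such compact divisors $D_v$ — here I would use that the divisors meeting the compact set $S$ are finite in number, again via Proposition \ref{prop: open covering}(1) applied to the moment image of $S$. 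Combining, only finitely many toric irreducible curves represent $\alpha$.

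The main obstacle I anticipate is making precise the step ``only finitely many toric divisors meet $S$'': one must pass from the compactness of $S$ in $X$ to a genuine finiteness statement about the fan or the dual polyhedron $P$, and this requires the exhaustion condition and the control over $P$ from Proposition \ref{prop: open covering}. Once a fixed class $\alpha$ is pinned to a bounded region of $P$ (equivalently, to finitely many cones of $\Sigma$), the finiteness of toric curves is immediate since each such curve is an edge/($(n-1)$-cone) of this finite sub-complex. The final sentence $\C[[q_1,\ldots]]^f = \C[[q_1,\ldots]]$ is then a formal consequence: ``finitely many terms in each equivalence class'' is automatic when each homology class, hence each monomial equivalence class, is represented by only finitely many toric curves.
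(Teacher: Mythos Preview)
Your proposal is correct and follows essentially the same approach as the paper: invoke Proposition~\ref{prop:cpt} to trap all rational curves in the class $\alpha$ inside a compact set $S$, then observe that $S$ meets only finitely many toric irreducible curves. The paper's proof is a one-liner that leaves the latter step implicit; your two alternative arguments (via the moment-map image and Proposition~\ref{prop: open covering}(1), or via the compact divisor $D_v$ and its finite fan) simply fill in that detail, and both are valid.
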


\begin{proof}
By Proposition \ref{prop:cpt}, any rational curves in the same class are contained in a compact subset of $X$, which only contains finitely many toric irreducible curves.
\end{proof}

Given an inclusion $X \subset X'$ of toric manifolds via a toric morphism, 
then the collection of K\"ahler parameters $\{q_i\}_i$ of $X$ is a subset of the collection of K\"ahler parameters $\{q_i\}_i\cup \{q_j'\}_j$ of $X'$.  

\begin{Lem} \label{lem:rel}
A linear relation for curve classes in $X$ is also a linear relation for toric curve classes in $X'$.  Conversely, if a linear relation for $X'$ only involves toric curve classes in $X$, then it is also a linear relation for $X$.
\end{Lem}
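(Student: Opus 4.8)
The plan is to reduce both implications to a single principle: a toric curve class is determined by its intersection numbers with the toric prime divisors, and those numbers are computed locally in the maximal-cone charts, which are shared by $X$ and $X'$. Throughout I would write $\Sigma \subseteq \Sigma'$ for the inclusion of fans on the common lattice $N$ underlying the toric inclusion $X \hookrightarrow X'$, and $D_v$, $D_v'$ for the toric prime divisors of $X$, $X'$ associated to a ray $v$.

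First I would record the cone combinatorics. A toric curve $C_i$ of $X$ is $V(\tau_i)$, the closure of the torus orbit of an $(n-1)$-cone $\tau_i$ of $\Sigma$ not contained in $\partial|\Sigma|$, and then $\tau_i$ is a face of exactly two maximal cones $\sigma_i^\pm$ of $\Sigma$. Since each maximal cone of $\Sigma$ is $n$-dimensional and simplicial (recall it is generated by a basis of $N$), it cannot be a proper face of any cone of $\Sigma'$, hence remains maximal in $\Sigma'$; and since the relative interior of $\tau_i$ meets the interior of $|\Sigma| \subseteq |\Sigma'|$ and $|\Sigma'|$ is convex, $|\Sigma'|$ is a full neighborhood there, so $\tau_i$ is again a wall of $\Sigma'$ with the same two adjacent maximal cones $\sigma_i^\pm$. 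In particular $\tau_i \not\subset \partial|\Sigma'|$, so $V(\tau_i)$ is one of the toric curves of $X'$ — the curve $C_i'$ whose Kähler parameter is the one identified with that of $C_i$. Crucially, a neighborhood of $C_i$ in $X$, namely the union of the affine charts of $\sigma_i^\pm$, is literally identical to the corresponding neighborhood of $C_i'$ in $X'$.

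From this I extract two intersection-number facts. (a) For every $v \in \Sigma(1)$ one has $D_v \cdot C_i = D_v' \cdot C_i'$, since each is a sum of contributions computed in a neighborhood of $C_i = C_i'$ and those neighborhoods agree. (b) For $v \in \Sigma'(1) \setminus \Sigma(1)$ one has $D_v' \cdot C_i' = 0$, because the only maximal cones of $\Sigma'$ containing $\tau_i$ are $\sigma_i^\pm$, whose rays all lie in $\Sigma(1)$, so $D_v'$ is disjoint from $C_i'$. Now I would combine this with Proposition \ref{prop:H_2^eff} and the exact sequence $0 \to H_2(X,\C) \to H_2(X,T)_\C \to H_1(T,\C) \to 0$, where $H_2(X,T)_\C = \bigoplus_v \C\beta_v$: under it a curve class $C$ goes to $\sum_v (D_v \cdot C)\beta_v$, so a $\Z$-combination $\sum_i a_i C_i$ vanishes in $H_2(X)$ exactly when $\sum_i a_i (D_v \cdot C_i) = 0$ for all $v \in \Sigma(1)$. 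By (a) this is the same as $\sum_i a_i (D_v' \cdot C_i') = 0$ for all $v \in \Sigma(1)$, and by (b) the analogous identities for the remaining $v \in \Sigma'(1)$ hold automatically; hence it is equivalent to $\sum_i a_i C_i'$ vanishing in $H_2(X')$. This yields both directions of the lemma. (Passing between $\C$- and $\Z$-coefficients uses torsion-freeness of $H_2$ of a smooth toric manifold; alternatively the forward direction is immediate by applying $j_*$ to $j\colon X \hookrightarrow X'$.)

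The main obstacle I expect is not a single deep step but justifying the ``detected by toric divisors'' principle — routine for complete smooth toric varieties — in the non-compact, infinite-type setting: the right tool is the $H_2(X,T)_\C$ description already set up in the text, so that Poincar\'e duality and global compactness are never invoked, together with the purely local nature of the comparison in (a). Everything else is elementary fan geometry.
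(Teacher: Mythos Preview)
Your proposal is correct and follows essentially the same approach as the paper: both reduce the question to intersection numbers with toric prime divisors via the exact sequence $0 \to H_2(X,\Z) \to H_2(X,T) \to H_1(T,\Z) \to 0$, and conclude by noting that a curve class of $X$ pairs trivially with the extra divisors of $X'$. Your version is more explicit about the fan combinatorics justifying facts (a) and (b), while the paper treats these as understood and dispatches the forward direction as ``obvious'' (your $j_*$ remark); but the substance is the same.
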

\begin{proof}
The first statement is obvious.  Now suppose $\sum_{i=1}^k a_i C_i$ is a linear relation in $X'$, where $C_i$ are toric curve classes in $X$.  From the exact sequence
$$ 0 \longrightarrow H_2(X,\Z) \longrightarrow H_2(X,T) \longrightarrow H_1(T,\Z) \longrightarrow 0 $$
every class in $H_2(X,\Z)$ is determined by the set of its intersection numbers with all toric divisors of $X$.  
Now $\sum_{i=1}^k a_i C_i$ has intersection number $0$ with any toric divisor of $X'$, and in particular with any toric divisor of $X$. 
Hence the class is $0$ in $H_2(X,\Z)$.
\end{proof}

Let $I$ and $I'$ be the ideals generated by the multiplicative relations given for $X$ and $X'$ respectively (Definition \ref{def:K-mod}). 
Since $I$ is a subset of $I'$, we have a natural map $\C[[q_1,\ldots]]/I \to \C[[q_1,\ldots,q_1',\ldots]]/I'$ which gives a fibration 
$$
\mathrm{Spec} (\C[[q_1,\ldots,q_1',\ldots]]/I') \longrightarrow \mathrm{Spec} (\C[[q_1,\ldots]]/I).
$$  
By Lemma \ref{lem:rel}, a multiplicative relation in $I'$ which only involves $q_1,\ldots$ is also a relation in $I$. 
Hence we also have the map $\C[[q_1,\ldots]]^f/I \to \C[[q_1,\ldots,q_1',\ldots]]^f/I'$.

We can define a section of this fibration as follows.

\begin{Def} \label{def:restr}
Let $X \subset X'$ be toric manifolds where the inclusion is a toric morphism.
Define a linear map 
$$
\C[[q_1,\ldots,q_1',\ldots]]^f/I' \longrightarrow \C[[q_1,\ldots]]^f/I
$$
as follows. 
For a monomial in $\C[[q_1,\ldots,q_1',\ldots]]/I'$, if it is equivalent to a monomial $Q(q_1,\ldots)$ merely in $\{q_i\}$, 
then its image is defined to be $[Q] \in \C[[q_1,\ldots]]/I$; otherwise its image is $0$.  
\end{Def}

The above map is essentially the operation of setting $q_i' \mapsto 0$. 
However we need to write each term in merely $q_1,\ldots$ whenever possible before taking $q_i' \mapsto 0$. 
We denote the map by $(\cdot)|_{(q_i')=0}$.


\subsection{Toric Calabi--Yau manifolds of infinite-type} \label{sec:torCYinf}
Now we focus on a toric {\it Calabi--Yau} manifold $X=X_\Sigma$, whose anti-canonical divisor $K_X=\sum_i D_i$ is linearly equivalent to $0$.   
A toric Calabi--Yau manifold is necessarily non-compact\footnote{There exists a holomorphic function whose zero set is exactly the anti-canonical divisor $\sum_i D_i$.}.  
The setup can be made as follows. 

\begin{Def}[Toric Calabi--Yau manifolds] \label{def:torCY}
Let $N = N' \times \Z$ for a lattice $N'$ of rank $n-1$. 
Let $P \subset N_{\R}'$ be a lattice polyhedral set (which can be non-compact) containing $0 \in N'$, 
and fix a lattice triangulation of $P$, each of whose triangles is standard\footnote{
A standard triangle is isomorphic to the convex hull $\Conv(\{0,e_1,\ldots,e_{n-1}\})$ for a basis $\{e_i\}_{i=1}^{n-1}$ of $N'$ under an integral translation of $N'$.}. 
Coning over $P \times \{1\} \subset N$ produces an $n$-dimensional fan $\Sigma$. 
Then $X = X_\Sigma$ defines an $n$-dimensional toric Calabi--Yau manifold.
\end{Def}
For the purpose of defining a K\"ahler metric, from now on we always assume the exhaustion condition (Definition \ref{def:exhaustion}). 
We begin with the following simple observation.

\begin{Lem} \label{lem:neg-int-CY}
Let $X$ be a toric Calabi--Yau manifold. Then for any class $\alpha \in H_2(X,\Z)$, there exists a toric prime divisor $D$ with $D \cdot \alpha < 0$.
\end{Lem}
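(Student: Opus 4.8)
\textbf{Proof plan for Lemma \ref{lem:neg-int-CY}.}
The plan is to reduce to the already-established Lemma \ref{lem:neg-int}, whose hypothesis is that $|\Sigma|-\{0\}$ lies in an open half-space of $N_\R$. The key observation is that for a toric Calabi--Yau manifold as in Definition \ref{def:torCY}, the fan $\Sigma$ is obtained by coning over $P\times\{1\}\subset N'\times\Z=N$. Hence every primitive ray generator $v\in\Sigma(1)$ lies in the affine hyperplane $N'_\R\times\{1\}$, so under the projection $\pi:N_\R\to\R$ onto the last coordinate we have $\pi(v)=1>0$ for every $v\in\Sigma(1)$. Consequently $|\Sigma|-\{0\}$, being the union of cones spanned by such rays, is entirely contained in the open half-space $\{\pi>0\}$: any nonzero point of $|\Sigma|$ is a nonnegative combination $\sum_j t_j v_j$ with not all $t_j=0$, so $\pi(\sum_j t_j v_j)=\sum_j t_j>0$.

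With this in hand, the hypothesis of Lemma \ref{lem:neg-int} is satisfied: $X$ is a toric manifold of infinite-type (or finite-type, in which case the classical statement applies, but the Calabi--Yau hypothesis combined with $\Sigma$ living on an affine hyperplane forces non-compactness and the same argument works uniformly) with $|\Sigma|-\{0\}$ contained in an open half-space. Therefore, by Lemma \ref{lem:neg-int}, for any nonzero $\alpha\in H_2(X,\Z)$ there exists a toric prime divisor $D$ with $D\cdot\alpha<0$. For $\alpha=0$ the statement is vacuous (there is no claim to prove), so we may assume $\alpha\neq 0$ and conclude.

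I would present this as a short argument: first record the projection $\pi$ and note $\pi(v)=1$ for all $v\in\Sigma(1)$; then deduce $|\Sigma|-\{0\}\subset\{\pi>0\}$; then cite Lemma \ref{lem:neg-int}. The only point requiring a moment's care is the finite-type case, since Lemma \ref{lem:neg-int} is stated for infinite-type; but the proof of Lemma \ref{lem:neg-int} actually only uses the half-space condition (it argues that if $\alpha$ had nonnegative intersection with every toric divisor then the generators $v_j$ appearing in $\alpha=\sum a_j\beta_j$ would satisfy $\sum a_j v_j=0$, forcing them into the boundary hyperplane of the half-space, a contradiction), so the same proof applies verbatim regardless of finite- or infinite-type. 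I do not anticipate a genuine obstacle here; the essential content is the elementary remark that coning over a polyhedron placed at height $1$ automatically produces a fan inside an open half-space.
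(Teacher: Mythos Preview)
Your proposal is correct and coincides with the paper's \emph{alternative} proof, which makes exactly the same observation that $|\Sigma|-\{0\}\subset N'_\R\times\R_{>0}$ and then invokes Lemma~\ref{lem:neg-int}. The paper's \emph{primary} proof is shorter and more intrinsic: since $X$ is Calabi--Yau, the anticanonical relation $\sum_i D_i=0$ in $H^2(X,\Z)$ gives $\sum_i (D_i\cdot\alpha)=0$, and as the intersection numbers are not all zero (for $\alpha\neq 0$, using the exact sequence $0\to H_2(X)\to H_2(X,T)\to H_1(T)\to 0$), some $D_i\cdot\alpha$ must be negative. Your route has the virtue of making the geometric reason (the half-space condition) explicit and of not requiring the infinite sum $\sum_i D_i$ to be interpreted in the dual space; the paper's route has the virtue of using only the Calabi--Yau condition and nothing about the specific placement of the fan.

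One minor quibble: for $\alpha=0$ the statement is not vacuous but literally false (it would assert $0<0$); both the paper and Lemma~\ref{lem:neg-int} implicitly require $\alpha\neq 0$, and the paper's statement of Lemma~\ref{lem:neg-int-CY} should be read that way.
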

\begin{proof}
Since $X$ is Calabi--Yau, $\sum_{i} D_i = 0$ in $H^2(X,\Z)$.  Thus $\alpha \cdot \sum_{i} D_i = 0$. 
On the other hand $\alpha \cdot D_i$ are not all $0$, and hence there exists some $i$ with $\alpha \cdot D_i < 0$.

Alternatively, from definition $|\Sigma|-\{0\}$ is contained in the open half space $N'_\R \times \R_{>0}$.  Then result follows from Lemma \ref{lem:neg-int}.
\end{proof}

\begin{Rem} \label{rem:base-pt}
For later purpose we fix an identification $N' \cong \Z^{n-1}$ in such a way that for the standard basis $\{e_i\}_{i=1}^{n-1}$ of $\Z^{n-1}$, 
the cone $\R_{\geq 0}\langle (0,1),(e_1,1),\ldots,(e_{n-1},1)\rangle$ is a cone in $\Sigma$.
The choice of a splitting $N = N' \times \Z$ and an isomorphism $N' \cong \Z^{n-1}$ fixes a base point of $X$ which we use to carry out the SYZ construction. 
Namely, we take the base point to be the toric fixed point which corresponds to the maximal cone $\R_{\geq 0}(\{(0,1),(e_1,1),\ldots,(e_{n-1},1)\})$.
\end{Rem}

For a toric Calabi--Yau manifold of finite-type, a Lagrangian fibration with codimension $2$ discriminant locus was constructed by Goldstein \cite{Gol} and Gross \cite{Gro}, 
extending the construction of Harvey--Lawson \cite{HL} on $\C^3$. 
The fibration has played a crucial role in the SYZ construction for toric Calabi--Yau manifolds of finite-type \cite{CLL}. 
Using the modified toric K\"ahler metric given in Definition \ref{def:metric}, the construction can be extended naturally to a toric Calabi--Yau manifold of infinite-type. 

Let $X$ be a toric Calabi--Yau manifold and 
$X^o$ a toric neighborhood of the anti-canonical divisor $\sum_i D_i$, over which a toric K\"ahler metric and the corresponding moment map $\mu: X^o \to M'_\R \times \R$ are defined. 
Let $\mu': X^o \to M'_\R$ be the first component of $\mu$, where $M'$ is the dual lattice of $N'$.   
Let $w$ be the toric holomorphic function on $X$ corresponding to the lattice point $(0,1) \in M' \times \Z$. 

\begin{Def}[Lagrangian fibration] \label{def:Lag-fib}
We define $\pi: X^o \to M'_\R \times \R$ to be $\pi = (\mu', |w - \delta|)$, 
where $\delta$ is taken sufficiently close to $0$ so that $\{p \in X: |w(p) - \delta| < 2 \delta\} \subset X^o$. 
By shrinking $X^o$, we can assume
$$
X^o = \{p \in X: |w(p) - \delta| < 2 \delta\}.
$$ 
Then $\pi$ defines a Lagrangian fibrations of $X^o$ over $B = M'_\R \times [0, 2\delta)$. 
\end{Def}

The proof of the following proposition is almost identical to the finite-type case and is omitted. 

\begin{Prop}[{Discriminant loci \cite{Gro},\cite[Prop. 4.9]{CLL}}] \label{prop:disc}
The discriminant locus of $\pi: \overline{X^o} \to \overline{B}$ consists of $3$ components, 
namely the boundary 
$$
\partial \overline{B} = M'_\R \times \{0\} \sqcup M'_\R \times \{2\delta\}, 
$$
and $\Gamma \times \{\delta\}$, 
where $\Gamma \subset M'_\R$ is the image of the union of codimension $2$ toric strata under $\mu': X^o \to M'_\R$.

Over each point $p \in \partial \overline{B}$, the fiber of $\pi$ is an $(n-1)$-dimensional torus which can be identified with $N'_\R / N'$. 
The fiber over $p \in \Gamma \times \{\delta\}$ is the total space of a torus fibration over $\U(1)$, 
whose fiber is $T^{n-1}$ at $e^{i \theta} \not= 1$, and is $T^k$ ($k \leq n-2$) at $1 \in \U(1)$ when $p$ corresponds to a point in a $k$-dimensional toric stratum but not in any $(k-1)$-dimensional stratum.
\end{Prop}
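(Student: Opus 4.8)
\emph{Proof strategy.}  The plan is to deduce the statement from the finite-type case, where the fibration is described by Gross \cite{Gro} and Chan--Lau--Leung \cite[Prop.~4.9]{CLL}, by observing that the location of the discriminant and the topology of the fibres are both local over $\overline B$, while near a fixed fibre the geometry is that of a finite-type toric Calabi--Yau thanks to the exhaustion hypothesis.  I would begin by recording the two structural facts that make $\pi=(\mu',|w-\delta|)$ a Lagrangian fibration.  The map $\mu'$ is the moment map for the Hamiltonian action of the compact torus $\mathbb{T}:=N'_\R/N'$ on $X^o$, so its components Poisson-commute; and because $(0,1)\in M'\times\Z$ annihilates $N'\times\{0\}$, the function $w=z^{(0,1)}$, hence $|w-\delta|$, is $\mathbb{T}$-invariant and Poisson-commutes with $\mu'$.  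Thus over a regular value of $\pi$ the fibre is a Lagrangian $n$-torus (Arnold--Liouville, given properness below), and in general the $\mathbb{T}$-action presents $\pi^{-1}(p,r)$ as a bundle of $\mathbb{T}$-orbits over the locus $\{|\bar w-\delta|=r\}$ in the reduced surface $\mu'^{-1}(p)/\mathbb{T}$, where $\bar w$ is the holomorphic function to which $w$ descends.

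The main work is the localisation.  Fix $\bar b\in\overline B$.  Using the Calabi--Yau relation $\sum_i D_i=0$ (Lemma \ref{lem:neg-int-CY}) and the exhaustion hypothesis (Definition \ref{def:exhaustion}), I would show that $\pi^{-1}(\bar b)$ together with a $\mathbb{T}$-invariant neighbourhood of it lie in a compact subset $S\subset\overline{X^o}$ meeting only finitely many divisors $D_v$; by Lemma \ref{lem:finite} these are the toric divisors of a single finite subfan $\Sigma'=\Sigma_i$ from the exhaustion.  I would then check that on this neighbourhood the data $(\omega,\mu',w)$ coincide with those of the finite-type toric Calabi--Yau $X_{\Sigma'}$ carrying a toric K\"ahler metric: only finitely many cut-offs $\rho_v$ are nonconstant near $S$, and the modified potential $\widetilde G$ of Definition \ref{def:metric} has the same boundary asymptotics along each active stratum $P\cap\{l_v=0\}$ as Guillemin's potential, so the moment-map image and the holomorphic function are the finite-type ones in this chart.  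Hence the germ of $\pi$ at $\pi^{-1}(\bar b)$ is that of the finite-type Gross fibration, and the computations of \cite{Gro},\cite[Prop.~4.9]{CLL} apply verbatim.  Running this at every $\bar b$ identifies the discriminant with $\partial\overline B\sqcup(\Gamma\times\{\delta\})$: over $r=0$ one has $w=\delta\neq0$, so the fibre avoids $\bigcup_v D_v$, the $\mathbb{T}$-action there is free, the circle $\arg(w-\delta)$ collapses, and the fibre is one free orbit $\cong N'_\R/N'$; over $\Gamma\times\{\delta\}$ the circle $\{|\bar w-\delta|=\delta\}$ passes through the zero locus of $\bar w$, over which the $\mathbb{T}$-orbit is $T^{k}$ when the corresponding point of $\Gamma$ is the $\mu'$-image of a $k$-dimensional toric stratum and of no lower one, and is $T^{n-1}$ elsewhere on the circle; finally $M'_\R\times\{2\delta\}$ is incorporated through the partial compactification $\overline{X^o}$, over which the fibre is again $N'_\R/N'$ by the reflected analogue of the $r=0$ analysis.

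I expect the main obstacle to be the ingredient that is vacuous in finite type: showing that $\pi:\overline{X^o}\to\overline B$ is proper, equivalently that its fibres are compact, although $X$ is noncompact and of infinite type.  This is exactly where the Calabi--Yau condition and the exhaustion hypothesis enter essentially (cf.\ Lemma \ref{lem:neg-int-CY}, Proposition \ref{prop:cpt}, and the local finiteness of the faces of the dual polyhedral set from Proposition \ref{prop: open covering}): one must prevent a sequence of points in a single fibre from escaping to infinity through the infinitely many toric charts, and the same estimate shows that $\Gamma$ is a locally finite polyhedral subset of $M'_\R$, which is what makes the patching above legitimate.  A secondary technical point, absent from \cite{CLL}, is that the metric of Definition \ref{def:metric} is not given by a single closed formula; one must therefore verify that the moment map $\mu'$ it produces still has the expected image and that $\Gamma$ is still precisely the $\mu'$-projection of the codimension-$\ge 2$ toric strata, both of which follow from the boundary-asymptotics comparison used above.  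Once properness and that comparison are in place, the remaining verifications are routine and identical to the finite-type case.
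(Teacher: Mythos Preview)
The paper does not give a proof of this proposition: it simply remarks that ``the proof of the following proposition is almost identical to the finite-type case and is omitted.''  Your strategy of localising over $\overline{B}$ and invoking the exhaustion to reduce to a finite-type toric Calabi--Yau neighbourhood is precisely the mechanism implicit in that remark, and your outline is correct.

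Your proposal is in fact more careful than the paper, which treats the passage from finite to infinite type as routine.  The two technical points you flag---properness of $\pi$ over $\overline{B}$ and the verification that the modified potential $\widetilde{G}$ of Definition~\ref{def:metric} yields a moment map whose boundary strata match those of the Guillemin metric---are genuine issues that the paper does not address explicitly; your sketch of how to handle them (local finiteness of the faces from Proposition~\ref{prop: open covering}, and comparing boundary asymptotics of $\widetilde{G}$ with the standard potential near each facet) is the right way to close these gaps.  One minor comment: your invocation of Lemma~\ref{lem:neg-int-CY} and Proposition~\ref{prop:cpt} for properness is slightly indirect, since those concern rational curves rather than Lagrangian fibres; the more direct route is the one you also mention, namely that over a fixed $(p,r)\in\overline{B}$ the level set $\mu'^{-1}(p)$ meets only finitely many toric charts by the local finiteness in Proposition~\ref{prop: open covering}, and within those charts the fibre is manifestly compact as in the finite-type case.
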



\subsection{GKZ system for toric manifolds of infinite-type}

Mirror symmetry for toric manifolds has been extensively studied, and the mirror theorem was proved for semi-projective toric stacks \cite{CCIT}. 
In this section, we shall show many of the results naturally extend to the toric manifolds of infinite-type. 

It will be conceptually clearer to introduce another set of variables $\{y_i\}$, known as the complex parameters of the mirror, 
which is in one-to-one correspondence with the set of K\"ahler parameters $\{q_i\}$.   
The mirror complex moduli is defined as $\C[[y_1,\ldots]]^f/I$ by replacing $q_i$ by $y_i$ in Definition \ref{def:K-mod}. 
However, the identification between the mirror complex moduli and the K\"ahler moduli is given by the highly-nontrivial mirror map explained below. 

Recall that $\{D_i\}$ denotes the set of toric prime divisors.  
By the map $H^2(X,T) \to H^2(X,\Z)$, a toric divisor $D$ can be identified with an element in $H^2(X,\Z)$, 
which can then be regarded as a first-order differential operator $\widehat{D} $ acting on $\C[[y_1,\ldots]]^f/I$, namely 
$$
\widehat{D}  \cdot y_i = (D \cdot C_i) \, y_i.
$$
The operator $\widehat{D} $ is explicitly expressed in terms of a basis as follows. 
First, $D$ (set theoretically) intersects finitely many irreducible toric curves, say $C_j$ for $j \in J$.   
Consider the image of $\Span \{C_j\}_{j \in J} \subset H_2(X,\Q)$ and choose a basis $\{\alpha_1,\ldots,\alpha_k\}$. 
Let $y^{\alpha_1},\ldots,y^{\alpha_k}$ be the corresponding K\"ahler parameters.  
Then
$$
\widehat{D}  = \sum_{l=1}^k (D \cdot \alpha_l) \, y^{\alpha_l} \frac{\partial}{\partial y^{\alpha_l}}.
$$
The following GKZ system was introduced by Gelfand, Kapranov and Zelevinskii \cite{GKZ1,GKZ2}.  
It plays a crucial role in the study of toric mirror symmetry as Gromov--Witten invariants can be extracted from their solutions.

\begin{Def}[GKZ system] \label{def:GKZ}
For each $d \in H_2(X,\Z)$, we define the differential operator
\begin{equation}
\Box_d := \prod_{i: (D_i, d) > 0} \prod_{k=0}^{(D_i,d)-1} (\widehat{D} _i - kz) - y^d \prod_{i: (D_i, d) < 0} \prod_{k=0}^{-(D_i,d)-1} (\widehat{D} _i - kz).
\end{equation}
The GKZ system is the system of differential equations $\Box_d \cdot h = 0$ for all $d \in H_2(X,\Z)$, where $h \in \C[[y_1,\ldots]]^f / I$.

Let $\{\alpha_l\}_{l \in \Z_{>0}}$ be a basis of $H_2(X,\Q)$ and $y^{\alpha_l}$ the corresponding K\"ahler parameters. 
The GKZ module over $H^2(X,\C)/H^2(X,\Z) \times \C_z$ is define to be 
$$\C[z,y^{\pm \alpha_1},\ldots] \left\langle z \frac{\partial}{\partial \log y^{\alpha_l}}: l \in \Z_{>0} \right\rangle \Big/ \langle  \Box_d: d \in H_2(X,\Z) \rangle.$$
\end{Def}

Here the variable $z$ is not particularly important for the purpose of this article; its power records the degree of a differential operator. 
Solutions to the GKZ system are given by the coefficients of the celebrated $I$-function (c.f. \cite{Iri}). 

\begin{Def}[$I$-function]
Let $\{\alpha_l\}_{l \in \Z_{>0}}$ be a basis of $H_2(X,\Z)$ and $\{T_l\}_{l \in \Z_{>0}}$ its dual basis.
We define the following formal $H^{\mathrm{even}}(X)$-valued series
\begin{align}
\bI(z;y) &:= e^{z^{-1} \sum_{l=1}^\infty T_l \log y^{\alpha_l}} \, \bI_{\mathrm{main}}(z;y)  \notag \\
&:= e^{z^{-1} \sum_{l=1}^\infty T_l \log y^{\alpha_l}} \sum_{d \in H_2^{\mathrm{eff}}(X,\Z)} y^d \prod_{i} \frac{\prod_{m=-\infty}^0 (D_i + mz)}{\prod_{m=-\infty}^{d \cdot D_i} (D_i + mz)}.\notag
\end{align}
It is understood that the above is written in terms of the cup product of cohomology classes and series expansions of the exponential and log functions.
\end{Def}


\begin{Lem}
The coefficient of each component of
$\bI_{\mathrm{main}}$ belongs to $\C[[y_1,\ldots]]^f/I$.
\end{Lem}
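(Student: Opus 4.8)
The claim is that each coefficient of a component of $\bI_{\mathrm{main}}(z;y)$, expanded as a power series in the K\"ahler parameters $y^{\alpha_l}$, lies in the ring $\C[[y_1,\ldots]]^f/I$ — that is, that in each equivalence class of monomials modulo the multiplicative relations $I$, only finitely many terms appear. The plan is to trace carefully which effective classes $d \in H_2^{\mathrm{eff}}(X,\Z)$ contribute a given monomial and to argue that, once one works modulo $I$, only finitely many of them do.

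The key steps, in order, are as follows. First, I would recall from the definition that $\bI_{\mathrm{main}}(z;y) = \sum_{d \in H_2^{\mathrm{eff}}(X,\Z)} y^d\, C_d(z)$, where $C_d(z) \in H^{\mathrm{even}}(X)\otimes \C(z)$ (actually a Laurent-type expression in $z$) is the product $\prod_i \frac{\prod_{m=-\infty}^0 (D_i + mz)}{\prod_{m=-\infty}^{d\cdot D_i}(D_i + mz)}$; the crucial observation is that $C_d$ is nonzero only when $d \cdot D_i \geq 0$ for every toric divisor $D_i$ with $C_d$ having no pole, but more importantly that the product is finite because for all but finitely many $i$ one has $d \cdot D_i = 0$ and the factor is $1$. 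So each $y^d C_d(z)$ is a well-defined element of $H^{\mathrm{even}}(X)$ tensored with the appropriate coefficient ring. Second, I would fix a monomial $Q$ in the $y_i$ and consider all $d$ with $y^d \sim Q$ modulo $I$ that contribute a nonzero $C_d$. By Lemma \ref{lem:neg-int-CY} (or Lemma \ref{lem:neg-int}), for the class $d$ to be effective and represented by actual curves — equivalently for $C_d \neq 0$ in the relevant range — one needs control on $d$; and by Proposition \ref{prop:cpt} / Corollary \ref{cor:fin}, when $|\Sigma|-\{0\}$ is open there are only finitely many toric rational curves, hence only finitely many effective toric classes, in each homology class, which already forces the finiteness-per-class condition in that case. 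Third, for the general case (without the openness hypothesis) I would use the definition of $\C[[q_1,\ldots]]^f$ directly: the relations $I$ are exactly the linear relations $R$ among the toric curves $C_i$, so two exponent vectors $d, d'$ give equivalent monomials iff $d - d' \in R$; I must show that within a fixed coset $d_0 + R$ only finitely many classes $d$ satisfy $C_d \neq 0$ in a fixed $z$-degree. This follows because $C_d$ in a fixed power of $z$ is a polynomial in the cohomology classes $D_i$ of bounded degree $\le \dim X$, so only $d$ with $\sum_i \max(0, d\cdot D_i) \le \dim X$ (roughly) contribute; combined with the fact that such $d$ form a bounded set in each coset because the pairing with the finitely many relevant $D_i$ pins them down (using the exact sequence $0 \to H_2(X,\Z) \to H_2(X,T) \to H_1(T,\Z) \to 0$ as in Lemma \ref{lem:rel}, a class is determined by its intersection numbers with the $D_i$).

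The main obstacle I anticipate is the last point: showing that inside a single $I$-equivalence class only finitely many effective $d$ contribute to a fixed coefficient of a fixed component of $\bI_{\mathrm{main}}$. One has to rule out the possibility of an infinite family of distinct effective classes $d_k$, all equivalent modulo $I$, each producing a nonzero cohomology-valued coefficient in the same multidegree in $z$. The resolution should combine two facts: (i) the coefficient $C_d$, in any fixed power of $z$, is built from a \emph{bounded-degree} cup product of the $D_i$'s, so it vanishes unless $\sum_i (d\cdot D_i)_+$ is bounded by $n = \dim X$; and (ii) within a coset of $R$, boundedness of all the intersection numbers $d \cdot D_i$ (which follows from (i) together with Calabi--Yau-ness, $\sum_i D_i = 0$, forcing $\sum_i (d\cdot D_i)_- = \sum_i (d\cdot D_i)_+$ to be bounded too) pins $d$ down to finitely many values, since a homology class is determined by its intersection numbers with the toric divisors. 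Making step (ii) fully rigorous in the infinite-type setting — where there are infinitely many $D_i$ and one must argue that only finitely many are ``relevant'' — is where the exhaustion condition and Lemma \ref{lem:finite} will need to be invoked, and I expect that to be the technical heart of the argument.
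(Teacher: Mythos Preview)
Your first step is correct and is in fact the entire argument the paper gives: for each fixed $d \in H_2^{\mathrm{eff}}(X,\Z)$, one has $d \cdot D_i = 0$ for all but finitely many $i$, so the product $\prod_i \frac{\prod_{m \le 0}(D_i+mz)}{\prod_{m \le d\cdot D_i}(D_i+mz)}$ is a finite product and its expansion in cohomology and $z$ has only finitely many terms. That already finishes the lemma.

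The ``main obstacle'' you describe is a non-issue, and the elaborate machinery you propose (bounding $\sum_i (d\cdot D_i)_+$, invoking the exhaustion condition, etc.) is unnecessary. The point you are missing is that \emph{distinct classes $d \in H_2(X,\Z)$ give distinct equivalence classes of monomials modulo $I$}. By construction (Definition~\ref{def:K-mod}), two monomials $\prod y_i^{a_i}$ and $\prod y_i^{b_i}$ are equivalent precisely when $\sum a_i C_i = \sum b_i C_i$ in $H_2(X,\Z)$; so the equivalence classes of monomials are in bijection with elements of $H_2(X,\Z)$. Since the sum defining $\bI_{\mathrm{main}}$ is indexed by $d \in H_2^{\mathrm{eff}}(X,\Z) \subset H_2(X,\Z)$, each equivalence class receives \emph{at most one} term $y^d C_d(z)$. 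There is no infinite family of distinct effective $d_k$ all landing in the same $I$-class, because equality of $I$-classes is equality in $H_2(X,\Z)$. Once you know each $C_d$ is a well-defined finite expression (your first step), membership in $\C[[y_1,\ldots]]^f/I$ is immediate.
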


\begin{proof}
For each $d\in H_2^{\mathrm{eff}}$, note that $\frac{\prod_{m=-\infty}^0 (D_i + mz)}{\prod_{m=-\infty}^{d \cdot D_i} (D_i + mz)} = 1$ if $d \cdot D_i = 0$, which is the case for all but finitely many $D_i$.  Thus $\prod_{i} \frac{\prod_{m=-\infty}^0 (D_i + mz)}{\prod_{m=-\infty}^{d \cdot D_i} (D_i + mz)}$ is indeed a finite product, which has only finitely many terms in its expansions. 
Hence $y^d$ only appears only finitely many times in the coefficient of each cohomology class in $\bI$.
\end{proof}

The proof of the following proposition is almost identical to that for the finite-type (see for instance \cite[Lemma 4.6]{Iri}).

\begin{Prop}
We have $\Box_d \cdot \bI(z;y) = 0 $ for all $d \in H_2(X,\Z)$.  
\end{Prop}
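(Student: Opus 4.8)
The plan is to verify $\Box_d \cdot \bI(z;y) = 0$ by direct computation, following the well-known finite-type argument (e.g.\ \cite[Lemma 4.6]{Iri}), and checking only that each manipulation remains valid in the infinite-type setting. Since $\Box_d$ acts through the operators $\widehat{D}_i$, and each $\widehat{D}_i$ involves only finitely many K\"ahler parameters (because $D_i$ meets only finitely many toric curves), the action of $\Box_d$ on any single monomial $y^{d'}$ in $\bI_{\mathrm{main}}$ is a finite-type computation. By the lemma just proved, each cohomology-component of $\bI_{\mathrm{main}}$ lies in $\C[[y_1,\ldots]]^f/I$, so it suffices to check $\Box_d \cdot \bI = 0$ coefficient-wise; convergence is not an issue once we work term-by-term.

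First I would record how $\widehat{D}_i$ acts on the relevant factors. Conjugating by the prefactor $e^{z^{-1}\sum_l T_l \log y^{\alpha_l}}$, one has $\widehat{D}_i \, \bigl(e^{z^{-1}\sum_l T_l \log y^{\alpha_l}} f\bigr) = e^{z^{-1}\sum_l T_l \log y^{\alpha_l}}(D_i + z\, y\partial_y) f$ where $y\partial_y$ denotes the grading operator counting the exponent of $y^{d'}$ paired with $D_i$. Hence on the coefficient of $y^{d'}$ in $\bI_{\mathrm{main}}$, the operator $\widehat{D}_i$ acts as multiplication by $D_i + (d' \cdot D_i)\, z$ (as a cup-product operator on cohomology), shifted appropriately. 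Writing $c_d(d') := \prod_i \dfrac{\prod_{m=-\infty}^0 (D_i + mz)}{\prod_{m=-\infty}^{d'\cdot D_i}(D_i+mz)}$ for the coefficient of $y^{d'}$, the key identity is the factorial-type recursion relating $c_d(d')$ and $c_d(d'-d)$: for $i$ with $(D_i,d)>0$ the numerator of $\Box_d$ contributes $\prod_{k=0}^{(D_i,d)-1}(D_i + ((d'-d)\cdot D_i + k)z)$, which exactly matches the ratio of the product $\prod_{m=-\infty}^{(d'-d)\cdot D_i}$ to $\prod_{m=-\infty}^{d'\cdot D_i}$, and symmetrically for $(D_i,d)<0$. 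Combining these shows that applying $\Box_d$ to $\bI_{\mathrm{main}}$ sends the $y^{d'}$-term to the $y^{d'}$-term minus a contribution coming from the $y^{d'-d}$-term, and these cancel after reindexing the sum over $H_2^{\mathrm{eff}}(X,\Z)$.

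The one point needing care is the reindexing $d' \mapsto d' - d$ in the sum $\sum_{d' \in H_2^{\mathrm{eff}}}$: in the finite-type case one uses that only finitely many $d'$ contribute to a fixed cohomology coefficient, and that $y^{d'} y^d = y^{d'+d}$ with the Mori-cone structure ensuring the shifted index still ranges over effective classes (the boundary terms where $d' - d$ fails to be effective vanish because the corresponding product in $\Box_d$ acquires a zero factor). I expect this bookkeeping — confirming that the ``$=0$'' terms at the boundary of the effective cone still vanish, and that the formal sum over the infinite index set $H_2^{\mathrm{eff}}(X,\Z)$ can be reorganized within $\C[[y_1,\ldots]]^f/I$ — to be the main obstacle; it relies on Proposition \ref{prop:H_2^eff} (the effective cone is generated by toric curves), on the finiteness built into the superscript $f$, and on Lemma \ref{lem:neg-int-CY} guaranteeing every class has negative intersection with some divisor, so that the grading is well-behaved. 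Everything else is a routine transcription of the finite-type computation, which is why the proposition is stated with the remark that the proof is ``almost identical.''
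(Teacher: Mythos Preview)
Your proposal is correct and follows exactly the approach the paper indicates: the paper gives no detailed proof, simply noting that the argument is ``almost identical to that for the finite-type (see for instance \cite[Lemma 4.6]{Iri}),'' and your outline is precisely a careful transcription of that finite-type computation with the infinite-type bookkeeping made explicit. Your additional checks on term-by-term action, the reindexing over $H_2^{\mathrm{eff}}$, and the vanishing of boundary terms are all reasonable and more than the paper itself supplies.
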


\begin{Def}[Mirror map]
The mirror map is defined as the coefficient of $1/z$ of the $I$-function $\bI(z;y)$.
\end{Def}

The following is obtained by direct computation.
\begin{Prop} \label{prop:mir-map}
The mirror map is $H^2(X)$-valued.  The coefficient of $T_l \in H^2(X,\Z)$ is defined to be $\log q^{\alpha_l}$, which equals to
$$ \log y^{\alpha_l} - \sum_{i}(D_i \cdot \alpha_l) g_i(y) $$
where
$$ g_i(y) := \sum_{d} \frac{(-1)^{(D_i \cdot d)} (-(D_i \cdot d)-1)!}{\prod_{p\not=i}(D_p \cdot d)!} y^d, $$
the summation is taken over all $d \in H_2^{\mathrm{eff}}(X,\Z)$ satisfying the condition that 
$$ -K_X \cdot d = 0, D_i \cdot d < 0 \textrm{ and } D_p \cdot d \geq 0 \textrm{ for all } p \not= i.$$
We denote the mirror map by $q(y) = (q^{\alpha_l})_{l=1}^\infty$ whose entries are given by the above expression.
\end{Prop}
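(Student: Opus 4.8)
The plan is to extract the coefficient of $z^{-1}$ from the $I$-function $\bI(z;y)$ by expanding everything as a cohomology-valued formal Laurent series in $z$, exactly as in the finite-type computation (cf.\ \cite{Iri}), and then to check that the infinite-type bookkeeping causes no trouble. Write $\bI = e^{z^{-1}\sum_l T_l\log y^{\alpha_l}}\cdot\bI_{\mathrm{main}}$ with $\bI_{\mathrm{main}} = \sum_{d\in H_2^{\mathrm{eff}}(X,\Z)} y^d\, E_d(z)$, where $E_d(z) = \prod_i \frac{\prod_{m=-\infty}^0(D_i+mz)}{\prod_{m=-\infty}^{D_i\cdot d}(D_i+mz)}$ and $E_0 = 1$. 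Since the prefactor expands as $1 + z^{-1}\sum_l T_l\log y^{\alpha_l} + O(z^{-2})$, the $z^{-1}$-coefficient of $\bI$ equals $\sum_l T_l\log y^{\alpha_l}$ (the prefactor's $z^{-1}$-term times the $z^0$-part of $\bI_{\mathrm{main}}$, once we know the latter is just $1$) plus the $z^{-1}$-coefficient of $\bI_{\mathrm{main}}$ itself. The heart of the argument is therefore a term-by-term analysis of $E_d(z)$.

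For fixed $d$ only finitely many $D_i$ satisfy $D_i\cdot d\neq 0$, so $E_d(z)$ is a genuine finite product: each factor is $1$ when $D_i\cdot d = 0$; the polynomial $\prod_{k=0}^{-D_i\cdot d-1}(D_i-kz)$ of $z$-degree $-D_i\cdot d-1$ with top coefficient $(-1)^{-D_i\cdot d-1}(-D_i\cdot d-1)!\,D_i$ when $D_i\cdot d<0$; and the power series $z^{-D_i\cdot d}/(D_i\cdot d)! + O(z^{-D_i\cdot d-1})$ in $z^{-1}$ when $D_i\cdot d>0$. Setting $S^-(d):=\{i:D_i\cdot d<0\}$ and using the Calabi--Yau relation $\sum_i D_i\cdot d=0$, one gets that the highest $z$-power in $E_d(z)$ is $z^{-|S^-(d)|}$, with coefficient of cohomological degree exactly $2|S^-(d)|$ (a scalar multiple of $\prod_{i\in S^-(d)}D_i$). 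By Lemma \ref{lem:neg-int-CY}, $|S^-(d)|\geq 1$ for every $d\neq 0$, so $E_d$ has no $z$-power $\geq 0$; in particular $\bI_{\mathrm{main}}$ has trivial $z^{\geq 1}$-part and its $z^0$-part is $1$, and only those $d$ with $|S^-(d)|=1$ can contribute to the $z^{-1}$-coefficient of $\bI_{\mathrm{main}}$, each contribution lying in $H^2(X)$. Together with the $H^2$-valued prefactor term $\sum_l T_l\log y^{\alpha_l}$, this proves the mirror map is $H^2(X)$-valued.

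It remains to compute. For $d$ with $S^-(d)=\{i_0\}$ the $z^{-1}$-coefficient of $y^d E_d(z)$ is $y^d$ times
\[
(-1)^{-D_{i_0}\cdot d-1}(-D_{i_0}\cdot d-1)!\;\frac{D_{i_0}}{\prod_{p\neq i_0}(D_p\cdot d)!},
\]
the denominator collecting the leading coefficients of the $D_p\cdot d>0$ factors (with $0!=1$ for $p$ with $D_p\cdot d=0$). Reorganizing the sum over such $d$ as $\sum_i$ over $i$ followed by the sum over $d\in H_2^{\mathrm{eff}}(X,\Z)$ with $D_i\cdot d<0$, $D_p\cdot d\geq 0$ for $p\neq i$, and $-K_X\cdot d=0$ — which is precisely the index set defining $g_i(y)$ — and expanding $D_{i_0}=\sum_l(D_{i_0}\cdot\alpha_l)T_l$ (a well-defined, possibly infinite, combination of the dual basis since $H^2(X,\Z)=\prod_l\Z T_l$), the $z^{-1}$-coefficient of $\bI$ becomes $\sum_l T_l\left(\log y^{\alpha_l} + \sum_i(D_i\cdot\alpha_l)\sum_d \frac{(-1)^{-D_i\cdot d-1}(-D_i\cdot d-1)!}{\prod_{p\neq i}(D_p\cdot d)!}\,y^d\right)$. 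Using $(-1)^{-n-1}=-(-1)^n$ to rewrite the sign, the inner sum is exactly $-g_i(y)$, and reading off the coefficient of $T_l$ gives $\log y^{\alpha_l}-\sum_i(D_i\cdot\alpha_l)g_i(y)=\log q^{\alpha_l}$, as claimed. The relevant series lie in $\C[[q_1,\ldots]]^f/I$ (with $q_i$ replaced by $y_i$): this was already shown for the coefficients of $\bI_{\mathrm{main}}$, and $g_i(y)$ is a sub-collection of those terms, while for fixed $\alpha_l$ only finitely many $D_i$ meet $\alpha_l$ (via $0\to H_2(X)\to H_2(X,T)=\bigoplus_i\Z\beta_i\to H_1(T)\to 0$), so the coefficient of each $T_l$ is a finite sum of such series.

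The only real obstacle beyond the classical finite-type computation is ensuring these manipulations are legitimate in the infinite-type setting — the $z$-expansion of $E_d(z)$, the cohomological-degree count forcing $H^2$-valuedness, the reindexing of the double sum, and the passage from $H^2(X,T)$ to $H^2(X)$. Each is resolved by the observation that everything is \emph{locally finite} (for each fixed $d$ only finitely many divisors intervene, and each fixed curve class meets only finitely many divisors), together with the convergence lemma for $\bI_{\mathrm{main}}$ and Lemma \ref{lem:neg-int-CY}; the genuinely non-routine input is the interplay of the Calabi--Yau relation with the $z$-degree count in the second paragraph.
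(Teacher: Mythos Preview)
Your proposal is correct and is exactly the ``direct computation'' the paper omits. The paper offers no proof beyond that phrase, and your expansion of $E_d(z)$, the $z$-degree count via the Calabi--Yau relation $\sum_i D_i\cdot d=0$ together with Lemma~\ref{lem:neg-int-CY}, and the local-finiteness bookkeeping for the infinite-type setting constitute precisely the argument one must supply; note only that you are (appropriately) using the toric Calabi--Yau hypothesis from Section~2.2, which the statement of the proposition leaves implicit but which the paper's context and later use (Theorem~\ref{thm:open-mir-thm}) make clear.
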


For the sake of completeness we quote the toric mirror theorem for the compact toric orbifolds.  
We shall not directly use this theorem in this article (although the proof of the open mirror theorem for toric Calabi--Yau orbifolds in \cite{CCLT} uses it).  

\begin{Thm}[Mirror theorem for toric orbifolds of finite-type \cite{CCIT}]
Let $X$ be a compact semi-Fano toric K\"ahler orbifold.
Fix a basis $\{\alpha_l\}$ of $H_2(X,\Z)$ and denote its dual basis by $\{T_l\} \subset H^2(X,\Z)$.  
Similarly fix a homogeneous basis $\{\phi_a\}$ of $H^*(X,\Z)$ and denote its dual basis by $\{\phi^a\} \subset H^*(X,\Z)$ with respect to the Poincar\'e duality.  Define
$$
J(z;q) := e^{z^{-1} \sum_{l=1}^\infty T_l \log q^{\alpha_l}} \left(1+\sum_a\sum_{d \in H_2^\mathrm{eff}(X,\Z)\setminus\{0\}} q^d \Big\langle1,\frac{\phi_a}{z-\psi}\Big\rangle_{0,2,d}\phi^a\right)
$$
where $\langle \cdots \rangle_{g,k,d}$ denotes the genus $g$, degree $d$ descendent Gromov--Witten invariant of $X$ with $k$ insertions. 
Then we have $J(z;q(y)) = \bI(z;y)$. 
\end{Thm}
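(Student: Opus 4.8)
The plan is to prove this by placing both $J(z;q)$ and $\bI(z;y)$ on Givental's overruled Lagrangian cone. Work in the symplectic vector space $\mathcal H_X = H^*(X)(\!(z^{-1})\!)$ (with $H^*(X)$ the Chen--Ruan cohomology, i.e. the cohomology of the inertia stack, in the orbifold case) equipped with Givental's symplectic pairing, and let $\mathcal L_X \subset \mathcal H_X$ be the cone swept out by the graph of the differential of the genus $0$ descendent potential. In the standard normalization the $J$-function has the property that $J(-z;q)$, as $q$ varies, parametrizes a germ of $\mathcal L_X$; this is Givental's characterization of $J$ via the string and dilaton equations. So it suffices to prove that $\bI(z;y)$ also parametrizes a germ of the \emph{same} cone, and then to identify the reparametrization relating the two slices with the mirror map $q(y)$ of Proposition \ref{prop:mir-map}.

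First I would show $\bI(z;y)$ lies on $\mathcal L_X$. There are two ingredients. (i) The GKZ operators $\Box_d$ annihilate $\bI$ --- this is precisely the Proposition quoted just above in the excerpt, and it says $\bI$ is a solution of the GKZ $\mathcal D$-module. (ii) For a semi-Fano toric orbifold, a solution of the GKZ system with the correct leading normalization (value $1$, together with the homogeneity built into the $e^{z^{-1}\sum_l T_l \log y^{\alpha_l}}$ prefactor) automatically parametrizes a germ of $\mathcal L_X$; concretely, after the mirror-map change of variables the GKZ $\mathcal D$-module becomes the quantum $\mathcal D$-module (Dubrovin connection), whose cyclic vector generates $\mathcal L_X$. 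The most self-contained way to establish (ii) is $T$-equivariant fixed-point localization: the algebraic torus acts on $X$, and applying the Atiyah--Bott localization formula to the virtual class on $\overline M_{0,2}(X,d)$ expresses the two-point descendent invariants in $J^T$ as sums over decorated trees; summing the tree contributions and exploiting the recursive structure of the fixed loci identifies the generating function with the equivariant hypergeometric series whose factors are $\prod_i \frac{\prod_{m\le 0}(D_i + mz)}{\prod_{m\le d\cdot D_i}(D_i + mz)}$. Taking the non-equivariant limit, which exists because $X$ is semi-Fano, yields $\bI$. In the orbifold setting one runs the same argument on the inertia stack, tracking ages so that the hypergeometric modification acquires the correct fractional exponents; this is the content of \cite{CCIT}.

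Finally I would extract the mirror map. Since $X$ is semi-Fano, degree considerations force $\bI(z;y) = e^{z^{-1}\sum_l T_l \log y^{\alpha_l}}\bigl(1 + z^{-1}\tau(y) + O(z^{-2})\bigr)$ with $\tau(y)$ valued in $H^{\le 2}(X)$; in particular no Birkhoff factorization is needed and the cone germ traced by $\bI$ differs from the one traced by $J$ only in the $z^0$ and $z^{-1}$ coefficients. The $H^2$-component of $z^{-1}\sum_l T_l \log y^{\alpha_l} + z^{-1}\tau(y)$ is exactly the mirror map $q(y)$ of Proposition \ref{prop:mir-map}. Using that $\mathcal L_X$ is a cone ruled by the subspaces $z\,T_q\mathcal L_X$, the substitution $q \mapsto q(y)$ (together with normalizing away the unit/scalar factor) sends the germ $\bI(z;y)$ to the canonical slice that defines the $J$-function, which gives $J(z;q(y)) = \bI(z;y)$.

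I expect the main obstacle to be step (ii): showing that the hypergeometric/GKZ solution genuinely lies on the Gromov--Witten Lagrangian cone. In the localization approach this is the combinatorial heart of the matter --- equating the sum over fixed-point graphs with the closed hypergeometric product --- and in the orbifold case it is genuinely delicate because of the bookkeeping of twisted sectors and the corresponding fractional shifts. A secondary subtlety is verifying that the semi-Fano hypothesis suffices both for the existence of the non-equivariant limit of $\bI^T$ and for the mirror map to act only on the $z^0$ and $z^{-1}$ parts of $\bI$, so that the comparison with $J$ requires no further Birkhoff factorization.
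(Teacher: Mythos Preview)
The paper does not give its own proof of this theorem: it is quoted from \cite{CCIT} ``for the sake of completeness'' and is explicitly not used directly. So there is nothing in the paper to compare your argument against.

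That said, your outline is essentially the strategy of \cite{CCIT}: place $\bI$ on Givental's Lagrangian cone $\mathcal{L}_X$ (in the orbifold setting, working with Chen--Ruan cohomology and the twisted sectors), and then read off the change of variables from the $z^{-1}$-asymptotics. Two small cautions. First, your step (ii) conflates two logically distinct things: that $\bI$ solves the GKZ system is immediate, but this alone does \emph{not} imply $\bI$ lies on $\mathcal{L}_X$; one must separately prove the mirror theorem (e.g.\ by localization/recursion as in Givental, or by the twisted-theory arguments of Coates--Givental as adapted in \cite{CCIT}), and only afterwards does the GKZ $\mathcal{D}$-module get identified with the quantum $\mathcal{D}$-module. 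Second, in the genuine toric-orbifold setting the semi-Fano hypothesis does not always reduce the comparison to a pure $H^{\le 2}$ mirror map; twisted-sector insertions can appear in the $z^{-1}$ term, and in general a string-equation/divisor-equation normalization (or a mild Birkhoff step) is part of the argument in \cite{CCIT}. For the manifold case your sketch is accurate as stated.
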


\subsection{Symmetries and quotients} \label{subset: free group}

In this subsection, we consider an effective free discrete group action of $G$ on a toric manifold $X=X_\Sigma$ of infinite-type by toric morphisms.  For simplicity we assume that the action has only finitely many orbits. 
We give an explicit description of the K\"ahler structures, mirror maps and Lagrangian fibrations. 

The setting is the following.  Let $\Sigma$ be a fan with an exhaustion by finite-type closed convex fans, and assume $|\Sigma|-\{0\}$ is open. 
Consider a discrete group $G < \GL(N)$ whose action on $N_\R$ preserves $\Sigma$, mapping the $k$-cones to the $k$-cones.  
We assume that the induced action on $\Sigma - \{0\}$ is free and has only finitely many orbits.

Notice that a $G$-invariant K\"ahler metric may not exist, shown by the example below.

\begin{Ex} \label{ex:no-metric}
Consider the fan $\Sigma$ consisting of the maximal cones for $(m,n) \in \Z^2$
\begin{align*}
&\langle(2m,n,1),(2m+1,n,1),(2m,n+1,1)\rangle, \\
&\langle(2m+1,n,1),(2m,n+1,1),(2m+1,n+1,1)\rangle,\\
&\langle(2m+1,n,1),(2m+1,n+1,1),(2m+2,n+1,1)\rangle, \\
&\langle(2m+1,n,1),(2m+2,n,1),(2m+2,n+1,1)\rangle
\end{align*} 
 in $N_\R = \R^3$.  It is depicted by the right figure in Figure \ref{fig:Domino2}. 
 It admits a group action by $G=\Z^2$, where the standard basis acts on $N = \Z^3$ by $e_1 \cdot (a_1,a_2,a_3) = (a_1+2,a_2,a_3)$ and $e_2 \cdot (a_1,a_2,a_3) = (a_1,a_2+1,a_3)$. 
 \begin{figure}[htbp]
 \begin{center} 
  \includegraphics[width=100mm]{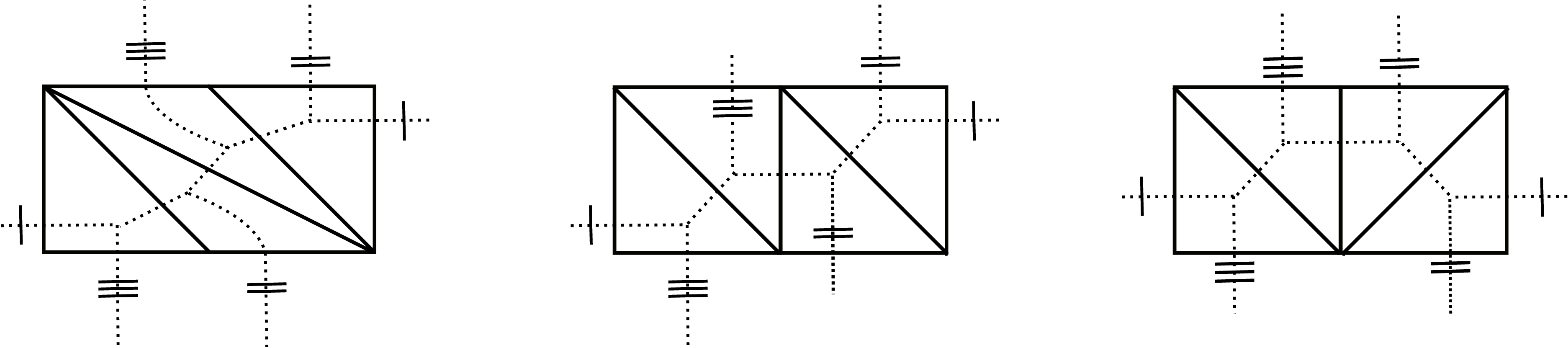}
 \end{center}
 \caption{Fundamental domain with its flops}
\label{fig:Domino2}
\end{figure}
 Then $X_\Sigma$ does not admit a $G$-invariant K\"ahler metric.  The reason is as follows.

Consider the toric invariant curves $C_1,C_2,C_3,C_4$ corresponding to the cones 
$$\langle(0,1,1),(1,0,1)\rangle,\langle(1,0,1),(1,1,1)\rangle,\langle(1,0,1),(2,1,1)\rangle,\langle(1,-1,1),(1,0,1)\rangle$$ 
respectively.  We have $[C_1]+[C_2]+[C_3]=[C_4]$ in $H_2(X_\Sigma,\Z)$.  Moreover $(-e_2) \cdot [C_2] = [C_4]$. 
Suppose there exists a $G$-invariant K\"ahler metric.  Then $C_2$ and $C_4$ have the same symplectic area since they are in the same $G$-orbit.  
This forces the symplectic area of $C_1 + C_3$ to be $0$, which contradicts to that fact that $C_1$ and $C_3$ are non-zero holomorphic curves which have positive symplectic areas.
\end{Ex}

As the above example illustrates, the $G$-action may not respect the stratification of the cone of effective classes, in the sense that the $G$-orbit closure of an interior point of the effective cone may hit the origin. 
In such a case $G$-invariant K\"ahler metric cannot exist.
As it turns out, the key obstruction to the existence is the compatibility between the $G$-action and the dual polytope of $\Sigma$. 
We consider the induced linear action of $G$ on the dual lattice $M$.  

\begin{Prop} \label{prop:G-metric}
Assume that the constants $c_v$ for $v \in \Sigma(1)$ can be chosen in such a way that the corresponding polytope 
$$
P := \bigcap_{v\in \Sigma(1)} \{ y \in M_\R \ | \ l_v(y) := (v,y) - c_v \geq 0 \}
$$
is invariant under the $G$-action on $M_\R$ up to translation, 
that is, there is an action of $G$ on $M_\R$ by affine linear transformations with the linear parts given by taking dual of the given $G$-action on $N$ under which $P$ is invariant. 
Then there exists a $G$-invariant toric K\"ahler metric on a toric neighborhood $X^o$ of the toric divisors.
\end{Prop}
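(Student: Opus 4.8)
The plan is to build the $G$-invariant K\"ahler metric by averaging the Guillemin-type potential construction of Definition \ref{def:metric} over $G$, using the hypothesis that $P$ can be chosen $G$-invariant up to translation to guarantee that the averaging procedure converges and produces the correct asymptotic behavior at the toric divisors. First I would set up the affine $G$-action on $M_\R$: by hypothesis there is a cocycle $g \mapsto t_g \in M_\R$ such that $g \cdot y = g^\vee y + t_g$ (where $g^\vee$ is the dual of the linear action on $N$) preserves $P$. Since the $G$-action on $\Sigma(1)$ permutes the rays with finitely many orbits, and $P$ is $G$-invariant, the defining functions transform as $l_{g\cdot v}(g\cdot y) = l_v(y)$; in particular the constants satisfy the compatibility $c_{g\cdot v} = (v, (g^\vee)^{-1} t_g) + c_v$ forced by $G$-invariance of $P$, which is exactly what makes the sum below well-behaved.

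Next I would choose the partition-of-unity data $\{\rho_v\}$ from Definition \ref{def:metric} to be $G$-equivariant: starting from the open covering $\{U_v\}$ of $\partial P$, which exists by Proposition \ref{prop: open covering} under the exhaustion condition, I would replace each $U_v$ by $\bigcup_{g \in \mathrm{Stab}} $-type corrections — more simply, pick one $U_v$ per $G$-orbit of rays and define $U_{g\cdot v} := g\cdot U_v$, and likewise $\rho_{g\cdot v} := \rho_v \circ g^{-1}$. Since there are finitely many orbits and the $G$-action on $\partial P$ is properly discontinuous (the orbit of any compact set meets only finitely many facets, by Proposition \ref{prop: open covering}(1) together with discreteness of $G$), the local finiteness property in Proposition \ref{prop: open covering}(3) is preserved. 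Then
$$
\widetilde{G}(y) := \frac{1}{2}\sum_{v \in \Sigma(1)} \rho_v(y)\,\big(l_v(y)\log l_v(y)\big)
$$
is manifestly $G$-invariant: under $y \mapsto g\cdot y$ the term indexed by $v$ maps to the term indexed by $g\cdot v$, using $l_{g\cdot v}(g\cdot y) = l_v(y)$ and $\rho_{g\cdot v}(g\cdot y) = \rho_v(y)$, so the sum is merely reindexed. As in Definition \ref{def:metric}, this is a finite sum near each $y \in P^\circ$, has the same singular asymptotics as Guillemin's $G$ at every boundary point, and hence descends through the Legendre transform to a toric K\"ahler potential on a toric neighborhood $X^o$ of $\bigcup_v D_v$. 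I would then shrink $X^o$ to be $G$-invariant (possible since the divisors are permuted by $G$ with finitely many orbits), so the resulting $\omega = 2i\partial\bar\partial F$ is $G$-invariant.

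The main obstacle is the convergence and asymptotics bookkeeping: I must verify that the $G$-equivariant choice of $\{\rho_v\}$ can still be made to satisfy property (3) of Proposition \ref{prop: open covering} — i.e. that equivariance does not force the supports $U_v$ to pile up — and that the Legendre transform of $\widetilde{G}$ is a genuine diffeomorphism onto $\R^n$ near the boundary with the toric metric extending smoothly across each $D_v$. Both follow from the finite-orbit hypothesis once one knows the $G$-action on the index set of facets is free with finitely many orbits and properly discontinuous on $P$; the latter is where $G$-invariance of $P$ is essential, since without it the orbit of a facet could accumulate (this is precisely the failure mode exhibited in Example \ref{ex:no-metric}, where no $G$-invariant $P$ exists). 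I would also remark that the accompanying moment map $X^o \to P$ is then automatically $G$-equivariant for the affine $G$-action on $P$, which is what is needed for the later equivariant SYZ construction.
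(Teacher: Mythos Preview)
Your proposal is correct and follows essentially the same approach as the paper: set up the affine $G$-action on $M_\R$, choose one bump function $\rho_v$ per $G$-orbit of rays and transport it by $G$ to obtain an equivariant family, then observe that the resulting potential $\widetilde{G}$ is $G$-invariant because the sum is simply reindexed under the substitution $y \mapsto g\cdot y$ (using $l_{g\cdot v}(g\cdot y)=l_v(y)$). The paper's proof differs only in minor bookkeeping: it first reduces to the case where $P$ has no translational symmetry so that the affine translation part $a_g$ is nonzero for $g\neq\mathrm{id}$, which is its route to proper discontinuity, and it explicitly invokes Lemma~\ref{lem:adj} (using that $|\Sigma|-\{0\}$ is open) to conclude each facet $H_v$ is compact, so that $U_v$ can be taken with compact closure---this is the precise step that guarantees local finiteness of the equivariant cover, the point you correctly flagged as the main obstacle.
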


\begin{proof}
Without loss of generality we may assume that $P$ has no translational symmetry, namely $P + a = P$ for some $a \in M_\R$ implies $a=0$.  We define an affine linear action of $G$ on $M_\R$ by $x * g = x \cdot g + a_g$, where $x \cdot g$ is the original linear action of $G$ on $M_\R$ and $a_g\in M_\R$ is fixed by the equality $P \cdot g = P + a_g$.  Note that $a_g \not= 0$ whenever $g \not= \mathrm{id}$, and hence the affine linear action is properly discontinuous.

In each $G$-orbit of rays of $\Sigma$ we fix a representative $\R \cdot v$. 
Since $|\Sigma|-\{0\}$ is open, each ray is in the interior, and by Lemma \ref{lem:adj} it is adjacent to finitely many rays.  
As a result the facet $H_v$ is compact.
Then we fix an open neighborhood $U_v$ of $H_v$ whose closure is compact. 
 We also fix a non-negative function $\rho_v$ which is supported in $U_v$ and equals to $1$ in a smaller neighborhood of $H_v$. 
 By the action of $G$, we obtain the corresponding open neighborhoods $U_v * g$ of $H_v * g$ and support functions $(g^{-1})^* \rho_v$.  We do this for every $G$-orbit.  
Since the affine linear $G$-action is properly discontinuous, $U_v \cap (U_v * g) \not= \emptyset$ only for finitely many $g$. 
By assumption there are only finitely many orbits, and hence for each $p \in P$, there are only finitely many open sets $U_v \ni p$.  
Then as in Definition \ref{def:metric} we define
$$
\widetilde{G}(y) = \frac{1}{2} \sum_{v} \sum_{g \in G} \rho_v(y * g) \cdot (l_v(y * g) \log l_v(y * g))
$$
on $\bigcup_{v} \bigcup_{g \in G} U_v * g$, where $v$ runs over the primitive generators of the representatives of the finitely many $G$-orbits of rays.  By definition $\widetilde{G}$ is invariant.  Moreover since $P$ is invariant under the affine linear action, its defining linear functions $l_v(y)$ are $G$-equivariant.  Since $\rho_v = 1$ in a neighborhood of $H_v$,
The above has the correct boundary behavior and defines the toric K\"ahler potential by Legendre transform.
\end{proof}

By Proposition \ref{prop:G-metric}, we observe that an open neighborhood $X^o$ of the toric divisors in the toric Calabi--Yau manifold of infinite type 
associated to the left or central figures of Figure \ref{fig:Domino2} 
admits a $G$-invariant toric K\"ahler metric (c.f. Section \ref{section: 3-fold}).  
 
Recall that the K\"ahler moduli of $X$ is $\C[[q_1,\ldots]]^f/I$ (Definition \ref{def:K-mod}), where $q_i$ is a formal K\"ahler parameter associated to each toric irreducible curve $C_i$.  
Since the K\"ahler structure is invariant under $G$, and $q_i$ measures the K\"ahler size of the curve $C_i$, the variable $q_i$ should be invariant under the $G$-action. 
This motivates the following definition of the K\"ahler moduli for the quotient $X^o / G$. 

\begin{Def}[K\"ahler moduli for the quotient] \label{def:K-mod-quot}
The K\"ahler moduli for the quotient $X^o / G$ is defined to be (the Spectrum of) the quotient ring $(\C[[q_1,\ldots]]^f/I)/G$, 
where the $G$-quotient means the quotient by the ideal generated by $g \cdot q_i - q_j$ for all $C_j \cdot g = C_i$. 
\end{Def}

The K\"ahler moduli defined above can be regarded as a formal neighborhood of a limit point of 
$$
H^2(X^o/G,\C)/H^2(X^o/G,\Z) = \mathrm{Spec}[q^{\pm \alpha_1},\ldots] / G,
$$
where $\{\alpha_i\}$ is a basis of $H_2(X^o/G,\Q)$, and $g \cdot q^{\alpha_i} = q^{\alpha_i \cdot g^{-1}}$. 
Similarly the mirror complex moduli is defined as $(\C[[y_1,\ldots]]^f/I)/G$ by replacing every formal variable $q_i$ by $y_i$. 
Recall from Corollary \ref{cor:fin} that the superscript $f$ (which stands for finitely many terms in the same class) can be dropped if we assume $|\Sigma| - \{0\}$ is open.

Recall that we have the GKZ system defined by the differential operators $\Box_d$ in Definition \ref{def:GKZ}. 
We have an action of $G$ on the ring of differential operators 
$$
\C[z,y^{\pm \alpha_1},\ldots] \left\langle z \frac{\partial}{\partial \log y^{\alpha_l}}: l \in \Z_{>0} \right\rangle
$$
given by
$$
g \cdot \frac{\partial}{\partial \log y^{\alpha_l}} = \frac{\partial}{\partial \log y^{\alpha_l \cdot g^{-1}}}.
$$

\begin{Prop}
If $h \in \C[[q_1,\ldots]]^f/I$ satisfies $\Box_d \cdot h = 0$ for all $d \in H_2(X,\Z)$, then so does $g \cdot h$ for each $g \in G$. 
Thus the GKZ module in Definition \ref{def:GKZ} is preserved under $G$.
\end{Prop}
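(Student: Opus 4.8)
The plan is to show that the action of $G$ on the GKZ operators is compatible with its action on functions, so that $\Box_d \cdot (g \cdot h)$ is, up to relabelling, $g \cdot (\Box_{g^{-1} \cdot d} \cdot h)$. First I would unwind the definitions: the $G$-action on $N$ induces an action on $H_2(X,\Z)$ (it maps toric curve classes $C_i$ to toric curve classes, since $G$ permutes the cones of $\Sigma$), hence on the index set of the $\alpha_l$ and on the dual toric divisors $D_i$. Because $G$ acts by toric morphisms, intersection numbers are preserved: $(g \cdot D) \cdot (g \cdot C) = D \cdot C$ for every toric divisor $D$ and toric curve $C$. In particular, for the differential operators $\widehat{D}_i$ defined by $\widehat{D}_i \cdot y_j = (D_i \cdot C_j)\, y_j$, the relation $g \cdot (\widehat{D}_i \cdot h) = \widehat{(g \cdot D_i)} \cdot (g \cdot h)$ holds: both sides act on a monomial $y^d$ by multiplication by the scalar $D_i \cdot d = (g \cdot D_i) \cdot (g \cdot d)$, and $g$ intertwines the variable relabelling $y^d \mapsto y^{g \cdot d}$.

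Next I would examine the operator $\Box_d$ itself. It is built as a difference of two products, each product running over toric divisors $D_i$ according to the sign of $(D_i, d)$, with factors $\widehat{D}_i - kz$, and the whole second term carries the monomial prefactor $y^d$. Applying $g$ and using the previous step, together with the fact that $g$ permutes the $D_i$ and preserves all the pairings $(D_i, d) = (g \cdot D_i, g \cdot d)$, one obtains the identity of operators
\begin{equation}
g \circ \Box_d = \Box_{g \cdot d} \circ g
\end{equation}
as operators on $\C[[y_1,\ldots]]^f/I$ (here $g$ denotes the ring automorphism $y^e \mapsto y^{g \cdot e}$, which is well-defined on the quotient since $G$ preserves the linear relations $R$ among toric curves, as noted in Definition \ref{def:K-mod-quot}). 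Then for $h$ with $\Box_d \cdot h = 0$ for all $d$, and any $g \in G$,
\begin{equation}
\Box_d \cdot (g \cdot h) = (g \circ \Box_{g^{-1} \cdot d}) \cdot h = g \cdot (\Box_{g^{-1} \cdot d} \cdot h) = 0,
\end{equation}
since $g^{-1} \cdot d$ again ranges over all of $H_2(X,\Z)$ as $d$ does. The final sentence — that the GKZ module is preserved — is then immediate: $G$ acts on the polynomial ring of differential operators by the formula $g \cdot \partial/\partial \log y^{\alpha_l} = \partial/\partial \log y^{\alpha_l \cdot g^{-1}}$ given just before the proposition, this action sends the ideal $\langle \Box_d : d \in H_2(X,\Z)\rangle$ to itself by the intertwining identity above, hence descends to the quotient module.

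The only genuinely delicate point is bookkeeping rather than substance: one must check that every infinite product and infinite sum appearing in $\widehat{D}_i$ and $\Box_d$ is actually finite when applied to a fixed element of $\C[[y_1,\ldots]]^f/I$ — but this was already established in the lemmas preceding the definition of the $I$-function (each relevant product over toric divisors is a finite product because $D_i \cdot d = 0$ for all but finitely many $i$), and these finiteness statements are $G$-equivariant since $G$ permutes the $D_i$. So I expect the main obstacle to be simply stating the intertwining identity cleanly while keeping track of the two different roles of $g$ (as a relabelling of variables and as a relabelling of divisor indices) and verifying it is well-defined modulo the ideal $I$; once that is set up, the proof is a two-line chase. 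I would therefore keep the write-up short: establish $(g \cdot D_i) \cdot (g \cdot C_j) = D_i \cdot C_j$, deduce $g \circ \widehat{D}_i = \widehat{(g\cdot D_i)} \circ g$, deduce $g \circ \Box_d = \Box_{g \cdot d} \circ g$, and conclude.
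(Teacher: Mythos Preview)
Your proposal is correct and follows essentially the same approach as the paper: both establish the intertwining identity between the $G$-action and the GKZ operators via preservation of intersection numbers, then conclude by noting that $d$ (or $g^{-1}\cdot d$) ranges over all of $H_2(X,\Z)$. The paper separates this into first showing $(g\cdot\widehat{D})(g\cdot h)=g\cdot(\widehat{D}\cdot h)$ and then computing $g\cdot\Box_d=\Box_{d\cdot g^{-1}}$ (using right-action notation), whereas you package the same computation directly as $g\circ\Box_d=\Box_{g\cdot d}\circ g$; the content is identical.
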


\begin{proof}
The $G$-action on the differential operator $\widehat{D} $ is given by $g \cdot \widehat{D}  = \widehat{D \cdot g^{-1}}$ for each toric divisor $D$.  Then for $C_j = C_i \cdot g$, we have
$$ (g \cdot \widehat{D} ) \cdot (g \cdot y_j) =  (\widehat{D \cdot g^{-1}}) \cdot y_i = ((D \cdot g^{-1}) \cdot C_i) \cdot y_i = (D \cdot C_j) \cdot (g \cdot y_j) = g \cdot (\widehat{D}  \cdot y_j). $$
Hence $(g \cdot \widehat{D} )(g \cdot h) = g \cdot (\widehat{D}  \cdot h)$ for all $h \in \C[[q_1,\ldots]]^f/I$.  Recall that
$$\Box_d = \prod_{i: (D_i, d) > 0} \prod_{k=0}^{(D_i,d)-1} (\widehat{D} _i - kz) - y^d \prod_{i: (D_i, d) < 0} \prod_{k=0}^{-(D_i,d)-1} (\widehat{D} _i - kz).$$
Thus $(g \cdot \Box_d) \cdot (g \cdot h) = g \cdot (\Box_d \cdot h)$.
On the other hand
\begin{align*}
g \cdot \Box_d &= \prod_{i: (D_i, d) > 0} \prod_{k=0}^{(D_i,d)-1} (\widehat{D_i \cdot g^{-1}} - kz) - (g \cdot y^d) \prod_{i: (D_i, d) < 0} \prod_{k=0}^{-(D_i,d)-1} (\widehat{D_i \cdot g^{-1}} - kz) \\
&= \prod_{i: (D_i \cdot g, d) > 0} \prod_{k=0}^{(D_i \cdot g,d)-1} (\widehat{D} _i - kz) - y^{d \cdot g^{-1}} \prod_{i: (D_i \cdot g, d) < 0} \prod_{k=0}^{-(D_i \cdot g,d)-1} (\widehat{D} _i - kz)\\
&= \prod_{i: (D_i, d \cdot g^{-1}) > 0} \prod_{k=0}^{(D_i,d \cdot g^{-1})-1} (\widehat{D} _i - kz) - y^{d \cdot g^{-1}} \prod_{i: (D_i, d \cdot g^{-1}) < 0} \prod_{k=0}^{-(D_i,d \cdot g^{-1})-1} (\widehat{D} _i - kz)\\
&= \Box_{d \cdot g^{-1}}.
\end{align*}

Thus $\Box_d \cdot h = 0$ if and only if $\Box_{d \cdot g^{-1}} \cdot (g \cdot h) = 0$. 
Since $d \cdot g^{-1}$ runs over the whole $H_2(X,\Z)$ as $d$ runs over the whole $H_2(X,\Z)$, it follows that $g \cdot h$ satisfies the same system of differential equations.
Also it follows from $g \cdot \Box_d = \Box_{d \cdot g^{-1}}$ that the GKZ ideal $\langle  \Box_d: d \in H_2(X,\Z) \rangle$ is preserved under $G$.
\end{proof}

Next we consider the $G$-action on the $I$-function.  For this purpose, fix a homogeneous basis $\{T_{l,p}\}$ of $H^{\mathrm{even}}(X,\Z)$, where $p$ records the cohomological degree and $T_{l,2} = T_l$.  An $H^{\mathrm{even}}(X)$-valued series is the formal sum $\sum_{p,l} h_{l,p} \, T_{l,p}$, 
where each $h_{l,p} \in \C[[y_1,\ldots]]^f/I$.  Define the $G$-action by 
$$g \cdot \sum_{p,l} h_{l,p} \, T_{l,p} := \sum_{p,l} (g \cdot h_{l,p}) \, (g \cdot T_{l,p}).$$
It is easy to see that the definition is independent of the choice of a basis.  Note that $C \cdot (g \cdot T_{l,p}) = (C \cdot g) \cdot T_{l,p}$ for $C \in H_*(X,\Z)$.

\begin{Prop}
We have $ g \cdot \bI_{\mathrm{main}} = \bI_{\mathrm{main}}$. 
\end{Prop}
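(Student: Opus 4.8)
The plan is to verify directly that the series $\bI_{\mathrm{main}}$ is termwise $G$-invariant, using the explicit formula
$$
\bI_{\mathrm{main}}(z;y) = \sum_{d \in H_2^{\mathrm{eff}}(X,\Z)} y^d \prod_i \frac{\prod_{m=-\infty}^0 (D_i + mz)}{\prod_{m=-\infty}^{d \cdot D_i} (D_i + mz)}
$$
together with the compatibilities with the $G$-action already established for the three ingredients appearing here: the K\"ahler parameters ($g \cdot y^d = y^{d \cdot g^{-1}}$ by Definition \ref{def:K-mod-quot}), the cohomology classes ($g \cdot T_{l,p}$, with $C \cdot (g \cdot T_{l,p}) = (C \cdot g) \cdot T_{l,p}$), and the toric divisors viewed as differential operators ($g \cdot \widehat{D} = \widehat{D \cdot g^{-1}}$). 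The key point to isolate is that the $G$-action on $N$ permutes $\Sigma(1)$, hence permutes the toric prime divisors $\{D_i\}$; write $g \cdot D_i = D_{\sigma_g(i)}$ for the induced permutation $\sigma_g$ of the index set.

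First I would apply $g$ to a single summand $y^d \prod_i \frac{\prod_{m=-\infty}^0 (D_i + mz)}{\prod_{m=-\infty}^{d \cdot D_i} (D_i + mz)}$, using the definition of the $G$-action on $H^{\mathrm{even}}(X)$-valued series: $g$ acts on the scalar coefficient $y^d$ sending it to $y^{d \cdot g^{-1}}$, and on the cohomology class factor by sending each $D_i$ to $g \cdot D_i = D_{\sigma_g(i)}$ (since cup products of divisors are sent to cup products of their images under the linear action). Thus the image of this summand is
$$
y^{d \cdot g^{-1}} \prod_i \frac{\prod_{m=-\infty}^0 (D_{\sigma_g(i)} + mz)}{\prod_{m=-\infty}^{d \cdot D_i} (D_{\sigma_g(i)} + mz)}.
$$
Re-indexing the product by $j = \sigma_g(i)$ and using the intersection identity $d \cdot D_i = d \cdot (g^{-1} \cdot D_{\sigma_g(i)}) = (d \cdot g^{-1}) \cdot D_{\sigma_g(i)} = (d \cdot g^{-1}) \cdot D_j$, this becomes exactly
$$
y^{d \cdot g^{-1}} \prod_j \frac{\prod_{m=-\infty}^0 (D_j + mz)}{\prod_{m=-\infty}^{(d \cdot g^{-1}) \cdot D_j} (D_j + mz)},
$$
which is the summand of $\bI_{\mathrm{main}}$ indexed by $d' := d \cdot g^{-1}$. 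Since $d \mapsto d \cdot g^{-1}$ is a bijection of $H_2^{\mathrm{eff}}(X,\Z)$ onto itself (the $G$-action preserves effectivity, as it permutes the toric curve classes spanning the effective cone by Proposition \ref{prop:H_2^eff}), summing over all $d$ recovers $\bI_{\mathrm{main}}$, so $g \cdot \bI_{\mathrm{main}} = \bI_{\mathrm{main}}$.

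The one genuinely delicate point is convergence/well-definedness of the rearrangement: the sum over $d$ is infinite and we are working in $\C[[y_1,\ldots]]^f/I$ (or $\C[[y_1,\ldots]]/I$ when $|\Sigma| - \{0\}$ is open), so I should note that the re-indexing is legitimate because, as shown in the lemma preceding Proposition \ref{prop:mir-map}'s neighborhood, each fixed power $y^{d'}$ occurs in only finitely many terms of each cohomological component, so the coefficient-wise identity makes sense with no analytic subtleties; the bijection $d \mapsto d \cdot g^{-1}$ merely relabels a convergent-in-the-formal-sense family. That is really the only obstacle — the rest is bookkeeping with the permutation $\sigma_g$ and the already-proven equivariances. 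I would state the argument at the level of a single summand and then invoke the bijection, keeping the write-up to a few lines.
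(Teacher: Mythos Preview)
Your proposal is correct and follows essentially the same direct computation as the paper's proof: apply $g$ to a single summand (sending $y^d \mapsto y^{d\cdot g^{-1}}$ and $D_i \mapsto g\cdot D_i$), then re-index both the product over $i$ via the permutation of toric prime divisors and the sum over $d$ via $d \mapsto d\cdot g^{-1}$, using $(d\cdot g^{-1})\cdot D_j = d\cdot D_i$. The paper's version is terser (three aligned equalities with no words), while you spell out the permutation $\sigma_g$, the bijection on $H_2^{\mathrm{eff}}$, and the formal-series bookkeeping, but the mathematical content is identical.
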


\begin{proof}
The assertion follows from direct computation. 
\begin{align*}
g \cdot \sum_{d \in H_2^{\mathrm{eff}}(X,\Z)} y^d \prod_{i} \frac{\prod_{m=-\infty}^0 (D_i + mz)}{\prod_{m=-\infty}^{d \cdot D_i} (D_i + mz)} &= \sum_{d \in H_2^{\mathrm{eff}}(X,\Z)} y^{d\cdot g^{-1}} \prod_{i} \frac{\prod_{m=-\infty}^0 (g \cdot D_i + mz)}{\prod_{m=-\infty}^{d \cdot D_i} (g \cdot D_i + mz)} \\
&= \sum_{d \in H_2^{\mathrm{eff}}(X,\Z)} y^{d} \prod_{i} \frac{\prod_{m=-\infty}^0 (D_i + mz)}{\prod_{m=-\infty}^{(d \cdot g) \cdot (g^{-1} \cdot D_i)} (D_i + mz)} \\
&= \sum_{d \in H_2^{\mathrm{eff}}(X,\Z)} y^d \prod_{i} \frac{\prod_{m=-\infty}^0 (D_i + mz)}{\prod_{m=-\infty}^{d \cdot D_i} (D_i + mz)}.
\end{align*}
\end{proof}

We have the following corollary for the mirror map.

\begin{Cor} \label{cor:G-mir-map}
Recall that the mirror map for $X$ is given by $\log q^{\alpha_l}(y)$ where $\alpha_l \in H_2(X,\Z)$ is the dual basis of $T_l \in H^2(X,\Z)$.  Then
$$ g \cdot (\log q^{\alpha_l}(y)) = \log q^{\alpha_l \cdot g^{-1}}(y).$$
In particular the inverse mirror map $\log y^{\alpha_l}(q)$ has the same property:
$$ g \cdot (\log y^{\alpha_l}(q)) = \log y^{\alpha_l \cdot g^{-1}}(q).$$
\end{Cor}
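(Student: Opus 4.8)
The plan is to deduce Corollary~\ref{cor:G-mir-map} directly from the $G$-equivariance of the $I$-function established in the preceding proposition, namely $g \cdot \bI_{\mathrm{main}} = \bI_{\mathrm{main}}$. Recall that the mirror map is extracted as the coefficient of $1/z$ in $\bI(z;y) = e^{z^{-1}\sum_l T_l \log y^{\alpha_l}}\,\bI_{\mathrm{main}}(z;y)$, and by Proposition~\ref{prop:mir-map} this coefficient is the $H^2(X)$-valued series $\sum_l (\log q^{\alpha_l}(y))\, T_l$. So first I would apply $g$ to the full $I$-function and use the definition of the $G$-action on $H^{\mathrm{even}}(X)$-valued series together with $g \cdot \bI_{\mathrm{main}} = \bI_{\mathrm{main}}$ to write $g \cdot \bI(z;y) = \left(g \cdot e^{z^{-1}\sum_l T_l \log y^{\alpha_l}}\right) \bI_{\mathrm{main}}(z;y)$.

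The next step is to understand the action of $g$ on the prefactor $e^{z^{-1}\sum_l T_l \log y^{\alpha_l}}$. Expanding the exponential, the degree-one-in-$T$ part is $z^{-1}\sum_l (\log y^{\alpha_l})\, T_l$, and the $G$-action sends each $\log y^{\alpha_l}$ to $\log y^{\alpha_l}$ (the $y_i$ are the formal variables being acted on, but here they are the arguments, not pulled back — more precisely, I would be careful and instead track that $g$ permutes the basis $\{T_l\}$ by $g\cdot T_l = \sum_m (T_l \cdot \text{stuff})$, i.e. $C \cdot (g\cdot T_l) = (C\cdot g)\cdot T_l$, so that $g\cdot T_{\alpha_l} = T_{\alpha_l\cdot g^{-1}}$ under the identification of $T_l$ with the dual basis element to $\alpha_l$). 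Rewriting the sum $\sum_l (\log y^{\alpha_l}) \,(g\cdot T_l) = \sum_l (\log y^{\alpha_l})\, T_{\alpha_l \cdot g^{-1}}$ and reindexing $\alpha_l \mapsto \alpha_l \cdot g$ shows the prefactor's linear term becomes $z^{-1}\sum_l (\log y^{\alpha_l \cdot g})\, T_l$; one checks the higher exponential terms transform compatibly so that $g \cdot e^{z^{-1}\sum_l T_l \log y^{\alpha_l}} = e^{z^{-1}\sum_l T_l \log y^{\alpha_l \cdot g}}$. Thus $g \cdot \bI(z;y)$ is the $I$-function with $\log y^{\alpha_l}$ replaced by $\log y^{\alpha_l\cdot g}$ in the prefactor but the same $\bI_{\mathrm{main}}(z;y)$.

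On the other hand, the mirror map coefficient of $T_l$ in $\bI(z;y)$ is $\log q^{\alpha_l}(y) = \log y^{\alpha_l} - \sum_i (D_i\cdot\alpha_l) g_i(y)$ by Proposition~\ref{prop:mir-map}, where $g_i(y)$ depends only on the divisor $D_i$ and the effective classes, hence is $G$-covariant in the sense $g \cdot g_i(y) = g_{i'}(y)$ for $D_{i'} = D_i \cdot g^{-1}$ — this is the same computation as in the proof of $g\cdot\bI_{\mathrm{main}} = \bI_{\mathrm{main}}$. Combining: applying $g$ to $\log q^{\alpha_l}(y)$ and matching coefficients of $T_l$ in $g\cdot\bI$ against the formula, one reads off $g \cdot (\log q^{\alpha_l}(y)) = \log q^{\alpha_l\cdot g^{-1}}(y)$. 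Finally, since the inverse mirror map $\log y^{\alpha_l}(q)$ is obtained by formally inverting the system $\{q^{\alpha_l} = q^{\alpha_l}(y)\}_l$, and this system is $G$-equivariant in the above sense, its inverse inherits the same equivariance $g\cdot(\log y^{\alpha_l}(q)) = \log y^{\alpha_l\cdot g^{-1}}(q)$; I would note that this follows formally because conjugating the inverse function by the permutation-of-indices automorphism given by $g$ inverts the conjugated (= same, by equivariance of the forward map up to relabeling) forward map.

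The main obstacle I expect is purely bookkeeping: keeping straight whether $g$ acts on the cohomology basis, on the homology basis, or on the formal variables, and the resulting left-versus-right / $g$-versus-$g^{-1}$ conventions, so that the reindexing in the exponential prefactor and the identification of coefficients of $T_l$ are done consistently. There is no analytic or geometric difficulty — everything reduces to the equivariance $g\cdot\bI_{\mathrm{main}} = \bI_{\mathrm{main}}$ already proved, plus the explicit formula in Proposition~\ref{prop:mir-map} and a formal inversion argument. I would likely phrase the actual proof tersely as ``This follows from the previous proposition and Proposition~\ref{prop:mir-map} by taking the coefficient of $1/z$ in the $I$-function; the statement for the inverse mirror map follows by formal inversion.''
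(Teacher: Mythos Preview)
Your approach is correct and matches the paper's (implicit) argument: the corollary is placed immediately after the proposition $g \cdot \bI_{\mathrm{main}} = \bI_{\mathrm{main}}$ with no separate proof, so the intended reasoning is precisely to extract the $1/z$-coefficient as you do, and your terse summary at the end is exactly how the paper would phrase it.

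There is one bookkeeping slip worth flagging, since you anticipated precisely this difficulty. In your second paragraph you compute $g$ acting on the prefactor's linear term as $\sum_l (\log y^{\alpha_l})\,(g\cdot T_l)$, but by the paper's definition of the $G$-action on $H^{\mathrm{even}}(X)$-valued series, $g$ acts on \emph{both} the coefficient and the basis element:
\[
g \cdot \bigl((\log y^{\alpha_l})\, T_l\bigr) \;=\; (g\cdot \log y^{\alpha_l})\,(g\cdot T_l) \;=\; (\log y^{\alpha_l\cdot g^{-1}})\, T_{\alpha_l\cdot g^{-1}}.
\]
After reindexing this returns the original sum, so the prefactor $e^{z^{-1}\sum_l T_l\log y^{\alpha_l}}$ is itself $G$-invariant, and hence $g\cdot\bI = \bI$ outright. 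Comparing $T_l$-coefficients of the mirror map on both sides of $\bI = g\cdot\bI$ then gives the stated formula directly. This is slightly cleaner than tracking a nontrivially transformed prefactor or passing through the explicit formula of Proposition~\ref{prop:mir-map}, though your alternative route via that formula and the covariance $g\cdot g_i(y) = g_{i'}(y)$ for $D_{i'}=D_i\cdot g^{-1}$ is also valid. The inversion argument for the inverse mirror map is fine as stated.
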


With the above corollary, we can define the mirror map for $X^o/G$ as follows.

\begin{Def}
The mirror map for $X^o/G$ is
$$
(\C[[q_1,\ldots]]^f/I)/G \to (\C[[y_1,\ldots]]^f/I)/G, \ \ \ q^{\alpha_l} \mapsto q^{\alpha_l}(y)
$$
where $\log q^{\alpha_l}(y)$ is the mirror map of $X$.  By Corollary \ref{cor:G-mir-map} the map is $G$-equivariant and hence is well-defined.
\end{Def}

Now we consider a toric Calabi--Yau manifold $X$ of infinite-type.  
We require that $G$ is a subgroup of $\SL(N)$ so that the quotient is still Calabi--Yau.

\begin{Lem} \label{lem:torCY}
Let $\nu = (0,1) \in M' \times \Z = M$ and $G \subset \SL(N)$. Then $\nu$ is invariant under $G$. 
A toric holomorphic volume form of $X$ descends to the quotient $X^o/G$, and hence $X^o/G$ is Calabi--Yau.
\end{Lem}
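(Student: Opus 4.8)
The plan is to exhibit an explicit $G$-invariant nowhere-vanishing holomorphic $n$-form on $X$ and check that it passes to the quotient; the whole statement then reduces to tracking the defining monomial of this form together with the determinant condition $G\subset\SL(N)$.

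\emph{Step 1: $\nu$ is $G$-invariant.} By Definition \ref{def:torCY} the fan $\Sigma$ is obtained by coning over a standard lattice triangulation of $P\times\{1\}$, so every primitive ray generator of $\Sigma$ has the form $(v,1)\in N'\times\Z$ with $v$ a vertex of the triangulation; in particular $\langle(v,1),\nu\rangle=1$ for all $v\in\Sigma(1)$, with respect to the pairing $N\times M\to\Z$. The group $G$ acts on $N$ by lattice automorphisms preserving $\Sigma$, hence permutes the primitive ray generators. By Remark \ref{rem:base-pt} (or directly from the standard-triangle condition) some maximal cone of $\Sigma$ is generated by a $\Z$-basis of $N$ lying in the affine hyperplane $H:=\{x\in N_\R:\langle x,\nu\rangle=1\}$, so the ray generators affinely span $H$. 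A linear automorphism $g$ of $N_\R$ carrying the affine hyperplane $H$ (which misses the origin) to itself satisfies $\langle g(x),\nu\rangle=1=\langle x,\nu\rangle$ for all $x\in H$; since $H$ affinely spans $N_\R$, the linear functionals $\nu\circ g$ and $\nu$ agree, i.e.\ $\nu\circ g=\nu$ for every $g\in G$. Thus $\nu$ is fixed by the induced dual action of $G$ on $M$.

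\emph{Step 2: the toric holomorphic volume form, and Step 3: descent.} Let $w=z^\nu$ be the toric holomorphic function of Definition \ref{def:Lag-fib} attached to $\nu$, and let $z_1,\dots,z_n$ be coordinates on the open torus $(\C^\times)^n\subset X$ dual to a $\Z$-basis of $N$. As a meromorphic $n$-form on $X$, $\frac{dz_1}{z_1}\wedge\cdots\wedge\frac{dz_n}{z_n}$ has divisor $-\sum_{v\in\Sigma(1)}D_v$, while $\mathrm{div}(w)=\sum_{v\in\Sigma(1)}\langle v,\nu\rangle\,D_v=\sum_{v\in\Sigma(1)}D_v$ by Step 1; locally near any point of $X$ only finitely many $D_v$ occur, so $\Omega:=w\cdot\frac{dz_1}{z_1}\wedge\cdots\wedge\frac{dz_n}{z_n}$ has $\mathrm{div}(\Omega)=0$ and extends to a nowhere-vanishing holomorphic $n$-form on $X$ (this $w$ is the holomorphic function realising $\sum_v D_v\sim 0$). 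For $g\in G\subset\SL(N)$, the induced monomial automorphism of $(\C^\times)^n$ pulls $\frac{dz_1}{z_1}\wedge\cdots\wedge\frac{dz_n}{z_n}$ back to $(\det g)\,\frac{dz_1}{z_1}\wedge\cdots\wedge\frac{dz_n}{z_n}=\frac{dz_1}{z_1}\wedge\cdots\wedge\frac{dz_n}{z_n}$ and pulls $w=z^\nu$ back to $z^\nu=w$ by Step 1, so $g^*\Omega=\Omega$ on the dense torus, hence on all of $X$. Since $w$ is $G$-invariant, the neighborhood $X^o=\{p\in X:|w(p)-\delta|<2\delta\}$ of Definition \ref{def:Lag-fib} is $G$-invariant, and as the $G$-action on $X^o$ is free and properly discontinuous, $\Omega|_{X^o}$ descends to a nowhere-vanishing holomorphic $n$-form on $X^o/G$. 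Hence $X^o/G$ has trivial canonical bundle and is Calabi--Yau.

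\emph{Expected difficulty.} Everything here is routine toric bookkeeping; the one place requiring care is Step 1, where one genuinely needs that $G$ acts by \emph{linear}, not merely affine, automorphisms of $N$ in order to conclude that the height functional $\nu$ — equivalently the defining monomial $w$ — is literally $G$-fixed, and that $G\subset\SL(N)$ rather than just $\GL(N)$ in order to keep the logarithmic $n$-form invariant in Step 3. None of the convergence or infinitude subtleties of the earlier sections intervenes, since $\Omega$ is a single monomial times the standard logarithmic $n$-form.
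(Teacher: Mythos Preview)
Your proof is correct and follows essentially the same approach as the paper: both argue that $\nu$ is characterized by pairing to $1$ with every primitive ray generator and that $G$ permutes these generators, and both verify $G$-invariance of $\Omega = w\,\frac{dz_1}{z_1}\wedge\cdots\wedge\frac{dz_n}{z_n}$ by combining $g^*w=w$ with the $\SL(N)$-invariance of the logarithmic $n$-form. Your write-up is somewhat more detailed (the divisor computation for $\Omega$, the explicit check that $X^o$ is $G$-stable, and the affine-hyperplane justification in Step~1), but the underlying argument is the same.
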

\begin{proof}
The vector $\nu$ is characterized by the property that it maps every primitive generator to $1$. 
Since $g \in G$ preserves the fan, it maps a primitive generator to another primitive generator. 
In particular $(g^* \nu) (v) = 1$ for every primitive generator, and hence $g^* \nu = \nu$.

A toric holomorphic volume form of $X^o$ takes the form (for a fixed $c \in \C$)
$$ c \, d z_1 \wedge \ldots \wedge d z_n = c \, w \, d \log z_1 \wedge \ldots d \log z_n$$
on every toric coordinate system $(z_1,\ldots,z_n)$ corresponding to a maximal cone of $\Sigma$, where $w$ is the holomorphic function corresponding to $\nu \in M$. 
Since $g \in \SL(N)$, $g^* d \log z_1 \wedge \ldots d \log z_n = d \log z'_1 \wedge \ldots d \log z'_n$ where $(z'_1,\ldots,z'_n)$ is the coordinate system for the image maximal cone of $g$. 
Moreover $g^* w = w$.  Thus the toric holomorphic volume form is preserved.
\end{proof}

Then we define a Lagrangian torus fibration on $X^o / G$ as follows.

\begin{Prop} \label{prop:fib-quot}
Let $\mu$ be the moment map with respect to a $G$-invariant toric K\"ahler metric given in Proposition \ref{prop:G-metric}. 
Let $\mu'$ be the composition of $\mu$ with the projection to the first factor $M_\R \to M'_\R$.  Then 
$$(\mu',|w-\delta|): X^o / G \to (M'_\R / G) \times [0,2\delta)$$ 
is a Lagrangian torus fibration. 
This fibration is special with respect to $\frac{\Omega}{w - \delta}$ (descended to $X^o / G$) where $\Omega$ is a toric holomorphic volume form of $X$.
\end{Prop}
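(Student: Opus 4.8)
The plan is to obtain $\bar\pi$ by descending the special Lagrangian torus fibration $\pi = (\mu', |w-\delta|) \colon X^o \to B = M'_\R \times [0,2\delta)$ of Definition~\ref{def:Lag-fib} along the quotient maps $X^o \to X^o/G$ and $B \to (M'_\R/G) \times [0,2\delta)$. The input from the finite-type theory is that $\pi$ is a Lagrangian torus fibration with discriminant locus as in Proposition~\ref{prop:disc}, and that it is special with respect to the meromorphic volume form $\Omega/(w-\delta)$; the latter is the same local computation as in the Goldstein--Gross (and \cite{CLL}) construction and does not involve the infinite-type subtleties. Granting this, everything reduces to two points: (i) $\pi$ is $G$-equivariant, so that it descends to a map $\bar\pi$ of quotients; and (ii) $G$ acts freely and properly discontinuously on $X^o$, so that $X^o/G$ is a smooth manifold onto which $\bar\omega$, the descended form $\Omega/(w-\delta)$, and the Lagrangian and special conditions transport.

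For (i): by Proposition~\ref{prop:G-metric} the toric K\"ahler form $\omega$ on $X^o$ is $G$-invariant, hence its moment map $\mu \colon X^o \to M_\R$ intertwines the $G$-action on $X^o$ with the affine-linear $G$-action on $M_\R$ introduced there, whose linear part is dual to the $G$-action on $N$. By Lemma~\ref{lem:torCY}, $G \subset \SL(N)$ fixes $\nu = (0,1) \in M' \times \Z = M$, so this affine action descends to the quotient $M_\R / \R\nu \cong M'_\R$; consequently $\mu'$, which is $\mu$ followed by $M_\R \to M_\R/\R\nu$, is $G$-equivariant for the induced affine action on $M'_\R$. Again by Lemma~\ref{lem:torCY}, $g^\ast w = w$ for every $g \in G$, so $|w - \delta|$ is $G$-invariant. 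Hence $\pi = (\mu', |w-\delta|)$ is equivariant for the product $G$-action on $M'_\R \times [0,2\delta)$ that is trivial on the second factor, and descends to $\bar\pi \colon X^o/G \to (M'_\R/G) \times [0,2\delta)$.

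For (ii) and the descent of structures: $G$ acts properly discontinuously on $X^o$ because it does so on $M_\R$ through $\mu$ together with the properness of the $\mu$-fibers, and it acts freely because no proper toric stratum is fixed (freeness on $\Sigma \setminus \{0\}$) while the fixed locus of any $g \neq \id$ on the open orbit is disjoint from $X^o = \{|w-\delta| < 2\delta\}$ once $\delta$ is small enough. Thus $X^o/G$ is a smooth manifold, $\omega$ descends to a symplectic form $\bar\omega$, and $\Omega/(w-\delta)$ descends because $\Omega$ descends (Lemma~\ref{lem:torCY}) and $w - \delta$ is $G$-invariant. A fiber of $\bar\pi$ is, locally, the image of a fiber of $\pi$ under the covering $X^o \to X^o/G$; being Lagrangian for $\bar\omega$ and having the imaginary part of $\Omega/(w-\delta)$ vanish along the fiber are pointwise conditions pulled back from $X^o$, hence they persist for $\bar\pi$. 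Over the complement of the discriminant the fibers of $\bar\pi$ are free quotients of the $(n-1)$-tori of Proposition~\ref{prop:disc} by their finite stabilizers in $G$, hence compact $(n-1)$-manifolds, which yields the asserted torus fibration.

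I expect the main obstacle to be the bookkeeping in step (ii): establishing genuine freeness of the $G$-action on $X^o$ --- i.e.\ controlling the fixed loci of nontrivial group elements inside the open torus orbit and ruling them out by shrinking $\delta$ --- and, relatedly, identifying the topology of a generic fiber of $\bar\pi$. Once the action on $X^o$ is known to be free and properly discontinuous, the remaining descent of the symplectic form, the meromorphic volume form, and the special-Lagrangian condition is entirely formal.
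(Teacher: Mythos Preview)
Your approach matches the paper's: verify $G$-equivariance of $(\mu',|w-\delta|)$ via Lemma~\ref{lem:torCY} (invariance of $\nu$ and hence of $w$), observe that $\Omega/(w-\delta)$ is $G$-invariant, and then descend the special Lagrangian fibration to the quotient. The paper's proof is essentially your paragraph~(i) together with the sentence ``since everything is $G$-equivariant, the fibration descends,'' plus a remark on surjectivity of $\mu'$.

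Two points in your step~(ii) deserve correction. First, the freeness of the $G$-action on $X^o$ that you flag as the main obstacle is a standing hypothesis of Section~\ref{subset: free group} (the paper assumes from the outset an effective free discrete action by toric morphisms), so there is nothing to establish. Second, your description of the fibers of $\bar\pi$ as ``free quotients of the $(n-1)$-tori \ldots\ by their finite stabilizers'' is off on two counts: the generic fibers of $\pi$ are $n$-tori, not $(n-1)$-tori (Proposition~\ref{prop:disc} describes only the boundary and discriminant fibers), and, more seriously, a free finite quotient of a torus need not be a torus (the Klein bottle is $T^2/(\Z/2)$). What actually saves the torus-fibration claim is that the affine $G$-action on $M'_\R$ is fixed-point free --- in all the paper's examples it is by pure translations --- so the stabilizer of every base point is trivial and each fiber of $\bar\pi$ is literally a fiber of $\pi$. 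The paper leaves this observation implicit.
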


\begin{proof}
First note that $\mu'$ is the moment map for the action of the subtorus $N'_\R/N' \subset N_\R / N$, which commutes with the $G$-action since $G$ acts as toric morphisms. 
Hence $\mu'$ is $G$-equivariant and gives a map $X^o / G \to M'_\R / G$.  This is indeed a surjective map, since for every $u \in M'_\R$, there exists $c \gg 0$ such that $u + c \nu \in P$.

Then $\frac{\Omega}{w - \delta}$ defines a meromorphic $n$-form on $X^o$ which is nowhere zero and whose pole set is the divisor $\{w = \delta\} \subset X^o$.  Since both $\Omega$ and $w$ are $G$-invariant, $\frac{\Omega}{w - \delta}$ descends to the quotient $X^o/G$.  The proof that the fibration $X^o \to M'_\R \times [0,2\delta)$ is special Lagrangian is the same as in the finite-type case given by \cite{Gro} using symplectic reduction.  Since everything is $G$-equivariant, the special Lagrangian fibration descends to one on the $G$-quotient.
\end{proof}

Note that the boundary divisor $\pi^{-1}(M'_\R\times \{0\})$ is the anti-canonical divisor $K_{X^o}=\sum_{i=1}D_i$.   



\section{SYZ mirrors of toric Calabi--Yau manifolds of infinite-type} \label{section: SYZ mirror}


The Strominger--Yau--Zaslow (SYZ) conjecture \cite{SYZ} provides a foundational geometric understanding of mirror symmetry. 
It asserts that, for a mirror pair of Calabi--Yau manifolds $X$ and $Y$, 
there exist Lagrangian torus fibrations $\pi:X\to B$ and $\pi^\vee:Y\to B$ which are ideally fiberwise-dual to each other. 
It suggests a geometric construction of the mirror $Y$ by fiberwise dualizing a Lagrangian torus fibration on $X$.  
In this article, we will use the formulation of an SYZ mirror (with quantum corrections) given in \cite[Section 2]{CLL}.

The SYZ mirror of a toric Calabi--Yau manifold of finite-type was constructed in \cite{CLL}. 
In this section we consider the SYZ mirror of a toric Calabi--Yau manifold of infinite-type.  The construction is similar and so we shall be brief. 
More precisely we construct the SYZ mirror of a neighborhood of an anti-canonical toric divisor in a toric Calabi--Yau manifolds of infinite-type, 
which contains the essential information of open Gromov--Witten theory.

Technically the SYZ mirror of a toric manifold of infinite-type involves infinitely many K\"ahler parameters, 
and also the mirror Laurent series in $z$ contains infinitely many terms. 
We shall need the topological ring of formal series $\C[u,v][[z_1^{\pm 1}, \ldots, z_{n-1}^{\pm 1}]][[q_1,\ldots]]^f/I$ where the mirror Laurent series lives in.

As in Definition \ref{def:Lag-fib}, we shall always assume that the corresponding lattice polyhedral set $P$ of the toric Calabi--Yau manifold admits an exhaustion by compact lattice polytopes, so that a K\"ahler metric and a Lagrangian fibration are defined (Definition \ref{def:Lag-fib}).  We show that the SYZ mirror formulated in \cite[Section 2]{CLL} equals to the following.

\begin{Thm} \label{thm:SYZ}
The SYZ mirror of a toric Calabi--Yau manifold $X$ of infinite-type is 
$$\check{X} = \mathrm{Spec} \big((\C[u,v][[z_1^{\pm 1}, \ldots, z_{n-1}^{\pm 1}]][[q_1,\ldots]]^f/I) / (uv - F^\open(q_1,\ldots; z_1,\ldots,z_{n-1}))\big)$$
where $F^\open$ is given below as a formal series in $\C[[z_1^{\pm 1}, \ldots, z_{n-1}^{\pm 1}]][[q_1,\ldots]]^f/I$:
\begin{equation} \label{eq:F^open}
F^\open = \sum_{v} \left( \sum_{\alpha \in H_2^\mathrm{eff} (X,\Z)} n_{\beta_v + \alpha} q^\alpha \right) q^{A_v} z^v.
\end{equation}
In the above expression, the sum is over all $v \in N'$ where $(v,1)=(a_1,\ldots,a_{n-1},1)$ are generators of the fan.  $\beta_v$ are the basic disc classes corresponding to the rays generated by $(v,1)$.
$n_{\beta_{v} + \alpha}$ is the open Gromov--Witten invariant of the disc class $\beta_{v} + \alpha$ of a regular moment-map fiber.  $z^v = z_1^{a_1} \ldots z_{n-1}^{a_{n-1}}$ where $a_i$ are the coefficients in the expression $\partial (\beta_v-\beta_0) = \sum_{k=1}^{n-1} a_k \cdot \partial(\beta_{e_k} -  \beta_{0})$.  $A_v$ is the curve class
$$
A_v := \beta_{v} - \beta_{0} - \sum_{k=1}^{n-1} a_k \cdot (\beta_{e_k} -  \beta_{0}).
$$
$q^C = q_{1}^{k_1} \ldots q_{p}^{k_p}$ for an effective curve class $C = \sum_{j=1}^p k_j C_j$ for $k_j \geq 0$.  ($\{C_j\}$ is the set of irreducible toric curves from Definition \ref{def:K-mod}.)  
\end{Thm}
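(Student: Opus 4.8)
The plan is to follow the SYZ recipe developed in \cite{CLL} for the finite-type case, checking at each step that the constructions survive passing to the infinite-type setting, where the main technical burden is bookkeeping convergence over the ring $\C[[z_1^{\pm1},\ldots,z_{n-1}^{\pm1}]][[q_1,\ldots]]^f/I$. Recall from Definition \ref{def:Lag-fib} and Proposition \ref{prop:disc} that $X^o$ carries the special Lagrangian fibration $\pi = (\mu',|w-\delta|) : X^o \to B = M'_\R \times [0,2\delta)$, whose generic fibers are Lagrangian tori. First I would fix a generic fiber $L$ and analyze the moduli of holomorphic discs bounded by $L$: by Lemma \ref{lem:neg-int-CY} and Proposition \ref{prop:cpt}, every non-constant holomorphic sphere lies in a compact toric divisor, so a Maslov-index-$2$ disc class bounded by $L$ is of the form $\beta_v + \alpha$ where $\beta_v$ is a basic disc class attached to a ray generator $(v,1)$ and $\alpha \in H_2^{\mathrm{eff}}(X,\Z)$. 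The key finiteness input is Corollary \ref{cor:fin} (valid when $|\Sigma|-\{0\}$ is open), which guarantees only finitely many toric curves in a given class; this, together with the asymptotic control from the exhaustion condition, ensures that for each fixed curve class the relevant disc moduli spaces are compact and the open Gromov--Witten invariants $n_{\beta_v+\alpha}$ are well-defined exactly as in the finite-type case \cite{FOOO,CLL}.

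Next I would assemble the SYZ mirror by the standard procedure: over the smooth locus $B_0 \subset B$, the semi-flat mirror is the total space of the dual torus fibration, coordinatized by $(z_1,\ldots,z_{n-1})$ coming from $H^1(L)$ paired with the flat connections, together with the two extra coordinates $u,v$ arising because $\pi$ has a discriminant component $\Gamma \times \{\delta\}$ of conic type (Proposition \ref{prop:disc}); the wall-crossing across this discriminant glues the charts so that the mirror is cut out by a single equation $uv = W$ where $W$ is the superpotential obtained by summing the disc counts with weights $z^{\partial\beta}$ and symplectic areas $q^{\alpha}$. Writing $\partial(\beta_v+\alpha)$ in terms of the chosen basis and using the definition $z^{\beta_v-\beta_0} = q^{A_v} z^v$ with $A_v$ as in the statement — which simply records the discrepancy between the boundary class of $\beta_v$ and the $\Z$-linear combination $\sum a_k(\beta_{e_k}-\beta_0)$ dictated by the primitive generator $(a_1,\ldots,a_{n-1},1)$ — one obtains precisely $W = F^{\open}$ as in Equation \eqref{eq:F^open}. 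The contribution of the base-point cone $\R_{\geq 0}\langle(0,1),(e_1,1),\ldots,(e_{n-1},1)\rangle$ fixed in Remark \ref{rem:base-pt} normalizes the exponents so that $\beta_0$ and the $\beta_{e_k}$ correspond to the monomials $1, z_1, \ldots, z_{n-1}$ with no $q$-factor.

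The step I expect to be the main obstacle is making rigorous sense of the infinite sum defining $F^{\open}$ as an element of $\C[[z_1^{\pm1},\ldots,z_{n-1}^{\pm1}]][[q_1,\ldots]]^f/I$, i.e. showing that for each monomial $z^v$ the coefficient $\sum_{\alpha} n_{\beta_v+\alpha} q^\alpha$ genuinely lies in $\C[[q_1,\ldots]]^f/I$ (finitely many terms in each multiplicative equivalence class) and that the whole expression is compatible with the ideal $I$ of multiplicative relations. For this I would argue that in a fixed curve class $\alpha$ there are only finitely many stable disc configurations contributing to $n_{\beta_v+\alpha}$ (again via Proposition \ref{prop:cpt} applied to the sphere bubbles, plus the toric classification of discs from \cite{CO}), so the coefficient of any fixed $q^\alpha z^v$ is a finite number; and that the exhaustion $\{\Sigma_i\}$ lets one reduce, order by order in the $q$-adic filtration, to the finite-type sub-geometries where \cite{CLL} applies verbatim. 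The remaining verification — that the gluing across the conic discriminant really produces the single equation $uv = F^{\open}$ and not some more complicated relation — is identical to \cite[Section 4--5]{CLL} once the disc invariants are in hand, since the local model near $\Gamma \times \{\delta\}$ is unchanged; I would invoke that computation rather than repeat it, noting only that the convergence of the gluing data is controlled termwise by the same finiteness statements.
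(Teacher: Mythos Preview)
Your proposal is correct and follows essentially the same route as the paper: set up the Lagrangian fibration, show stable discs of Maslov index $2$ lie in a compact region (so the invariants $n_{\beta_v+\alpha}$ are well-defined), identify the wall and compute the wall-crossing factor as the generating function $F^{\open}$, and then invoke the gluing argument of \cite{CLL} verbatim to obtain $uv=F^{\open}$. The paper makes the wall-crossing step slightly more explicit---it pins down the wall as $H=M'_\R\times\{\delta\}$, splits $B_0-H=B_+\sqcup B_-$, and records that over $B_-$ only $\beta_{(0,1)}$ contributes while over $B_+$ the invariants agree with those of a moment-map fiber---but this is exactly the content you defer to \cite{CLL}, and your convergence argument via the exhaustion is the same one the paper formalizes (as a limit $F^{\open}_k\to F^{\open}$ in the topological ring).
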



\subsection{Construction}


\subsubsection{Semi-flat SYZ mirrors} \label{sec:semi-flat}
Recall that we have a Lagrangian fibration $\pi: X^o \to B = M'_\R \times [0,2\delta)$ from Definition \ref{def:Lag-fib} 
with discriminant loci given by $\partial B = M'_\R \times \{0\}$ and $\Gamma \times \{\delta\}$. 
Denote the complement by $B_0 := B - \partial B - (\Gamma \times \{\delta\})$ and a fiber of $\pi$ at $r \in B_0$ by $L_r$. 
We have the dual torus bundle over $B_0$
$$ \check{\pi}: \check{X}_0 := \{(L_r,\nabla):r \in B_0, \nabla \textrm{ is a flat } \U(1) \textrm{-connection on } L_r\} \longrightarrow B_0. $$
The total space $\check{X}_0$ is called the semi-flat mirror of $X_0:=\pi^{-1}(B_0)$.  

We shall first show that $\check{X}_0$ has semi-flat complex coordinates. 
Recall from Remark \ref{rem:base-pt} that we fix an identification $N' \cong \Z^{n-1}$ such that $\R_{\geq 0} \langle (0,1),(e_1,1),\ldots,(e_{n-1},1)\rangle$ is a cone in $\Sigma$. 
Then we fix the connected component of $M'_\R - \Gamma$ corresponding to the primitive generator $(0,1) \in N$, and denote its complement in $M'_\R$ by $S$.  Then take
$ U := B_0 - S \times \{\delta\} $
which is a contractible open set.  

Let us consider a point $r_0 = (r_1,r_2) \in U$ where $r_1 \in M'_\R$ and $r_2 > \delta$. 
Then $L_{r_0}$ is isotopic to a moment-map fiber via $\{(\mu',|w-a|) = r_0\} \subset X - \bigcup_i D_i$ for $a \in [0,\delta]$. 
In particular $\pi_2(X,L_{r_0})$ can be identified with $\pi_2(X,T)$. 
Since the torus bundle $\pi^{-1}(U) \to U$ is trivial, this gives an identification of $\pi_2(X,L_r)$ with $\pi_2(X,T)$ for all $r \in U$.

We have the disc classes $\beta_{(0,1)}(r), \beta_{(e_i,1)}(r) \in \pi_2(X,L_r)$ corresponding to the primitive generators $(0,1)$ and $(e_i,1)$ for $i=1,\ldots,n-1$ respectively. 
Let us denote $\bar{\beta}_{e_i}(r) := \beta_{(e_i,1)}(r) - \beta_{(0,1)}(r)$.
Then semi-flat complex coordinates are given by
$$ z_i(L_r,\nabla) := \exp \left(- \int_{\bar{\beta}_{e_i}(r)} \omega \right) \mathrm{Hol}_\nabla (\partial \beta_{(e_i,1)} - \partial \beta_{(0,1)}) $$
for $i=1,\ldots,n-1$ and
$$ z_0(L_r,\nabla) := \exp \left(- \int_{\beta_{(0,1)}(r)} \omega \right) \mathrm{Hol}_\nabla (\partial \beta_{(0,1)})$$
where $(L_r,\nabla) \in \check{\pi}^{-1}(U)$, where $\omega$ is the K\"ahler form in Definition \ref{def:metric}.


\subsubsection{Wall-crossing of open Gromov--Witten invariants}

A key ingredient in SYZ construction is open Gromov--Witten invariant, definition of which is rather involved.  We refer to \cite[Section 2.1.2]{FOOO} for the moduli spaces of stable discs, and \cite[Chapter 7]{FOOO} for transversality issues.  We shall restrict to the situation that no disc bubbling occurs, so that the disc moduli does not have boundary and can be treated as the moduli spaces in closed Gromov--Witten theory.  This avoids the ambiguity of defining the invariants.

First of all, recall that the wall consists of the locations of Lagrangian fibers where disc bubbling occurs and hence the invariants are not well-defined.  We will remove the wall in order to talk about open Gromov--Witten invariants.

\begin{Def} \label{def:wall}
The wall $H$ of the Lagrangian fibration $\pi$ is
$$ H := \{r \in B_0: L_r \textrm{ bounds a non-constant holomorphic disc of Maslov index $\le 0$} \} $$
\end{Def}

In the infinite-type case, we need to make sure that stable discs of the same class stay in a compact region, so that the disc moduli is compact.

\begin{Lem}
Let $\beta \in \pi_2(X,L_r)$ have Maslov index $2$, where $r \in B_0 - H$.  Then any stable disc (with any fixed number of boundary marked points) in $\beta$ lies in a compact region.
\end{Lem}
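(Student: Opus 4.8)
The plan is to exploit the rigidity of degenerations of Maslov-index-$2$ discs together with the compactness statements already proved for rational curves. A stable disc with boundary on $L_r$ decomposes into disc components, sphere components and possible ghost components. Since $r\in B_0-H$, the fibre $L_r$ bounds no non-constant holomorphic disc of Maslov index $\le 0$, so every non-constant disc component has Maslov index $\ge 2$; and since $\sum_i D_i$ is trivial in cohomology (the Calabi--Yau condition), every sphere component has Maslov index $2c_1=2\big((\sum_i D_i)\cdot[\text{sphere}]\big)=0$. Additivity of the Maslov index over components (using $\mu(\beta)=2\,\beta\cdot\sum_i D_i$ for a disc with boundary on a toric fibre) then forces the configuration to consist of exactly one non-constant disc component $u_0$ of Maslov index $2$, finitely many rational curves $u_1,\dots,u_m$, and ghost components mapping to points. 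As the symplectic area of each component is bounded by $\omega\cdot\beta$, only finitely many combinatorial/homological types occur, so it suffices to confine a configuration of each fixed type to a compact region.

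First I would confine the sphere bubbles: each $u_k$ is a non-constant holomorphic sphere, hence contained in a toric divisor by Lemma \ref{lem:neg-int}, and it represents a fixed effective class; by Proposition \ref{prop:cpt} its image lies in a compact subset of $X^o$ depending only on that class. Next, for the disc component $u_0$: its class has the form $\beta_v+\alpha_0$ for a ray generator $v$ and an effective curve class $\alpha_0$ (the standard structure of Maslov-$2$ discs on a toric manifold), and $u_0$ is a limit of configurations built from the basic disc $\beta_v$ and rational curves in $\alpha_0$; in particular $u_0$ meets $\sum_i D_i$ with total intersection number one, hence by positivity meets exactly one toric divisor $D_w$, which is compact under the running hypotheses (those of Proposition \ref{prop:cpt}). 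For the $w$-direction I would use the maximum principle: the holomorphic function $w$ (corresponding to $(0,1)\in M$) vanishes to order one along every toric divisor, so $w\circ u_0\colon\D\to\C$ is holomorphic with a single zero and maps $\partial\D$ into the circle $\{|w-\delta|=r_2\}$; hence $|w\circ u_0-\delta|\le r_2$ everywhere, and $w(u_0)$ is bounded. In the remaining $M'_\R$ directions I would combine the area bound $\int_{u_0}\omega=\omega\cdot(\beta_v+\alpha_0)$ with a monotonicity estimate anchored at the fixed compact sets $L_r$ (the boundary) and $D_w$ (which $u_0$ meets): if $u_0$ left a large fixed neighbourhood of $L_r\cup D_w$, disjoint small balls along the escaping arc would each carry a definite amount of area, contradicting the bound. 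Equivalently, one may phrase this via Gromov compactness relative to the fixed Lagrangian $L_r$: any bubbling at infinity would have to produce a non-constant holomorphic sphere at infinity (no disc bubble can escape, as its boundary node lies on the fixed compact $L_r$), and such a sphere is excluded by Lemma \ref{lem:neg-int} and Proposition \ref{prop:cpt}.

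Assembling the pieces, the whole stable disc lies in the union of the finitely many compact sets obtained above (ghost components map into images of adjacent non-ghost components, and the number of boundary marked points is irrelevant), a compact region depending only on $\beta$ and $r$. I expect the delicate point to be the confinement of the disc component $u_0$: unlike the finite-type case one cannot simply invoke compactness of $X$ or of a toric compactification, so the argument must instead extract the bound from (i) the explicit toric description pinning a Maslov-$2$ disc near a single, necessarily compact, toric divisor, and (ii) a monotonicity/area estimate that rests on uniform local geometry bounds for the toric K\"ahler metric on $X^o$ -- and it is precisely here that the exhaustion-by-compact-polytopes hypothesis (and, in the quotient setting, the finiteness of the number of $G$-orbits) is needed.
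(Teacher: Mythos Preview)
Your decomposition into one Maslov-$2$ disc component plus sphere bubbles is correct, and confining the sphere bubbles via Proposition~\ref{prop:cpt} is fine. The real gap is in how you handle the disc component. You assert that ``only finitely many combinatorial/homological types occur'' from the area bound, but in infinite type this is precisely the non-trivial point: there are infinitely many basic disc classes $\beta_v$, and you have not shown that only finitely many can appear as $[u_0]$. Your subsequent monotonicity argument is meant to compensate, but it relies on uniform local geometry bounds for the toric K\"ahler metric on $X^o$, which you do not establish (and which are genuinely delicate since the metric is not complete and the construction of Definition~\ref{def:metric} gives no such uniformity).

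The paper sidesteps all of this with a single observation you underuse: the class of the disc component is \emph{determined} by $\beta$. Indeed $H_2(X,L_r)\cong H_2(X,T)=\bigoplus_v\Z\,\beta_v$, and an irreducible holomorphic disc $u_0$ with boundary on $L_r$ has $[u_0]\cdot D_v\ge 0$ for every $v$ (positivity of intersection) with total $1$; hence $[u_0]=\beta_i$ for a single $i$, and $\partial\beta_i=\partial\beta$ forces $i$ to be the unique index with $v_i=\partial\beta$. So the disc component always hits the \emph{same} compact divisor $D_i$. The paper then simply chooses $k$ with $\beta\in H_2(X^{(k)},L_r)$, writes $\beta=\beta_i+\alpha$ with $\alpha\in H_2(X^{(k)})$, and notes that by connectedness every sphere component must meet $D_i$; since $X^{(k)}$ is a finite-type toric Calabi--Yau with convex support, the entire configuration is trapped in a compact subset of $X^{(k)}$. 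No monotonicity, no area-versus-geometry estimates, no case analysis over homological types --- the exhaustion does all the work. Your maximum-principle bound on $|w|$ is correct but unnecessary once this is in place.
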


\begin{proof}
$\beta$ belongs to $H_2(X^{(k)},L_r)$ for $k \gg 0$. 
Since any non-constant holomorphic disc bounded by $L_r$ has at least Maslov index $2$, the class $\beta$ takes the form $\beta_i + \alpha$ where $\beta_i$ is a basic disc class and $\alpha$ is a curve class.  Any rational curve in $\alpha$ is contained in the toric divisors and has zero intersection with any toric divisor not belonging to $X^{(k)}$.  A holomorphic disc in $\beta_i$ intersects only the toric irreducible divisor $D_i$.  As a stable disc in $\beta$ is connected, the curve component must intersect $D_i$.  Since $X^{(k)}$ is a toric Calabi--Yau manifold of finite-type whose fan has convex support, any such curve in $\alpha$ is contained in $X^{(k)}$ and lies in a compact region.
\end{proof}

The definition of an open Gromov--Witten invariant is briefly recalled as follows.

\begin{Def}[Open Gromov--Witten invariant] \label{def:oGW}
Let $L_r$ be a Lagrangian torus which bounds no non-constant holomorphic disc of Maslov index $\le 0$.  
Let $\beta \in \pi_2(X,L)$, and denote by $\mathcal{M}_1(\beta)$ the moduli space of stable discs with one boundary marked point representing $\beta$.  The open Gromov--Witten invariant associated to $\beta$ is $n_\beta := \int_{\mathcal{M}_1(\beta)} \mathrm{ev}^*[\mathrm{pt}]$, 
where $\mathrm{ev}:\mathcal{M}_1(\beta) \to L$ is the evaluation map at the boundary marked point.
\end{Def}

For dimension reason $n_\beta$ is non-zero only when $\beta$ has Maslov index $2$. 
The condition that $L$ bounds no non-constant holomorphic disc of Maslov index $\le 0$ makes sure that disc-bubbling does not occur and so $\mathcal{M}_1(\beta)$ has no codimension $1$ boundary.  
Then $n_\beta$ is well-defined and in particular does not depend on the choice of perturbations in Kuranishi structure.

The following proposition corresponds to \cite[Lemma 4.27, Propositions 4.30 \& 4.32]{CLL}, describing the wall-crossing of the open Gromov--Witten invariants. 
The proof is parallel to the finite-type case and is omitted here. 

\begin{Prop} \label{prop:wc}
The wall is given by $H = M'_\R \times \{\delta\}$. 
We have $B_0 - H = B_+ \coprod B_-$ where $B_+ = \{(r_1,r_2) \in B_0: r_2 > \delta\}$ and $B_- = \{(r_1,r_2) \in B_0: r_2 < \delta\}$. 
For $r \in B_+$, we have $n_\beta^{L_r} = n_\beta^{T}$ where $T$ denotes a regular moment-map fiber. 
For $r \in B_-$, $n_\beta^{L_r}$ equals to $1$ when $\beta = \beta_{(0,1)}$ and $0$ otherwise.

Moreover $n_\beta^T \not= 0$ only when $\beta = \beta_{(v,1)} + \alpha$ where $\alpha \in H_2(X,\Z)$ is a rational curve class, and $\beta_{(v,1)}$ is a basic disc class corresponding to the primitive generator $(v,1)$ of the fan.  We have $n_{\beta_{(v,1)}} = 1$.
\end{Prop}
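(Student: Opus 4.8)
The plan is to reduce the statement to the finite-type case of \cite[Lemma 4.27, Propositions 4.30 \& 4.32]{CLL} by exhausting $X$ with the finite-type toric Calabi--Yau submanifolds $X^{(k)} := X_{\Sigma_k}$ coming from the exhaustion $\Sigma_1\subset\Sigma_2\subset\cdots$. The key structural fact, already available to us, is that any stable disc of Maslov index $2$ bounded by a moment-map fiber $L_r$ with $r\in B_0-H$ lies in a compact region, and in fact is supported on some $X^{(k)}$ for $k\gg 0$: the class is $\beta_i+\alpha$ with $\alpha$ a toric curve class, the curve component meets $D_i$, and convexity of $|\Sigma_k|$ pins the whole stable disc inside $X^{(k)}$. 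Hence the moduli space $\mathcal{M}_1(\beta_i+\alpha)$ computed in $X$ coincides with the one computed in $X^{(k)}$, and the open Gromov--Witten invariants $n_\beta^{L_r}$ agree with those of the finite-type manifold $X^{(k)}$. All the assertions of the proposition are then inherited term-by-term from \cite{CLL}.

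Concretely, I would proceed in the following steps. First, locate the wall: by Definition \ref{def:wall}, $H$ consists of fibers bounding a non-constant holomorphic disc of Maslov index $\le 0$; since each such disc lives inside some $X^{(k)}$, and the finite-type analysis of \cite{CLL} identifies the wall of $X^{(k)}$ as $M'_\R\times\{\delta\}$ (the locus where the Harvey--Lawson-type fiber degenerates over $\Gamma\times\{\delta\}$ is accounted for by the discriminant, not the wall), taking the union over $k$ still gives $H=M'_\R\times\{\delta\}$, and this divides $B_0$ into $B_+$ and $B_-$ as stated. Second, on $B_-$: a fiber $L_r$ with $r_2<\delta$ bounds only the single basic disc $\beta_{(0,1)}$ (the one wrapping the divisor $\{w=\delta\}$), exactly as in the finite-type model near that boundary, giving $n_{\beta_{(0,1)}}^{L_r}=1$ and all others zero. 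Third, on $B_+$: the invariance $n_\beta^{L_r}=n_\beta^{T}$ for a regular moment-map fiber $T$ follows because $L_r$ is isotopic to $T$ through an unobstructed family (no wall is crossed within $B_+$), so $\mathcal{M}_1(\beta)$ deforms without codimension-$1$ boundary; this is the same cobordism argument as in \cite{CLL}. Fourth, the vanishing $n_\beta^T\ne 0\Rightarrow\beta=\beta_{(v,1)}+\alpha$ and the normalization $n_{\beta_{(v,1)}}=1$ are local statements about the toric fiber $T$, each holding inside a suitable $X^{(k)}$ and hence in $X$; the former uses that Maslov index $2$ forces exactly one basic disc factor, the latter that the open disc moduli in a basic class is a point (Cho--Oh \cite{CO}).

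The one point requiring genuine care — and the main potential obstacle — is verifying that the Kuranishi-structure machinery behaves well under the exhaustion, i.e.\ that the moduli space $\mathcal{M}_1(\beta_i+\alpha)$ and its virtual fundamental chain are \emph{literally} the same whether formed in $X$ or in $X^{(k)}$ for $k$ large, rather than merely isomorphic up to the usual ambiguities. Here one invokes the compactness lemma proved just above: because every stable map in the class stays in a fixed compact subset $S\subset X^{(k)}\subset X$, the Gromov compactness and the construction of Kuranishi neighborhoods take place entirely within $X^{(k)}$, so the identification is canonical. Once this is granted, no new analysis is needed and the finite-type results of \cite{CLL} transport verbatim; accordingly the proof is short and I would, as the authors do elsewhere in this section, state it as parallel to the finite-type case with the compactness lemma supplying the only extra input.
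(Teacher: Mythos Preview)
Your proposal is correct and matches the paper's approach: the paper simply states that the proof is parallel to the finite-type case \cite[Lemma 4.27, Propositions 4.30 \& 4.32]{CLL} and omits it, with the compactness lemma immediately preceding the proposition supplying exactly the extra input you identify (that stable discs of Maslov index $2$ are trapped in some $X^{(k)}$, so the moduli spaces and invariants coincide with their finite-type counterparts). Your write-up spells out what the paper leaves implicit, including the Cho--Oh result for $n_{\beta_{(v,1)}}=1$, which the paper also cites.
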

The last assertion in the proposition is due to the result of Cho--Oh \cite{CO}.

Thus the only non-trivial open Gromov--Witten invariants are $n_{\beta_{(v,1)}+\alpha}^{L_r}$ where $r \in B_+$ and $\alpha \not=0$. 
The corresponding disc moduli space has sphere-bubbling contributions which lead to non-trivial obstructions. 
It turns out that these invariants exactly correspond to the instanton corrections in the mirror map and in particular can be extracted from solutions of the GKZ system.  
This will be done in the next subsection.



\subsubsection{SYZ mirrors with quantum corrections}

We obtain the following generating function of open Gromov--Witten invariants corresponding to the boundary divisor $D_u = \{w = \delta\}$:
for $(L_r,\nabla) \in B_0 - H$, 
\begin{equation}
u (L_r,\nabla) := \sum_{\substack{\beta \in \pi_2(X,L_r) \\ \beta \cdot D_u = 1, \beta\pitchfork D_u}} n_\beta \cdot \mathrm{e}^{-\int_{\beta}\omega}\Hol_{\nabla}(\partial \beta). 
\label{eq:gen}
\end{equation}
It follows from Proposition \ref{prop:wc} that $u$ can be expressed in terms of the semi-flat complex coordinates $z_0,\ldots,z_n$ as follows. 
The detail can be found in \cite[Proposition 4.39]{CLL}.

\begin{Prop} \label{prop: gen fnct}
With the same notation as in Theorem \ref{thm:SYZ}, we have
$$
u (L_r,\nabla) = \left\{
\begin{array}{ll}
z_0 \left(\sum_J \left(\sum_{\alpha \in H_2^{\mathrm{eff}}(X,\Z)} n_{\beta_{(J,1)}+\alpha} q^\alpha\right) q^{A_J} z^J\right) & \textrm{ for }  r \in B_+ \\
z_0 & \textrm{ for }  r \in B_-
\end{array}
\right.
$$
\end{Prop}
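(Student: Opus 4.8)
The plan is to begin from the defining formula \eqref{eq:gen} for $u$, substitute the wall-crossing description of the open Gromov--Witten invariants supplied by Proposition \ref{prop:wc}, and rewrite each weight $\mathrm{e}^{-\int_\beta\omega}\Hol_\nabla(\partial\beta)$ in terms of the semi-flat coordinates $z_0,z_1,\dots,z_{n-1}$ of Section \ref{sec:semi-flat}. This parallels \cite[Proposition 4.39]{CLL} almost verbatim; the only genuinely new point, due to infinite-type, is to make sure the output is a well-defined element of $\C[[z_1^{\pm 1},\dots,z_{n-1}^{\pm 1}]][[q_1,\dots]]^f/I$.

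First I would isolate the contributing disc classes. Over the contractible set $U$, which contains both $B_+$ and $B_-$, we identify $\pi_2(X,L_r)$ with $\pi_2(X,T)$ as in Section \ref{sec:semi-flat}, so that the invariants appearing in \eqref{eq:gen} are those of a regular moment-map fiber when $r\in B_+$, and reduce to the single value $n_{\beta_{(0,1)}}=1$ when $r\in B_-$ (Proposition \ref{prop:wc}). The nonzero invariants are carried by the Maslov-index-$2$ classes $\beta_{(v,1)}+\alpha$, with $(v,1)$ a ray generator and $\alpha\in H_2^{\mathrm{eff}}(X,\Z)$; and, exactly as in \cite[Proposition 4.39]{CLL} (argument principle for $w$ restricted to a disc representative), these are precisely the classes with $\beta\cdot D_u=1$ meeting $D_u=\{w=\delta\}$ transversally. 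Hence \eqref{eq:gen} collapses to a double sum over $v$ and $\alpha$ when $r\in B_+$, and to the single term coming from $\beta_{(0,1)}$ when $r\in B_-$.

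Next I would expand the weight of $\beta_{(v,1)}+\alpha$. Writing $(v,1)=(a_1,\dots,a_{n-1},1)$, the identity $(v,1)=(0,1)+\sum_{k=1}^{n-1}a_k\big((e_k,1)-(0,1)\big)$ shows that the class
$$ A_v \;:=\; \beta_{(v,1)}-\beta_{(0,1)}-\sum_{k=1}^{n-1}a_k\big(\beta_{(e_k,1)}-\beta_{(0,1)}\big) $$
has vanishing boundary, hence is a genuine curve class, and that $\beta_{(v,1)}+\alpha=\beta_{(0,1)}+\sum_k a_k\bar\beta_{e_k}+A_v+\alpha$. Splitting $\int\omega$ additively and $\Hol_\nabla$ multiplicatively along this decomposition and invoking the definitions of $z_0$, $z_i$ and of the Kähler variables $q_i$ (Definition \ref{def:K-mod}), the weight becomes $z_0\cdot q^{A_v}z^v\cdot q^\alpha$ with $z^v=z_1^{a_1}\cdots z_{n-1}^{a_{n-1}}$. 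Summing over $\alpha$ for fixed $v$ and then over $v$ gives the asserted formula on $B_+$; on $B_-$ only $v=0$, $\alpha=0$ survive, with $A_0=0$ and $z^0=1$, so $u=z_0$.

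The hard part will be the final claim that the right-hand side genuinely lies in $\C[[z_1^{\pm 1},\dots,z_{n-1}^{\pm 1}]][[q_1,\dots]]^f/I$. For each fixed $v$ the inner series $\sum_\alpha n_{\beta_{(v,1)}+\alpha}q^\alpha$ is well-defined because every stable disc in $\beta_{(v,1)}+\alpha$ stays in a compact region, hence inside a finite-type truncation $X^{(k)}$ (the compactness lemma preceding Definition \ref{def:oGW}), on which the open Gromov--Witten generating function is an honest power series in finitely many Kähler parameters; the superscript $f$ together with the ideal $I$ then absorbs the possibly infinitely many toric curves in a single homology class. One must also check that the Laurent shift $z^{\beta_v-\beta_0}=q^{A_v}z^v$, in which $q^{A_v}$ may be a negative monomial, is compatible with the grouping of monomials defining $I$, and that after this grouping only finitely many $v$ contribute to each $z$-power; this is where the exhaustion hypothesis and the openness of $|\Sigma|-\{0\}$ (Corollary \ref{cor:fin}, Proposition \ref{prop:cpt}) come in. In the finite-type situation of \cite{CLL} these points are essentially automatic, so they are the only place where the infinite-type proof genuinely differs.
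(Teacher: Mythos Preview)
Your proposal is correct and follows essentially the same approach as the paper: the paper does not give a self-contained proof but simply refers to \cite[Proposition 4.39]{CLL} and to Proposition \ref{prop:wc}, which is precisely the route you outline (identify the contributing classes via Proposition \ref{prop:wc}, decompose $\beta_{(v,1)}+\alpha$ to express the weight in the semi-flat coordinates). Your additional discussion of well-definedness in the infinite-type setting is appropriate and goes slightly beyond what the paper spells out; one minor remark is that distinct $v$ give distinct monomials $z^v$, so the issue is not that several $v$ contribute to the same $z$-power but rather that for each fixed equivalence class of $q$-monomials the collection of pairs $(v,\alpha)$ with $q^{A_v+\alpha}$ in that class yields a well-defined element of the coefficient ring.
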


According to \cite{CLL}, the SYZ mirror is given by the equation $uv = F^\open$, where $F^\open$ is the wall-crossing factor of the generating function $u$\footnote{
More precisely, the $v$ variable is obtained as the generating function of open Gromov--Witten invariants corresponding to the divisor $D_\infty$ 
coming from a symplectic cut \cite{Ler} by the Hamiltonian circle action of $(0,-1) \in N = N' \times \Z$. 
The mirror equation $uv = F^\open$ is the relation between the $2$ functions $u$ and $v$.}. 
It follows from Proposition \ref{prop: gen fnct} that the SYZ mirror is the one given in Theorem \ref{thm:SYZ}.



By definition $F^\open$ belongs to the ring $\C[[z_1^{\pm 1}, \ldots, z_{n-1}^{\pm 1}]][[q_1,\ldots]]^f / I$, 
where we recall that $\C[[q_1,\ldots]]^f / I$ is the K\"ahler moduli given in Definition \ref{def:K-mod}. 
Alternatively, $F^\open$ can be deduced as the limit of the corresponding wall-crossing factors of members of the exhaustion $X_k$, the toric Calabi-Yau manifolds corresponding to the fans $\Sigma_k$ in Definition \ref{def:exhaustion}.
For this we recall the natural topology for power series ring.

\begin{Def}
We define a topology on $\C[[z_1^{\pm 1}, \ldots, z_{n-1}^{\pm 1}]][[q_1,\ldots]]^f$ as follows. 
The basic open sets takes the form 
$$
\left\{\sum_{I,J} a_{I,J} z^I q^J: \sum_{J:[q^{J}] = h_1} a_{I_1,J} = c_1, \ldots, \,\, \sum_{J:[q^{J}] = h_p} a_{I_p,J} = c_p \right\}
$$ 
for some fixed $p \geq 0$, indices $I_i \in \Z^{n-1}$, monomial classes $h_i$ in $\C[[q_1,\ldots]]^f/I$, and $c_i \in \C$ for $i=1,\ldots,p$.  $\C[[z_1^{\pm 1}, \ldots, z_{n-1}^{\pm 1}]][[q_1,\ldots]]^f/I$ is equipped with the quotient topology. 
\end{Def}

It is easy to see the following.
\begin{Prop}
$\C[[z_1^{\pm 1}, \ldots, z_{n-1}^{\pm 1}]][[q_1,\ldots]]^f/I$ is Hausdorff in the above topology.  A sequence $(f_i)$ in $\C[[z_1^{\pm 1}, \ldots, z_{n-1}^{\pm 1}]][[q_1,\ldots]]^f/I$ is convergent if and only if each term eventually stabilizes.
\end{Prop}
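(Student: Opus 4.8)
The plan is to identify $R := \C[[z_1^{\pm 1},\ldots,z_{n-1}^{\pm 1}]][[q_1,\ldots]]^f/I$ with a subspace of a product of copies of $\C$ carrying the \emph{discrete} topology, and then to read off both assertions from the corresponding facts about product topologies. For each $I \in \Z^{n-1}$ and each monomial class $h$ in $\C[[q_1,\ldots]]^f/I$, I would introduce the aggregated-coefficient functional
$$ \pi_{I,h}\colon R \longrightarrow \C, \qquad \sum_{I',J} a_{I',J}\, z^{I'} q^{J} \longmapsto \sum_{J\,:\,[q^{J}]=h} a_{I,J}. $$
First one checks this is well defined: the inner sum is finite by the defining property of the superscript $f$, and it is unchanged if one adds an element of $I$, because the generating relations of $I$ pair monomials $q^{J_1}, q^{J_2}$ lying in one and the same class with opposite coefficients, and multiplication by an arbitrary series preserves this pairing since the class map is multiplicative. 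Each basic open set in the statement is then a finite intersection of fibres $\pi_{I_i,h_i}^{-1}(c_i)$; in particular the $\pi_{I,h}$ are continuous, the basic open sets of the non-quotient ring are $I$-saturated, the quotient map is open, and the quotient topology on $R$ coincides with the initial topology generated by the family $\{\pi_{I,h}\}$ (with $\C$ discrete). I would record these steps carefully, as the identification of the quotient topology with this initial topology is where one must be precise.

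Next I would show that $\Phi := (\pi_{I,h})_{I,h}\colon R \to \prod_{(I,h)} \C$, with the target carrying the product of discrete topologies, is injective, hence a topological embedding. Injectivity amounts to the claim that if every aggregated coefficient of $f \in \C[[z^{\pm}]][[q_1,\ldots]]^f$ vanishes then $f \in I$. Writing $f = \sum_{I'} z^{I'} f_{I'}(q)$ reduces this to the statement in $\C[[q_1,\ldots]]^f$ that a series all of whose class-sums vanish lies in $I$; there one fixes a representative monomial $q^{J_h}$ in each class $h$ and rewrites the series as $\sum_h \sum_{[q^{J}]=h} a_J\,(q^{J} - q^{J_h})$, each inner finite sum lying in $I$. \textbf{The main obstacle} is precisely this last step: the sum over $h$ is infinite, so one must argue that the total still lies in $I$ — either by regrouping the whole expression as a finite $R$-linear combination of the defining relations, or by verifying that $I$ is closed in the ambient topology so that it absorbs such convergent sums.

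Finally I would deduce the two claims from properties of $\Phi$. A product of discrete, hence Hausdorff, spaces is Hausdorff and subspaces inherit this, so $R$ is Hausdorff; explicitly, if $f \neq g$ then $\pi_{I,h}(f) \neq \pi_{I,h}(g)$ for some $(I,h)$ by injectivity of $\Phi$, and $\pi_{I,h}^{-1}(\pi_{I,h}(f))$ and $\pi_{I,h}^{-1}(\pi_{I,h}(g))$ are disjoint basic open sets separating them. For the convergence criterion, a sequence in a product of discrete spaces converges if and only if each coordinate is eventually constant, so through the embedding $\Phi$ a sequence $(f_i)$ in $R$ converges if and only if for every $(I,h)$ the value $\pi_{I,h}(f_i)$ eventually stabilizes — which is exactly the assertion that each term eventually stabilizes. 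The one remaining point is that the limit actually lies in $R$: given eventually-stabilizing aggregated values $c_{I,h}$, one exhibits a genuine element of $R$ realizing them, for instance $\sum_{I,h} c_{I,h}\, z^{I} q^{J_h}$, which has at most one term in each class and hence lies in $\C[[z^{\pm}]][[q_1,\ldots]]^f$; this shows $\Phi(R)$ contains all such limits and completes the argument.
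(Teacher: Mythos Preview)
Your approach is correct and considerably more detailed than the paper's, which offers no proof at all beyond ``It is easy to see the following.'' The identification of $R$ with a subspace of $\prod_{(I,h)} \C_{\mathrm{disc}}$ via the aggregated-coefficient functionals $\pi_{I,h}$ is exactly the right framing, and both conclusions drop out immediately once the embedding is established.

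You have put your finger on the one genuine subtlety: the injectivity of $\Phi$, equivalently the equality of $I$ with $\bigcap_{I,h}\ker\pi_{I,h}$, equivalently (since $R/I$ is a topological group) the closedness of $I$. This is not a gap in your argument so much as an ambiguity in the paper's Definition~2.14. In a formal power series ring the phrase ``ideal generated by'' is routinely taken to mean the \emph{closed} ideal, i.e.\ one allows formal (convergent) sums $\sum_s r_s s$ rather than only finite ones; under that reading your rewriting $f=\sum_h\sum_{[q^J]=h} a_J(q^J-q^{J_h})$ exhibits $f$ as such a convergent sum of ideal elements and injectivity is immediate. If instead one insists on the purely algebraic ideal, then the Proposition as stated is equivalent to the assertion that this ideal is already closed, and one would need a regrouping argument of the type you sketch in option~(a) --- which is feasible class by class but requires care about whether infinitely many distinct generating relations appear. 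Given the paper's terse treatment, the intended reading is almost certainly the topological one. Your option~(b) is not circular: proving $I$ closed in the \emph{upstairs} ring with its given topology is a separate (and prior) question to Hausdorffness of the quotient, and it is equivalent to the injectivity you need. Either way, you have isolated precisely the point that the paper leaves implicit.
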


The SYZ mirror can be obtained by taking the limit of the exhausting finite-type Calabi--Yau manifolds of $X$.

\begin{Prop} \label{prop:lim}
Let $X$ be a toric Calabi--Yau $n$-fold of infinite-type. 
Let $X_1\subset X_2 \subset \ldots \subset X$ be the sequence of toric Calabi--Yau $n$-folds of finite-type corresponding to the compact exhaustion $P_1\subset P_2 \subset \ldots \subset P$. 
Let 
$$
\{((u,v),x_1,\ldots,x_{n-1}) \in \C^2 \times (\C^\times)^{n-1}: uv =  F^\open_k (q_1,\ldots,q_{N_k}; x_1,\ldots,x_{n-1})\}
$$
be the SYZ mirrors of $X_k$. 
Then $F^\open$ is the limit of $F^\open_k$ in $\C[[z_1^{\pm 1}, \ldots, z_{n-1}^{\pm 1}]][[q_1,\ldots]]^f/I$.
Moreover $F^\open_k$ is the image of $F$ under the restriction map 
$$\C[[z_1,\ldots,z_{n-1}]][[q_1,\ldots,q_{N_k},\ldots]]^f/I \longrightarrow \C[[z_1,\ldots,z_{n-1}]][[q_1,\ldots,q_{N_k}]]^f/I_k$$ 
given in Definition \ref{def:restr}.
\end{Prop}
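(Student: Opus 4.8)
The plan is to reduce everything to the comparison of open Gromov--Witten invariants between $X$ and its finite-type exhausting pieces $X_k$, and then invoke the description of convergence in $\C[[z_1^{\pm 1}, \ldots, z_{n-1}^{\pm 1}]][[q_1,\ldots]]^f/I$ proved just above (a sequence converges iff each coefficient eventually stabilizes). Recall from Theorem \ref{thm:SYZ} that $F^\open = \sum_{v} \big(\sum_{\alpha} n_{\beta_v + \alpha} q^\alpha\big) z^{\beta_v-\beta_0}$, and the analogous formula holds for each $F^\open_k$ with the sum over $v$ restricted to the rays of $\Sigma_k$ (equivalently, the lattice points of $P_k$) and with $\alpha$ ranging over $H_2^\mathrm{eff}(X_k,\Z)$. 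So the statement has two halves: (i) the restriction-map identity $F^\open_k = F^\open|_{(q_i')=0}$ for the parameters $q_i'$ not belonging to $X_k$, and (ii) the limit statement $F^\open_k \to F^\open$.

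First I would establish (i). The key geometric input is the lemma in Section \ref{sec:semi-flat}'s vicinity (the compactness lemma for stable discs of Maslov index $2$): a stable disc in class $\beta_v + \alpha$ bounded by a fiber $L_r$ consists of a basic disc meeting only $D_v$ together with rational curve components, all of which are forced into $X^{(k)}$ for $k$ large, since the rational curves are contained in the toric divisors and have zero intersection with every toric divisor outside $X^{(k)}$, and $X^{(k)}$ has convex support. Consequently, for a basic disc class $\beta_v$ that already lives in $X_k$ and $\alpha\in H_2(X_k,\Z)$, the moduli space $\mathcal{M}_1(\beta_v+\alpha)$ computed in $X$ agrees with the one computed in $X_k$, hence $n_{\beta_v+\alpha}^{X} = n_{\beta_v+\alpha}^{X_k}$. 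For a ray $v$ of $\Sigma$ not in $\Sigma_k$, the curve class $A_v = \beta_v-\beta_0-\sum a_i(\beta_{e_i}-\beta_0)$ satisfies $A_v\cdot D_v = 1$, so it is not a $\Z_{\ge0}$-combination of the toric curves $C_i$ of $X_k$; therefore the monomial $z^{\beta_v-\beta_0}=q^{A_v}z^v$ contains a factor $q_i'$ that cannot be rewritten purely in the $q_i$ of $X_k$, and Definition \ref{def:restr} sends it to $0$. Combining the two observations, $F^\open|_{(q_i')=0}$ keeps precisely the terms indexed by rays of $\Sigma_k$ with the correct coefficients, which is $F^\open_k$. (One should also check, using Lemma \ref{lem:rel}, that the relations defining $I_k$ are exactly the relations in $I$ involving only $q_1,\ldots,q_{N_k}$, so the passage between the two quotient rings is consistent — this is routine.)

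Then (ii) follows quickly: by the convergence criterion it suffices to show each coefficient of $F^\open_k$, indexed by a fixed pair $(z^I, [q^J])$, stabilizes. For fixed $I$ the ray $v$ with $z^v = z^I$ is determined (it is $(I,1)$), so the relevant basic disc class $\beta_v$ is fixed; and by the compactness Proposition \ref{prop:cpt} applied to the finitely many curve classes $\alpha$ with $[q^\alpha] = [q^J]$ and $n_{\beta_v+\alpha}\neq 0$ (finitely many by the $f$-condition), all contributing curves lie in a fixed compact subset, hence inside $X_k$ for all $k\gg 0$, and $n_{\beta_v+\alpha}^{X_k} = n_{\beta_v+\alpha}^{X}$ for such $k$ by the argument in (i). Thus the coefficient of $z^I q^J$ (or rather of $z^I$ in the class $[q^J]$) in $F^\open_k$ equals that of $F^\open$ once $k$ is large, proving $F^\open_k\to F^\open$.

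The main obstacle I anticipate is (i) rather than (ii): one must be careful that the open Gromov--Witten invariant $n_{\beta_v+\alpha}$, whose definition involves a choice of Kuranishi perturbation, genuinely agrees when computed in $X$ versus in $X_k$ — i.e. that the identification of moduli spaces respects the Kuranishi structures and virtual counts, not merely the underlying sets. This is exactly the content of the compactness lemma together with the fact that $X^{(k)}$ is an open subset of $X$ carrying a compatible toric K\"ahler structure, so the obstruction theory is local to a compact region; still, it is the point requiring genuine care rather than bookkeeping. The bookkeeping with the ideals $I$, $I_k$ and the $f$-superscript is the other place where one can slip, but Lemma \ref{lem:rel} and Corollary \ref{cor:fin} (which lets us drop the $f$ when $|\Sigma|-\{0\}$ is open) make it manageable.
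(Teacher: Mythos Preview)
Your proof is correct and follows essentially the same route as the paper: compare term-by-term, use the compactness of stable discs to identify $n^X_{\beta_v+\alpha}=n^{X_k}_{\beta_v+\alpha}$ once $k$ is large, and argue that terms indexed by rays $v\notin\Sigma_k$ are killed by the restriction map because $A_v$ is not a curve class in $X_k$. One small omission in your part (i): you should also explicitly note the case where $v\in\Sigma_k$ but $\alpha\notin H_2(X_k,\Z)$ --- then $q^\alpha$ necessarily involves a K\"ahler parameter outside $X_k$ (again by Lemma \ref{lem:rel}) and is sent to $0$ by the restriction; the paper states this as the ``second case'' and it is needed for the restriction identity to hold for a \emph{fixed} $k$, not just in the limit.
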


\begin{proof}
$ F^\open_k$ takes the form
$$  F^\open_k = \sum_{v} \sum_{\alpha \in H_2^\mathrm{eff} (X_k)} n^{X_k}_{\beta_{v} + \alpha} q^\alpha q^{A_v} z^v $$
where $v \in N' = \Z^{n-1}$ such that $(v,1)$ are generators of the fan of $X_k$; see also Theorem \ref{thm:SYZ} for the notations.

To prove that $ F^\open_k$ limits to $F^\open$, it suffices to see that each coefficient $n^{X_k}_{\beta_{(J,1)} + \alpha}$ equals to $n^{X}_{\beta_{(J,1)} + \alpha}$ for $k$ large enough.
Let $\beta$ be a basic disc class of a regular moment map fiber of $X$ and $\alpha \in H_2^{\mathrm{eff}}(X,\Z)$.  For $k$ large enough, $\beta+\alpha$ is a disc class of a regular moment map fiber of $X_k$.  Consider the moduli space $\mathcal{M}^{X}_{0,1,\beta + \alpha}$. The elements consist of a nodal union of a basic disc in $\beta$ and a curve in $\alpha$.  In order to have a non-empty intersection with a basic disc in $\beta$, all such curves are contained in a compact subset of $X_k \subset X$.  Thus we have $\mathcal{M}^{X_k}_{0,1,\beta + \alpha} = \mathcal{M}^{X}_{0,1,\beta + \alpha}$, and hence $n_{\beta + \alpha}^X = n_{\beta + \alpha}^{X_k}$ for $k$ large enough.

Suppose $n^{X_k}_{\beta_{v} + \alpha} q^\alpha q^{A_v} z^v$ is a term in $F^\open$ but not in $ F^\open_k$.  Then either $(v,1)$ is not a ray of the fan of $X_k$, or $\alpha$ is not a curve class in $X_k$.  In the first case, $A_v$ is not a curve class in $X_k$, and hence any monomial equivalent to $q^{A_v}$ must contain K\"ahler parameters which do not belong to $X_k$.  In the second case $q^\alpha$ contain K\"ahler parameters not belonging to $X_k$.  Thus $ F^\open_k$ is obtained from $F^\open$ by setting the extra K\"ahler parameters to be $0$.
\end{proof}


\subsection{Open mirror theorem and Gross--Siebert normalization} \label{sec:renorm}

The open mirror theorem for toric Calabi--Yau manifolds of infinite-type can be deduced purely algebraically from the corresponding theorem for finite-type \cite{CCLT} and Proposition \ref{prop:lim}. 
This gives an explicit computation of all the coefficients of the SYZ mirror in Theorem \ref{thm:SYZ}. 

\begin{Thm}[Open mirror theorem] \label{thm:open-mir-thm}
Let $X$ be a toric Calabi--Yau manifold which could be of infinite-type.  Then we have
$$\sum_{\alpha} n_{\beta_l + \alpha} q^{\alpha}(\check{q}) = \exp(g_l(\check{q})),$$ where we recall from Proposition \ref{prop:mir-map} that
\begin{equation}\label{eqn:funcn_g}
g_l(\check{q}):=\sum_{d}\frac{(-1)^{(D_l\cdot d)}(-(D_l\cdot d)-1)!}{\prod_{p\neq l} (D_p\cdot d)!}\check{q}^d,
\end{equation}
the summation is over all effective curve classes $d\in H_2^\text{eff}(X,\Z)$ satisfying $-K_X\cdot d=0, D_l\cdot d<0 \text{ and } D_p\cdot d \geq 0 \text{ for all } p\neq l,$ 
and $q(\check{q})$ is the mirror map.
\end{Thm}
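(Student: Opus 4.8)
The plan is to reduce the infinite-type open mirror theorem to the already-established finite-type case via the exhaustion, passing to the limit using Proposition \ref{prop:lim}. First I would fix a basic disc class $\beta_l$ (corresponding to a ray $(v,1)$ of $\Sigma$) and an effective curve class $\alpha$, and observe that by the argument in the proof of Proposition \ref{prop:lim} the open Gromov--Witten invariant $n_{\beta_l+\alpha}$ is computed inside a finite-type member $X_k$ of the exhaustion once $k$ is large enough: any stable disc in class $\beta_l+\alpha$ has its sphere component confined to a compact subset of $X_k$, so $\mathcal{M}^{X}_{0,1,\beta_l+\alpha}=\mathcal{M}^{X_k}_{0,1,\beta_l+\alpha}$ and hence $n^X_{\beta_l+\alpha}=n^{X_k}_{\beta_l+\alpha}$.

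Next I would invoke the open mirror theorem for toric Calabi--Yau manifolds of finite-type proved in \cite{CCLT}, which gives, for each $X_k$,
$$\sum_{\alpha\in H_2^{\mathrm{eff}}(X_k,\Z)} n^{X_k}_{\beta_l+\alpha}\, q^\alpha(\check q) = \exp\big(g_l^{X_k}(\check q)\big),$$
where $g_l^{X_k}$ is the function \eqref{eqn:funcn_g} with the sum restricted to effective classes $d$ of $X_k$, and $q^\alpha(\check q)$ is the mirror map of $X_k$. The right-hand side $g_l^{X_k}$ only involves the finitely many $D_p$ of $X_k$; by Lemma \ref{lem:rel} and Corollary \ref{cor:G-mir-map}-style stability (here more simply by the explicit formula in Proposition \ref{prop:mir-map}), the relevant $d$ for the infinite-type $X$ satisfying $-K_X\cdot d=0$, $D_l\cdot d<0$ and $D_p\cdot d\ge 0$ for all $p\ne l$ automatically have $-K_{X_k}\cdot d=0$ and the same sign conditions once $k$ is large, because $D_l\cdot d<0$ forces the curve $d$ to meet $D_l$ and stay in a compact region, hence in $X_k$. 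Thus the coefficient of $\check q^d$ in $g_l^{X_k}$ stabilizes to that of $g_l$ as $k\to\infty$, and similarly the mirror map $q^\alpha(\check q)$ of $X_k$ stabilizes to that of $X$ by Proposition \ref{prop:mir-map} (the instanton correction $g_i$ there is a finite-type-stable sum). Both sides are series in $\C[[\check q_1,\ldots]]^f/I$, and by the topology of Proposition on convergence (``a sequence is convergent iff each term eventually stabilizes''), taking the limit $k\to\infty$ of the finite-type identity yields the desired infinite-type identity.

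The one point requiring care is the interchange of the limit with the substitution $q\mapsto q(\check q)$: the left-hand side of the finite-type statement is $\big(\sum_\alpha n_{\beta_l+\alpha} q^\alpha\big)$ evaluated at the mirror map, and I must check that composing the stabilizing series $\sum_\alpha n^{X_k}_{\beta_l+\alpha} q^\alpha$ with the stabilizing mirror map $q(\check q)$ still stabilizes term-by-term in $\check q$. This follows because each monomial $\check q^d$ on the composed side receives contributions only from finitely many $(\alpha,\text{mirror-map terms})$ with total curve class $d$ (a consequence of working in the ring $\C[[q_1,\ldots]]^f/I$ where each equivalence class of monomials is finitely supported), and each such contributing datum lies in $X_k$ for $k$ large. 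The main obstacle is therefore bookkeeping: verifying that all three finite-type objects --- the GW generating function, the mirror map $q(\check q)$, and the function $g_l$ --- stabilize compatibly under the restriction maps of Definition \ref{def:restr} and Proposition \ref{prop:lim}, so that the identity passes cleanly to the limit; there is no new geometric input beyond \cite{CCLT} and the compactness results already proved in Section \ref{section: toric infinite-type}.
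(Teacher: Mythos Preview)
Your proposal is correct and follows essentially the same route as the paper: reduce to the finite-type open mirror theorem of \cite{CCLT} via the exhaustion, argue that $n^{X_k}_{\beta_l+\alpha}$, $g_l^{X_k}$, and the mirror map each stabilize term-by-term as $k\to\infty$, and pass to the limit in the topology of $\C[[\check q_1,\ldots]]^f/I$. Your discussion of the interchange of the limit with the substitution $q\mapsto q(\check q)$ is in fact more explicit than the paper's, which simply asserts the convergence; the aside about Corollary~\ref{cor:G-mir-map} is a distraction (that corollary concerns $G$-equivariance, not stability under exhaustion), but you correctly fall back on Proposition~\ref{prop:mir-map} for what is actually needed.
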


\begin{proof}
By Proposition \ref{prop:lim}, we have the convergence 
$$
F^\open_k = \sum_{l\in\Sigma_k^{(1)}} \left(\sum_{\alpha \in H_2^\mathrm{eff} (X_k,\Z)} n^{X_k}_{\beta_{l} + \alpha} q^\alpha \right) q^{A_l} z^{v_l} \longrightarrow 
F^\open = \sum_{l\in\Sigma^{(1)}} \left( \sum_{\alpha \in H_2^\mathrm{eff} (X,\Z)} n^X_{\beta_{l} + \alpha} q^\alpha \right) q^{A_l} z^{v_l}
$$
where $\Sigma_k$ is the exhaustion of $\Sigma$ in Definition \ref{def:exhaustion} and $\Sigma^{(1)}$ denotes the set of rays of $\Sigma$. 
Thus $\sum_{\alpha \in H_2^\mathrm{eff} (X_k,\Z)} n^{X_k}_{\beta_{l} + \alpha} q^\alpha$ converges to $\sum_{\alpha \in H_2^\mathrm{eff} (X,\Z)} n^X_{\beta_{l} + \alpha} q^\alpha$.

By the open mirror theorem \cite[Theorem 1.4]{CCLT} for the toric Calabi--Yau manifold $X_k$ (where $k$ is big enough so that $D_l$ is contained in $X_k$), we have
$$ \sum_{\alpha \in H_2^\mathrm{eff} (X_k,\Z)} n^{X_k}_{\beta_{l} + \alpha} q^\alpha_{X_k}(\check{q}) = \exp(g^{X_k}_l(\check{q})) $$
where $q_{X_k}(\check{q})$ is the mirror map for $X_k$, $g^{X_k}_l(\check{q})$ takes the same expression as in Equation \eqref{eqn:funcn_g}, where the summation is over all effective curve classes $d\in H_2^\text{eff}(X_k,\Z)$ satisfying the same conditions, and $D_p$ in the expression are required to be toric divisors of $X_k$.

For each $d \in H_2^\mathrm{eff}(X,\Z)$, curves in $d$ are contained in $X_k$ for $k \gg 0$.  In particular $D \cdot d = 0$ for every toric divisor not contained in $X_k$. 
Thus the summand of $g_l^{X_k}$ corresponding to $d$ agrees with that of $g_l^X$ for $k \gg 0$. 
This implies $g_l^{X_k}(\check{q})$ converges to $g_l(\check{q})$ in $\C[[\check{q}_1,\ldots]]^f/I$, and so the same is also true for the mirror maps, 
namely $q^\alpha_{X_k}(\check{q})$ converges to $q^\alpha(\check{q})$. 
By taking $k \to \infty$ it follows that $\sum_{\alpha} n_{\beta_l + \alpha} q^{\alpha}(\check{q}) = \exp(g_l(\check{q}))$.
\end{proof}


In the current SYZ construction, we have chosen the toric fixed point corresponding to the maximal cone $\cb=\R_{\geq 0} \cdot \langle (0,1),(e_1,1),\ldots,(e_{n-1},1)\rangle$ as a base point and the primitive generator $v=(0,1) \in \cb$ for a trivialization of the torus bundle (Remark \ref{rem:base-pt} and the beginning of Section \ref{sec:semi-flat}). 
We can carry out the same construction for the other choices of a maximal cone $\cb$ and a primitive generator $v$ as well and obtain the functions $F^{\open}_{\cb,v}(q;z)$.

It was shown in \cite{Lau} that $\{F^\open_{\cb,v_i}(q;z)\}$ satisfies the Gross--Siebert normalization condition for the toric Calabi--Yau manifolds of finite-type by using the open mirror theorem of \cite{CCLT}.  The normalization is an essential ingredient in the Gross-Siebert program of toric degenerations \cite{GS,GS3}.  ( $\cb$ is called a slab, which is a subset of the wall that we choose to pass through.)  Since we still have the open mirror theorem (Theorem \ref{thm:open-mir-thm}) for toric CY of infinite-type, the same proof as in \cite{Lau} goes through to show that Gross--Siebert normalization still holds in our context. 

\begin{Thm} \label{thm: GS norm}
	For a toric Calabi--Yau manifold of infinite-type, the collection of generating functions $F^{\open}_{\cb,v}(q;z)$ satisfies and is uniquely determined by the following Gross--Siebert normalization conditions.
	\begin{enumerate}
		\item The constant term of each $F^{\open}_{\cb,v}(q;z)$ (as a series in $q$ and $z$) is $1$.
		\item If $v_i$ and $v_j$ are adjacent vertices of $\cb$, then $ F^{\open}_{\cb,v_i}(q;z) = q^{A_{v_j}-A_{v_i}} z^{v_j-v_i} F^{\open}_{\cb,v_j}(q;z)$. 
		(See Theorem \ref{thm:SYZ} for the definition of $A_{v}$). 
		\item If $v \in \cb \cap \cb'$, then $F^{\open}_{\cb,v}(q;z) = F^{\open}_{\cb',v}(q;z)$.
		\item 	
		\begin{equation}
		\log F^\open_{\cb,v_i}(q;z) = \sum_{k=1}^\infty \frac{(-1)^{k-1}}{k} (F^\open_{\cb,v_i}(q;z) - 1)^k
		\end{equation}
		has no term of the form $a \cdot q^{C}$ where $a \in \C^\times$ and $C \in H_2(X,\Z)-\{0\}$.
	\end{enumerate}
\end{Thm}

The normalization will be useful for explicit computations of SYZ mirrors in Sections \ref{section: Calabi--Yau}, \ref{section: 3-fold} and \ref{section: higher-dim}.


\subsection{SYZ under free group actions} \label{subsection: group action}
Now we consider a group action as in Section \ref{subset: free group}, where $G < \SL(N)$ (where $N$ denotes a lattice) acts on the fan $\Sigma - \{0\}$ supported in $N_\R$ freely, 
where $X_\Sigma$ is a toric Calabi--Yau manifold given in Definition \ref{def:torCY}. 
From Proposition \ref{prop:G-metric} and \ref{prop:fib-quot}, we have a $G$-invariant K\"ahler structure on $X^o$ and a Lagrangian fibration $X^o \to M_\R' \times [0,2\delta)$ which descends to the quotient.  

In this subsection, we construct the SYZ mirror of the quotient $X^o/G$ via taking a $G$-quotient of $\check{X}$. 
In general, to construct the SYZ mirror for a Lagrangian fibration $Y \to B$ of a K\"ahler manifold $Y$ where the base $B$ is not simply connected, 
one needs to cover $B$ by simply connected open sets, study the wall-crossing phenomenon and construct the SYZ mirror of each open set, 
and argue that they can be glued together to give a global mirror of $Y \to B$. 
On the other hand, it is conceptually cleaner by pulling back the Lagrangian fibration as $Z:=Y\times_B \widetilde{B} \to \widetilde{B}$ over the universal cover $\widetilde{B}$, 
construct the SYZ mirror $\check{Z}$ of $Z \to \widetilde{B}$, and take the quotient of $\check{Z}$ by the deck transformation group to define the SYZ mirror of $Y \to B$. 
We shall take this approach in this section.  

In our case, $Y = X^o/G$ and the base $B = (M_\R'/G) \times [0,2\delta)$ where $G$ acts on $M_\R'$ freely. 
Hence the universal cover is $\widetilde{B} = M_\R' \times [0,2\delta)$, the deck transformation group is $G$ itself and the pull-back is exactly the original Lagrangian fibration $X^o \to M_\R' \times [0,2\delta)$. 
The task is to construct a natural $G$-action on the SYZ mirror $\check{Z}$, induced from the action of $G$ on $X^o$.

Recall that the semi-flat mirror $\check{X}^o$ of $X^o$ is the space of pairs $(L_r,\nabla)$ where $L_r \subset X^o$ is a non-singular Lagrangian fiber equipped with a flat $\U(1)$-connection $\nabla$.  

\begin{Lem} \label{lem:G-wall}
The group $G$ takes a non-singular Lagrangian fiber to another non-singular Lagrangian fiber.  Moreover if a non-singular fiber $L_r$ bounds a non-constant holomorphic disc of Maslov index $0$, then the same holds for $L_r \cdot g$ for any $g \in G$.  In particular $G$ has an action on the base $B$ which preserves the discriminant locus and the wall of the Lagrangian fibration.
\end{Lem}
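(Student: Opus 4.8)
The plan is to exploit that $G$ acts on $X^o$ by holomorphic symplectomorphisms covering an affine action on the base $B$, so that every structure entering the SYZ construction --- the Lagrangian fibration, its discriminant locus, the holomorphic volume form, Maslov indices, and the moduli of holomorphic discs --- is carried along $G$-equivariantly.

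First I would record that each $g \in G$ acts on $X^o$ (which is $G$-invariant, since $w$ is) as a biholomorphism --- it is a toric morphism --- preserving the $G$-invariant K\"ahler form supplied by Proposition \ref{prop:G-metric}; hence $g$ is a symplectomorphism of $X^o$ and sends Lagrangian submanifolds to Lagrangian submanifolds. Next, exactly as in the proof of Proposition \ref{prop:fib-quot}, $\mu'$ is the moment map of the subtorus $N'_\R/N' \subset N_\R/N$, which commutes with $G$, so $\mu'$ is $G$-equivariant for the linear action of $G$ on $M'_\R$. Combined with the $G$-invariance of the toric holomorphic function $w$ (Lemma \ref{lem:torCY}), this shows that $\pi = (\mu', |w - \delta|)$ satisfies $\pi \circ g = \hat{g} \circ \pi$, where $\hat{g}$ acts on $B = M'_\R \times [0, 2\delta)$ by $\hat{g}(r_1, r_2) = (g \cdot r_1, r_2)$. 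In particular $g$ carries the fiber $L_r$ onto the fiber $L_{\hat{g}(r)}$, and $G$ acts on $B$ through $g \mapsto \hat{g}$.

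To see that $G$ sends non-singular fibers to non-singular fibers, I would invoke the description of the discriminant locus in Proposition \ref{prop:disc}: it equals $\partial B \sqcup (\Gamma \times \{\delta\})$, where $\Gamma$ is the $\mu'$-image of the union of the codimension $2$ toric strata. Since $G$ acts by toric morphisms it permutes the toric strata preserving their dimension, hence preserves that union, and by $G$-equivariance of $\mu'$ it preserves $\Gamma$; it obviously preserves $\partial B$. Thus $\hat{g}$ preserves the discriminant locus and therefore restricts to an action on $B_0$, which is the first assertion. For the second assertion, because $g$ is a biholomorphism of pairs $(X^o, L_r) \to (X^o, L_{\hat{g}(r)})$ it induces an isomorphism $\pi_2(X, L_r) \cong \pi_2(X, L_{\hat{g}(r)})$ together with a bijection between holomorphic discs bounded by $L_r$ and those bounded by $L_{\hat{g}(r)}$, taking non-constant discs to non-constant discs. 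It remains to check that this bijection preserves the Maslov index; here I would use $G \subset \SL(N)$: by Lemma \ref{lem:torCY} each $g$ preserves a toric holomorphic volume form (equivalently it permutes the toric prime divisors $D_i$ and fixes $\sum_i D_i$), and the Maslov index of a disc class is twice its intersection number with $\sum_i D_i$, so $g$ preserves Maslov indices. Consequently $L_{\hat{g}(r)}$ bounds a non-constant holomorphic disc of Maslov index $\le 0$ whenever $L_r$ does, so $\hat{g}$ preserves the wall $H$ of Definition \ref{def:wall}, consistently with the identification $H = M'_\R \times \{\delta\}$ in Proposition \ref{prop:wc}.

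I expect the only step requiring genuine care to be the $G$-invariance of the Maslov index; everything else is formal once the $G$-equivariance of $\pi$ has been established. One should also note that although the SYZ construction fixes an identification $\pi_2(X, L_r) \cong \pi_2(X, T)$ over a contractible chart, the present lemma only concerns properties intrinsic to the pair $(X, L_r)$ --- being Lagrangian, being non-singular, bounding a non-constant holomorphic disc of Maslov index $\le 0$ --- so compatibility of the $G$-action with that chart-dependent trivialization is not needed here.
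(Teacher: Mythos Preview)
Your proof is correct and follows essentially the same route as the paper's: establish $G$-equivariance of $\pi=(\mu',|w-\delta|)$ via Lemma~\ref{lem:torCY}, use preservation of toric strata for the discriminant locus, and use preservation of the K\"ahler structure to transport holomorphic discs. Your treatment is in fact more explicit than the paper's on the Maslov-index step: the paper simply invokes that $G$ preserves the whole K\"ahler structure (hence Maslov indices automatically), whereas you spell out the mechanism via intersection with the anticanonical divisor $\sum_i D_i$, which $G$ permutes. One small remark: for the Gross fibration the Maslov index is most directly computed as twice the intersection with the boundary divisor $\{w=\delta\}$ (the pole locus of $\Omega/(w-\delta)$) rather than with $\sum_i D_i$; since $G$ fixes $w$ this divisor is also $G$-invariant, so your conclusion stands, but the cleanest phrasing is simply that a biholomorphic symplectomorphism preserves Maslov indices.
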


\begin{proof}
Since $G$ acts as toric morphisms, it maps toric orbits to toric orbits. 
In particular there is a unique $G$-action on the moment map image of $X^o$ such that the moment map $\mu$ is $G$-equivariant. 
Since $\nu = (0,1) \in M$ is invariant under $G$ (Lemma \ref{lem:torCY}), the $G$-action descends to $M_\R / \R \cdot \nu$ and 
$\mu'$, the composition of $\mu$ with the projection $M_\R \to M_\R / \R \cdot \nu$, is also $G$-equivariant.  
The holomorphic function $w$ corresponding to $\nu$ is also $G$-invariant, and hence the fibration map $(\mu',|w-\delta|)$ is $G$-equivariant. 
In particular $G$ maps fibers to fibers.

Moreover the $G$-action preserves the toric stratification.  A fiber is singular if and only if it hits a codimension-$2$ toric strata. 
Hence singular fibers are mapped to singular fibers under $G$.  Thus the $G$-action on $B$ preserves the discriminant locus.

Since $G$ preserves the whole K\"ahler structure, it maps a non-constant holomorphic disc of Maslov index $0$ bounded by $L_r$ to that bounded by $L_r \cdot g$. 
Hence the wall is also preserved by $G$.
\end{proof}

In order to understand the $G$-action on the semi-flat complex structure, it would be easier to use a chart of $\check{X}_0$ which is preserved by $G$.  Unfortunately the chart $U$ taken in Section \ref{sec:semi-flat} is not preserved by $G$, since $G$ can map the connected component of $M'_\R - \Gamma$ corresponding to $(0,1) \in N$ to another component.  Instead we take the following $G$-invariant chart.

Recall that the discriminant loci in the base $B = M_\R = M_\R' \times \R$ are given by $\partial B = M_\R' \times \{0\}$ and $\Gamma \times \{\delta\}$.  The fundamental group $\pi_1(B)$ is generated by loops winding around the codimension-$2$ locus $\Gamma \times \{\delta\}$.  Now take a contractible open set $U' = B_0 - (\Gamma \times [0, \delta])$. 
$G$ preserves $B_0$ and $\Gamma$.  Moreover the last component $|w-\delta|$ of the fibration map is invariant under $G$, and hence $U'$ is preserved by $G$.

As in Section \ref{sec:semi-flat}, we pick a point $r_0 = (r_1,r_2) \in U'$ where $r_1 \in M'_\R - \Gamma$ and $r_2 > \delta$, 
and identify the fiber $L_{r_0}$ with a moment map fiber by the Lagrangian isotopy
$$
\{(\mu',|w-a|) = r_0\} \subset X - \bigcup_i D_i, \ \ \ a \in [0,\delta].
$$
Since $U'$ is contractible, any other fiber $L_r$ is identified with $L_{r_0}$. 
This gives identifications $H_1(L_{r}) \cong H_1(T)$ and $H_2(X,L_{r}) \cong H_2(X,T)$ where $T$ denotes a moment-map fiber.

There is a key difference between this identification and that in Section \ref{sec:semi-flat}. 
For different choices of $r_1$ in different chambers of $M'_\R - \Gamma$, the identifications $H_1(L_{r}) \cong H_1(T)$ and $H_2(X,L_{r}) \cong H_2(X,T)$ are different. 
In particular if we compose the identification $H_1(L_{r}) \cong H_1(T)$ by one choice of $r_1$, with the identification $H_1(T) \cong H_1(L_{r})$ by another choice of $r_1$, the resulting endomorphism on $H_1(L_{r})$ is a non-trivial monodromy (if the $2$ choices of $r_1$ live in different chambers of $M'_\R - \Gamma$).  This does not occur in Section \ref{sec:semi-flat} since a chamber (namely the one corresponding to $(0,1) \in N$) is fixed in the beginning in the definition of the contractible open set $U$. 
The chambers of $M'_\R - \Gamma$ are called slabs in the Gross--Siebert program \cite{GS}.

We fix the above choice of $r_1$ to be in the chamber of $M'_\R - \Gamma$ corresponding to $(0,1) \in N$.  Let
$$ z^\beta (L_r,\nabla) := \exp \left(- \int_{\beta(r)} \omega \right) \mathrm{Hol}_\nabla (\partial \beta)$$
be the semi-flat complex coordinate on $U'$ corresponding to a disc class $\beta \in H_2(X,T)$, which is identified with a disc class in $H_2(X,L_r)$.  
As in Section \ref{sec:semi-flat}, we have the disc classes $\beta_0$, $\beta_{v_i} - \beta_0$ for $i=1,\ldots,n-1$ 
(where $\{(0,1),(v_i,1) \textrm{ for } i=1,\ldots,n-1\}$ is a maximal cone of $\Sigma$) whose boundary classes form a basis of $N = H_1(T)$.  
Thus for any $\beta$, we have 
$$
\partial \beta = a_0 \partial \beta_0 + \sum_{i=1}^{n-1} a_i (\partial \beta_i - \partial \beta_0)
$$
for some $a_i \in \Z$, and hence $z^\beta$ can be written in terms of the coordinates $(z^{\beta_0},z^{\beta_1-\beta_0},\ldots,z^{\beta_{n-1}-\beta_0})$ as
$$
z^\beta = q^{\beta - a_0 \beta_0 - \sum_{i=1}^{n-1} a_i (\beta_i - \beta_0)} (z^{\beta_0})^{a_0} \prod_{i=1}^{n-1} (z^{\beta_i-\beta_0})^{a_i}
$$
where $\beta - a_0 \beta_0 - \sum_{i=1}^{n-1} a_i (\beta_i - \beta_0) \in H_2(X,\Z)$.  This is regarded as an element in 
$$
((\C[[q_1,\ldots]]^f/I)/G)[z^{\pm\beta_0}][[z^{\pm(\beta_1-\beta_0)},\ldots,z^{\pm(\beta_{n-1}-\beta_0)}]]. 
$$
We recall the reader that $\C[[q_1,\ldots]]^f/I)/G$ is defined as the K\"ahler moduli of the quotient $X^o/G$ (Definition \ref{def:K-mod-quot}).

The induced action of $G$ on the semi-flat coordinates is given by the following.

\begin{Lem} \label{lem:G-z}
We have $ g^* \, z^\beta = z^{\beta \cdot g^{-1}}$ for $g \in G$.  
\end{Lem}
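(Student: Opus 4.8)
The plan is to read off $g^{*}z^{\beta}$ straight from the definition $z^{\beta}(L_{r},\nabla)=\exp(-\int_{\beta(r)}\omega)\,\mathrm{Hol}_{\nabla}(\partial\beta)$. The group $G$ acts on $X^{o}$ through a diffeomorphism $\psi_{g}$ which, by Proposition \ref{prop:G-metric} and Lemma \ref{lem:torCY}, preserves the $G$-invariant K\"ahler form $\omega$, the Lagrangian fibration $(\mu',|w-\delta|)$, and the function $w$; by Lemma \ref{lem:G-wall} it preserves the chart $U'$ and so induces the action $(L_{r},\nabla)\mapsto(\psi_{g}(L_{r}),(\psi_{g})_{*}\nabla)$ on the semi-flat mirror over $U'$. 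Unwinding, $(g^{*}z^{\beta})(L_{r},\nabla)=z^{\beta}(\psi_{g}(L_{r}),(\psi_{g})_{*}\nabla)$, where $\beta$ is read through the chart identification $\iota_{r}\colon H_{2}(X,L_{r})\xrightarrow{\sim}H_{2}(X,T)$; writing the class in $H_{2}(X,\psi_{g}(L_{r}))$ labelled by $\beta$ as $(\psi_{g})_{*}\gamma$ with $\gamma\in H_{2}(X,L_{r})$, the equality $\psi_{g}^{*}\omega=\omega$ gives $\int_{(\psi_{g})_{*}\gamma}\omega=\int_{\gamma}\omega$, and naturality of holonomy under pushing forward both the flat connection and the boundary loop gives $\mathrm{Hol}_{(\psi_{g})_{*}\nabla}(\partial(\psi_{g})_{*}\gamma)=\mathrm{Hol}_{\nabla}(\partial\gamma)$. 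Hence $(g^{*}z^{\beta})(L_{r},\nabla)=z^{\iota_{r}(\gamma)}(L_{r},\nabla)$, and the lemma reduces to the equivariance $\iota_{\psi_{g}(r)}\circ(\psi_{g})_{*}=(\text{right }g\text{-action on }H_{2}(X,T))\circ\iota_{r}$, which forces $\iota_{r}(\gamma)=\beta\cdot g^{-1}$.

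To establish that equivariance I would check that each ingredient of $\iota_{r}$ is $G$-equivariant: $\mu'$ and $w$ are $G$-equivariant because $g$ is a toric morphism fixing $\nu=(0,1)$ (Lemma \ref{lem:torCY}); the reference isotopy $\{(\mu',|w-a|)=r_{0}\}_{a\in[0,\delta]}$ joining $L_{r_{0}}$ to a moment-map fiber is carried by $\psi_{g}$ to the corresponding isotopy at $\psi_{g}(r_{0})$; and parallel transport within $U'$ is $\psi_{g}$-compatible since $U'$ is $G$-invariant and contractible. Because the abstract right action on disc classes is by definition induced from the linear action of $g\in\GL(N)$ on $N=H_{1}(T)$ --- exactly what $(\psi_{g})_{*}$ realizes geometrically --- combining these identifications yields the claimed equivariance and hence $\iota_{r}(\gamma)=\beta\cdot g^{-1}$, up to the routine matching of left/right conventions.

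The one step I expect to be a genuine obstacle is this equivariance of the chart identifications: one must make sure no monodromy correction is introduced, because $g$ can send the chamber of $M'_{\R}-\Gamma$ distinguished by $(0,1)$ to a different slab, whereas all the $\iota_{r}$ are normalized against that fixed chamber. The point is that $\iota_{r}$ is built by parallel transport inside $U'=B_{0}-(\Gamma\times[0,\delta])$, which is $G$-invariant, so the transport is canonical and commutes with $\psi_{g}$, and the only non-canonical ingredient --- the isotopy down to a moment-map fiber in the distinguished chamber --- is itself intertwined with its $\psi_{g}$-image. Once this bookkeeping is spelled out the computation of the first paragraph would complete the proof.
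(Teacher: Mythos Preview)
Your proposal is correct and follows essentially the same approach as the paper: unwind the definition of $z^\beta$, use $G$-invariance of $\omega$ for the area term and naturality for the holonomy term, and reduce to the identity $\beta^{L\cdot g}=(\beta\cdot g^{-1})^{L}\cdot g$ (in the paper's notation), which is exactly your equivariance statement $\iota_{\psi_g(r)}\circ(\psi_g)_*=(\cdot\, g)\circ\iota_r$. The paper simply asserts this identity in one line, whereas you spell out why the chart identifications built from the $G$-invariant isotopy and parallel transport in the $G$-invariant contractible set $U'$ commute with $\psi_g$; your extra care about the slab/monodromy issue is warranted but, as you correctly conclude, no correction arises because everything happens inside $U'$.
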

\begin{proof}
By definition, we have 
$$
(g^* \, z^\beta) (L,\nabla) = z^\beta (L \cdot g, \nabla \cdot g) = \exp \left(- \int_{\beta^{L \cdot g}} \omega \right) \mathrm{Hol}_{\nabla \cdot g} (\partial \beta^{L \cdot g})
$$
where $\nabla \cdot g$ denotes the pull back of the flat connection $\nabla$ on $L$ to $L \cdot g$ by $g^{-1}$. 
Then we have 
$\beta^{L \cdot g} = (\beta \cdot g^{-1})^L \cdot g \in \pi_2(X,L \cdot g)$ 
and
$$
\mathrm{Hol}_{\nabla \cdot g} (\partial \beta^{L \cdot g}) = \mathrm{Hol}_{\nabla} (\partial \beta^{L \cdot g} \cdot g^{-1}) = \mathrm{Hol}_{\nabla} ((\partial {\beta \cdot g^{-1}})^{L}).
$$  
Also since the K\"ahler structure is $G$-invariant, we have 
$$
\int_{(\beta \cdot g^{-1})^L \cdot g} \omega = \int_{(\beta \cdot g^{-1})^L} (g^{-1})^* \omega = \int_{(\beta \cdot g^{-1})^L} \omega.
$$  
As a result, it follows that 
$$
(g^* \, z^\beta) (L,\nabla) = \exp \left(\int_{(\beta \cdot g^{-1})^L} \omega\right) \mathrm{Hol}_{\nabla} ((\partial {\beta \cdot g^{-1}})^{L}) = z^{\beta \cdot g^{-1}} (L,\nabla).
$$
\end{proof}

The wall divides $B_0$ into $2$ chambers, $B_0 - H = B_+ \cup B_-$.  Note that for this choice of $U'$, while $B_+$ is still connected, $B_- \cap U'$ consists of the connected components $B_{-,v} := C_v \times (0,\delta)$ where $C_v$ is a chamber of $M'_\R - \Gamma$ corresponding to a primitive generator $(v,1)$ of $\Sigma$ ($v \in N'$).  ($B_\pm$ are given in Proposition \ref{prop:wc}).  The chamber structure is preserved under $G$.

\begin{Lem} \label{lem:G-chamber}
We have $B_+ \cdot g = B_+$ and $B_- = B_- \cdot g$ for $g \in G$.
\end{Lem}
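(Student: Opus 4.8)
The plan is to reduce the statement to the fact that the $G$-action on the base $B$ fixes the last coordinate of the fibration. Recall from the proof of Lemma \ref{lem:G-wall} that $w$, the holomorphic function attached to $\nu = (0,1) \in M$, is $G$-invariant by Lemma \ref{lem:torCY}, while $\mu'$ is $G$-equivariant; hence the fibration map $\pi = (\mu', |w-\delta|)$ is $G$-equivariant, where $G$ acts on $B = M'_\R \times [0,2\delta)$ by the given action on the first factor $M'_\R$ and \emph{trivially} on the second factor $[0,2\delta)$. In particular the coordinate $r_2 = |w-\delta|$ is pointwise fixed by every $g \in G$.

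First I would record that $G$ preserves $B_0 = B - \partial B - (\Gamma \times \{\delta\})$ and the wall $H = M'_\R \times \{\delta\}$; this is exactly the content of the last sentences of Lemma \ref{lem:G-wall}. Next, by Proposition \ref{prop:wc} the chambers $B_+$ and $B_-$ are cut out inside $B_0$ by the conditions $r_2 > \delta$ and $r_2 < \delta$ respectively. Since $r_2$ is $G$-invariant, each of these conditions is preserved by the $G$-action, so $B_+ \cdot g = B_+$ and $B_- \cdot g = B_-$ for all $g \in G$, which is the claim.

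There is essentially no obstacle here; the one point that must be handled with care is to make sure one works with the description of the $G$-action on $B$ in which the fibration coordinate $|w-\delta|$ is genuinely fixed rather than merely permuted among fibers — this is precisely what the $G$-invariance of $w$ guarantees, and it is also the reason $B_+$ and $B_-$ cannot be interchanged by $G$.
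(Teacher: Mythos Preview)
Your proof is correct and in fact more direct than the paper's. The paper argues that $G$ preserves the holomorphic volume form on $X$, hence the orientation of $B$, and since $G$ also preserves the wall $H$ it cannot swap the two chambers. You instead observe (as already noted in the proof of Lemma \ref{lem:G-wall}) that $w$ is $G$-invariant, so the second coordinate $r_2 = |w-\delta|$ of the fibration map is literally fixed by $G$; since $B_\pm$ are cut out by inequalities on $r_2$ alone, they are automatically preserved. Your route avoids any appeal to orientation and makes explicit why no swap can occur, at the cost of relying on the concrete form of the fibration rather than a more intrinsic property.
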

\begin{proof}
Since $G$ preserves the holomorphic volume form on $X$, it preserves the orientation of the base $B$. 
Moreover it preserves the wall $H$ by Lemma \ref{lem:G-wall}.  Hence it preserves the chambers above and below the wall. 
\end{proof}

Now we need to consider the $G$-action on generating functions of open Gromov--Witten invariants.  The following simple lemma would be useful.

\begin{Lem} \label{lem:G-oGW}
We have $ n_\beta^L = n_{\beta \cdot g}^{L}$ for a Lagrangian torus fiber $L$ over $B_0 - H$. 
\end{Lem}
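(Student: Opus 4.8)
The plan is to obtain the identity directly from the fact that $g \in G$ acts on $X$ by a toric automorphism $\phi_g$ which is simultaneously a biholomorphism and, because the K\"ahler metric was chosen $G$-invariant (Proposition \ref{prop:G-metric}), a symplectomorphism. First I would note that $\phi_g$ preserves the toric neighborhood $X^o$ (it is cut out by $|w-\delta|<2\delta$ and $w$ is $G$-invariant by Lemma \ref{lem:torCY}), sends a Lagrangian torus fiber $L$ over a point of $B_0 - H$ to the fiber $L\cdot g$, which again lies over $B_0 - H$ by Lemma \ref{lem:G-wall}, and intertwines flat $\U(1)$-connections. Consequently, composition with $\phi_g$ carries a $J$-holomorphic stable disc with boundary on $L$ in the class $\beta$ to a $J$-holomorphic stable disc with boundary on $L\cdot g$ in the class $\beta\cdot g$, where under the identifications $H_2(X,L)\cong H_2(X,T)\cong H_2(X,L\cdot g)$ furnished by the contractible chart $U'$ the pushforward $(\phi_g)_*$ on disc classes is exactly the combinatorial action $\beta\mapsto\beta\cdot g$; this is the homology-level shadow of the computation already performed in the proof of Lemma \ref{lem:G-z}.

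Next I would upgrade this to an isomorphism of moduli spaces. Since $\phi_g$ is a biholomorphism, it induces a homeomorphism $\mathcal{M}_1(\beta,L)\xrightarrow{\ \sim\ }\mathcal{M}_1(\beta\cdot g,L\cdot g)$ commuting with the boundary evaluation maps (after using $\phi_g$ to identify $L$ with $L\cdot g$), and it carries the Kuranishi structure on the source to one on the target. By the compactness already established for infinite-type targets --- stable discs in a fixed Maslov-index-$2$ class stay in a fixed compact subset --- both moduli spaces are compact, and because $L$ and $L\cdot g$ bound no non-constant disc of Maslov index $\le 0$ the invariants $n_\beta^L$ and $n_{\beta\cdot g}^{L\cdot g}$ are independent of the perturbation data (Definition \ref{def:oGW}); transporting the structure along $\phi_g$ therefore gives $n_\beta^L = n_{\beta\cdot g}^{L\cdot g}$.

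Finally I would remove the dependence on which fiber the invariant is evaluated at. By Lemma \ref{lem:G-chamber} the fibers $L$ and $L\cdot g$ lie in the same chamber of $B_0 - H$, and by the wall-crossing description of Proposition \ref{prop:wc} the open Gromov--Witten invariant of a fixed disc class takes the same value on all fibers in a given chamber --- equal to the value on a regular moment-map fiber when the chamber is $B_+$, and pinned down explicitly when the chamber is $B_-$. Hence $n_{\beta\cdot g}^{L\cdot g}=n_{\beta\cdot g}^{L}$, and combining with the previous step yields $n_\beta^L = n_{\beta\cdot g}^L$. The only step that genuinely requires care --- and the one I would check in detail --- is the first bookkeeping point: that the combinatorial $G$-action $\beta\mapsto\beta\cdot g$ on $H_2(X,T)$ appearing in the statement really coincides with $(\phi_g)_*$ once everything is transported through the $U'$-trivialization, accounting for the monodromy of that chart; everything else is a direct transport of structure along the symplectomorphism $\phi_g$.
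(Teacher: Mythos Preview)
Your proposal is correct and follows essentially the same two-step approach as the paper: first use that $g$ acts by a K\"ahler automorphism to get an isomorphism $\mathcal{M}_1^L(\beta)\cong\mathcal{M}_1^{L\cdot g}(\beta\cdot g)$ and hence $n_\beta^L=n_{\beta\cdot g}^{L\cdot g}$, then invoke Lemma \ref{lem:G-chamber} and the chamber-independence from Proposition \ref{prop:wc} to conclude $n_{\beta\cdot g}^{L\cdot g}=n_{\beta\cdot g}^{L}$. Your write-up is more detailed than the paper's (you spell out the Kuranishi-structure transport and the bookkeeping on disc classes), but the logical skeleton is identical.
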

\begin{proof}
Since the $G$-action preserves the K\"ahler structure of $X$, it gives an isomorphism between the moduli spaces $\mathcal{M}^L_1(\beta) \cong \mathcal{M}^{L \cdot g}_1(\beta \cdot g)$ for any $g \in G$.  As a result we have $n_\beta^L = n_{\beta \cdot g}^{L \cdot g}$.  By Lemma \ref{lem:G-chamber} $L \cdot g$ and $L$ belongs to the same chamber.  Thus $n_{\beta \cdot g}^{L \cdot g} = n_{\beta \cdot g}^{L}$.
\end{proof}

In parallel to Proposition \ref{prop: gen fnct}, we have the following expression of $u$ (c.f. Equation \eqref{eq:gen}).

\begin{Prop} \label{prop:u}
We have 
\begin{equation}
u (L_r,\nabla) = \left\{
\begin{array}{ll}
z^{\beta_0} \cdot F^\open & \textrm{ for }  r \in B_+ \\
z^{\beta_v} & \textrm{ for }  r \in B_{-,v}
\end{array}
\right.
\end{equation}
and
$$ v (L_r,\nabla) = \left\{
\begin{array}{ll}
z^{-\beta_0} & \textrm{ for }  r \in B_+ \\
z^{-\beta_v} \cdot F^\open & \textrm{ for }  r \in B_{-,v}.
\end{array}
\right.$$
where 
$$F^\open = \sum_v \left(\sum_{\alpha \in H_2^{\mathrm{eff}}(X,\Z)} n_{\beta_v +\alpha} q^\alpha\right) z^{\beta_v - \beta_0} \in ((\C[[q_1,\ldots]]^f/I)/G)[[z^{\pm(\beta_1-\beta_0)},\ldots,z^{\pm(\beta_{n-1}-\beta_0)}]].$$
\end{Prop}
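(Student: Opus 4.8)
The plan is to follow the computation behind Proposition \ref{prop: gen fnct} (carried out in detail in \cite[Proposition 4.39]{CLL}), adapting it to the $G$-invariant chart $U'$ fixed above. Recall that $u$ is the generating function \eqref{eq:gen} over disc classes $\beta\in\pi_2(X,L_r)$ with $\beta\cdot D_u=1$ and $\beta\pitchfork D_u$, and that $v$ is the analogous generating function attached to the divisor $D_\infty$ coming from the symplectic cut by $(0,-1)\in N$. First I would note, using the identification $\pi_2(X,L_r)\cong\pi_2(X,T)$ on $U'$ pinned down by the chamber of $(0,1)$ together with the definition of the semi-flat coordinates $z^\beta$, that each summand $n_\beta\,\mathrm{e}^{-\int_\beta\omega}\Hol_\nabla(\partial\beta)$ is exactly $n_\beta\,z^\beta$, and that $z^{\beta+\alpha}=q^\alpha z^\beta$ for a curve class $\alpha$ (since $\partial\alpha=0$). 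Thus the whole problem reduces to evaluating $n_\beta^{L_r}$ over each chamber of $B_0-H$, which is governed by Proposition \ref{prop:wc}.

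Over $B_+$, which is connected and carries the $(0,1)$-chamber identification, Proposition \ref{prop:wc} gives $n_\beta^{L_r}=n_\beta^T$, nonzero only for $\beta=\beta_v+\alpha$ with $\alpha\in H_2^{\mathrm{eff}}(X,\Z)$ and $n_{\beta_v}=1$; these are exactly the classes with $\beta\cdot D_u=1$ and $\beta\pitchfork D_u$. Substituting $z^{\beta_v+\alpha}=q^\alpha z^{\beta_0}z^{\beta_v-\beta_0}$ and summing over $v$ and $\alpha$ would give $u=z^{\beta_0}F^\open$, the inner sums $\sum_\alpha n_{\beta_v+\alpha}q^\alpha$ converging in $(\C[[q_1,\ldots]]^f/I)/G$ by Corollary \ref{cor:fin} (recall $|\Sigma|-\{0\}$ is open here). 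Over each component $B_{-,v}$ of $B_-\cap U'$ the fiber $L_r$ is a small torus near $D_u$ lying over the chamber $C_v$; the local wall-crossing analysis of \cite[Lemma 4.27, Propositions 4.30 \& 4.32]{CLL} together with Cho--Oh's classification \cite{CO} in the toric chart containing $D_{(v,1)}$ shows that $L_r$ bounds a single basic holomorphic disc, in class $\beta_v$, with $n_{\beta_v}^{L_r}=1$ and all other admissible classes having vanishing invariant, so $u=z^{\beta_v}$. The formulae for $v$ then follow from the defining relation $uv=F^\open$ of the SYZ mirror, yielding $v=z^{-\beta_0}$ on $B_+$ and $v=z^{-\beta_v}F^\open$ on $B_{-,v}$; alternatively the same wall-crossing argument applied to $D_\infty$ gives them directly.

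Finally I would record that the resulting expressions are $G$-equivariant: combining Lemma \ref{lem:G-z}, Lemma \ref{lem:G-chamber} and Lemma \ref{lem:G-oGW} shows that $g^*u$ and $g^*v$ are the same functions written in the $g$-translated semi-flat coordinates, so $u$ and $v$ descend to the quotient mirror and $F^\open$ is a well-defined element of $((\C[[q_1,\ldots]]^f/I)/G)[[z^{\pm(\beta_1-\beta_0)},\ldots,z^{\pm(\beta_{n-1}-\beta_0)}]]$.

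I expect the main obstacle to be the bookkeeping forced by the disconnectedness of $B_-\cap U'$, which does not arise in the situation of Proposition \ref{prop: gen fnct}: the monodromy of $\pi_2(X,L_r)$ around $\Gamma\times\{\delta\}$ makes the basic disc class over $B_{-,v}$ equal to $\beta_v$ rather than the single class $\beta_0$ appearing in the chart $U$ of Section \ref{sec:semi-flat}. One then has to verify that the induced wall-crossing transformation is internally consistent, namely that the wall-crossing factor $u_+/u_-=z^{\beta_0}F^\open/z^{\beta_v}=q^{-A_v}z^{-v}F^\open$ over $C_v$ is precisely the slab function $F^\open_{\cb,v}$ in the notation of the discussion preceding Theorem \ref{thm: GS norm} (using the compatibility between slab functions of adjacent chambers recorded there), and similarly for $v$. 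Once this compatibility is in place, the remainder is a routine transcription of the finite-type argument of \cite{CLL}.
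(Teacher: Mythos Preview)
Your proposal is essentially correct and follows the same route as the paper: reduce to Proposition~\ref{prop:wc} on each chamber, and track how the unique disc class over $B_{-,v}$ is identified in the chart $U'$. The paper makes the $B_-$ step explicit via a Lagrangian isotopy from $L_r$ (with $r\in B_{-,v_0}$) to the basepoint $L_{r_0}$ inside $U'$: since $\beta(r)$ hits $D_u$ but no toric divisor, and the path must pass over the slab $C_{v_0}$, the isotoped class acquires intersection number $1$ with $D_{v_0}$ and $0$ with all other toric divisors, hence equals $\beta_{v_0}$ in $H_2(X,T)$. Your ``monodromy'' remark in the last paragraph is exactly this point; you should promote it to the body of the argument rather than leaving it as an anticipated obstacle.

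One caution: your primary derivation of $v$ from the relation $uv=F^\open$ is circular here, since that relation is what Proposition~\ref{prop:u} is meant to establish (cf.\ the footnote before Proposition~\ref{prop: gen fnct}). The paper instead says ``the expression for $v$ is deduced similarly,'' i.e.\ by running the same disc-counting argument for the divisor $D_\infty$. You already note this alternative; make it the actual argument. Also, the $G$-equivariance discussion you append is not part of this proposition in the paper---it is the content of the next result, Proposition~\ref{prop:G-F}---so it can be omitted here.
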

\begin{proof}
For $r \in B_-$, by Proposition \ref{prop:wc}, $L_r$ has a unique holomorphic disc class $\beta(r)$ of Maslov index $2$ with the property that $\beta(r) \cdot D = 1$ where $D = \{w = \delta\}$ is the boundary divisor, and $\beta(r) \cdot D_v = 0$ for all toric divisors $D_v$.  Now suppose $r \in B_{-,v_0}$ and consider the Lagrangian isotopy obtained from moving the fiber $L_r$ to $L_{r_0}$ along a path from $r$ to $r_0$ in $V'$.  Under this isotopy, $\beta(r)$ is identified with a disc class $\beta(r_0)$ of $L_{r_0}$ with $\beta(r_0) \cdot D_{v_0} = 1$ and $\beta(r_0) \cdot D_v = 0$ for all $v \not= v_0$.  Hence $\beta(r_0)$ is identified with $\beta_{v_0} \in H_2(X,T)$.  As a result, $u (L_r,\nabla) = z^{\beta_{v_0}}$ if $r \in B_{-,v_0}$.

For $r \in B_+$ it is the same as Proposition \ref{prop: gen fnct}.  Namely $L_r$ can be identified with a toric fiber $T$, and stable disc classes of Maslov index $2$ are of the form $\beta_v + \alpha$ for some basic disc class $\beta_v$ and effective curve class $\alpha \in H_2^{\mathrm{eff}}(X,\Z)$.  
As a result 
$$
u (L_r,\nabla) = \sum_v \sum_{\alpha \in H_2^{\mathrm{eff}}(X,\Z)} n_{\beta_v +\alpha} q^{\alpha} z^{\beta_v}
$$
as stated. 
The expression for $v$ is deduced similarly.
\end{proof}

Note that the wall-crossing function is $z^{\beta_0 - \beta_v} F^\open$, which depends on the slab $C_v$ passed through going from $B_{-,v}$ to $B_+$.

Due to the invariance of open Gromov--Witten invariants under $G$-action, it turns out the action on the generating function $F^\open$ is simply an overall scaling given as follows.
\begin{Prop} \label{prop:G-F}
We have $g^* \, F^{\open} = z^{\beta_0 - \beta_0 \cdot g^{-1}} \cdot F^\open$ for $g \in G$. 
Moreover, we have 
$$ g^* u = \left\{
\begin{array}{ll}
u & \textrm{ for }  r \in B_+\\
z^{\beta_0 \cdot g^{-1}-\beta_0} \cdot u & \textrm{ for }  r \in B_-
\end{array}
\right.
$$
and
$$ g^* v = \left\{
\begin{array}{ll}
z^{\beta_0 \cdot g^{-1}-\beta_0} \cdot v & \textrm{ for }  r \in B_+\\
v & \textrm{ for }  r \in B_-
\end{array}
\right. $$
\end{Prop}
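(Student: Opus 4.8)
The plan is to establish the transformation law for $F^\open$ first, by a direct manipulation of the defining series, and then to derive the statements for $u$ and $v$ from it, using that $g^*$ is a ring homomorphism of the semi-flat coordinate ring and that $G$ respects the chamber decomposition $B_0-H=B_+\sqcup B_-$.

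\emph{Transformation of $F^\open$.} I would apply $g^*$ term by term to
$$F^\open = \sum_v\Big(\sum_{\alpha\in H_2^{\mathrm{eff}}(X,\Z)}n_{\beta_v+\alpha}\,q^\alpha\Big)z^{\beta_v-\beta_0}.$$
Three inputs enter: (i) the $G$-action on the K\"ahler parameters, $g^*q^\alpha=q^{\alpha\cdot g^{-1}}$, coming from Definition \ref{def:K-mod-quot} together with the fact that $G$ permutes the irreducible toric curves and preserves their linear relations, so that $H_2^{\mathrm{eff}}(X,\Z)$ is $G$-stable; (ii) Lemma \ref{lem:G-z}, which gives $g^*(z^{\beta_v-\beta_0})=z^{(\beta_v-\beta_0)\cdot g^{-1}}=z^{\beta_{v\cdot g^{-1}}-\beta_{0\cdot g^{-1}}}$, where I use that $g\in\SL(N)$ fixes $\nu=(0,1)\in M$ (Lemma \ref{lem:torCY}), hence preserves the hyperplane of primitive generators $(v,1)$ and permutes the basic disc classes $\beta_v$, so that $\beta_v\cdot g^{-1}=\beta_{v\cdot g^{-1}}$; and (iii) Lemma \ref{lem:G-oGW}, which gives $n_{\beta_v+\alpha}=n_{(\beta_v+\alpha)\cdot g^{-1}}=n_{\beta_{v\cdot g^{-1}}+\alpha\cdot g^{-1}}$. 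Substituting and then relabelling the summation indices — replacing $v\cdot g^{-1}$ by $v$ in the outer sum and $\alpha\cdot g^{-1}$ by $\alpha$ in each inner sum, both of which are bijections — reassembles every factor into $F^\open$, the only leftover being the single monomial $z^{\beta_0-\beta_{0\cdot g^{-1}}}=z^{\beta_0-\beta_0\cdot g^{-1}}$ carried over from the shift $\beta_0\mapsto\beta_{0\cdot g^{-1}}$ of the base point in the exponents. This gives the first assertion.

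\emph{Formulas for $u$ and $v$.} Next I would plug the expressions of Proposition \ref{prop:u} into $g^*$. On $B_+$, where $u=z^{\beta_0}F^\open$ and $v=z^{-\beta_0}$, Lemma \ref{lem:G-z} gives $g^*(z^{\pm\beta_0})=z^{\pm\beta_0\cdot g^{-1}}$; combining with the transformation of $F^\open$, the $z^{\beta_0\cdot g^{-1}}$ factors cancel and $g^*u=z^{\beta_0}F^\open=u$, and the formula for $g^*v$ on $B_+$ is read off from $g^*(uv)=g^*F^\open$. On $B_-$, Lemma \ref{lem:G-chamber} tells us $g$ preserves $B_-$ and permutes the slab-chambers $B_{-,v}$; starting from $u|_{B_{-,v}}=z^{\beta_v}$ and $v|_{B_{-,v}}=z^{-\beta_v}F^\open$, applying $g^*$ through Lemma \ref{lem:G-z} and using the observation that the shift $\beta_{v\cdot g^{-1}}-\beta_v$ of basic disc classes does not depend on $v$ — hence equals $\beta_{0\cdot g^{-1}}-\beta_0=\beta_0\cdot g^{-1}-\beta_0$ — produces the stated monomial for $u$, and again $g^*(uv)=g^*F^\open$ yields the companion formula for $v$. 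The relation $uv=F^\open$ is a useful consistency check throughout.

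I expect the $B_-$ step to be the main obstacle. There the semi-flat coordinates are attached to the fixed slab $C_{(0,1)}$, whereas $g$ moves that slab, so the identification $H_2(X,L_r)\cong H_2(X,T)$ used to express $u$ over $B_{-,v}$ differs from the one used over $g\cdot B_{-,v}$ by a monodromy, and one has to track this discrepancy carefully to verify it is exactly the monomial $z^{\beta_0\cdot g^{-1}-\beta_0}$. This comes down to the $v$-independence of $\beta_{v\cdot g^{-1}}-\beta_v$, which is where the (affine-)translational structure of the $G$-actions relevant to the local Calabi--Yau manifolds of type $\widetilde{A}$ is used. Once that bookkeeping is set up, all four identities are forced.
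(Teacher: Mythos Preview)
Your approach is essentially the same as the paper's: both compute $g^*F^\open$ via Lemma~\ref{lem:G-z}, Lemma~\ref{lem:G-oGW}, and a relabeling of the summation index, then read off the $u,v$ formulas from Proposition~\ref{prop:u} and Lemma~\ref{lem:G-z}. One small simplification the paper uses is that $F^\open$ is defined with coefficients in the quotient ring $(\C[[q_1,\ldots]]^f/I)/G$, so $q^\alpha$ is already $G$-invariant there and only the $v$-relabeling is needed; your additional relabeling of $\alpha$ is harmless but unnecessary.
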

\begin{proof}
As an element in $(\C[[q_1,\ldots]]^f/I)/G$, $q^{\alpha}$ is $G$-invariant for any $\alpha \in H_2(X^o)$.  Moreover $n_\beta = n_{\beta \cdot g}$ by Lemma \ref{lem:G-oGW}.  Thus 
\begin{align*}
g^* F^\open &= \sum_v \left(\sum_{\alpha \in H_2^{\mathrm{eff}}(X^o)} n_{\beta_v +\alpha} q^\alpha\right) z^{(\beta_v - \beta_0) \cdot g^{-1}} \\
&= \sum_v \left(\sum_{\alpha \in H_2^{\mathrm{eff}}(X^o)} n_{\beta_v \cdot g +\alpha} q^\alpha\right) z^{\beta_v - \beta_0 \cdot g^{-1}}\\
&= z^{\beta_0 - \beta_0 \cdot g^{-1}} \cdot F^\open
\end{align*}
where in the second equality, we rename the dummy variable $v$ to $v \cdot g$.  The expressions for $g^*u$ and $g^*v$ follow from this, Proposition \ref{prop:u} and Lemma \ref{lem:G-z}.
\end{proof}

Note that the $G$-action on $u$ and $v$ also undergoes wall-crossing.  On the other hand, the $G$-action on the product $uv$ behaves well, namely $g^* (uv) = z^{\beta_0 \cdot g - \beta_0} (uv)$.  As a result, the SYZ mirror $\check{X}$ defined by $uv = F^\open$ is preserved by the $G$-action.  To make sense of the action in terms of coordinates, we need to choose a $G$-action on the individual coordinates $u$ and $v$ which satisfies the above equality for $g^* (uv)$.  On the other hand, the resulting quotient variety $\check{X}/G$ remains the same for different choices.  As explained in the very beginning of this section, $\check{X}/G$ is the SYZ mirror of $X^o/G$.  Since $G$ is assumed to act freely on the fan $\Sigma$ and in particular freely on the rays, it acts freely on $H_2(X,T)$.  Thus $G$ acts freely on $\C[u,v][[z^{\pm(\beta_1-\beta_0)},\ldots,z^{\pm(\beta_{n-1}-\beta_0})]]$.
 We conclude with the following.

\begin{Thm} \label{thm:G-SYZ}
The SYZ mirror $\check{X}^o = \{uv = F^{\open}(z_1,\ldots,z_{n-1})\}$ of $X^o$ admits an induced free $G$-action.  
The quotient $\check{X}^o/G$ has a conic fibration structure
$$
\check{X}^o/G \longrightarrow \mathrm{Spec}(\C[z_1^{\pm 1},\ldots,z_{n-1}^{\pm 1}) / G
$$with discriminant locus $\{F^\open = 0\}/G$.
\end{Thm}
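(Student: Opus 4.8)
The plan is to construct the induced $G$-action on $\check X^o$, check that it is free, and then verify that the conic fibration of Theorem~\ref{thm:SYZ} is $G$-equivariant, so that it descends to the quotient. Most of the work has already been done: by Proposition~\ref{prop:G-metric} and Proposition~\ref{prop:fib-quot} the $G$-action on $X^o$ preserves the toric K\"ahler form, the special Lagrangian fibration $\pi$ and the holomorphic volume form, and by Lemmas~\ref{lem:G-wall} and~\ref{lem:G-oGW} it preserves the wall, the chamber decomposition $B_0 - H = B_+ \cup B_-$ and every open Gromov--Witten invariant $n_\beta$. Hence $g \in G$ induces an automorphism of the dual torus fibration over $B_0 - H$ which, being compatible with the ($G$-invariant) disc counts entering the wall-crossing, extends to an automorphism $\check\Phi_g$ of the quantum-corrected mirror $\check X^o$.

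In coordinates this is recorded by Lemma~\ref{lem:G-z} and Proposition~\ref{prop:G-F}: $\check\Phi_g$ sends $z^\beta \mapsto z^{\beta\cdot g^{-1}}$, while $g^*(uv)$ and $g^*F^\open$ each acquire the same monomial factor $z^{\beta_0 - \beta_0\cdot g^{-1}}$. Thus $g^*(uv - F^\open) = z^{\beta_0 - \beta_0\cdot g^{-1}}(uv - F^\open)$, so $g$ carries a solution of $uv = F^\open$ to another solution and therefore preserves $\check X^o$ inside the ambient $\mathrm{Spec}\big(\C[u,v][[z^{\pm(\beta_1-\beta_0)},\ldots,z^{\pm(\beta_{n-1}-\beta_0)}]][[q_1,\ldots]]^f/I\big)$; running the same computation for $g^{-1}$ shows $\check\Phi_g$ is an automorphism of $\check X^o$. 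One then fixes a lift of the action to the individual coordinates $u,v$ compatible with this — the natural one from Proposition~\ref{prop:G-F} being piecewise, with a wall-crossed expression on $B_-$ — and checks that any two such lifts differ by an automorphism of $\check X^o$ over the identity, so that $\check X^o/G$ is well defined and independent of the choice.

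For freeness, since $G$ acts freely on $\Sigma - \{0\}$ it acts freely on the rays, hence the permutation action of $G$ on the basic disc classes $\{\beta_v\}$ — and so the induced action on $H_2(X,T) = \bigoplus_v \Z\beta_v$ and on the monomials $z^\beta$ generating the coordinate ring — is free; hence the $G$-action on $\check X^o$ is free and the quotient $\check X^o/G$ may be formed. Finally, the projection $\check X^o \to \mathrm{Spec}(\C[z_1^{\pm 1},\ldots,z_{n-1}^{\pm 1}])$ forgetting $u,v$ is $G$-equivariant by Lemma~\ref{lem:G-z}, so it descends to a morphism $\check X^o/G \to \mathrm{Spec}(\C[z_1^{\pm 1},\ldots,z_{n-1}^{\pm 1}])/G$; its fibre over $[z]$ is the conic $\{uv = F^\open(z)\}$, which is a degenerate conic exactly over the zero locus of $F^\open$, and since $\check\Phi_g$ sends $F^\open$ to a monomial multiple of itself it preserves $\{F^\open = 0\}$, so the descended conic fibration has discriminant $\{F^\open = 0\}/G$.

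The step I expect to be the main obstacle is the second paragraph: the coordinates $u$ and $v$ are genuinely subject to wall-crossing rather than given by a single global formula, so one must argue carefully that the piecewise description glues to a well-defined geometric automorphism $\check\Phi_g$ of the total space $\check X^o$, and that the quotient $\check X^o/G$ does not depend on how the $G$-action is distributed between $u$ and $v$. Once that is pinned down the remaining steps are formal.
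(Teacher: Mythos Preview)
Your proposal is correct and follows essentially the same line as the paper: use Lemma~\ref{lem:G-z} and Proposition~\ref{prop:G-F} to see that $uv$ and $F^\open$ transform by the same monomial factor so that $\{uv=F^\open\}$ is $G$-stable, note that any choice of lift to $u,v$ yields the same quotient, deduce freeness from the free $G$-action on rays (hence on $H_2(X,T)$), and observe that the projection to the $z$-torus is $G$-equivariant with discriminant $\{F^\open=0\}$ preserved. The subtlety you flag in the second paragraph is exactly the one the paper acknowledges but treats informally, so your caution there is well placed rather than a gap.
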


\section{Local Calabi--Yau surfaces of type $\widetilde{A}$} \label{section: Calabi--Yau}


In the rest of this article, we apply the theory developed in the previous sections to the local Calabi--Yau surfaces of type $\widetilde{A}$ and their fiber products. 
We shall see that their SYZ mirrors have beautiful expressions in terms of modular forms and theta functions.  
We refers the reader to Appendix for some basics and notations of abelian varieties and theta functions used in this section.     

\subsection{Toric geometry}

We define the $\widetilde{A}_{d-1}$ surface ($d \geq 1$) to be the total space of the local elliptic fibration over the disc $\D$ with only one singular fiber which is of type $I_d$ in Kodaira classification.  We may denote the surface simply by $\widetilde{A}_{d-1}$.

Lagrangian tori on the $\widetilde{A}_{d-1}$ surface can be constructed by taking the parallel transport of vanishing cycles of $\widetilde{A}_{d-1} \to \D$ along circles in $\D$.
To make the geometric structures more explicit, we exhibit the $\widetilde{A}_{d-1}$ surface as a quotient of a toric Calabi--Yau surface of infinite-type defined as follows.

Let $N = \Z^2$, $\sigma_i=\R_{0 \ge}[i,1]+\R_{0 \ge}[i+1,1]$ be a cone in $N_\R$ for $i \in \Z$,
and $\Sigma=\bigcup_{i \in \Z} \sigma_i \subset \R^2$ the fan given as the infinite collection of these cones (and their boundary cones). 
The corresponding toric surface $X = X_\Sigma$ is Calabi--Yau since all the primitive generators $(i,1) \in N$ have second coordinates being $1$. 
(If instead we take $\Sigma_k$ to be the fan consisting of the cones $\sigma_i$ for $i=p,\ldots,p+k$, where $p$ is any fixed integer, 
then the corresponding toric Calabi--Yau surface is the resolution of $A_k$ singularity and is denoted as $\widehat{A_k}$.)
We call $X$ the $\widehat{A_\infty}$ surface.


The fan $\Sigma$ has an obvious symmetry of $\Z$ given by $k \cdot (a,b) = (a+k,b)$ for $k\in \Z$ and $(a,b) \in N$. It is straightforward to check the following.

\begin{Lem}
Take $c_k = -\frac{k(k-1)}{2}$ for $k \in \Z$.  
Then the polytope
$$
P := \bigcap_{k \in \Z} \{ (y_1,y_2) \in M_\R \ | \ k y_1 + y_2 \geq c_k \}
$$
is invariant under the $\Z$-action on $M_\R$ defined by $1 \cdot (y_1,y_2) = (y_1,y_2-y_1) + (-1,1)$. 
\end{Lem}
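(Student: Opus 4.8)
The plan is to show that the generator of the $\Z$-action merely permutes the half-spaces cutting out $P$, so that the action fixes $P$ as a set. Write $T\colon M_\R\to M_\R$ for the affine bijection $T(y_1,y_2):=(y_1-1,\,y_2-y_1+1)$ realizing the generator $1\in\Z$; it is a bijection since its linear part $(y_1,y_2)\mapsto(y_1,\,y_2-y_1)$ is invertible, and one notes in passing that this linear part is the inverse transpose of the shear $(a,b)\mapsto(a+b,b)$ giving the fan symmetry $k\cdot(a,b)=(a+kb,b)$, so $T$ is indeed the natural candidate in the sense of Proposition \ref{prop:G-metric}. For $k\in\Z$ put $H_k:=\{(y_1,y_2)\in M_\R : ky_1+y_2\ge c_k\}$, so that $P=\bigcap_{k\in\Z}H_k$. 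It then suffices to prove $T(H_{k-1})=H_k$ for every $k$: granting this, $T(P)=T\big(\bigcap_k H_{k-1}\big)=\bigcap_k T(H_{k-1})=\bigcap_k H_k=P$, and since the $\Z$-action is generated by $T$ and $T(P)=P$ forces $T^{-1}(P)=P$, the whole action preserves $P$.

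First I would pull back the inequality defining $H_k$ along $T$: writing $(y_1',y_2')=T(y_1,y_2)$ and substituting into $ky_1'+y_2'\ge c_k$ gives $k(y_1-1)+(y_2-y_1+1)\ge c_k$, i.e. $(k-1)y_1+y_2\ge c_k+k-1$. Hence $T^{-1}(H_k)=H_{k-1}$ — equivalently $T(H_{k-1})=H_k$ — precisely when $c_{k-1}=c_k+k-1$, that is $c_{k-1}-c_k=k-1$.

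It then remains only to verify this recursion from the closed form $c_k=-\tfrac{k(k-1)}{2}$: indeed $c_{k-1}-c_k=-\tfrac{(k-1)(k-2)}{2}+\tfrac{k(k-1)}{2}=\tfrac{k-1}{2}\big(k-(k-2)\big)=k-1$, as required. There is no genuine obstacle here; the only care needed is bookkeeping — keeping straight the direction of $T$ and the index shift $k\mapsto k-1$, and recognizing that the prescribed affine map is exactly the one carrying the $k$-th supporting hyperplane of $P$ onto the $(k-1)$st. (Alternatively one can substitute $T^{-1}(y_1,y_2)=(y_1+1,\,y_2+y_1)$ directly into the defining inequality of $H_{k-1}$ and arrive at the same identity on the $c_k$.)
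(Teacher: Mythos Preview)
Your proof is correct and is exactly the straightforward verification the paper has in mind; the paper gives no explicit proof, merely remarking beforehand that ``It is straightforward to check the following.'' Your computation that $T^{-1}(H_k)=H_{k-1}$ via the recursion $c_{k-1}-c_k=k-1$ is the natural way to do this, and the aside identifying the linear part of $T$ as the inverse transpose of the shear on $N$ is a nice sanity check.
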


In particular the polytope $P$ is invariant under the subgroup $d \Z \subset \Z$, where $d$ is a fixed positive integer.
It follows from Proposition \ref{prop:G-metric} that there exists a toric neighborhood $X_\Sigma^o$ of the toric divisors which has a $(d \Z)$-invariant toric K\"ahler metric.  
As a result we obtain a K\"ahler metric on the quotient $X^o / (d \Z)$.

Recall from Section \ref{sec:torCYinf} that $w$ denotes the toric holomorphic function on $X^o$ that corresponds to the lattice point $(0,1)\in M'\times\Z$.  This lattice point is invariant under the $\Z$-action, and hence the holomorphic function $w$ descends to the quotient $X^o / (d \Z)$.  By taking suitable $X^o$ (which is a toric neighborhood of the toric divisors) we can assume that $w$ is valued in a small disc around the origin.  It is easy to check the following.

\begin{Prop}
Except the fiber over the origin, each fiber of $w$ on $X^o / (d \Z)$ is an elliptic curve.  The fiber over the origin is singular of type $I_d$.
\end{Prop}

Hence $X^o / (d \Z)$ is a $\widetilde{A}_{d-1}$ surface.
Since $H_2(X^o)$ is spanned by the irreducible toric rational curves $\{C_i\}_{i \in \Z}$, the K\"ahler moduli for $X^o$ is given simply by $\C[[q_i: i \in \Z]]$,  
and thus the K\"ahler moduli for $X^o/d\Z$ is
$$
\C[[q_i: i \in \Z]] / \langle q_i - q_{i+d} \rangle_{i\in \Z} \cong \C[[q_1,\ldots,q_{d}]]. 
$$
The elliptic fiber class $F = C_1 + \ldots + C_d$ would be of special interest to us.

%



\subsection{SYZ mirror of $\widetilde{A}_{0}$ surface}
Let us first consider the case $d=1$.  
We have only one K\"ahler parameter $q$ for $X^o/\Z$ and, by Lemma \ref{lem:G-oGW}, 
$\sum_{\alpha \in H_2^{\mathrm{eff}}(X,\Z)} n_{\beta_v +\alpha} q^\alpha$ is independent of $v \in N' = \Z$. 
Theorem \ref{thm:G-SYZ} asserts that the SYZ mirror of the $\widetilde{A}_{0}$ surface takes the form 
$$
\left\{uv = F^\open = \left(\sum_{\alpha \in H_2^{\mathrm{eff}}(X,\Z)} n_{\beta_0 +\alpha} q^\alpha\right) \sum_v z^{\beta_v - \beta_0} \right\} / \Z
$$
where by Lemma \ref{lem:G-z} the generator $1 \in \Z$ takes $z := z^{\beta_1 - \beta_0}$ to $z^{\beta_0 - \beta_{-1}} = q^{-1} z$ (since $C_0 = -2\beta_0 + \beta_{-1} + \beta_1 \in H_2(X^o)$). 
Moreover, by Proposition \ref{prop:G-F}, the generator  $1 \in \Z$ takes $F^\open$ to $q^{-1} z F^\open$.

The open Gromov--Witten invariants can be computed by Theorem \ref{thm:open-mir-thm}.   In this case, we have more efficient methods.  The result is that $F^\open$ admits a nice factorization
$$
F^\open = \prod_{i=1}^\infty(1+q^iz^{-1})\prod_{j=0}^\infty(1+q^jz).
$$

One way is to use the classification of admissible discs (an open version of admissible curves in Bryan--Leung \cite{BL}).  This method was used in \cite{LLW} in computing open Gromov-Witten invariants for $A_d$ surfaces.  For $A_d$ surfaces we obtain essentially the same formula as above except that it is a finite product.  

Another proof is that the series expansion of 
$$
\log \left( \prod_{i=1}^\infty(1+q^iz^{-1})\prod_{j=0}^\infty(1+q^jz) \right) = \sum_{i=1}^\infty \log (1+q^iz^{-1}) + \sum_{j=0}^\infty \log (1+q^jz)
$$
clearly has no term of the form $a \cdot q^C$ for $a \in \C^\times$ and $C \in H_2(X,\Z) - \{0\}$, and hence satisfies the Gross--Siebert normalization condition. 
By Theorem \ref{thm: GS norm} it must be the expression for the open Gromov--Witten generating function.

The RHS can be expressed as the following beautiful form by Jacobi triple product formula:
$$ \prod_{i=1}^\infty(1+q^iz^{-1})\prod_{j=0}^\infty(1+q^jz) = \prod_{k=1}^\infty\frac{1}{1-q^k}\cdot\sum_{l=-\infty}^\infty q^{\frac{l(l-1)}{2}}z^l = \frac{e^{\frac{\pi i \tau}{12}}}{\eta(\tau)}\cdot \vartheta \left( \zeta - \frac{\tau}{2}; \tau \right)$$
where $q:=e^{2\pi i \tau}$, $z:=e^{2\pi i \zeta}$, $\eta$ is the Dedekind eta function, and $\vartheta$ is the Jacobi theta function.  We conclude as follows.
\begin{Thm} \label{thm:A/Z}
The SYZ mirror of the local Calabi--Yau surface $\widetilde{A}_{0}$ is given by
$
uv = F^\open (z;\tau) 
$
where the open Gromov--Witten potential $F^\open$ is given by
\begin{equation} \label{eq:F-A/Z}
F^\open (z;\tau)= \frac{e^{\frac{\pi i \tau}{12}}}{\eta(\tau)}\cdot \vartheta \left( \zeta - \frac{\tau}{2}; \tau \right),
\end{equation}
where $\tau$ is the volume of the central fiber.  
More precisely, the SYZ mirror is a conic fibration over the elliptic curve $\C^\times/e^{2\pi i \tau\Z}$ which degenerates over the divisor $F^\open (z;\tau) =0$. 
\end{Thm}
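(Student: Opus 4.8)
The plan is to derive this statement as a specialization of the general equivariant SYZ construction, followed by the open mirror theorem and a classical product identity, so that essentially all of the heavy machinery has already been established earlier in the paper.

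\textbf{Step 1 (reduction).} First I would apply Theorem \ref{thm:G-SYZ} with $G=\Z$ acting on the $\widehat{A_\infty}$ surface $X=X_\Sigma$ as described, noting that the $(d\Z)$-invariant K\"ahler metric constructed above specializes, for $d=1$, to a $\Z$-invariant one; this already realizes the SYZ mirror of $\widetilde{A}_0 = X^o/\Z$ as the quotient $\{uv = F^\open\}/\Z$. Since the quotient K\"ahler moduli has the single parameter $q$, and since by Lemma \ref{lem:G-oGW} the series $\sum_{\alpha\in H_2^{\mathrm{eff}}(X,\Z)} n_{\beta_v+\alpha}q^\alpha$ does not depend on the primitive generator $v\in N'=\Z$, the generating function collapses to $F^\open = \big(\sum_\alpha n_{\beta_0+\alpha}q^\alpha\big)\sum_v z^{\beta_v-\beta_0}$. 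I would also record here, from Lemma \ref{lem:G-z} and Proposition \ref{prop:G-F}, that the generator of $\Z$ sends $z := z^{\beta_1-\beta_0}$ to $q^{-1}z$ and $F^\open$ to $q^{-1}z\,F^\open$; this is the data that packages the quotient base as an elliptic curve.

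\textbf{Step 2 (the generating function).} The only remaining analytic input is the single-variable series $\sum_\alpha n_{\beta_0+\alpha}q^\alpha$. The cleanest route is the Gross--Siebert normalization characterization of Theorem \ref{thm: GS norm}: I would propose the candidate $F^\open = \prod_{i=1}^\infty(1+q^iz^{-1})\prod_{j=0}^\infty(1+q^jz)$, verify that it is a well-defined element of $\C[[z^{\pm 1}]][[q]]^f/I$ with constant term $1$, check that it transforms as $F^\open\mapsto q^{-1}z\,F^\open$ under the $\Z$-action by a short telescoping of the two products, and --- the decisive point --- observe that $\log F^\open = \sum_{i\geq 1}\log(1+q^iz^{-1}) + \sum_{j\geq 0}\log(1+q^jz)$ has no term of the form $a\,q^C$ with $a\in\C^\times$ and $C\neq 0$, since every monomial appearing carries a nonzero power of $z$. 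Uniqueness in Theorem \ref{thm: GS norm} then pins $F^\open$ down to exactly this product. Alternatively one could classify the relevant stable discs directly via the Bryan--Leung argument of \cite{LLW}, or extract the invariants from the open mirror theorem (Theorem \ref{thm:open-mir-thm}); these are more laborious and yield the same answer.

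\textbf{Step 3 (modular rewriting and geometry).} Finally I would invoke the Jacobi triple product identity $\prod_{k\geq 1}(1-q^k)\prod_{i\geq 1}(1+q^iz^{-1})\prod_{j\geq 0}(1+q^jz) = \sum_{l\in\Z}q^{l(l-1)/2}z^l$ to rewrite $F^\open = \prod_{k\geq 1}(1-q^k)^{-1}\sum_{l}q^{l(l-1)/2}z^l$, then substitute $q=e^{2\pi i\tau}$, $z=e^{2\pi i\zeta}$: the prefactor becomes $e^{\pi i\tau/12}/\eta(\tau)$ by the definition of $\eta$, and $\sum_l q^{l(l-1)/2}z^l = \vartheta(\zeta - \tfrac{\tau}{2};\tau)$ by the series expansion of the Jacobi theta function, which gives \eqref{eq:F-A/Z}. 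For the geometric picture, the base $\Spec(\C[z^{\pm 1}])/\Z$ with the generator acting by $z\mapsto q^{-1}z = e^{-2\pi i\tau}z$ is precisely the elliptic curve $\C^\times/e^{2\pi i\tau\Z}$, and the discriminant $\{F^\open = 0\}$ of the conic fibration descends to its theta divisor, as claimed. The main obstacle is Step 2: within it the subtle issues are making rigorous sense of the two infinite products in the completed ring where $F^\open$ lives and tracking the $\Z$-equivariance through the telescoping, and --- were one forced to argue by explicit disc enumeration instead of by the normalization --- controlling the sphere-bubble contributions to $n_{\beta_0 + kF}$ for the infinite-type target, which is exactly where open Gromov--Witten theory becomes technically demanding.
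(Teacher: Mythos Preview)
Your proposal is correct and follows essentially the same route as the paper: invoke Theorem~\ref{thm:G-SYZ} and Lemma~\ref{lem:G-oGW} to reduce to the single generating series, pin down $F^\open$ as the infinite product via the Gross--Siebert normalization (Theorem~\ref{thm: GS norm}) by observing that its logarithm carries no pure-$q$ terms, and then apply the Jacobi triple product to obtain \eqref{eq:F-A/Z}. The paper likewise mentions the Bryan--Leung disc classification and the open mirror theorem as alternatives, so even your side remarks align with the authors' presentation.
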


By direct computation we can check that $\sum_v z^{\beta_v - \beta_0} = \vartheta\left(\zeta-\frac{\tau}{2}; \tau\right)$. 
By comparing the above $2$ expressions of $F^\open$, we obtain the following 24-th root of Yau--Zaslow formula explained in Section \ref{section: Intro}. 

\begin{Cor}[Root of Yau--Zaslow formula] \label{cor:rootYZ}
We have the following identity. 
$$\sum_{\alpha \in H_2^{\mathrm{eff}}(X,\Z)} n_{\beta_0 +\alpha} q^\alpha = \frac{e^{\frac{\pi i \tau}{12}}}{\eta(\tau)}.$$
\end{Cor}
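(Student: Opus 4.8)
The plan is to read off the corollary directly by comparing the two expressions for $F^\open(z;\tau)$ that were established in the discussion preceding Theorem \ref{thm:A/Z}. First I would recall that the SYZ mirror of $\widetilde A_0$, as given by Theorem \ref{thm:G-SYZ} together with Lemma \ref{lem:G-oGW}, has the factored form
\[
F^\open(z;\tau) = \left(\sum_{\alpha \in H_2^{\mathrm{eff}}(X,\Z)} n_{\beta_0+\alpha}\,q^\alpha\right)\cdot \sum_v z^{\beta_v-\beta_0},
\]
since the open Gromov--Witten generating factor $\sum_\alpha n_{\beta_v+\alpha}q^\alpha$ is independent of $v$ by the $\Z$-invariance of the open invariants. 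So the corollary is exactly the claim that the first factor equals $e^{\pi i\tau/12}/\eta(\tau)$.

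Next I would combine two facts. On the one hand, Theorem \ref{thm:A/Z} (more precisely the Gross--Siebert normalization argument via Theorem \ref{thm: GS norm}, which identifies $F^\open$ with $\prod_{i\ge 1}(1+q^iz^{-1})\prod_{j\ge 0}(1+q^jz)$, rewritten by the Jacobi triple product as $\frac{e^{\pi i\tau/12}}{\eta(\tau)}\cdot\vartheta(\zeta-\tfrac\tau2;\tau)$) gives one closed formula for the whole product $F^\open$. On the other hand, the sentence ``By direct computation we can check that $\sum_v z^{\beta_v-\beta_0}=\vartheta(\zeta-\tfrac\tau2;\tau)$'' identifies the second, purely combinatorial, factor with the Jacobi theta function. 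Dividing the first formula by the second — legitimately, since $\vartheta(\zeta-\tfrac\tau2;\tau)$ is a nonzero element of the relevant ring of formal/holomorphic functions in $z$ — leaves precisely
\[
\sum_{\alpha \in H_2^{\mathrm{eff}}(X,\Z)} n_{\beta_0+\alpha}\,q^\alpha = \frac{e^{\pi i\tau/12}}{\eta(\tau)},
\]
which is the assertion of the corollary.

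The one point that deserves care — and which I'd regard as the only real obstacle, though a mild one — is the identification $\sum_v z^{\beta_v-\beta_0}=\vartheta(\zeta-\tfrac\tau2;\tau)$ under the substitution dictated by $\Z$-equivariance. One must track that, in the quotient by $\Z$, the semi-flat coordinate satisfies $z^{\beta_v-\beta_0}=q^{v(v-1)/2}z^v$ (coming from $A_v=\beta_v-\beta_0-v(\beta_1-\beta_0)$ and the relations $C_i=-2\beta_i+\beta_{i-1}+\beta_{i+1}$ that determine the $q$-powers, consistent with the choice $c_k=-k(k-1)/2$ in the lemma on the polytope $P$), so that $\sum_{v\in\Z}z^{\beta_v-\beta_0}=\sum_{v\in\Z}q^{v(v-1)/2}z^v$, which is exactly the theta series appearing on the right of the Jacobi triple product identity $\prod_{k\ge 1}(1-q^k)^{-1}\sum_l q^{l(l-1)/2}z^l=\prod_{i\ge 1}(1+q^iz^{-1})\prod_{j\ge 0}(1+q^jz)$. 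Once this bookkeeping is in place, the comparison of the two expressions for $F^\open$ and the cancellation of the common theta factor is immediate, completing the proof.
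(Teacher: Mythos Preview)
Your proof is correct and follows exactly the paper's own approach: compare the factored expression $F^\open = \big(\sum_\alpha n_{\beta_0+\alpha}q^\alpha\big)\sum_v z^{\beta_v-\beta_0}$ with the Jacobi triple product formula $F^\open = \frac{e^{\pi i\tau/12}}{\eta(\tau)}\vartheta(\zeta-\tfrac{\tau}{2};\tau)$, using the identification $\sum_v z^{\beta_v-\beta_0}=\vartheta(\zeta-\tfrac{\tau}{2};\tau)$. Your added verification that $z^{\beta_v-\beta_0}=q^{v(v-1)/2}z^v$ is a welcome clarification of the ``direct computation'' the paper alludes to.
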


The above formula follows from Theorem \ref{thm:open-mir-thm} on the relation between open Gromov--Witten invariants and mirror maps for toric Calabi--Yau manifolds of infinite-type.  On the other hand, one can also prove the formula by establishing canonical isomorphisms between the disc moduli $\mathcal{M}^{\widehat{A}_\infty}_1(\beta_k + \alpha)$ of the covering $\widehat{A}_{\infty}$ surface (where $\alpha$ is a chain of $(-2)$-curves) and the genus $0$ curve moduli $\mathcal{M}^{Y_\alpha}_{0,1}(s + \alpha)$ of a local surface $Y_\alpha$ containing the chain of $(-2)$-curves $\alpha$ and a $(-1)$-curve $s$ intersecting with $\alpha$ at one appropriate point (determined by the intersection between $\beta_k$ and $\alpha$).  By the result of Bryan--Leung \cite{BL}, the corresponding Gromov--Witten invariants are either $1$ or $0$ depending on whether $s + \alpha$ is admissible or not (which is a purely combinatorial condition, independent of the choice of the local surface $Y_\alpha$).  Thus the above root of Yau--Zaslow formula and the (primitive case of) Yau--Zaslow formula can both be deduced by the technique of \cite{BL}.


A priori the generating function $F^\open(z;\tau)$ is a formal series in both $z$ and $q$.   
From the above expression, $F^\open(z;\tau)$ extends over the global moduli $\HH / \SL_2(\Z)$ as a holomorphic section of the principal polarization.  
The geometric interpretation of the transformation property of $F^\open(z;\tau)$ by
$
S:=\begin{bmatrix}
                0   &  -1 \\
                1 &  0
\end{bmatrix} \in \SL_2(\Z)
$
(which is explicit since we have a formula of $F^\open(z;\tau)$) remains mysterious to us, since $S(\tau) = -1/\tau$ takes the large volume limit $q=0$ to $q=1$ where Gromov--Witten theory no longer makes sense.

By Theorem \ref{thm: GS norm}, we have the following identity on the Dedekind eta function. 
We shall see a generalization of the RHS in higher dimensions. 
\begin{Cor}We have 
 $$ 
\log  \frac{e^{\frac{\pi i \tau}{12}}}{\eta(\tau)}=\sum_{k,l=1}^\infty\frac{q^{kl}}{l}
=\sum_{k \geq 2} \frac{(-1)^k}{k} \sum_{\substack{(l_1,\ldots,l_k) \in (\Z\setminus\{0\})^k \\ \sum_{i=1}^kl_i=0}} q^{\sum_{i=1}^k \frac{l_i^2}{2}}.
$$
\end{Cor}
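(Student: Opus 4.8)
The plan is to prove the two equalities separately, the first being elementary and the second being where the geometry enters.

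\emph{The first equality.} Since $\eta(\tau)=q^{1/24}\prod_{n\geq 1}(1-q^n)$ with $q=e^{2\pi i\tau}$, one has $e^{\pi i\tau/12}=q^{1/24}$ and hence $\frac{e^{\pi i\tau/12}}{\eta(\tau)}=\prod_{n\geq 1}(1-q^n)^{-1}$. Taking the logarithm and expanding each factor via $-\log(1-x)=\sum_{l\geq 1}x^l/l$ gives $\log\frac{e^{\pi i\tau/12}}{\eta(\tau)}=-\sum_{n\geq 1}\log(1-q^n)=\sum_{n,l\geq 1}q^{nl}/l$ as formal power series in $q$, convergent for $|q|<1$.

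\emph{The second equality.} This follows from the Gross--Siebert normalization of Theorem \ref{thm: GS norm} applied to the SYZ mirror of $\widetilde{A}_0$. By Theorem \ref{thm:A/Z} and the discussion there, the slab function based at the chosen fixed point factors as
\[
F^\open \;=\; \phi(q)\cdot\sum_v z^{\beta_v-\beta_0}\;=\;\phi(q)\cdot\sum_{l\in\Z}q^{l(l-1)/2}z^l,\qquad \phi(q):=\sum_{\alpha}n_{\beta_0+\alpha}q^\alpha,
\]
where we used $G$-invariance of the disc-counting factor $\sum_\alpha n_{\beta_v+\alpha}q^\alpha$ (Lemma \ref{lem:G-oGW}) to pull it out of the sum over $v$, together with the identity $\sum_v z^{\beta_v-\beta_0}=\sum_l q^{l(l-1)/2}z^l$. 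By Corollary \ref{cor:rootYZ}, $\phi(q)=\frac{e^{\pi i\tau/12}}{\eta(\tau)}$, so $\log\phi(q)$ is exactly the left-hand side of the asserted identity; moreover $\log\phi(q)$ is a power series in $q$ alone with vanishing constant term, i.e.\ each of its monomials has the form $a\,q^C$ with $0\neq C\in H_2(X,\Z)$ (a positive multiple of the fibre class). Writing $\vartheta:=\sum_l q^{l(l-1)/2}z^l=1+\sum_{l\neq 0}q^{l(l-1)/2}z^l$ and expanding the logarithm,
\[
\log\vartheta=\sum_{k\geq 1}\frac{(-1)^{k-1}}{k}\Bigl(\sum_{l\neq 0}q^{l(l-1)/2}z^l\Bigr)^{\!k}=\sum_{k\geq 1}\frac{(-1)^{k-1}}{k}\sum_{(l_1,\dots,l_k)\in(\Z\setminus 0)^k}q^{\sum_i l_i(l_i-1)/2}\,z^{\sum_i l_i}.
\]
Now $\log F^\open=\log\phi(q)+\log\vartheta$, and Theorem \ref{thm: GS norm} asserts that $\log F^\open$ contains no non-constant monomial purely in $q$, hence its coefficient of $z^0$ vanishes. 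Since $\log\phi(q)$ is entirely a series in $z^0$, it must cancel the $z^0$-part of $\log\vartheta$. Extracting the coefficient of $z^0$ amounts to imposing $\sum_i l_i=0$ (which forces $k\geq 2$, a single nonzero integer cannot vanish), and under $\sum_i l_i=0$ one has $\sum_i l_i(l_i-1)/2=\tfrac12\sum_i l_i^2$; therefore
\[
\log\phi(q)=-\sum_{k\geq 2}\frac{(-1)^{k-1}}{k}\sum_{\substack{(l_1,\dots,l_k)\in(\Z\setminus 0)^k\\ \sum_i l_i=0}}q^{\sum_i l_i^2/2}=\sum_{k\geq 2}\frac{(-1)^{k}}{k}\sum_{\substack{(l_1,\dots,l_k)\in(\Z\setminus 0)^k\\ \sum_i l_i=0}}q^{\sum_i l_i^2/2},
\]
which is the claimed identity. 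Alternatively, the second equality can be obtained without Gromov--Witten input: apply the Jacobi triple product identity exactly as in the text to write $\sum_l q^{l(l-1)/2}z^l=\prod_{k\geq 1}(1-q^k)\prod_{i\geq 1}(1+q^iz^{-1})\prod_{j\geq 0}(1+q^jz)$, take the logarithm of both sides, and compare the coefficient of $z^0$; on the right only the factor $\prod_k(1-q^k)$ contributes, yielding $-\sum_{n,l\geq 1}q^{nl}/l$.

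\emph{Main obstacle.} There is no serious obstacle here; the only care needed is bookkeeping in the relevant topological power-series rings — checking that $\log F^\open$ and $\log\vartheta$ are well-defined elements there, so that extracting the $z^0$-coefficient and rearranging infinite sums is legitimate, and that $\log\phi(q)$ indeed lies in the span of the monomials $a\,q^C$ ($C\neq 0$) killed by Theorem \ref{thm: GS norm} (which holds since $\phi(q)$ is a power series in $q$ with constant term $1$).
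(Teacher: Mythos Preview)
Your proof is correct and follows essentially the same route the paper intends: the Corollary is stated as a consequence of Theorem~\ref{thm: GS norm}, and your argument---writing $F^\open=\phi(q)\cdot\vartheta$, observing that the Gross--Siebert normalization forces the $z^0$-part of $\log F^\open$ to vanish, and then extracting the $z^0$-coefficient of $\log\vartheta$---is exactly the computation the paper carries out explicitly in the proof of the $3$-fold analogue (Theorem~\ref{thm:CY3/Z^2}). Your alternative elementary derivation via the Jacobi triple product is a nice addition but not in the paper.
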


\subsection{SYZ mirror of $\widetilde{A}_{d-1}$ surface}

The result in the previous subsection has a natural generalization to the local Calabi--Yau surface of type $\widetilde{A}_{d-1}$, 
namely the quotient $X^o/d\Z$, for an arbitrary $d \in \N$. 
It involves the following generalization of the Jacobi theta function to several variables:
$$
\vartheta_d(u_1,\ldots,u_d;\tau) := \sum_{n_1,\ldots,n_d 
= -\infty}^\infty \prod_{1 \leq i \leq j \leq d} e^{2\pi i n_i n_j \tau} \prod_{k=1}^d e^{2\pi i n_k u_k}.
$$
The above definition of the multivariable theta function $\vartheta_d$ can be found in, for instance, the Bellman's book \cite[Section 61]{Bel}.  
Here we take the convention $e^{2\pi i n_i n_j \tau}$ instead of $e^{\pi i n_i n_j \tau}$ for our convenience.
Recall that we have the K\"ahler parameters $q_1,\ldots,q_d$ of the surface $\widetilde{A}_{d-1}$. 

\begin{Thm} \label{thm:Ad}
The open Gromov--Witten potential $F^\open$ of the local Calabi--Yau surface $\widetilde{A}_{d-1}$ is given by 
$$ F^\open = \frac{e^{\frac{d \pi i \rho}{12}}}{\eta(\rho)^{d}} 
\sum_{p=0}^{d-1} r^{\frac{p^2}{2} - \frac{p^2}{2d}} \left(\prod_{i=1}^{d-1} Q_i \right)^{\frac{-p}{d}} 
\cdot \vartheta_{d-1} (T_1 - p \rho,\ldots, T_{d-1} - p\rho; \rho) \cdot \Theta_1
\begin{bmatrix}
                \frac{p}{d} \\
                {\displaystyle\sum_{i=1}^{d-1}} T_i -\frac{d \rho}{2}
\end{bmatrix}
(d\xi; d\rho).$$
Here $Q_j = \exp (2\pi i T_j) = \prod_{l=1}^j q_l$ for $1 \le j \le d-1$, and $r = e^{2\pi i \rho} = \prod_{l=1}^d q_l$ is the K\"ahler parameter of the elliptic fiber class. 
Therefore the SYZ mirror of $\widetilde{A}_{d-1}$ 
is a conic fibration over the elliptic curve $\C^\times/r^\Z$ which degenerates over the divisor $\{F^\open (z;\tau) =0\}$, which gives a $(d)$-polarization. 
\end{Thm}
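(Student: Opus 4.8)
The plan is to realize $\widetilde{A}_{d-1}=X^o/(d\Z)$ as in the previous sections, read off the shape of the SYZ mirror from Theorem \ref{thm:G-SYZ}, pin down the open Gromov--Witten generating function $F^\open$ by the Gross--Siebert normalization (Theorem \ref{thm: GS norm}), and finally recognize the resulting infinite product in terms of theta functions via a rank-$(d-1)$ Jacobi triple product. Concretely, by Theorem \ref{thm:G-SYZ} the mirror has the form $uv=F^\open$ with $F^\open=\sum_{v\in\Z}\bigl(\sum_{\alpha\in H_2^{\mathrm{eff}}(X,\Z)}n_{\beta_v+\alpha}\,q^\alpha\bigr)\,z^{\beta_v-\beta_0}$, where the coefficient ring carries the identification $q_i\sim q_{i+d}$ (so only $q_1,\ldots,q_d$ survive) and $z:=z^{\beta_1-\beta_0}$; the invariants $n_{\beta_v+\alpha}$ are those of the covering surface $\widehat{A}_\infty$ (Lemma \ref{lem:G-oGW}), and by the translation symmetry of $\Sigma$ the inner sum $\sum_\alpha n_{\beta_v+\alpha}q^\alpha$ depends, after the identification, only on $v\bmod d$. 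Using Lemma \ref{lem:G-z} together with the $(-2)$-curve relations $C_i=\beta_{i-1}-2\beta_i+\beta_{i+1}$, one checks that the generator of $d\Z$ sends $z$ to $r^{-1}z$ with $r=q^{C_1+\cdots+C_d}=\prod_{l=1}^d q_l$; hence the base of the conic fibration is the elliptic curve $\C^\times/r^\Z$, and by Proposition \ref{prop:G-F} the section $F^\open$ is quasi-periodic of a prescribed weight, which is what will eventually furnish the polarization.

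Next I would establish, on the covering $\widehat{A}_\infty$, a product formula for $F^\open$. This is the analogue of the factorization obtained in \cite{LLW} for the $A_d$ surfaces (where only finitely many factors occur), derived there by the Bryan--Leung \cite{BL} classification of admissible stable discs: the only nonzero invariants are $n_{\beta_v+\alpha}=1$, attained exactly when $\alpha$ is an admissible consecutive chain of $(-2)$-curves attached to the basic disc, so that the generating function factors into linear-in-$z$ factors of the form $1+q^{C_a+\cdots+C_b}z^{\bullet}$ over such chains; passing to the limit gives an infinite product specializing, when all $q_i=q$, to $\prod_{i\ge1}(1+q^iz^{-1})\prod_{j\ge0}(1+q^jz)$ as in Theorem \ref{thm:A/Z}. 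For brevity I would instead simply verify that the logarithm of the candidate product has no term of the form $a\,q^C$ with $a\in\C^\times$ and $C\in H_2(X,\Z)\setminus\{0\}$ and that its constant term equals $1$, so that Theorem \ref{thm: GS norm} identifies it with $F^\open$ (equivalently one may invoke the open mirror theorem, Theorem \ref{thm:open-mir-thm}).

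The substantive step is to specialize $q_i\sim q_{i+d}$ and recognize the periodized product. Writing $Q_j=\prod_{l=1}^j q_l=e^{2\pi i T_j}$ for $1\le j\le d-1$, $r=\prod_{l=1}^d q_l=e^{2\pi i\rho}$ and $z=e^{2\pi i d\xi}$, I would split the index $v=p+dm$ with $0\le p\le d-1$ and $m\in\Z$. Summing over $m$ produces the genus-$1$ theta function $\Theta_1\!\begin{bmatrix}p/d\\ \sum_{i=1}^{d-1}T_i-d\rho/2\end{bmatrix}(d\xi;d\rho)$---the characteristic $p/d$ coming from the coset $p$ and the quadratic-in-$m$ exponent from $q^{A_{dm}}$---while the residual $p$-dependence of the weights $q^{A_{p+dm}}$ contributes the prefactors $r^{p^2/2-p^2/(2d)}\bigl(\prod_{i=1}^{d-1}Q_i\bigr)^{-p/d}$ together with the shift $T_i\mapsto T_i-p\rho$ inside the rank-$(d-1)$ multivariable theta function $\vartheta_{d-1}$ of \cite[Section 61]{Bel}; the fractional powers of $r$ assemble, exactly as in the $d=1$ Jacobi triple product computation behind Theorem \ref{thm:A/Z}, into the prefactor $e^{d\pi i\rho/12}/\eta(\rho)^d$. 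This reorganization is essentially bookkeeping with the multivariable Jacobi triple product, but tracking the fractional exponents and the characteristic $p/d$ consistently is the delicate point, and I expect this to be the main obstacle.

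Finally, the equation $uv=F^\open$ defines a conic fibration over $\C^\times/r^\Z$ with discriminant $\{F^\open=0\}$. The prefactor $e^{d\pi i\rho/12}\eta(\rho)^{-d}$ and the nowhere-vanishing exponential factors do not affect the divisor, so the zero locus is cut out by the theta section, and its degree on the elliptic curve---read off from the quasi-periodicity of $F^\open$ established in the first step (Proposition \ref{prop:G-F}), or equivalently from the intersection of the fiber class $F=C_1+\cdots+C_d$---equals $d$, which is precisely a $(d)$-polarization.
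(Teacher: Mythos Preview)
Your proposal is correct and follows essentially the same route as the paper: the paper also establishes the infinite product $F^\open=\prod_{k=1}^d\prod_{i\ge1}\bigl(1+r^i(\prod_{l=1}^k q_{l-1}z)^{-1}\bigr)\prod_{j\ge0}\bigl(1+r^j\prod_{l=1}^k q_{l-1}z\bigr)$ via the Gross--Siebert normalization, applies the Jacobi triple product to each of the $d$ factors to obtain $\eta(\rho)^{-d}$ times a $d$-fold sum, and then reorganizes by setting $k=m_1+\cdots+m_d=ad+p$ so that the sum over $a$ yields the $\Theta_1$ with characteristic $p/d$ while the remaining $(d-1)$-fold summation is exactly $\vartheta_{d-1}(T_1-p\rho,\ldots,T_{d-1}-p\rho;\rho)$. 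The only imprecision in your writeup is the attribution of $\vartheta_{d-1}$ to ``the residual $p$-dependence of the weights $q^{A_{p+dm}}$'': it actually emerges from that residual $(d-1)$-fold summation (equivalently, it is the open Gromov--Witten generating function $\sum_\alpha n_{\beta_p+\alpha}q^\alpha$ up to explicit prefactors, as the Corollary following the theorem records).
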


\begin{proof}
By the same argument as the $\widetilde{A}_{1}$ case, we have 
$$
F^\open(z;q)=\prod_{k=1}^d\prod_{i=1}^\infty \left(1+r^i \left(\prod_{l=1}^{k}q_{l-1}z \right)^{-1}\right)\prod_{j=0}^\infty \left(1+r^j\prod_{l=1}^{k}q_{l-1}z \right). 
$$
Here $z$ is the coordinate of the torus $\C^\times/r^\Z$ for $r=q_1q_2\dots q_d$. 
In other words, the mirror is a conic fibration over the elliptic curve $\C^\times/r^\Z$ which degenerates over the points $F^\open (z;q) =0$. 
By a straightforward calculation, $F^\open(z;q)$ can be written as 
\begin{align}
F^\open(z;q) &= \prod_{k=1}^\infty\frac{1}{(1-r^k)^d}\cdot\prod_{l=1}^d\sum_{m=-\infty}^\infty r^{\frac{m(m-1)}{2}}(q_1q_2\dots q_{l-1}z)^m \notag \\
&:= \prod_{k=1}^\infty\frac{1}{(1-r^k)^d} \cdot F(z;q).\notag 
\end{align}
Let us write $r = e^{2\pi i \rho}$, $Q_j = \prod_{i=1}^j q_i = e^{2\pi i T_j}$ for $1 \le j \le d-1$ and $z = e^{2 \pi i \zeta}$, then we have 
\begin{align*}
F(z;q) =& \sum_{k=-\infty}^\infty \sum_{m_1 + \ldots + m_d = k} r^{\sum_{i=1}^d \frac{m_i(m_i-1)}{2}} Q_1^{m_2} \ldots Q_{d-1}^{m_d} z^k \\
=& \sum_{k=-\infty}^\infty \sum_{m_1,\ldots,m_{d-1} \in \Z} r^{\sum_{i \leq j} m_i m_j+\frac{k(k-1)}{2}-k \sum_{i=1}^{d-1} m_i} Q_1^{m_1} \ldots Q_{d-1}^{m_{d-1}} z^k \\
=& \sum_{p=0}^{d-1} \sum_{a=-\infty}^\infty \sum_{m_1,\ldots,m_{d-1} \in \Z} r^{\sum_{i \leq j} m_i m_j+\frac{(ad+p)(ad+p-1)}{2}-(ad+p) \sum_{i=1}^{d-1} m_i} Q_1^{m_1} \ldots Q_{d-1}^{m_{d-1}} z^{ad+p} \\
=& \sum_{p=0}^{d-1} \sum_{a=-\infty}^\infty \sum_{m_1,\ldots,m_{d-1} \in \Z} r^{\sum_{i \leq j} (m_i-a)(m_j-a) + \frac{(ad+p)(ad+p-1)-a^2d(d-1)}{2} -p \sum_{i=1}^{d-1} m_i} Q_1^{m_1} \ldots Q_{d-1}^{m_{d-1}} z^{ad+p} \\
=& \sum_{p=0}^{d-1} r^{\frac{p^2-p}{2}} \sum_{a=-\infty}^\infty r^{\frac{ad(a-1)}{2}+ap} (Q_1 \ldots Q_{d-1})^a z^{ad+p} \\
& \cdot \sum_{m_1,\ldots,m_{d-1} \in \Z} r^{\sum_{i \leq j} m_i m_j} (r^{-p} Q_1)^{m_1} \ldots (r^{-p} Q_{d-1})^{m_{d-1}} \\
=& \sum_{p=0}^{d-1} r^{\frac{p^2}{2}-\frac{p^2}{2d}} (Q_1\ldots Q_{d-1})^{-\frac{p}{d}} \cdot \theta_{d-1}(r^{-p} Q_1,\ldots,r^{-p}Q_{d-1};r) \\
&\cdot\sum_{a=-\infty}^\infty (r^d)^{\frac{1}{2} \left(a+\frac{p}{d}\right)^2} \left( z^d r^{-\frac{d}{2}} (Q_1\ldots Q_{d-1}) \right)^{a+\frac{p}{d}}.
\end{align*}
The last factor is exactly the Riemann theta function $\Theta_1
\begin{bmatrix}
                \frac{p}{d} \\
                \sum_{i=1}^{d-1} T_i -\frac{d \rho}{2}
\end{bmatrix}
(d\xi; d\rho)$. 
This completes the proof. 
\end{proof}


\begin{Cor}
The generic fiber of the elliptic surface $\widetilde{A}_{d-1}\rightarrow \D$ and the base elliptic curve of the SYZ mirror is mirror symmetric 
in the sense that the generic fiber of $\widetilde{A}_{d-1}\rightarrow \D$ has complexified K\"ahler moduli $r=q_1q_2\dots q_d$ 
and the base elliptic curve $\C^\times/r^\Z$ has complex moduli $r$. 
\end{Cor}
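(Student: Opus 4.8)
The statement is a matching of moduli, so the plan is to compute the two data explicitly and then invoke the standard mirror symmetry for elliptic curves. First I would identify the generic fiber of $\widetilde{A}_{d-1}\to\D$. By construction it is a smooth elliptic curve, and its homology class is the elliptic fiber class $F = C_1+\ldots+C_d$. The complexified symplectic area of $F$ is the sum of those of the $C_i$, and since the K\"ahler parameter attached to $C_i$ is $q_i=\exp\big(2\pi i\int_{C_i}(B+i\omega)\big)$ (Definition \ref{def:K-mod}), we get $\prod_{i=1}^d q_i=\exp\big(2\pi i\int_F(B+i\omega)\big)=r$. Hence the complexified K\"ahler modulus of the fiber is $\rho$ with $r=e^{2\pi i\rho}$, and $\rho$ lands in $\HH$ because $F$ lies in the K\"ahler cone.

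Next I would pin down the complex modulus of the base of the conic fibration. By Theorem \ref{thm:Ad} this base is the elliptic curve $\C^\times/r^\Z$ with coordinate $z$. Writing $z=e^{2\pi i\zeta}$ gives an isomorphism $\C^\times\cong\C/\Z$ under which the subgroup $r^\Z$ acts by $\zeta\mapsto\zeta+\rho$ (as $r=e^{2\pi i\rho}$). Therefore the base is biholomorphic to $\C/(\Z+\rho\Z)$; equivalently, $\C^\times/r^\Z$ is the Tate curve with nome $r$, so its complex modulus is precisely the parameter $r$ (equivalently $\rho$ modulo $\SL_2(\Z)$).

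Finally, the two sides agree: the generic fiber is an elliptic curve with complexified K\"ahler parameter $\rho$, the base is an elliptic curve with complex modulus $\rho$, and under the normalization $r=e^{2\pi i\rho}$ both are labelled by the same $r$. Standard mirror symmetry for elliptic curves --- e.g. Polishchuk--Zaslow, or directly the SYZ $T^2$-duality, which dualizes the symplectic torus $\C/(\Z+\rho\Z)$ carrying complexified K\"ahler parameter $\rho$ into the complex torus $\C/(\Z+\rho\Z)$ --- then yields the asserted mirror duality between the fiber of $\widetilde{A}_{d-1}\to\D$ and the base of the SYZ mirror.

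The only genuinely delicate point, and the one I would check most carefully, is the bookkeeping of normalizations: that there are no stray factors of $2\pi i$ or sign errors in passing between the curve classes $C_i$, the K\"ahler parameters $q_i$, and the exponentiated base coordinate $z$, and that the orientation conventions make $F$ (rather than $-F$) the fiber class so that $\rho\in\HH$. Beyond these routine identifications there is no further content.
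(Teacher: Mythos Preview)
Your proposal is correct and matches what the paper intends: the Corollary is stated there without proof, as an immediate consequence of Theorem \ref{thm:Ad}, and your argument simply makes explicit the two identifications (the K\"ahler parameter of the fiber class $F=C_1+\cdots+C_d$ is $r=\prod q_i$, and the base $\C^\times/r^\Z$ is the Tate curve with complex modulus $r$) that the reader is expected to supply. Your additional invocation of Polishchuk--Zaslow/SYZ duality for elliptic curves is a reasonable elaboration, though strictly speaking the Corollary as stated only asserts the matching of parameters, which is already the paper's working definition of ``mirror symmetric'' here.
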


The following is a generalization of the root of Yau--Zaslow formula (Corollary \ref{cor:rootYZ}) to the $\widetilde{A}_{d-1}$ surface, 
which involves the multivariable theta function in addition to eta function.
\begin{Cor}
Let $\beta_p$ be the basic disc class corresponding to the primitive generator $(p,1) \in N$, $p=0,\ldots,d-1$. 
Then we have
$$ \sum_{\alpha\in H_2(X,\Z)} n_{\beta_p + \alpha} q^{\alpha} = \frac{e^{\frac{d \pi i \rho}{12} + \pi i p(p-1)\rho - 2\pi i \sum_{l=1}^{p-1} T_l}}{\eta(\rho)^{d}} 
\cdot \vartheta_{d-1} (T_1 - p \rho,\ldots, T_{d-1} - p\rho; \rho) $$
In particular if we restrict to $\alpha$ being multiples of the elliptic fiber class $F$, then
$$ \sum_{k=0}^\infty n_{\beta_p + kF} q^{k} = \left(\frac{q^{\frac{1}{24}}}{\eta(q)}\right)^d$$
which is independent of $p$.
\end{Cor}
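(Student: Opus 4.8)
The strategy is to read both identities off the expansion of the open Gromov--Witten potential $F^\open$ already obtained inside the proof of Theorem~\ref{thm:Ad}, using Theorem~\ref{thm:SYZ} and its equivariant version Theorem~\ref{thm:G-SYZ} to translate coefficients of $F^\open$ into open Gromov--Witten invariants. By construction $\sum_{\alpha} n_{\beta_p+\alpha}\,q^\alpha$ is exactly the coefficient of the monomial $z^{\beta_p-\beta_0}$ in $F^\open$, and by the $\Z$-equivariance of these generating functions (Lemma~\ref{lem:G-oGW}, Proposition~\ref{prop:G-F}) it depends only on $p \bmod d$, so it suffices to treat $0\le p\le d-1$. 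In the notation of Theorem~\ref{thm:SYZ} one has $z^{\beta_p-\beta_0}=q^{A_p}z^p$ with $z=z^{\beta_1-\beta_0}$ and $A_p=\beta_p-\beta_0-p(\beta_1-\beta_0)\in H_2(X,\Z)$; a short check on intersection numbers with the toric divisors gives $A_p=\sum_{i=1}^{p-1}(p-i)C_i$, hence $q^{A_p}=\prod_{i=1}^{p-1}q_i^{p-i}=\prod_{l=1}^{p-1}Q_l=e^{2\pi i\sum_{l=1}^{p-1}T_l}$.

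For the first identity I would then isolate a single coefficient of the expansion established in the proof of Theorem~\ref{thm:Ad}:
$$F^\open=\frac{e^{d\pi i\rho/12}}{\eta(\rho)^d}\,F(z;q),\qquad F(z;q)=\sum_{p=0}^{d-1}r^{(p^2-p)/2}\Big(\sum_{a\in\Z}r^{ad(a-1)/2+ap}(Q_1\cdots Q_{d-1})^{a}z^{ad+p}\Big)\vartheta_{d-1}(T_1-p\rho,\ldots,T_{d-1}-p\rho;\rho).$$
Only the $a=0$ summand contributes monomials of $z$-degree $p$ when $0\le p\le d-1$, so the coefficient of $z^p$ in $F^\open$ equals $\frac{e^{d\pi i\rho/12}}{\eta(\rho)^d}\,r^{(p^2-p)/2}\vartheta_{d-1}(T_1-p\rho,\ldots,T_{d-1}-p\rho;\rho)$. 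Dividing by $q^{A_p}$ and using $r^{(p^2-p)/2}=e^{\pi i p(p-1)\rho}$ together with $q^{-A_p}=e^{-2\pi i\sum_{l=1}^{p-1}T_l}$ yields the claimed formula.

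For the second identity I would project the right-hand side of the first formula onto the sub-series supported on classes $\alpha\in\Z_{\ge0}F$, i.e.\ onto the monomials in $q_1,\ldots,q_d$ that are powers of $q^F=r=q_1\cdots q_d$. Writing $\frac{e^{d\pi i\rho/12}}{\eta(\rho)^d}=\prod_{k\ge1}(1-r^k)^{-d}$ (already a power series in $r$) and $\vartheta_{d-1}(T_1-p\rho,\ldots,T_{d-1}-p\rho;\rho)=\sum_{m\in\Z^{d-1}}r^{\sum_{i\le j}m_im_j-p\sum_k m_k}\prod_k Q_k^{m_k}$, one checks that the total $q$-monomial attached to an index $m$ (after including the factor $\prod_{l=1}^{p-1}Q_l^{-1}$ coming from $q^{-A_p}$) is a power of $r$ if and only if $m=m_\star:=(1,\ldots,1,0,\ldots,0)$ with $p-1$ ones; for every other $m$ some $q_i$ with $i<d$ occurs with exponent different from that of $q_d$. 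A direct substitution shows that the $m_\star$ term contributes the pure monomial $r^0=1$: the factors $r^{(p^2-p)/2}$, $r^{\sum_{i\le j}(m_\star)_i(m_\star)_j-p\sum_k(m_\star)_k}$ and $\prod_{l=1}^{p-1}Q_l^{-1}\prod_k Q_k^{(m_\star)_k}$ cancel. Hence the projection collapses to $\prod_{k\ge1}(1-r^k)^{-d}=(q^{1/24}/\eta(q))^d$ with $q=r$, which is the stated formula, manifestly independent of $p$.

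The proof is essentially bookkeeping, and the one place to be careful is the normalization $z^{\beta_p-\beta_0}=q^{A_p}z^p$ --- equivalently, the identification of the elliptic-curve coordinate used in Theorem~\ref{thm:Ad} with the semi-flat coordinate $z^{\beta_1-\beta_0}$ after the $d\Z$-quotient, and the resulting intersection-number computation $A_p=\sum_{i=1}^{p-1}(p-i)C_i$. The other delicate point is the combinatorial claim that $m_\star$ is the unique ``diagonal'' lattice point in the theta sum and that the surrounding powers of $r$ and $Q_l$ telescope to $1$; both assertions are forced, since the two sides are literally the same coefficient of $F^\open$, and any error would be detected already by the constant term ($\alpha=0$, $n_{\beta_p}=1$, which must match the constant term $1$ of $(q^{1/24}/\eta(q))^d$).
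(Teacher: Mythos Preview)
Your proof is correct and is precisely the argument the paper has in mind: the Corollary is stated without proof because it follows by reading off the $z^p$-coefficient of the expansion of $F^\open$ derived in the proof of Theorem~\ref{thm:Ad} and dividing by $q^{A_p}$, together with the observation that among the lattice points in the $\vartheta_{d-1}$-sum only $m_\star=(1,\ldots,1,0,\ldots,0)$ contributes a pure power of $r$. Your verification of $A_p=\sum_{i=1}^{p-1}(p-i)C_i$ and of the telescoping of the $m_\star$ term fills in exactly the bookkeeping the paper omits.
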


Notice that Dedekind eta function appears frequently in Donaldson--Thomas invariants. 
In particular for the Calabi--Yau 3-fold $K_{\widetilde{A}_{d-1}}$ we have
$$ \sum_{k=0}^\infty \mathrm{DT}(kF,0) q^{k} = \left(\frac{q^{\frac{1}{24}}}{\eta(q)}\right)^d$$ 
where $\mathrm{DT}(kF,0)$ is the Donaldson--Thomas invariants, the virtual count of subschemes $Z$ of $K_{\widetilde{A}_{d-1}}$ with $[Z]=kF$ and $\chi(\mathcal{O}_Z) = 0$. 
The localization technique relates this with the Euler characteristic of $\mathrm{Hilb}^k(K_{\widetilde{A}_{d-1}})$ \cite{Got}. 

\section{Local Calabi--Yau 3-folds of type $\widetilde{A}$} \label{section: 3-fold}
Now we proceed to derive the SYZ mirror of a crepant resolution $X_{(p,q)}$ of the fiber product of the $\widetilde{A}_{p-1}$ surface and the $\widetilde{A}_{q-1}$ surface over $\D$. 
Here we assume the singular fibers of both surfaces occur at $0 \in \D$. 
The fiber product has conifold singularities at $(a,b)$ where $a$ and $b$ are the singular points of the fibers at $0$ of the two surfaces.  
Taking a consistent crepant resolution of all these singularities, we obtain a local Calabi--Yau 3-fold, 
which can be regarded as a partial compactification of (an open subset of) the resolved orbifolded conifold $\widehat{O_{p,q}}$ (see \cite{KL} for instance). 
$$
\xymatrix{
X_{(p,q)} \ar[r]& \widetilde{A}_{p-1} \times_\D \widetilde{A}_{q-1} \ar[d] \ar[r] \ar@{}[dr]|\Box &\widetilde{A}_{q-1} \ar[d]  \\
& \widetilde{A}_{p-1} \ar[r] & \D. 
}
$$

In \cite{HIV}, Hollowood--Iqbal--Vafa constructed the local Calabi--Yau 3-fold $X_{(1,1)}$ in a heuristic way and argued that its mirror curve is given by 
$$
F(X,Y)=\sum_{k,l \in \Z} e^{(\pi i k(k-1)\rho+\pi i l (l-1)\tau+2\pi i kl \sigma)}X^kY^l=0
$$
in a 2-dimensional torus $(\C^\times)^2/\Z^2$, where $\rho, \tau$ and $\sigma$ represent the complexified K\"ahler parameters of $X_{(1,1)}$.
They gave $3$ supporting arguments for this mirror proposal, namely matrix models, geometric engineering and instanton calculus.


In the following we will derive the mirror of $X_{(1,1)}$, and more generally mirrors of $X_{(p,q)}$, via the mathematically rigorous SYZ program. 
We have the same form of mirror as above. 
A crucial advantage of our approach is that additionally our mirrors involve the generating functions of open Gromov--Witten invariants, 
which are analogs of the Dedekind eta function and multivariable theta functions in the surface case.  


\subsection{Toric geometry}
Similar to the previous section, the geometry can be made more explicit by taking a cover by a toric Calabi--Yau 3-fold of infinite-type. 
Consider the fan $\Sigma_0$ consisting of maximal cones
$$
\langle (i,j,1), (i+1,j,1),(i,j+1,1),(i+1,j+1,1)\rangle, \ \ \ i,j\in\Z,
$$
which is a fiber product of $2$ copies of the fan of the $\widehat{A_{\infty}}$ surface. 
In particular $X_{\Sigma_0}$ is a fiber product of $2$ copies of the $\widehat{A_\infty}$ surface. 
It admits an action by $\Z^2$: $(k,l) \cdot (a,b,c) = (a+k,b+l,c)$ on $N$. 
A crepant resolution is obtained by refining each maximal cone, which is a cone over a square, into $2$ triangles. 
Note that we have $2$ choices for each of the squares, which are related by a flop. 
We make a choice which is invariant under $p\Z \times q\Z$, and the corresponding fan is denoted by $\Sigma$. 
Then $X_{(p,q)}$ is obtained as $X^o_\Sigma / (p\Z \times q\Z)$, where $X^o_\Sigma$ is a toric neighborhood of the toric divisors where $\Z^2$ acts freely.  The natural morphism $\overline{\phi}:X_{(p,q)}\rightarrow \D$ is an abelian surface fibration such that $\overline{\phi}^{-1}(t)=(\C^\times/t^{p\Z})\times(\C^\times/t^{q\Z})$ for $t\ne 0$. 

As was discussed in Section \ref{subsection: group action}, in order to have a $\Z^2$-invariant metric, where $\Z^2\cong p\Z \times q\Z$, 
we need to make sure $\Sigma$ has a dual polytope which is invariant under $\Z^2$ up to translation. 
Unlike the situation in the surface case, this imposes a consistency condition on the choice of crepant resolutions of the conifold points 
(see Example \ref{ex:no-metric} for a counterexample that does not admit a $G$-invariant metric). 
Here we simply take the refinement of
$$
\langle (i,j,1), (i+1,j,1),(i,j+1,1),(i+1,j+1,1)\rangle, \ \ \ i,j\in\Z
$$ 
into the $2$ triangles (Figure \ref{fig:FiberProd})
$$
\langle (i,j,1),(i+1,j,1),(i,j+1,1)\rangle, \ \ \ \langle (i+1,j,1),(i,j+1,1),(i+1,j+1,1)\rangle. 
$$ 
\begin{figure}[htbp]
 \begin{center} 
  \includegraphics[width=75mm]{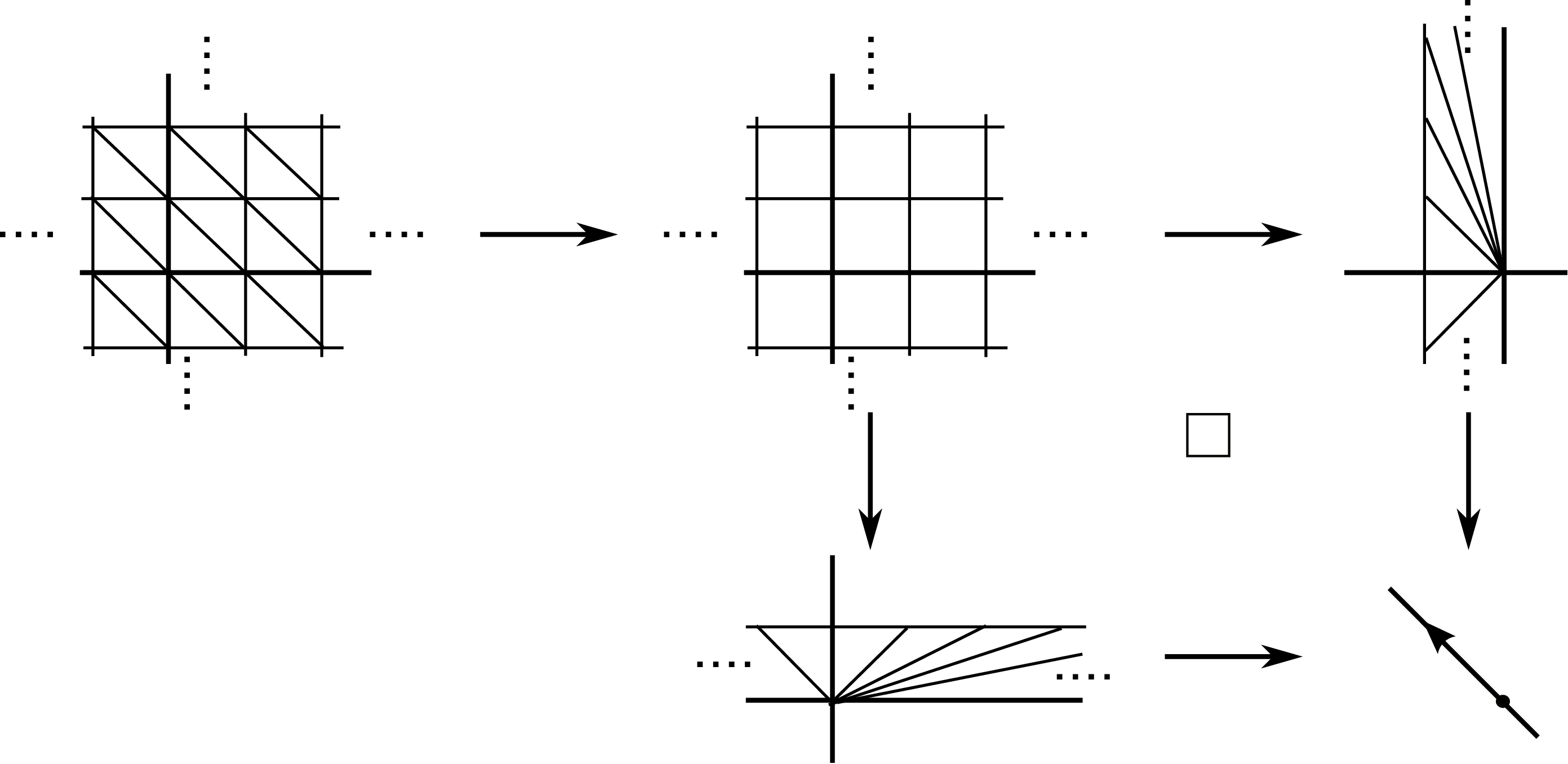}
 \end{center}
 \caption{Toric picture of $\widehat{A_{\infty}} \times_\C \widehat{A_{\infty}}$ and its crepant resolution $X_\Sigma$}
\label{fig:FiberProd}
\end{figure}
It is easy to check that this choice of $\Sigma$ has a $\Z^2$-invariant dual polytope.

\begin{Lem} \label{lem:invP-3fold}
Take $c_{k,l} = -k(k-1)-l(l-1)-kl$ for $k,l\in\Z$.  Then the polytope 
$$P := \bigcap_{(k,l) \in \Z^2} \{ (y_1,y_2,y_3) \in \R^3 \ | \ k y_1 + l y_2 + y_3 \geq c_{k,l} \}$$
is invariant under the $\Z^2$-action generated by 
\begin{align}
(1,0) \cdot (y_1,y_2,y_3) = (y_1,y_2,y_3-y_1) + (-2,-1,2), \notag \\
(0,1) \cdot (y_1,y_2,y_3) = (y_1,y_2,y_3-y_2) + (-1,-2,2). \notag 
\end{align}
\end{Lem}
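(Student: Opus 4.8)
The plan is to verify that each of the two generators of the $\Z^2$-action permutes the family of closed half-spaces cutting out $P$, from which invariance of the intersection $P$ is automatic. Write $\ell_{k,l}(y) := k y_1 + l y_2 + y_3 - c_{k,l}$, so that $P = \bigcap_{(k,l)\in\Z^2} H_{k,l}$ with $H_{k,l} := \{y\in\R^3 : \ell_{k,l}(y)\geq 0\}$, and denote the two generators by $g_1 := (1,0)\cdot$ and $g_2 := (0,1)\cdot$, viewed as affine bijections of $\R^3$, namely $g_1(y) = (y_1-2,\ y_2-1,\ y_3-y_1+2)$ and $g_2(y) = (y_1-1,\ y_2-2,\ y_3-y_2+2)$.

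The key step is the pair of identities $\ell_{k+1,l}\circ g_1 = \ell_{k,l}$ and $\ell_{k,l+1}\circ g_2 = \ell_{k,l}$, valid for all $(k,l)\in\Z^2$. I would prove the first by substituting $g_1(y)$ into $\ell_{k+1,l}$: the linear terms collapse to $k y_1 + l y_2 + y_3$, and matching the constant terms amounts to the numerical identity $c_{k,l} - c_{k+1,l} = 2k+l$, which is a one-line check from $c_{k,l} = -k(k-1)-l(l-1)-kl$. The second identity is handled the same way and reduces to $c_{k,l} - c_{k,l+1} = k + 2l$. Because $g_1$ is an affine bijection, the identity $\ell_{k+1,l}\circ g_1 = \ell_{k,l}$ gives $g_1(H_{k,l}) = H_{k+1,l}$, and similarly $g_2(H_{k,l}) = H_{k,l+1}$.

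With this in hand the conclusion is immediate: since $(k,l)\mapsto(k+1,l)$ and $(k,l)\mapsto(k,l+1)$ are bijections of the index set $\Z^2$,
$$ g_1(P) = \bigcap_{(k,l)\in\Z^2} g_1(H_{k,l}) = \bigcap_{(k,l)\in\Z^2} H_{k+1,l} = P, $$
and likewise $g_2(P) = P$; since $g_1$ and $g_2$ generate the action, $P$ is invariant under all of $\Z^2$. I would also record that $g_1$ and $g_2$ commute, so that they indeed define a $\Z^2$-action: their linear parts are the commuting elementary shears $y_3\mapsto y_3-y_1$ and $y_3\mapsto y_3-y_2$, and a short computation confirms the translation parts are compatible. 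This is the exact analogue of the surface-case lemma with $c_k = -\tfrac{k(k-1)}{2}$ in Section \ref{section: Calabi--Yau}; the metric it produces via Proposition \ref{prop:G-metric} is then $\Z^2$-invariant, in particular $p\Z\times q\Z$-invariant, as needed for $X_{(p,q)}$.

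There is essentially no genuine obstacle here. The only point requiring foresight is the choice of the constants $c_{k,l}$: the inhomogeneous quadratic $(k,l)\mapsto c_{k,l}$ has been arranged precisely so that it transforms correctly under the dual affine shears, i.e.\ so that the two numerical identities $c_{k,l}-c_{k+1,l}=2k+l$ and $c_{k,l}-c_{k,l+1}=k+2l$ hold. Beyond that, the proof is bookkeeping --- keeping the translation vectors $(-2,-1,2)$ and $(-1,-2,2)$ and the shears straight --- so the ``hard part'' is merely being careful rather than anything structural.
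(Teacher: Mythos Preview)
Your proof is correct and is exactly the direct verification the paper has in mind; the paper itself omits the proof entirely, stating the lemma as an easy check. Your identities $c_{k,l}-c_{k+1,l}=2k+l$ and $c_{k,l}-c_{k,l+1}=k+2l$ are precisely what makes the half-spaces permute under $g_1$ and $g_2$, and there is nothing more to it.
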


We label the toric invariant curves as follows.

\begin{Def} \label{def:C^i}
We label the irreducible toric rational curve corresponding to the 2-dimensional cone as follows (the left figure of Figure \ref{fig:Curves}):
\begin{align}
C^1_{(a,b)}&:=\R_{\geq 0} \Conv(\{(a+1,b,1), (a,b+1,1) \}), \notag \\
C^2_{(a,b)}&:=\R_{\geq 0} \Conv(\{(a,b,1), (a,b+1,1) \}), \notag \\
C^3_{(a,b)}&:=\R_{\geq 0} \Conv(\{(a,b,1), (a+1,b,1) \}).\notag
\end{align} 
\end{Def}

\begin{figure}[htbp]
 \begin{center} 
  \includegraphics[width=50mm]{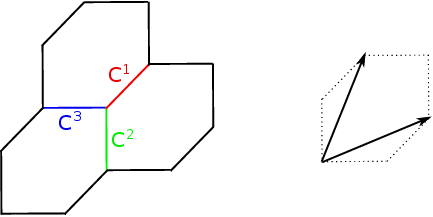}
 \end{center}
 \caption{Toric web diagram and vectors $[\tau,\sigma]^t$ and $[\sigma,\rho]^t$.}
\label{fig:Curves}
\end{figure}

It is straightforward to get the following relations in $H_2(X_{\Sigma},\Z)$ for all $a,b \in \Z$.
$$
C^1_{(a-1,b)}+C^3_{(a-1,b)}=C^1_{(a,b-1)}+C^3_{(a,b)}, \ \ \ 
C^1_{(a-1,b)}+C^2_{(a,b)}=C^1_{(a,b-1)}+C^2_{(a,b-1)}. 
$$

\begin{Lem} \label{Kahler dim}
$H_2(X_{(p,q)},\Z)$ has rank $pq+2$. 
\end{Lem}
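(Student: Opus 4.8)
The plan is to compute $H_2(X_{(p,q)},\Z)$ by a presentation in terms of toric curves. By Proposition~\ref{prop:H_2^eff} applied to the covering toric Calabi--Yau $3$-fold $X^o_\Sigma$ — together with the fact that any rational curve in the quotient lifts to $X^o_\Sigma$ since $\PP^1$ is simply connected — $H_2(X_{(p,q)},\Z)$ is generated by the images of the toric curves $C^1_{(a,b)},C^2_{(a,b)},C^3_{(a,b)}$ of Definition~\ref{def:C^i}. The deck group $p\Z\times q\Z$ identifies $C^j_{(a,b)}$ with $C^j_{(a+p,b)}$ and with $C^j_{(a,b+q)}$, so the images $\bar C^j_{(a,b)}$ depend only on $(a,b)\bmod(p,q)$, giving $3pq$ generators. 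I then need the relations, which I claim are generated by the two displayed families preceding the statement, now read modulo $(p,q)$:
\begin{align*}
\bar C^1_{(a-1,b)}+\bar C^3_{(a-1,b)}&=\bar C^1_{(a,b-1)}+\bar C^3_{(a,b)},\\
\bar C^1_{(a-1,b)}+\bar C^2_{(a,b)}&=\bar C^1_{(a,b-1)}+\bar C^2_{(a,b-1)},
\end{align*}
with $(a,b)$ ranging over $(\Z/p)\times(\Z/q)$: a total of $2pq$ relations, which I will call $(R1)$ and $(R2)$.

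Granting this, the rank count is a finite linear-algebra exercise. Using $(R1)$ I would solve recursively in the $a$-direction for $\bar C^3_{(a,b)}$ with $a\not\equiv 0\pmod p$ in terms of the $\bar C^1$'s and the $q$ classes $\bar C^3_{(0,b)}$; using $(R2)$ I solve in the $b$-direction for $\bar C^2_{(a,b)}$ with $b\not\equiv 0\pmod q$ in terms of the $\bar C^1$'s and the $p$ classes $\bar C^2_{(a,0)}$. After these substitutions $H_2(X_{(p,q)},\Z)$ is generated by the $pq+p+q$ classes $\{\bar C^1_{(a,b)}\}\cup\{\bar C^3_{(0,b)}\}\cup\{\bar C^2_{(a,0)}\}$, and the only relations that survive are the cyclic-consistency conditions of the two recursions: closing up the first recursion once around the $a$-circle forces the row sums $\sum_{a=0}^{p-1}\bar C^1_{(a,b)}$ to be independent of $b$, i.e.\ $q-1$ independent relations, and the second recursion forces the column sums $\sum_{b=0}^{q-1}\bar C^1_{(a,b)}$ to be independent of $a$, another $p-1$; these two families are mutually compatible since $\sum_{a,b}\bar C^1_{(a,b)}$ is common to both, and no further relation appears. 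Hence $\rk H_2(X_{(p,q)},\Z)=(pq+p+q)-\big((q-1)+(p-1)\big)=pq+2$. As sanity checks, $p=q=1$ gives rank $3$, matching the three K\"ahler parameters $\rho,\sigma,\tau$ of Hollowood--Iqbal--Vafa, and $(p,q)=(2,1)$ gives $4$.

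The main obstacle is the claim that $(R1)$ and $(R2)$ generate \emph{all} relations among the $\bar C^j_{(a,b)}$, and that after the eliminations exactly the $p+q-2$ cyclic-consistency relations remain and are independent. For the covering space $X^o_\Sigma$ this is the standard statement that for a smooth toric variety $H_2=\ker\big(\Z^{\Sigma(1)}\to N\big)$ is presented by toric-curve classes and wall-crossing relations; I would establish it here by writing $X^o_\Sigma$ as an increasing union of finite-type smooth toric Calabi--Yau $3$-folds $X^{(k)}$ (an Euler-characteristic count for the triangulated planar region shows that the kernel there has rank $2\cdot(\#\text{interior vertices})$, exactly matched by the wall relations of the two types) and then passing to the limit as in Proposition~\ref{prop:lim}. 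Descending to the quotient along the free $p\Z\times q\Z$-action introduces no further relations, which I would justify via the covering. As an independent cross-check one can run the Cartan--Leray spectral sequence $H_s\big(p\Z\times q\Z,\,H_t(X^o_\Sigma,\Z)\big)\Rightarrow H_{s+t}(X_{(p,q)},\Z)$, noting that $X^o_\Sigma$ is simply connected, to get $\rk H_2(X_{(p,q)},\Z)=1+\rk H_2(X^o_\Sigma,\Z)_{p\Z\times q\Z}$, and then compute the coinvariant term from the same kernel description of $H_2(X^o_\Sigma,\Z)$; that route, however, still requires pinning down the rank of one connecting homomorphism $H_1(\Z^2,N)\to H_2(X^o_\Sigma,\Z)_{p\Z\times q\Z}$, so the direct elimination above is the cleaner path.
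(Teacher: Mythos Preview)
Your approach is essentially the paper's: $3pq$ toric-curve generators, $2pq$ relations of types $(R1)$ and $(R2)$, and rank of the relation module equal to $2pq-2$, giving $3pq-(2pq-2)=pq+2$. The paper states this in two lines, simply asserting that ``$2$ of the relations are abundant due to the periodicity''; your explicit elimination down to $pq+p+q$ generators with $(p-1)+(q-1)$ surviving cyclic-consistency relations among the $\bar C^1$'s is a more careful execution of the same count, and your remark that one must justify $(R1),(R2)$ generate \emph{all} relations flags a point the paper leaves implicit.
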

\begin{proof}
There are $3pq$ toric rational curves in the fundamental domain of the toric web diagram, namely $C^1_{(a,b)}, C^2_{(a,b)}$ and $C^3_{(a,b)}$ for $0\le a \le p-1$ and $0 \le b \le q-1$. 
We have $2pq$ relations as above, while $2$ of the relations are abundant due to the periodicity. Therefore the rank is $3pq-(2pq-2)=pq+2$. 
\end{proof}

The following relations would be useful to compute the explicit expression of the SYZ mirror.

\begin{Lem} \label{prop: horizontal vertical}
The sum $\sum_{a=0}^{p-1}C^1_{(a,b)}$ is independent of $0\le a \le p-1$ in $H_2(X_{\Sigma},\Z)$. 
Similarly, $\sum_{b=0}^{q-1}C^1_{(a,b)}$ is independent of $0 \le b \le q-1$ in $H_2(X_{\Sigma},\Z)$. 
\end{Lem}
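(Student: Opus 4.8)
The plan is to obtain both assertions directly from the two families of relations in $H_2(X_{(p,q)},\Z)$ recorded just before the lemma, combined with the periodicity $C^i_{(a,b)} = C^i_{(a+p,b)} = C^i_{(a,b+q)}$ which holds after taking the quotient of $X_\Sigma^o$ by $p\Z\times q\Z$. Here the sum $\sum_{a=0}^{p-1}C^1_{(a,b)}$ is a sum over one full period in the first direction, so the content of the statement is that it is independent of $b$ for $0\le b\le q-1$, and symmetrically that $\sum_{b=0}^{q-1}C^1_{(a,b)}$ is independent of $a$ for $0\le a\le p-1$. The key observation is that each of the two relations, once summed over a full period in the appropriate direction, telescopes so that every boundary term cancels by periodicity.

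For the first assertion I would rewrite the relation $C^1_{(a-1,b)}+C^3_{(a-1,b)}=C^1_{(a,b-1)}+C^3_{(a,b)}$ as $C^1_{(a-1,b)}-C^1_{(a,b-1)}=C^3_{(a,b)}-C^3_{(a-1,b)}$ and sum over $a=0,\dots,p-1$. The right-hand side telescopes to $C^3_{(p-1,b)}-C^3_{(-1,b)}$, which is $0$ by $p$-periodicity. On the left-hand side, reindexing the first sum and using $C^1_{(-1,b)}=C^1_{(p-1,b)}$ turns it into $\sum_{a=0}^{p-1}C^1_{(a,b)}-\sum_{a=0}^{p-1}C^1_{(a,b-1)}$. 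Hence $\sum_{a=0}^{p-1}C^1_{(a,b)}=\sum_{a=0}^{p-1}C^1_{(a,b-1)}$, and an induction on $b$ gives independence of $b$. The second assertion is entirely symmetric: from $C^1_{(a-1,b)}+C^2_{(a,b)}=C^1_{(a,b-1)}+C^2_{(a,b-1)}$, i.e. $C^1_{(a-1,b)}-C^1_{(a,b-1)}=C^2_{(a,b-1)}-C^2_{(a,b)}$, summing over $b=0,\dots,q-1$ makes the $C^2$-terms telescope to $C^2_{(a,-1)}-C^2_{(a,q-1)}=0$, and the same reindexing ($C^1_{(a,-1)}=C^1_{(a,q-1)}$) yields $\sum_{b=0}^{q-1}C^1_{(a-1,b)}=\sum_{b=0}^{q-1}C^1_{(a,b)}$, whence independence of $a$.

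Since the whole argument is just bookkeeping with telescoping sums, there is essentially no real obstacle; the only thing requiring care is to pair each relation with the correct direction in which to sum — period $p$ in the index $a$ for the first statement (using the relation involving $C^3$), period $q$ in the index $b$ for the second (using the relation involving $C^2$) — so that the boundary terms genuinely cancel. One should also keep in mind that the cancellations of the boundary terms $C^1_{(-1,b)}-C^1_{(p-1,b)}$, $C^3_{(p-1,b)}-C^3_{(-1,b)}$, and so on, take place in $H_2(X_{(p,q)},\Z)$ thanks to the periodicity, which is the homology group relevant here.
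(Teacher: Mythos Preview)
Your proof is correct and is essentially the same argument as the paper's: both sum one of the two relations over a full period in the appropriate index and use the periodicity $C^i_{(a+p,b)}=C^i_{(a,b)}$, $C^i_{(a,b+q)}=C^i_{(a,b)}$ to cancel the $C^3$- (resp.\ $C^2$-) terms. Your presentation via rewriting the relation as a difference and telescoping is only a cosmetic rephrasing of the paper's direct summation and cancellation.
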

\begin{proof}
By the periodicity, we have $C^1_{(-1,b)}=C^1_{(p-1,b)}$ and thus the sum of the above relations
$$
\sum_{a=0}^{p-1} \left(C^1_{(a-1,b)}+C^3_{(a-1,b)}\right)=\sum_{a=0}^{p-1}\left(C^1_{(a,b-1)}+C^3_{(a,b)}\right)
$$
simplifies to
$$
\sum_{a=0}^{p-1}C^1_{(a,b)}=\sum_{a=0}^{p-1}C^1_{(a,b-1)}.
$$ 
The second assertion follows similarly. 
\end{proof}

It is worth noting that the above toric construction provides a mathematical foundation of the local Calabi--Yau 3-fold $X_{1,1}$ 
heuristically discussed by Hollowood--Iqbal--Vafa \cite{HIV}. 
In particular the topological vertex technique can be justified as $X_{p,q}$ admits the action of the subgroup $\C^\times \times (S^1)^2\subset (\C^\times)^3$.  


\subsection{SYZ mirror of $X_{(1,1)}$} 
Let us first consider the local Calabi--Yau 3-fold $X_{(1,1)}$. 
By Lemma \ref{Kahler dim}, $\dim H_2(X_{1,1},\R) = 3$. 
The cone of effective curves is given by $\R_{\geq 0} \{C^1,C^2,C^3\}$ where $C^i = C^i_{(a,b)}$ for any $a,b$, where $C^i_{(a,b)}$ are given in Definition \ref{def:C^i}. 
For the purpose of modularity, we define
$$
C_\tau = C^1 + C^2, \ \ \ C_\rho = C^1 + C^3, \ \ \ C_\sigma = C^1
$$
and let $q_\tau = e^{2\pi i \tau}$, $q_\rho= e^{2\pi i \rho}$ and $q_\sigma= e^{2\pi i \sigma}$ be the corresponding K\"ahler parameters respectively 
(the right figure of Figure \ref{fig:Curves}). 
Then we have $q_\tau = q_1 q_2$, $q_\rho = q_1 q_3$ and $q_\sigma = q_1$. 
We will show that $\Omega:=\begin{bmatrix}
                \tau   &  \sigma  \\
                 \sigma& \rho  \\
\end{bmatrix}$ serves as the period matrix of the base abelian surface of the SYZ mirror of $X_{(1,1)}$.  From now on we shall use both $q = (q_\tau,q_\rho,q_\sigma)$ and $\Omega$ to refer to the mirror complex parameters.

\begin{Lem} \label{lem: omega}
 The matrix $\Omega$ belongs to the Siegel upper half-plane $\mathfrak{H}_2$, i.e. $\mathrm{Im}\Omega>0$. 
\end{Lem}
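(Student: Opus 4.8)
The plan is to read off the imaginary part of $\Omega$ directly from the symplectic areas of the three toric rational curves $C^1,C^2,C^3$ and then apply Sylvester's criterion. Since $\Omega$ is manifestly symmetric, the only real content of the statement is the positive-definiteness of $\mathrm{Im}\,\Omega$.

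First I would invoke the equivariant K\"ahler metric. By Lemma \ref{lem:invP-3fold} (specialized to $p=q=1$) the fan $\Sigma$ admits a dual polyhedral set invariant under $\Z^2$ up to translation, so Proposition \ref{prop:G-metric} furnishes a $\Z^2$-invariant toric K\"ahler form $\omega$ on the toric neighbourhood $X^o_\Sigma$ of the toric divisors. Each curve $C^i_{(a,b)}$ is a non-constant holomorphic $\PP^1$ contained in two toric prime divisors, hence lies inside $X^o_\Sigma$ and has strictly positive symplectic area; by $\Z^2$-invariance this area depends only on $i$, consistently with the identification $C^i=C^i_{(a,b)}$ in $H_2(X_{(1,1)},\Z)$, so I set $s_i:=\int_{C^i}\omega>0$ for $i=1,2,3$. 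Next I would translate the relations $C_\sigma=C^1$, $C_\tau=C^1+C^2$, $C_\rho=C^1+C^3$ (established just above) into statements about imaginary parts: writing $q^{C^i}=e^{2\pi i t_i}$ with $\mathrm{Im}\,t_i=s_i$, one gets $\mathrm{Im}\,\sigma=s_1$, $\mathrm{Im}\,\tau=s_1+s_2$ and $\mathrm{Im}\,\rho=s_1+s_3$ — each well defined in spite of the integral ambiguity in $\tau,\rho,\sigma$ — hence
$$
\mathrm{Im}\,\Omega=\begin{bmatrix} s_1+s_2 & s_1 \\ s_1 & s_1+s_3 \end{bmatrix}.
$$
Finally I would check positive-definiteness: the leading $1\times 1$ minor is $s_1+s_2>0$, and $\det(\mathrm{Im}\,\Omega)=(s_1+s_2)(s_1+s_3)-s_1^2=s_1s_2+s_1s_3+s_2s_3>0$, so by Sylvester's criterion $\mathrm{Im}\,\Omega>0$ and therefore $\Omega\in\mathfrak{H}_2$.

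There is no serious obstacle in this argument; the only point demanding care is to work with the $\Z^2$-invariant K\"ahler metric of Proposition \ref{prop:G-metric} — which exists precisely because of the translation-invariant dual polytope of Lemma \ref{lem:invP-3fold} — so that the three area parameters $s_i$ descend to well-defined positive numbers on the quotient $X_{(1,1)}$ and the curve classes $C^1,C^2,C^3$ genuinely span the effective cone with the asserted K\"ahler parameters $q_\sigma,q_\tau,q_\rho$.
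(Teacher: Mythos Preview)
Your proof is correct and follows essentially the same approach as the paper: both arguments identify $\mathrm{Im}\,\tau$, $\mathrm{Im}\,\rho$, $\mathrm{Im}\,\sigma$ with the symplectic areas of $C^1+C^2$, $C^1+C^3$, $C^1$ respectively and then verify positive-definiteness of the resulting $2\times 2$ matrix. The only cosmetic difference is that the paper checks the trace and determinant conditions while you apply Sylvester's criterion with the leading minor, and you supply more explicit justification (via Lemma~\ref{lem:invP-3fold} and Proposition~\ref{prop:G-metric}) for why the areas $s_i$ are well-defined positive numbers on the quotient.
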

\begin{proof}
Recall that $\mathrm{Im}\rho$, $\mathrm{Im}\tau$ and $\mathrm{Im}\sigma$ serve as the symplectic areas of the curve classes 
$C^1 + C^2, C^1 + C^3$ and $C^1$ respectively. 
Then it follows immediately that $\mathrm{Im}\tau+\mathrm{Im}\rho>0$ and $\mathrm{Im}\tau\mathrm{Im}\rho-(\mathrm{Im}\sigma)^2>0$. 
\end{proof}

Now we combine Theorem \ref{thm:SYZ} and \ref{thm: GS norm} to obtain the SYZ mirror of $X_{(1,1)}$.
\begin{Thm}  \label{thm:CY3/Z^2}
The SYZ mirror of $X_{(1,1)}$ is $uv = F^\open(z_1,z_2;q)$ where
$$
F^\open(z_1,z_2;q) = \Delta(\Omega) \cdot 
\Theta_2
\begin{bmatrix}
                0 \\
                (- \frac{\tau}{2},- \frac{\rho}{2})
\end{bmatrix}
\left(\zeta_1, \zeta_2; \Omega\right). 
$$
Here $z_i = e^{2\pi i \zeta_i}$, $\Theta_2$ is the genus $2$ Riemann theta function, and
\begin{equation} \label{eq:Delta}
\Delta(\Omega)
 = \exp \left(\sum_{j \geq 2} \frac{(-1)^j }{j } 
\sum_{\substack{({\bf l}_i=(l_i^1, l_i^2)\in \Z^{2} \setminus 0)_{i=1}^j  \\ \textrm{with } \sum_{i=1}^j  {\bf l}_i = 0}}
\exp\left(\sum_{k=1}^j \pi i {\bf l}_k\cdot \Omega \cdot {\bf l}^T_k\right) \right).
\end{equation}
Thus the SYZ mirror is a conic fibration over the abelian surface with period $\Omega$  
which degenerates over the genus $2$ curve defined by $F^\open=0$, which give a principal polarization. 
\end{Thm}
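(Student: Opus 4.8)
The plan is to specialize the equivariant SYZ package of Section~\ref{section: SYZ mirror}---Theorems~\ref{thm:SYZ}, \ref{thm:G-SYZ}, \ref{thm:open-mir-thm} and \ref{thm: GS norm}---to the quotient $X_{(1,1)} = X^o_\Sigma/\Z^2$, and then to recognize the resulting formal series as $\Delta(\Omega)$ times a genus $2$ Riemann theta function. First I would apply Theorem~\ref{thm:G-SYZ}: the SYZ mirror of $X^o_\Sigma/\Z^2$ is $uv = F^\open$ with $F^\open = \sum_v \big(\sum_{\alpha \in H_2^{\mathrm{eff}}} n_{\beta_v+\alpha}\, q^\alpha\big)\, z^{\beta_v - \beta_0}$, the sum over $v \in N' = \Z^2$. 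Because $G = \Z^2$ acts transitively (by translation) on the rays $\R_{\geq 0}(i,j,1)$ of $\Sigma$, Lemma~\ref{lem:G-oGW} forces $\sum_\alpha n_{\beta_v+\alpha}\, q^\alpha$ to be independent of $v$; write this element of $(\C[[q_1,\ldots]]^f/I)/\Z^2$ as $\Delta(\Omega)$, a series in the K\"ahler parameters alone with constant term $n_{\beta_0} = 1$. Thus $F^\open = \Delta(\Omega)\cdot\big(\sum_v z^{\beta_v - \beta_0}\big)$ and only the two factors remain to be identified.

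For the bare sum, write $v = (k,l)$; by Theorem~\ref{thm:SYZ} $z^{\beta_v - \beta_0} = q^{A_v}\, z_1^k z_2^l$ for $A_v \in H_2(X_\Sigma,\Z)$. A direct computation using Definition~\ref{def:C^i}, the $H_2$-relations recorded just after it, and Lemma~\ref{prop: horizontal vertical} gives $A_{(k,l)} = \tfrac{k(k-1)}{2}\,C_\tau + \tfrac{l(l-1)}{2}\,C_\rho + kl\,C_\sigma$, so $q^{A_{(k,l)}} = q_\tau^{k(k-1)/2}\, q_\rho^{l(l-1)/2}\, q_\sigma^{kl}$. Summing over $\Z^2$ and completing the square, exactly as in the surface identity $\sum_v z^{\beta_v - \beta_0} = \vartheta(\zeta - \tfrac{\tau}{2};\tau)$, yields $\sum_v z^{\beta_v - \beta_0} = \Theta_2\big[\begin{smallmatrix} 0 \\ (-\tau/2,\,-\rho/2)\end{smallmatrix}\big](\zeta_1,\zeta_2;\Omega)$ with $z_i = e^{2\pi i\zeta_i}$ and $\Omega = \big[\begin{smallmatrix}\tau & \sigma \\ \sigma & \rho\end{smallmatrix}\big]$; Lemma~\ref{lem: omega} guarantees $\Omega\in\mathfrak{H}_2$, so the series converges.

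Next I would determine $\Delta(\Omega)$ via the Gross--Siebert normalization (Theorem~\ref{thm: GS norm}): $\log F^\open$ contains no term $a\cdot q^C$ with $a\in\C^\times$ and $C\in H_2(X_\Sigma,\Z)\setminus\{0\}$. Since $\Delta(\Omega)$ is a function of $q$ only, $\log\Delta(\Omega)$ equals minus the $z$-free part of $\log\Theta_2 = \sum_{j\geq 1}\frac{(-1)^{j-1}}{j}(\Theta_2 - 1)^j$. A monomial of $(\Theta_2 - 1)^j$ is $z$-free exactly when it comes from a $j$-tuple $(\mathbf{l}_1,\ldots,\mathbf{l}_j)$ of vectors in $\Z^2\setminus\{0\}$ with $\sum_i \mathbf{l}_i = 0$ (impossible for $j = 1$), and then the terms linear in the $\mathbf{l}_i$ in $\sum_i A_{\mathbf{l}_i}$ cancel, leaving $\prod_{k=1}^j \exp(\pi i\,\mathbf{l}_k\Omega\mathbf{l}_k^T)$. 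This reproduces exactly Equation~\eqref{eq:Delta}, so $F^\open = \Delta(\Omega)\cdot\Theta_2\big[\begin{smallmatrix} 0 \\ (-\tau/2,\,-\rho/2)\end{smallmatrix}\big](\zeta_1,\zeta_2;\Omega)$ with $\Delta(\Omega)$ as stated. (One could instead feed the toric data of $X_{(1,1)}$ into the open mirror theorem, Theorem~\ref{thm:open-mir-thm}, but that route additionally requires the mirror map.)

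Finally, for the geometric picture: by Lemma~\ref{lem:G-z} the standard generators of $G = \Z^2$ act on $z_1, z_2$ by multiplication by $q$-monomials, and the expression for $A_v$ above shows these monomials realize the lattice $\Omega\Z^2 \subset \C^2$ acting on $(\zeta_1,\zeta_2)$; hence $\mathrm{Spec}\,\C[z_1^{\pm 1},z_2^{\pm 1}]/\Z^2 \cong \C^2/(\Z^2\oplus\Omega\Z^2)$, an abelian surface with period matrix $\Omega$. Theorem~\ref{thm:G-SYZ} then presents $\check{X} = \{uv = F^\open\}/\Z^2$ as a conic fibration over it with discriminant $\{F^\open = 0\}/\Z^2$; since $\Delta(\Omega)\in\C^\times$ this is the zero locus of $\Theta_2\big[\begin{smallmatrix} 0 \\ (-\tau/2,\,-\rho/2)\end{smallmatrix}\big]$, i.e.\ a translate of the theta divisor, which is the principal polarization and is a smooth genus $2$ curve for generic $\Omega$ (an arithmetic-genus-$2$ stable curve in general). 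The step I expect to be the main obstacle is the second one: carrying the curve-class identity for $A_{(k,l)}$ uniformly over all of $\Z^2$ through the toric relations (genuinely more involved than the one-parameter surface case) and matching the resulting theta characteristic precisely.
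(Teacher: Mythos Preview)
Your proposal is correct and follows essentially the same route as the paper's proof: factor $F^{\open}$ using the transitive $\Z^2$-action on rays (Lemma~\ref{lem:G-oGW}), compute the curve class $A_{(k,l)}$ to identify the bare sum as a shifted genus-$2$ theta function, and pin down $\Delta(\Omega)$ via the Gross--Siebert normalization. The paper carries out the curve-class step by writing $C_{(p,q)}$ explicitly as a chain of irreducible toric curves and verifying the formula through intersection numbers with the toric prime divisors (see Figure~\ref{fig:HoneyComb}), which is exactly the obstacle you anticipated; your proposed use of the $H_2$-relations is equivalent, and for $(1,1)$ Lemma~\ref{prop: horizontal vertical} is not actually needed since the $\Z^2$-symmetry already identifies all $C^i_{(a,b)}$ with fixed $i$.
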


\begin{proof}
By Theorem \ref{thm:SYZ} and $\Z^2$-symmetry, we have
$$ F^\open = \Delta(q) \sum_{(p,q) \in \Z^2} q^{C_{(p,q)}} z_1^p z_2^q  $$  
where $\Delta(q) = \sum_{\alpha \in H_2^\mathrm{eff} (X,\Z)} n_{\beta_{(0,1)} + \alpha} q^\alpha$.
Now we need to compute $q^{C_{(p,q)}}$, where
$$
C_{(p,q)} = \beta_{(p,q,1)} - \beta_{(0,0,1)} -  p \cdot (\beta_{(1,0,1)} -  \beta_{(0,0,1)}) - q \cdot (\beta_{(0,1,1)} -  \beta_{(0,0,1)}).
$$
We claim that
$$C_{(p,q)} = \left(\sum_{k=1}^{p-1} k \, (C^1_{p-k,q-1} + C^2_{p-k,q-1}) \right) + p \, C^1_{0,q-1} + \left( \sum_{k=1}^{q-1}  (k \, C^3_{0,q-k} + 
(p+k) C^1_{0,q-k-1}) \right). $$
(See Figure \ref{fig:HoneyComb} for an example.)
One can directly verify this by the intersection numbers:
$$
C_{(p,q)} \cdot D_{(p,q)} = 1, \ \ C_{(p,q)} \cdot D_{(0,0)} = p+q-1, \ \ C_{(p,q)} \cdot D_{(1,0)} = -p, \ \ C_{(p,q)} \cdot D_{(0,1)} = -q,
$$ 
and the intersection number of $C_{(p,q)}$ with any other toric prime divisor is $0$.  

Due to the $\Z^2$-symmetry, the K\"ahler parameters corresponding to $C^i_{(a,b)}$ are independent of $(a,b) \in \Z^2$ and are denoted by $q_i$ for $i=1,2,3$. 
Then we see that
$$
q^{C_{(p,q)}} = q_1^{\frac{(p+q)(p+q-1)}{2}}q_2^{\frac{p(p-1)}{2}}q_3^{\frac{q(q-1)}{2}} = q_\rho^{\frac{p(p-1)}{2}} q_\tau^{\frac{q(q-1)}{2}} q_\sigma^{pq}
$$  
where $q_\rho = q_1 q_2$, $q_\tau = q_1 q_3$ and $q_\sigma=q_1$.  

By Theorem \ref{thm: GS norm}, we observe that 
$$
\log F^\open = \sum_{k=1}^\infty \frac{(-1)^{k-1}}{k} (F^\open - 1)^k
$$
has no $z^0$-term.  
Thus $\log \Delta(q)$ equals to the $z^0$-term of $- \log \left( \sum_{I} q^{C_I} z^I \right)$.  It follows from direct computation that $\Delta$ has the given expression. 
Since $F^\open$ is, up to a nowhere-zero multiple, the genus $2$ Riemann theta function, it endows the ambient abelian surface the principal polarization. 
\end{proof}

\begin{figure}[htbp]
 \begin{center} 
  \includegraphics[width=40mm]{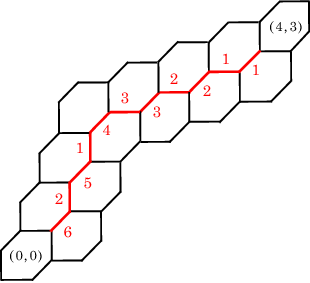}
 \end{center}
 \caption{The toric web diagram where the hexagons are the images of the toric prime divisors. 
 The numbered lines show the curve $C_{(p,q)}$ which connect the divisor $D_{(p,q)}$ to $D_{(0,0)}$.}
\label{fig:HoneyComb}
\end{figure}


\begin{Lem}
The series 
$$
\log(\Delta(\Omega)):=\sum_{j \geq 2} \frac{(-1)^j }{j } 
\sum_{\substack{({\bf l}_i=(l_i^1, l_i^2)\in \Z^{2} \setminus 0)_{i=1}^j  \\ \textrm{with } \sum_{i=1}^j  {\bf l}_i = 0}}
\exp\left(\sum_{k=1}^j \pi i {\bf l}_k\cdot \Omega \cdot {\bf l}^T_k\right)
$$
converges absolutely and uniformly on compact subsets of the Siegel upper half-plane $\mathfrak{H}_2$. 
\end{Lem}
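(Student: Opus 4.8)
The plan is to dominate the displayed double series by a convergent series of nonnegative constants and then invoke the Weierstrass $M$-test, obtaining absolute and uniform convergence on a fixed compact set $K \subset \mathfrak{H}_2$. First I would record the elementary identity $\bigl|\exp(\pi i\,{\bf l}\cdot\Omega\cdot{\bf l}^T)\bigr| = \exp\bigl(-\pi\,{\bf l}\cdot(\mathrm{Im}\,\Omega)\cdot{\bf l}^T\bigr)$ for ${\bf l}\in\Z^2$, together with the standard fact that on a compact $K$ one has $\mathrm{Im}\,\Omega \geq c\,I$ for some constant $c = c_K > 0$; hence $\bigl|\exp(\pi i\,{\bf l}\cdot\Omega\cdot{\bf l}^T)\bigr| \leq e^{-\pi c |{\bf l}|^2}$ uniformly for $\Omega \in K$.

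Next, for each fixed $j \geq 2$ I would estimate the inner sum over tuples $({\bf l}_1,\dots,{\bf l}_j) \in (\Z^2 \setminus 0)^j$ with $\sum_i {\bf l}_i = 0$ by eliminating the last index: writing ${\bf l}_j = -({\bf l}_1 + \dots + {\bf l}_{j-1})$ and bounding the corresponding factor by $1$ (legitimate because $\mathrm{Im}\,\Omega$ is positive definite), the sum of absolute values is at most $\bigl(\sum_{{\bf l} \in \Z^2 \setminus 0} e^{-\pi c |{\bf l}|^2}\bigr)^{j-1} =: \theta_c^{\,j-1}$. Summing $\tfrac1j\,\theta_c^{\,j-1}$ over $j \geq 2$ gives the closed bound $\theta_c^{-1}\bigl(-\log(1-\theta_c) - \theta_c\bigr)$, which is finite whenever $\theta_c < 1$. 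Since this bound is independent of $\Omega \in K$, the $M$-test yields absolute and uniform convergence on $K$, and holomorphy of the sum follows because every truncation (over $j$ and over the tuples) is a finite sum of holomorphic functions.

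The step I expect to be the real obstacle is guaranteeing $\theta_c < 1$. Eliminating a single index discards essentially all of the constraint $\sum_i {\bf l}_i = 0$, and if $c$ (equivalently $\mathrm{Im}\,\Omega$) is not large the number of zero-sum tuples grows too fast to be beaten by the Gaussian decay: already the $2m$-tuples built from $\pm e_1$ account for $\binom{2m}{m}$ configurations, each of modulus $e^{-2\pi m (\mathrm{Im}\,\Omega)_{11}}$, and $\binom{2m}{m}\,e^{-2\pi m (\mathrm{Im}\,\Omega)_{11}}$ is unbounded in $m$ once $(\mathrm{Im}\,\Omega)_{11} < \tfrac{\log 2}{\pi}$. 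The natural remedy --- and the regime relevant for the Gromov--Witten interpretation --- is to restrict $K$ to a neighbourhood of the large complex structure limit, say $K \subset \{\Omega \in \mathfrak{H}_2 : \mathrm{Im}\,\Omega \geq c_0 I\}$ with $c_0$ large enough that $\sum_{{\bf l} \neq 0} e^{-\pi c_0 |{\bf l}|^2} < 1$; there the argument above applies verbatim. Covering all of $\mathfrak{H}_2$ would instead require a sharper bound that retains the full effect of the zero-sum condition --- for instance organising the tuples by the collision pattern of their partial sums and estimating the connected pieces separately --- which is the part I would flag as needing genuine work.
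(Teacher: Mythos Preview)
Your diagnosis is correct, and in fact sharper than the paper's own argument. The paper's proof follows the same outline as yours --- bound the inner sum by dropping the zero-sum constraint and dominating by a power of the theta-null $\sum_{\mathbf l\in\Z^2}\exp(-\pi\,\mathbf l\cdot(\mathrm{Im}\,\Omega)\cdot\mathbf l^T)$ --- and then, at the crucial step, only estimates the tail $|l^1|,|l^2|>L$ of that theta-null so as to force the base of the geometric series below~$1$. The finitely many remaining lattice points with $|l^1|\le L$ or $|l^2|\le L$ are never dealt with, so the paper's proof has the very gap you flagged; it does not actually establish convergence for arbitrary compact $K\subset\mathfrak H_2$.

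Your counterexample is decisive: it shows the obstacle is not an artifact of the method but that the statement itself is too strong. The $2m$-tuples built from $\pm e_1$ alone contribute
\[
\frac{1}{2m}\binom{2m}{m}\,e^{-2\pi m\,(\mathrm{Im}\,\Omega)_{11}}
\;\sim\;\frac{1}{2m\sqrt{\pi m}}\bigl(4\,e^{-2\pi(\mathrm{Im}\,\Omega)_{11}}\bigr)^m
\]
to the sum of absolute values at level $j=2m$, and this is unbounded in $m$ once $(\mathrm{Im}\,\Omega)_{11}<\tfrac{\log 2}{\pi}$. Since such $\Omega$ lie in $\mathfrak H_2$, the series does \emph{not} converge absolutely on every compact subset of $\mathfrak H_2$; no rearrangement of the argument can repair this. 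The honest statement --- and the only one either proof supports --- is the one you wrote down: absolute and locally uniform convergence on $\{\Omega\in\mathfrak H_2:\mathrm{Im}\,\Omega\ge c_0 I\}$ for $c_0$ large enough that $\theta_{c_0}=\sum_{\mathbf l\ne 0}e^{-\pi c_0|\mathbf l|^2}<1$. That is exactly the regime in which the K\"ahler parameters $q_\tau,q_\rho,q_\sigma$ are small and the open Gromov--Witten generating function is defined, so nothing is lost for the applications in the paper.
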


\begin{proof}
Since $\mathrm{Im} \Omega>0$, we have
{\small
$$
\sum_{\substack{({\bf l}_i=(l_i^1, l_i^2)\in \Z^{2} \setminus 0)_{i=1}^p \\ \textrm{with } \sum_{i=1}^p {\bf l}_i = 0}}
\left| \exp\left(\sum_{k=1}^p\pi i {\bf l}_k\cdot \Omega \cdot {\bf l}^T_k\right) \right| 
\leq \sum_{({\bf l}_i \in \Z^2)_{i=1}^p} \prod_{k=1}^p \exp\left(-\pi {\bf l}_k\cdot \mathrm{Im} \Omega \cdot {\bf l}^T_k\right)
= \left(\sum_{{\bf l} \in \Z^2} \exp\left(-\pi {\bf l} \cdot \mathrm{Im} \Omega \cdot {\bf l}^T\right) \right)^p.
$$
}
There exists an invertible matrix $A \in \mathrm{GL}(2,\R)$ such that 
$\mathrm{Im} \Omega = A \cdot \left( \begin{array}{cc}
r_1 & 0 \\
0 & r_2
\end{array}
\right) A^T \textrm{ for } r_1,r_2>0. $
Then there exist $a_2 > a_1 > 0$ such that $a_1 \|v \| \leq \|v \cdot A\| \leq a_2 \|v\|$ for all $v \in \R^2$.  If $\Omega$ lies in a compact set, then $a_1,a_2,r_1,r_2$ lie in compact sets.
For $\| {\bf l} \| \gg 0$,
$$ \exp \left( - \pi {\bf l} \cdot (\mathrm{Im} \Omega) \cdot {\bf l}^T \right) = \exp -\pi \left(r_1 (\widetilde{l}^1)^2 + r_2 (\widetilde{l}^2)^2 \right) \leq \exp -\pi r a_1 \| {\bf l} \|^2 \leq \exp -(|l^1| + |l^2|) $$
where ${\bf l} = (l^1,l^2)$, ${\bf l} \cdot A = (\widetilde{l}^1,\widetilde{l}^2)$ and $r = \min\{r_1,r_2\}$.  Thus there exists a large $L$ such that
$$ \left(\sum_{|l^1| > L} \sum_{|l^2| > L}  \left| \exp\left( \pi i {\bf l}\cdot \Omega \cdot {\bf l}^T \right) \right| \right)^p \leq \left(\sum_{|l^1| > L} \sum_{|l^2| > L} \exp -(|l^1| + |l^2|)\right)^p = \left(\sum_{|l| > L}\exp -|l| \right)^{2p}$$
for all $\Omega$ lying in the compact set.  We have
$$
\log \left( 1 + \left( 2 \sum_{l>L}\exp (-l)\right)^2 \right) =  \sum_{p \geq 1} \frac{(-1)^p}{p} \left(2 \sum_{l>L}\exp (-l) \right)^{2p}
$$
for sufficiently large $L$ such that $\sum_{l>L}\exp (-l) < \frac{1}{2}$, and hence the RHS is absolutely convergent.
\end{proof}

The function $\Delta$ is an analog of the Dedekind eta function in the surface case, 
and Equation \eqref{eq:Delta} is a higher dimensional analog of the root of Yau--Zaslow formula (Corollary \ref{cor:rootYZ}). 
It satisfies the following modularity properties.

\begin{Prop} \label{prop:modular}
For $A \in \GL(2,\Z)$ and $ B \in \mathrm{M}(2,\Z)$, we have
$$
\Delta(A \Omega A^t) = \Delta(\Omega),\ \ \ \Delta(\Omega + B)= \Delta(\Omega). 
$$
Moreover, on the diagonal $\sigma=0$, we have 
$$
\left.\frac{q^{\frac{1}{24}}_\rho q^{\frac{1}{24}}_\tau}{\Delta(\Omega)}\right|_{\Omega \mapsto -\Omega^{-1}}
= \sqrt{\det (-i \Omega)} \frac{q^{\frac{1}{24}}_\rho q^{\frac{1}{24}}_\tau}{\Delta(\Omega)}. 
$$
\end{Prop}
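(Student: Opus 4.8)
The plan is to treat the three assertions separately, since each reflects a different symmetry of the defining series \eqref{eq:Delta}. First I would establish the invariance $\Delta(A\Omega A^t)=\Delta(\Omega)$ for $A\in\GL(2,\Z)$. The idea is that the transformation $\Omega\mapsto A\Omega A^t$ replaces each summand $\exp(\sum_k \pi i\,\mathbf{l}_k\cdot A\Omega A^t\cdot\mathbf{l}_k^T)=\exp(\sum_k\pi i\,(\mathbf{l}_k A)\cdot\Omega\cdot(\mathbf{l}_k A)^T)$, so the change of variables $\mathbf{l}_k\mapsto \mathbf{l}_k A^{-1}$ (equivalently $\mathbf{m}_k:=\mathbf{l}_k A$) is a bijection of $\Z^2\setminus 0$ onto itself that preserves the constraint $\sum_i\mathbf{l}_i=0$ (since $\sum_i\mathbf{m}_i=(\sum_i\mathbf{l}_i)A=0$); here I use that $A\in\GL(2,\Z)$ so $A^{-1}$ is also integral. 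Absolute convergence (the preceding lemma) justifies rearranging the sum. The invariance $\Delta(\Omega+B)=\Delta(\Omega)$ for $B\in\mathrm{M}(2,\Z)$ is even simpler: each summand picks up a factor $\exp(\sum_k\pi i\,\mathbf{l}_k\cdot B\cdot\mathbf{l}_k^T)$, and since $B$ has integer entries, $\mathbf{l}_k\cdot B\cdot\mathbf{l}_k^T\in\Z$; I would check that in fact $\mathbf{l}\cdot B\cdot\mathbf{l}^T$ is an \emph{even} integer for all $\mathbf{l}\in\Z^2$ when $B$ is symmetric, and for general $B$ only the symmetric part $\tfrac12(B+B^T)$ contributes to the quadratic form, so one reduces to the symmetric case; hence $\exp(\pi i\,\mathbf{l}\cdot B\cdot\mathbf{l}^T)=1$ and every summand is unchanged. (If $B$ is not required symmetric one should double-check the intended normalization; in any case $\mathbf{l}\cdot B\cdot\mathbf{l}^T=\mathbf{l}\cdot\tfrac12(B+B^T)\cdot\mathbf{l}^T$ and symmetry of $\Omega+B$ forces the relevant $B$ to be symmetric, so $\mathbf{l}\cdot B\cdot\mathbf{l}^T$ is an even integer.)

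For the third, more substantial, identity I would argue geometrically rather than by direct manipulation of \eqref{eq:Delta}. On the diagonal $\sigma=0$, the genus $2$ data degenerates: by Theorem \ref{thm:CY3/Z^2} the factor $\Theta_2$ splits as a product of two genus $1$ Jacobi theta functions in $\tau$ and $\rho$ separately, and correspondingly $X_{(1,1)}$ at $\sigma=0$ degenerates to a product-type geometry whose open Gromov--Witten generating function factors. Concretely, comparing Theorem \ref{thm:CY3/Z^2} restricted to $\sigma=0$ with Theorem \ref{thm:A/Z} (applied twice, in the variables $\tau$ and $\rho$), one gets
$$
\Delta(\Omega)\big|_{\sigma=0}=\frac{e^{\frac{\pi i\tau}{12}}}{\eta(\tau)}\cdot\frac{e^{\frac{\pi i\rho}{12}}}{\eta(\rho)},
$$
equivalently $\left.\dfrac{q_\rho^{1/24}q_\tau^{1/24}}{\Delta(\Omega)}\right|_{\sigma=0}=\eta(\tau)\,\eta(\rho)$. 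The desired transformation law is then exactly the product of the classical modular transformation $\eta(-1/\tau)=\sqrt{-i\tau}\,\eta(\tau)$ with the same for $\rho$, together with the observation that on the diagonal $\Omega=\diag(\tau,\rho)$ one has $-\Omega^{-1}=\diag(-1/\tau,-1/\rho)$ and $\det(-i\Omega)=(-i\tau)(-i\rho)$, so $\sqrt{\det(-i\Omega)}=\sqrt{-i\tau}\,\sqrt{-i\rho}$ (with consistent branch choices). I would state and prove the product formula for $\Delta|_{\sigma=0}$ as the main lemma, either by the factorization of the disc moduli described after Corollary \ref{cor:rootYZ}, or purely formally by checking that at $\sigma=0$ the Gross--Siebert normalization characterization (Theorem \ref{thm: GS norm}) forces $\log\Delta$ to split as the sum of the two surface contributions, since the $z^0$-term of $-\log(\sum_I q^{C_I}z^I)$ factors when $\Omega$ is diagonal.

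The main obstacle I anticipate is the third part: making the degeneration at $\sigma=0$ rigorous at the level of the generating function. The cleanest route is the algebraic one via Theorem \ref{thm: GS norm}: restrict the slab functions $F^\open_{\mathfrak b,v_i}$ to $\sigma=0$, observe that the restricted system of normalization conditions decouples into the two surface systems of Section \ref{section: Calabi--Yau}, and invoke uniqueness of the solution to conclude $F^\open|_{\sigma=0}$ (and hence $\Delta|_{\sigma=0}$) is the product of the two $\widetilde{A}_0$ answers; then the $\SL_2(\Z)$ transformation of $\eta$ finishes it. Branch-of-square-root bookkeeping in $\sqrt{\det(-i\Omega)}$ should be handled by fixing the principal branch with $\mathrm{Im}\,\tau,\mathrm{Im}\,\rho>0$ so that $\sqrt{-i\tau}$ and $\sqrt{-i\rho}$ lie in the right half-plane, matching the standard convention for the $\eta$ transformation.
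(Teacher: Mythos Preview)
Your approach is essentially the same as the paper's: the first two identities follow directly from the defining series \eqref{eq:Delta}, and the third follows from the factorization $\Delta|_{\sigma=0}=\dfrac{q_\tau^{1/24}q_\rho^{1/24}}{\eta(\tau)\eta(\rho)}$ together with the classical transformation law for $\eta$. Your treatment of the third part is in fact more detailed than the paper's, which simply asserts the factorization by noting that $X_{(1,1)}$ degenerates to the fiber product of two $\widetilde{A}_0$ surfaces at $\sigma=0$.

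There is one genuine gap in your argument for the second identity. You write that you would ``check that $\mathbf{l}\cdot B\cdot\mathbf{l}^T$ is an even integer for all $\mathbf{l}\in\Z^2$ when $B$ is symmetric.'' This is false: for $B=I_2$ and $\mathbf{l}=(1,0)$ one gets $1$. What is true is that the \emph{sum} $\sum_{k=1}^j \mathbf{l}_k\cdot B\cdot\mathbf{l}_k^T$ is even under the constraint $\sum_k\mathbf{l}_k=0$. Indeed, for symmetric integer $B$ one has
\[
\mathbf{l}\cdot B\cdot\mathbf{l}^T=\sum_i B_{ii}(l^i)^2+2\sum_{i<j}B_{ij}l^il^j\equiv\sum_i B_{ii}\,l^i\pmod 2,
\]
using $(l^i)^2\equiv l^i\pmod 2$; summing over $k$ and using $\sum_k l_k^i=0$ gives $\sum_k\mathbf{l}_k\cdot B\cdot\mathbf{l}_k^T\equiv 0\pmod 2$, so $\exp\bigl(\pi i\sum_k\mathbf{l}_k\cdot B\cdot\mathbf{l}_k^T\bigr)=1$ as needed. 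With this correction your argument goes through.
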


\begin{proof}
The first and second properties follow easily from Equation \eqref{eq:Delta}. 
The third property follows from the fact that $\Delta|_{\sigma = 0} = \frac{q^{\frac{1}{24}}_\rho q^{\frac{1}{24}}_\tau}{\eta(\tau)\eta(\rho)}$, since $X_{(1,1)}$ degenerates to the fiber product of two $\tilde{A}_1$ surfaces. 
\end{proof}

Unfortunately, at this point we do not know whether or not the third property extends to the off-diagonal $\sigma \not= 0$. 
We anticipate that the function $\Delta$ is closely related to Siegel modular forms of genus $2$. 

It is interesting to note the following.
Consider Schoen's Calabi--Yau 3-fold $S_1 \times_{\PP^1}S_2$, which is the fiber product of $2$ generic elliptic rational surfaces $S_i\rightarrow \PP^1$ $(i=1,2)$. 
Then a degeneration argument shows the Gromov--Witten potential of $S_1 \times_{\PP^1}S_2$ 
equals to that of the product $E \times K3$ of an elliptic curve $E$ and a K3 surface, 
and is conjecturally given by $1/\chi_{10}$ \cite{OP}\footnote{We are grateful to G. Oberdieck for useful communication.}, 
where $\chi_{10}$ is the Igusa cusp form of weight $10$.   
The local Calabi--Yau 3-fold $X_{1,1}$ serves as a local model of a conifold transition of $S_1 \times_{\PP^1}S_2$ which is a crepant resolution of the singular 3-fold $S_1 \times_{\PP^1}S_1$. 
The Igusa cusp form $\chi_{10}$ has the asymptotic behavior similar to $\Delta^{24}$ on the diagonal $\sigma=0$ \cite{Igu}. 
Namely, the cusp form $\chi_{10}(\Omega)$ for $\sigma\rightarrow 0$ is given by
$$
\chi_{10}(\Omega)=(\eta(\tau)\eta(\rho))^{24}(\pi \sigma)^2 +O(\sigma^4). \label{eq: asymptotic}
$$

\subsection{SYZ mirror of $X_{(p,q)}$} \label{section: $X_{p,q}$}
Next we consider the local Calabi--Yau 3-fold $X_{(p,q)}$ for $(p,q) \in \N^2$. 
In general we do not have a choice of basis of $H_2(X,\Z)$ such that every effective class is a non-negative linear combination of the basic elements. 
Instead we use the generators
$$
C_{\sigma,(k,l)} = C_{(k,l)}^{1}, \ \ C_{\tau,(k,l)} = C_{(k,l)}^{1} + C_{(k,l)}^{2}, \ \ C_{\rho,(k,l)} = C_{(k,l)}^{1} + C_{(k,l)}^{3}
$$
for $k \in \Z_p$, $l \in \Z_q$, and keep in mind that there are $2pq-2$ relations among them.


\begin{Thm} \label{mainthm: 3-fold}
The SYZ mirror of $X_{(p,q)}$ is $uv = F^{\open}$ where the open Gromov--Witten potential $F^{\open}$ is given by 
$$\sum_{a,b = 0}^{p-1,q-1} K_{a,b} \cdot \Delta_{a,b} \cdot \Theta_2
\begin{bmatrix}
                \left(\frac{a}{p}, \frac{b}{q}\right)  \\
								\left(\frac{-p \tau}{2} + \displaystyle{\sum_{k=0}^{p-1}} k \tau_{(-1-k,0)}, \frac{-q \rho}{2} + \sum_{l=0}^{q-1} l \rho_{(0,-1-l)}\right) 
\end{bmatrix}
\left(p \cdot \zeta_1, q \cdot \zeta_2; 
\begin{bmatrix}
                p \tau & \sigma \\
               \sigma & q \rho 
\end{bmatrix}\right),
$$
\begin{align*}
\Delta_{a,b} :=& \sum_{\alpha} n_{\beta_{(a,b)} + \alpha} Q^{\alpha}, \\
K_{a,b} :=& Q_\tau^{-\frac{a^2}{2p} + \frac{a}{2}} Q_\sigma^{\frac{-ab}{pq}} Q_\rho^{-\frac{b^2}{2q}+\frac{b}{2}} \left( \prod_{k=0}^{p-1} Q_{\tau,(-1-k,0)}^k \right)^{\frac{-a}{p}} \left( \prod_{l=0}^{q-1} Q_{\rho,(0,-1-l)}^l \right)^{\frac{-b}{q}}\\
& \cdot \left( \prod_{k=0}^{a-1} Q_{\tau, (a-1-k,b)}^k \right) \cdot \left( \prod_{l=0}^{b-1} Q_{\rho, (0,b-1-l)}^l \right) \cdot \left( \prod_{l=0}^{b-1} Q_{\sigma,(0,l)} \right)^{a}.
\end{align*}
In the above expression, we use 
$$
\tau := \sum_{k=0}^{p-1} \tau_{(k,b)}, \ \ \rho := \sum_{l=0}^{p-1} \rho_{(a,l)}, \ \ 
\sigma := \sum_{k,l=0}^{p-1,q-1} \sigma_{(k,l)} = q \sum_{k=0}^{p-1} \sigma_{(k,b)} = p \sum_{l=0}^{q-1} \sigma_{(a,l)}
$$
which are independent of $a$ and $b$ by Lemma \ref{prop: horizontal vertical} (see also Figure \ref{fig:MirrorCorresp3}). 
The divisor $\{F^\open=0\}$ defines a genus $pq+1$ curve and endows the ambient abelian surface with the $(p,q)$-polarization. 
\end{Thm}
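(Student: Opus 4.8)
The plan is to follow the same strategy as in the proof of Theorem \ref{thm:CY3/Z^2}, specializing the general SYZ mirror formula from Theorem \ref{thm:SYZ} to $X=X_\Sigma^o/(p\Z\times q\Z)$ and then applying the Gross--Siebert normalization (Theorem \ref{thm: GS norm}) to pin down the generating function, and finally rewriting the resulting theta-type sum in terms of the genus $2$ Riemann theta function with characteristics. First I would invoke Theorem \ref{thm:G-SYZ}: the SYZ mirror is $uv = F^\open$ with $F^\open = \sum_v\big(\sum_\alpha n_{\beta_v+\alpha}Q^\alpha\big)z^{\beta_v-\beta_0}$, where $v$ ranges over the $\Z^2$-orbit representatives of primitive generators, i.e. over $(a,b)$ with $0\le a\le p-1$, $0\le b\le q-1$ together with their $\Z^2$-translates. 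Grouping the sum by orbit and writing each translate as $(a+mp,b+nq)$ for $(m,n)\in\Z^2$, the $\alpha$-sum over each orbit is $\Delta_{a,b}=\sum_\alpha n_{\beta_{(a,b)}+\alpha}Q^\alpha$ by $\Z^2$-invariance of the open Gromov--Witten invariants (Lemma \ref{lem:G-oGW}) and of the curve classes, so it suffices to compute $z^{\beta_{(a+mp,b+nq)}-\beta_0}$ as an element of the Kähler moduli of the quotient.

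The core computation is the curve-class identity: I would need the analogue of the claim $C_{(p,q)}=\cdots$ in the proof of Theorem \ref{thm:CY3/Z^2}, namely an explicit expression for $C_{(a',b')}:=\beta_{(a',b',1)}-\beta_{(0,0,1)}-a'(\beta_{(1,0,1)}-\beta_{(0,0,1)})-b'(\beta_{(0,1,1)}-\beta_{(0,0,1)})$ as a non-negative combination of the generators $C^i_{(k,l)}$ (or of $C_{\sigma,(k,l)},C_{\tau,(k,l)},C_{\rho,(k,l)}$). One verifies such an identity by checking intersection numbers with all toric prime divisors, exactly as done for $X_{(1,1)}$ (using $C_{(a',b')}\cdot D_{(a',b')}=1$, $C_{(a',b')}\cdot D_{(0,0)}=a'+b'-1$, $C_{(a',b')}\cdot D_{(1,0)}=-a'$, $C_{(a',b')}\cdot D_{(0,1)}=-b'$ and zero otherwise); the path through the honeycomb giving the explicit combination is read off from Figure \ref{fig:HoneyComb}. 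Setting $a'=a+mp$, $b'=b+nq$ and using the relations of Lemma \ref{prop: horizontal vertical} to replace row/column sums of the $C^i$'s by the $\Z^2$-invariant classes $\tau,\rho,\sigma$, the monomial $Q^{C_{(a',b')}}z^{(a',b')}$ splits into a factor depending only on $(a,b)$ (this is $K_{a,b}$) times a factor that, when summed over $(m,n)\in\Z^2$, is a genus $2$ theta series. Carrying out the quadratic-form bookkeeping — completing the square in $m$ and $n$, extracting the lattice-coset shifts $a/p$, $b/q$ into the theta characteristic, and matching the period matrix $\big[\begin{smallmatrix}p\tau&\sigma\\\sigma&q\rho\end{smallmatrix}\big]$ — yields precisely $\Theta_2\big[\begin{smallmatrix}(a/p,b/q)\\(\cdots)\end{smallmatrix}\big](p\zeta_1,q\zeta_2;\cdot)$ with the stated lower characteristic. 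This is the same Jacobi-triple-product/Gaussian-summation manipulation that produced $\Theta_1$ in Theorem \ref{thm:Ad}, now in two variables.

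To finish I would observe that $\Delta_{a,b}$ is determined (rather than merely expressed as a formal GW sum) by the Gross--Siebert normalization: $\log F^\open$ has no pure $Q^C$-term with $C\in H_2(X,\Z)\setminus\{0\}$ (Theorem \ref{thm: GS norm}), which forces $\log\Delta_{a,b}$ to equal the relevant $z^0$-part of $-\log$ of the theta sum; alternatively the infinite-product factorization $F^\open=\prod(\cdots)$ as in the surface case makes this transparent. For the polarization statement, I would note that $F^\open$ is, up to a nowhere-vanishing holomorphic multiple, a section of the line bundle on the abelian surface $\C^2/(\Z^2\oplus\Omega'\Z^2)$ with $\Omega'=\big[\begin{smallmatrix}p\tau&\sigma\\\sigma&q\rho\end{smallmatrix}\big]$ whose first Chern class is the $(p,q)$-polarization (the $p\zeta_1$, $q\zeta_2$ in the argument of $\Theta_2$ being exactly the pullback under the $(p,q)$-isogeny); the genus of the theta divisor is then $1+\tfrac12 c_1^2 = pq+1$ by adjunction on the abelian surface. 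The main obstacle I expect is the second step: getting the explicit curve-class decomposition of $C_{(a+mp,\,b+nq)}$ and the ensuing algebra correct so that the $(a,b)$-dependent prefactor cleanly separates as $K_{a,b}$ and the remaining $(m,n)$-sum assembles into $\Theta_2$ with exactly the advertised characteristics — this is a bookkeeping-heavy identity where sign and index conventions (and the periodicity relations $C^1_{(-1,b)}=C^1_{(p-1,b)}$ etc.) must be tracked carefully, but it is conceptually routine given the $X_{(1,1)}$ case and Lemma \ref{prop: horizontal vertical}.
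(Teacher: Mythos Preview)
Your proposal is correct and follows essentially the same route as the paper: start from the SYZ formula (Theorem \ref{thm:SYZ}/\ref{thm:G-SYZ}), group the sum over primitive generators into $(p\Z\times q\Z)$-orbits to extract $\Delta_{a,b}$, compute the curve class $C_{(cp+a,\,dq+b)}$ explicitly in terms of the generators $C_{\tau,\cdot},C_{\rho,\cdot},C_{\sigma,\cdot}$ (verified by intersection numbers, using Lemma \ref{prop: horizontal vertical} for the periodicity relations), and then complete the square in the lattice sum to assemble the genus~$2$ theta function with characteristics. One small remark: the Gross--Siebert normalization step you include is not actually needed here, since the statement of the theorem leaves $\Delta_{a,b}$ as the abstract generating function $\sum_\alpha n_{\beta_{(a,b)}+\alpha}Q^\alpha$ rather than giving a closed formula for it (in contrast to Theorem \ref{thm:CY3/Z^2}); and the infinite-product factorization you mention as an alternative does not carry over cleanly to the $3$-fold case.
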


\begin{proof}
First, we observe that $\Omega=\begin{bmatrix}
                p \tau & \sigma \\
               \sigma & q \rho 
\end{bmatrix}$ lies in the Siegel upper half-plane as is shown in Lemma \ref{lem: omega}.   
The proof is a complicated version of that of Theorem \ref{thm:CY3/Z^2} and so we shall be brief.  By Theorem \ref{thm:SYZ} and $(p\Z \times q\Z)$-symmetry,
$$F^\open = \sum_{a,b=0}^{p-1,q-1} \Delta_{a,b} \sum_{(c,d) \in \Z^2} Q^{C_{(cp+a,dq+b)}} \cdot z_1^{cp+a} z_2^{dq+b}
$$
where $\Delta_{a,b} = \sum_{\alpha} n_{\beta_{(a,b)} + \alpha} Q^{\alpha}$ are generating functions of open Gromov--Witten invariants, and
$$
C_{(c,d)} = \sum_{k=1}^{c-1} k C_{\tau,(c-1-k,d)} + c \sum_{l=0}^{d-1} C_{\sigma,(0,d-1-l)} + \sum_{l=1}^{d-1} l C_{\rho,(0,d-1-l)}.
$$
Set
\begin{align*}
\epsilon_k = \left\lfloor \frac{a-1-k}{p} \right\rfloor = \left\{ \begin{array}{ll}
0 & (0 \le k \le a-1)\\
-1 & (a \le k \le p-1)\end{array}
\right.,\ \ 
\delta_l = \left\lfloor \frac{b-1-l}{q} \right\rfloor = \left\{ \begin{array}{ll}
0 & (0 \le k \le b-1)\\
-1 & (b \le k \le q-1).\end{array}
\right. \\
\end{align*}
Using $\Z^2$-symmetry, we can check that
$C_{(cp+a,dq+b)}$ equals to
{\small
\begin{align*}
& \frac{(p c^2 + p c)C_\tau}{2} + \left(c+\frac{a}{p}\right) dC_\sigma + \frac{(q d^2 + q d)C_\rho}{2} + c \sum_{k=0}^{p-1} ( \epsilon_{a,k} p + k ) C_{\tau,(a-1-k,b)} + d \sum_{l=0}^{q-1} (\delta_{b,l} \, q + l) C_{\rho,(0,b-1-l)} \\ 
&+ cp \sum_{l=0}^{b-1} C_{\sigma,(0,l)} + \sum_{k=0}^{p-1} \left(\frac{p \epsilon_{a,k}}{2} + \frac{p \epsilon_{a,k}^2}{2} + k (\epsilon_{a,k}+1)\right) C_{\tau,(a-1-k,b)} + \sum_{l=0}^{q-1} \left(\frac{q \delta_{b,l}}{2} + \frac{q \delta_{b,l}^2}{2} + l (\delta_{b,l}+1)\right) C_{\rho,(0,b-1-l)} \\
&+ a \sum_{l=0}^{b-1} C_{\sigma,(0,b-1-l)}, 
\end{align*}
}
The rest is a rearrangement of the terms in the above expression.
\end{proof}

The Riemann theta functions with characteristics in the above theorem form a basis of the $(p,q)$-polarization 
of the abelian surface $A_\Omega$ with the period $\Omega=\begin{bmatrix}
                p \tau & \sigma \\
               \sigma & q \rho 
\end{bmatrix}$. 
In other words $A_\Omega$ is the quotient $(\C^\times)^2/\Z^2$, where the generators of $\Z^2$ act by
$$
(X,Y) \mapsto (e^{2\pi i \tau}X,e^{2\pi i \sigma/p}Y), \ \ \  (X,Y) \mapsto (e^{2\pi i \sigma/q}X,e^{2\pi i \rho}Y). 
$$
Note that $p$ and $q$ are not necessarily coprime in this article.  

\begin{Prop}
The dimension of the complex moduli space of the SYZ mirror of $X_{p,q}$ is $pq+2$. 
\end{Prop}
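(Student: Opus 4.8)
The plan is to identify the complex moduli space of the SYZ mirror of $X_{(p,q)}$ with the (mirror) complex moduli space attached to the K\"ahler moduli of $X_{(p,q)}$, and then simply compute its dimension using the already-established rank count $\rk H_2(X_{(p,q)},\Z) = pq+2$ from Lemma \ref{Kahler dim}. Concretely, the complex parameters $\{y_i\}$ of the SYZ mirror are in bijection with the K\"ahler parameters $\{q_i\}$ of $X_{(p,q)}$ (this bijection was set up in the discussion of the GKZ system and the mirror map), and the mirror complex moduli space is $\Spec\big((\C[[y_1,\ldots]]^f/I)/G\big)$ with $G = p\Z\times q\Z$ by Definition \ref{def:K-mod-quot} applied on the mirror side. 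So the statement reduces to: the dimension of this formal scheme equals the rank of $H_2(X_{(p,q)},\Z)$.

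First I would recall that by Lemma \ref{Kahler dim} we have $\rk H_2(X_{(p,q)},\Z) = pq+2$: in a fundamental domain there are $3pq$ toric rational curves $C^1_{(a,b)},C^2_{(a,b)},C^3_{(a,b)}$ for $0\le a\le p-1$, $0\le b\le q-1$, subject to the $2pq$ periodicity-compatible relations displayed after Definition \ref{def:C^i}, two of which are redundant, giving $3pq-(2pq-2)=pq+2$. Next I would argue that the $G$-quotient does not change this count: the K\"ahler moduli $(\C[[q_1,\ldots]]^f/I)/G$ is by construction a formal neighborhood of a limit point in $H^2(X_{(p,q)}/G,\C)/H^2(X_{(p,q)}/G,\Z)$ — but here $X_{(p,q)}$ already \emph{is} the quotient $X^o_\Sigma/(p\Z\times q\Z)$, so the relevant lattice is precisely $H_2(X_{(p,q)},\Z)$, of rank $pq+2$. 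The toric curve classes $C^i_{(a,b)}$ descend to $X_{(p,q)}$ with the identifications $C^i_{(a,b)}\sim C^i_{(a+p,b)}\sim C^i_{(a,b+q)}$ already built in, so the generators-and-relations presentation I quoted is exactly the presentation of $H_2(X_{(p,q)},\Z)$.

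Then I would conclude: the mirror complex moduli space, being a formal neighborhood of a limit point in the $pq+2$-dimensional torus $H^2(X_{(p,q)},\C)/H^2(X_{(p,q)},\Z)$ (partially compactified as in the remark following Definition \ref{def:K-mod}), has dimension equal to $\rk H_2(X_{(p,q)},\Z) = pq+2$. For the concrete verification one can also exhibit $pq+2$ independent complex parameters explicitly, matching the parametrization used in Theorem \ref{mainthm: 3-fold}: the entries $\tau,\rho,\sigma$ of the period matrix $\Omega$ (3 parameters), together with the ``internal'' parameters $\tau_{(k,b)}$, $\rho_{(a,l)}$, $\sigma_{(k,l)}$ modulo the constraints that $\sum_k\tau_{(k,b)}$, $\sum_l\rho_{(a,l)}$ and $\sum_{k,l}\sigma_{(k,l)}$ are fixed — a short bookkeeping check shows these amount to exactly $pq-1$ further independent parameters, for a total of $pq+2$.

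The main obstacle is purely bookkeeping: making precise that the $G$-action, passing to the quotient ring, neither collapses nor adds dimensions — i.e., that $G$ acts on the set of toric curve classes with the orbit structure that exactly realizes $H_2(X^o_\Sigma/G,\Z)$ as the coinvariants, with no extra relations introduced. This is essentially the content already contained in Definition \ref{def:K-mod-quot} (``the $G$-action preserves linear relations among the irreducible toric curves'') and in the discussion that $\C[[q_1,\ldots]]^f/I)/G$ is a formal neighborhood of a limit point of $H^2(X^o/G,\C)/H^2(X^o/G,\Z)$; so the proof is really an application of Lemma \ref{Kahler dim} plus that identification, and I would expect it to be only a few lines.
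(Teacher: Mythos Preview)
Your main argument---identifying the mirror complex moduli with the K\"ahler moduli of $X_{(p,q)}$ by construction and then invoking Lemma \ref{Kahler dim}---computes the right number, but it takes a different route from the paper and in doing so loses the content the proposition is meant to carry. The paper's proof is geometric: Theorem \ref{mainthm: 3-fold} identifies the SYZ mirror as a conic fibration over a $(p,q)$-polarized abelian surface with discriminant a divisor in the polarization class, so the complex moduli of the mirror is the moduli of pairs (abelian surface, such a divisor), which has dimension $\dim \mathfrak{H}_2 + \dim |L_{(p,q)}| = 3 + (pq-1) = pq+2$.

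The distinction matters because the very next sentence in the paper combines this proposition with Lemma \ref{Kahler dim} to observe that the intrinsic complex moduli dimension of the mirror matches the K\"ahler moduli dimension of $X_{(p,q)}$---a nontrivial sanity check of the SYZ construction. Your argument computes the dimension of the \emph{parameter space of the construction} rather than the intrinsic moduli of the output variety; using Lemma \ref{Kahler dim} to prove the proposition would make that subsequent comparison a tautology. Your secondary ``concrete verification'' at the end (three parameters from $\Omega$ plus $pq-1$ internal ones) is in fact exactly the paper's argument in different clothing---the $pq-1$ is precisely $\dim |L_{(p,q)}|$---so you would do better to lead with that and drop the appeal to Lemma \ref{Kahler dim}.
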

\begin{proof}
The complex moduli space can be identified with the complex moduli space of abelian surfaces equipped with divisor which gives a principal polarization.  
Therefore it is the sum of the dimension of $\mathfrak{H}_2$ and the dimension of the linear system of the $(p,q)$-polarizations (which is $pq-1$). 
\end{proof}
Combining with Proposition \ref{Kahler dim}, we observe that the dimension of the complex moduli space of the SYZ mirror matches 
with the dimension of the K\"ahler moduli of the local Calabi--Yau 3-fold $X_{p,q}$. 

Let us take a closer look at the complex moduli space in the genus $2$ case (for $(p,q)=(1,1)$). 
One has the toroidal Torelli map 
$\bar{\mathfrak{t}} : \overline{\mathcal{M}}_2 \rightarrow \overline{\mathcal{A}}_2$, extending the classical Torelli map \cite{Ale}.  
Here $\overline{\mathcal{M}}_2$ is the Deligne--Mumford compactification of the moduli space of genus $2$ curves 
and $\overline{\mathcal{A}}_2$ is the toroidal compactification of the moduli space $\mathfrak{H}_2/\Sp_4(\Z)$ 
given by the secondary Voronoi fan. 
There are $3$ irreducible divisors of the form $\overline{\mathcal{M}}_{1,1} \times \overline{\mathcal{M}}_{1,1} \subset \overline{\mathcal{M}}_{2}$ 
(the left figure of Figure \ref{fig:Degeneration}), given by $\sigma=0$, $\tau=\sigma$, and $\rho=\sigma$. 
Their images under $\bar{\mathfrak{t}}$ are the Humbert surface $H_1 \subset \overline{\mathcal{A}}_2$, 
where the corresponding abelian surface splits into the product of $2$ elliptic curves. 
There the total space of the conic fibration whose discriminant loci is given by the theta divisor has a conifold singularity.  
Thus the Humbert surface $H_1 \subset \overline{\mathcal{A}}_2$ serves as the conifold limits of the complex moduli space. 
On the other hand the large complex structure limit $\Omega = i\infty$ is given by the point $\bar{\mathfrak{t}}(\mathcal{M}_{0,3} \times \mathcal{M}_{0,3})$ 
(the right figure of Figure \ref{fig:Degeneration}).
 \begin{figure}[htbp]
 \begin{center} 
  \includegraphics[width=80mm]{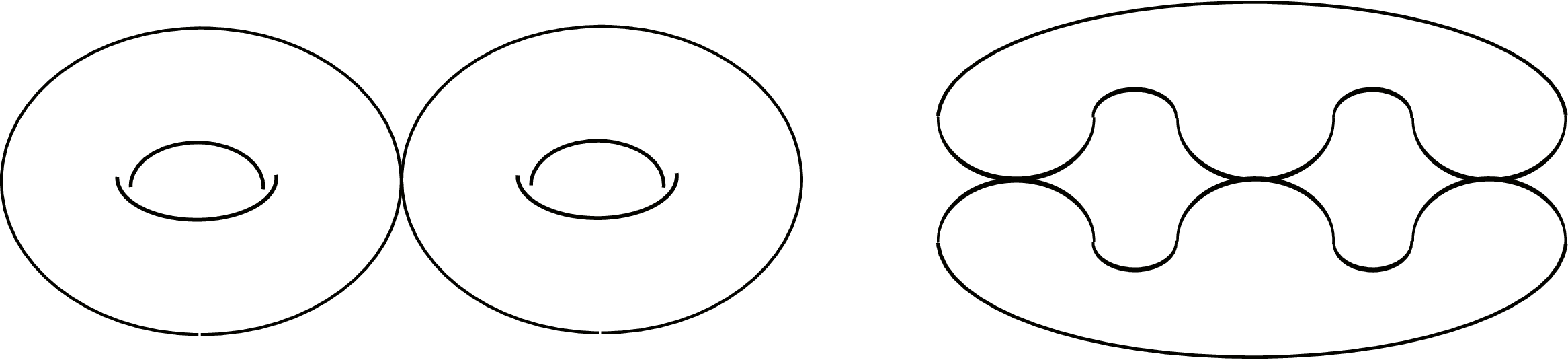}
 \end{center}
  \caption{Conifold loci and large complex structure limit} 
\label{fig:Degeneration}
\end{figure}
Such a degeneration has been studied in Oda--Seshadri \cite{OS} for instance. 
The degeneration limit of abelian surfaces is the union of $2$ copies of $\PP^2$'s glued along 3 $\PP^1$'s \cite[Dual graph (honeycomb) given in Figure 11]{OS}. 
It contains the above stable genus $2$ curve as the theta divisor \cite{Ale}. 

\subsection{Fiber-base mirror duality} \label{section:fiber-base mirror dual}

There are various formulation of mirror symmetry for the abelian varieties, for example the work of Golyshev--Lunts--Orlov \cite{GLO} (see also Section \ref{section: speculation}).  
In the surface case, in light of Dolgachev's mirror symmetry \cite{Dol} for the lattice polarized K3 surfaces, 
we formulate mirror symmetry of the abelian surfaces as follows (compatible with \cite{GLO}).  
\begin{Def}
For an algebraic surface $S$, we denote by $NS(S)$ the Neron--Severi lattice and by $T(S)$ the transcendental lattice. 
We call abelian surfaces $A$ and $A'$ mirror symmetric if $NS(A)\oplus U\cong T(A')$ (and thus $NS(A')\oplus U\cong T(A)$). 
\end{Def}
Here $U$ stands for the hyperbolic lattice whose Gram matrix is given by $\left( \begin{array}{cc} 
0 & 1\\ 1 & 0\end{array}\right)$. 
For an abelian surface $A$, we have $H^2(A,\Z)\cong U^{\oplus3}$ as a lattice.  

The large complex structure limit corresponds to the $0$-dimensional cusp in the Bailey--Borel compactification of the period domain.   
That is, a choice of an isotropic vector in the transcendental lattice, giving an orthogonal factor $U$, corresponds to a $0$-dimensional cusp. 
This amounts to a choice of a SYZ fibration, which is an elliptic fibration on the mirror side. 
In our case, there is essentially a unique choice as described above.

\begin{Prop}
A generic fiber of $X_{(p,q)} \rightarrow \D$ and the base $A_\Omega$ of the mirror conic fibration are mirror symmetric. 
\end{Prop}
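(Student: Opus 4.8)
The plan is to verify the lattice-theoretic condition of the definition directly, namely $NS(A)\oplus U\cong T(A_\Omega)$, by computing both sides explicitly. By the fiber-product description, a generic fiber is $A=\overline{\phi}^{-1}(t)\cong E_1\times E_2$ with $E_1=\C^\times/t^{p\Z}$ and $E_2=\C^\times/t^{q\Z}$. Writing $t=e^{2\pi i\theta}$ with $\theta\in\HH$, the factors are elliptic curves of moduli $p\theta$ and $q\theta$, so their periods are in the fixed rational ratio $p:q$; hence $E_1$ and $E_2$ are isogenous for every $t$, and a short computation with the lattices $\Z+p\theta\Z$ and $\Z+q\theta\Z$ shows that for very general $\theta$ (no complex multiplication) $\mathrm{Hom}(E_1,E_2)$ is infinite cyclic, with the minimal isogeny $E_1\to E_2$ of degree $d:=pq/\gcd(p,q)^2$. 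I would then use the standard description $NS(E_1\times E_2)=\Z[E_1\times\{0\}]\oplus\Z[\{0\}\times E_2]\oplus\mathrm{Hom}(E_1,E_2)$: the two factor classes span a hyperbolic plane $U$, the class attached to the minimal isogeny is orthogonal to this $U$ with self-intersection $-2d$, and for very general $t$ these three classes generate $NS$ over $\Z$. This gives $NS(A)\cong U\oplus\langle-2d\rangle$, of rank $3$ and signature $(1,2)$.

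For the mirror base, since $\Omega$ is a very general point of the $(pq+2)$-dimensional complex moduli space, $\rho(A_\Omega)=1$ and $NS(A_\Omega)$ is generated by the primitive class $L'$ underlying the $(p,q)$-polarization $L$. The polarization has elementary divisors $(\gcd(p,q),\,pq/\gcd(p,q))$, so $L=\gcd(p,q)\cdot L'$ with $L'$ of type $(1,\,pq/\gcd(p,q)^2)$ and $(L')^2=2d$. Using $H^2(A_\Omega,\Z)\cong U^{\oplus3}$ together with Nikulin's uniqueness of primitive embeddings of lattices of small rank, the orthogonal complement of a primitively embedded $\langle 2d\rangle$ inside $U^{\oplus3}$ is $U^{\oplus2}\oplus\langle-2d\rangle$, so $T(A_\Omega)\cong U^{\oplus2}\oplus\langle-2d\rangle$.

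Comparing, $NS(A)\oplus U\cong U\oplus\langle-2d\rangle\oplus U=U^{\oplus2}\oplus\langle-2d\rangle\cong T(A_\Omega)$, and symmetrically $NS(A_\Omega)\oplus U=\langle2d\rangle\oplus U$ is the orthogonal complement of $U\oplus\langle-2d\rangle$ in $U^{\oplus3}$, hence $\cong T(A)$; so $A$ and $A_\Omega$ are mirror symmetric. Conceptually this is the statement that the SYZ elliptic fibration on the mirror selects the isotropic direction / orthogonal summand $U$ in $T(A_\Omega)$ whose quotient recovers $NS(A)$ --- the ``essentially unique'' choice noted just before the statement --- and the computation above makes this precise. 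As a numerical check, $\rk H_2(X_{(p,q)},\Z)=pq+2$ by Lemma \ref{Kahler dim}, which matches the dimension of the complex moduli of $A_\Omega$.

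The main obstacle I expect lies in the two lattice computations: showing that the three explicit divisor classes genuinely generate $NS(E_1\times E_2)$ for a very general member (primitivity, not merely finite index), correctly identifying the elementary-divisor type of the polarization induced on $A_\Omega$ by the SYZ construction, and then invoking the right form of Nikulin's theorem to upgrade ``equal rank, signature, and discriminant form'' to ``isometric''. The point that makes the two sides fit is that the minimal isogeny degree between the elliptic factors of the fiber and the primitive degree of the polarization on the mirror are the same number $pq/\gcd(p,q)^2$.
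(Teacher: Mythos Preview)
Your approach is the paper's approach: compute the N\'eron--Severi lattice of the generic fiber and the transcendental lattice of the base and match them via $NS(A)\oplus U\cong T(A_\Omega)$. You are considerably more explicit, and in fact more careful than the paper on one point. The paper asserts that the generic fiber is $U\oplus\langle-2pq\rangle$-polarized and that $A_\Omega$ is generically $\langle 2pq\rangle$-polarized; this is literally correct only when $\gcd(p,q)=1$. Your refinement to $d=pq/\gcd(p,q)^2$ is the right invariant in general: the minimal isogeny between $\C/(\Z+p\theta\Z)$ and $\C/(\Z+q\theta\Z)$ indeed has degree $d$, giving $NS(A)\cong U\oplus\langle-2d\rangle$, and on the mirror side the $(p,q)$-polarization (in the paper's convention, without divisibility) has elementary divisors $(\gcd(p,q),\operatorname{lcm}(p,q))$, so its primitive class has square $2d$. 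The mirror conclusion is unaffected, since the same number appears on both sides; your version simply gets the intermediate lattices right. The appeals to Nikulin and the primitivity checks you flag as obstacles are routine here (rank~$1$ inside a unimodular lattice), so the argument goes through as you outline.
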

\begin{proof}
A generic fiber of $\pi:X_{(p,q)} \rightarrow \D$ is a $U\oplus\langle -2pq\rangle$-polarized abelian surface\footnote{
Here $\langle n \rangle$ stands for the rank $1$ lattice generated by $a$ with $a^2=n$. 
We say that a compact surface $S$ is $M$-polarized if there is a primitive embedding $M \hookrightarrow NS(S)$ whose image contains an ample divisor}.  
On the other hand, $A_\Omega$ is a $(p,q)$-polarized abelian surface, which is generically a $\langle 2pq\rangle$-polarized abelian surface. 
Thus the transcendental lattice $T(A_\Omega)\cong U^{\oplus2}\oplus \langle-2pq\rangle$. 
\end{proof}

As is the local Calabi--Yau surface $\widetilde{A}_{d-1}$ case, this mirror correspondence (the base-fiber duality) is intuitively clear 
as the period $\Omega$ on one hand represents the symplectic structure of $X_{(p,q)}$ and on the other hand represents complex structure of $A_\Omega$ 
(Figure \ref{fig:MirrorCorresp3}). 
 \begin{figure}[htbp]
 \begin{center} 
  \includegraphics[width=25mm]{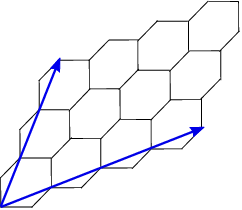}
 \end{center}
  \caption{Toric web diagram and $\Omega$-translation} 
\label{fig:MirrorCorresp3}
\end{figure}
The period matrix $\Omega$ establishes a dictionary of symplectic geometry and complex geometry.

\subsection{Mirror symmetry for varieties of general type}

As mentioned in Introduction, the fibration $w:X_{(p,q)} \to \D$ serves as the Landau--Ginzburg mirror of the discriminant locus of the mirror conic fibration \cite{Sei-Spec,AAK}, 
which is a smooth genus $pq+1$ curve.
The critical locus $\mathrm{Crit}(w)$ lies in the central fiber as the singular set. 
It consists of $3pq$ rational curves intersecting at $2pq$ points 
in such a way that exactly $3$ components meet in each point (thus arithmetic genus of $pq+1$). 
This is known as the mirror of a genus $pq+1$ curves. 
More precisely, the mirror of a curve of genus $\ge 2$ is a perverse curve \cite{Rud} 
and indeed $\mathrm{Crit}(w)$ comes equipped with perverse structure given by the sheaf of vanishing cycles of $w:X_{(p,q)} \to \D$.   
\begin{Ex}[Seidel \cite{Sei}]
The set $\mathrm{Crit}(w:X_{(1,1)}  \to \D)$ consists of a union of $3$ rational curves which forms a `$\theta$'-shape.  
This recovers the work of Seidel \cite{Sei} stating that the mirror of a genus $2$ curve is such a Landau--Ginzburg model $w:X_{(1,1)} \to \D$.    
 \begin{figure}[htbp]
 \begin{center} 
  \includegraphics[width=80mm]{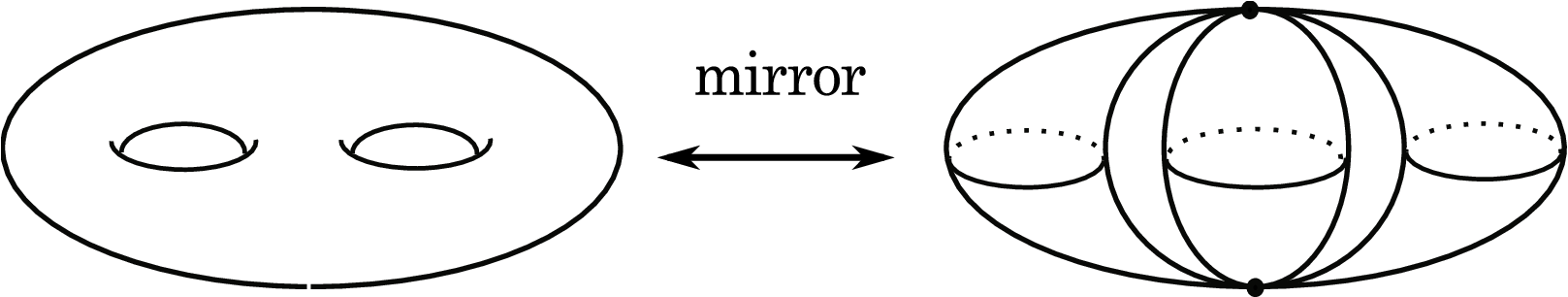}
 \end{center}
  \caption{Genus $2$ curve and mirror perverse curve} 
\end{figure}
\end{Ex}
As we will see in the next section, the SYZ mirror of $X_{(d_1,\ldots,d_l)}$ is a conic fibration over a $(d_1,\ldots,d_l)$-polarized abelian variety, 
with discriminant locus being a hypersurface defined by the $(d_1,\ldots,d_l)$-polarization. 
Then the Landau--Ginzburg model $w:X_{(d_1,\ldots,d_l)} \to \D$ serves as a mirror of this general-type hypersurface.  
We refer the reader to the work of Gross--Katzarkov--Ruddat \cite{GKR} for proposals of the Landau--Ginzburg mirrors of the varieties of general type,
where they show the interchange of Hodge numbers expected in mirror symmetry. 
This exchange occurs between the Hodge numbers of the discriminant locus 
and certain Hodge numbers associated to a mixed Hodge structure of the perverse sheaf of vanishing cycles on the critical locus $\mathrm{Crit}(w)$. 


\section{Local Calabi--Yau manifolds of type $\widetilde{A}$ in high dimensions} \label{section: higher-dim}
All the above results have natural generalizations to the local Calabi--Yau $(l+1)$-fold $X_{(d_1,\ldots,d_l)}$. 
The SYZ mirror would be given in terms of genus $l$ Riemann theta functions with characteristics, 
and the generating function of open Gromov--Witten invariants has modular properties similar to Proposition \ref{prop:modular}. 
We shall be brief in this section. 


We realize $X_{(d_1,\ldots,d_l)}$ as a quotient by $\Z^l\cong d_1\Z \times \ldots \times d_l\Z \subset \Z^l$ of a toric Calabi--Yau manifold of infinite-type 
whose fan is obtained from a $\Z^l$-invariant lattice triangulation of the hypercubic tiling
$\left\{\prod_{k=1}^l [i_k,i_k+1]\right\}_{ (i_1,\ldots,i_l) \in \Z^l }$ of $\R^l$. 
For instance, we can take the fan whose dual gives a zonotope tiling of $\R^l$, generalizing the honeycomb tiling in the 3-fold case (Figure \ref{fig: zonotope}) . 
Similar to Lemma \ref{lem:invP-3fold}, we can consider the following invariant polyhedral set in $\R^{l+1}$ (whose projection to $\R^l$ is a zonotope tiling).

\begin{Lem}
Take $c_{k_1,\ldots,k_l} = -\sum_{i=1}^l k_i(k_i-1) - \sum_{i<j} k_i k_j$.  Then the polyhedral set
$$P := \bigcap_{(k_1,\ldots,k_l) \in \Z^l} \left\{ (y_1,\ldots,y_{l+1}) \in \R^{l+1} \ | \ \sum_{i=1}^l k_i y_i + y_{l+1} \geq c_{k_1,\ldots,k_l} \right\}$$
is invariant under the $\Z^l$-action generated by 
$$
e_i \cdot (y_1,\ldots,y_l,y_{l+1}) = (y_1,\ldots,y_l,y_{l+1}-y_i) -2 e_i - \left(\sum_{j\not=i,l+1} e_j\right) + 2 e_{l+1}.
$$
 \begin{figure}[htbp]
 \begin{center} 
  \includegraphics[width=40mm]{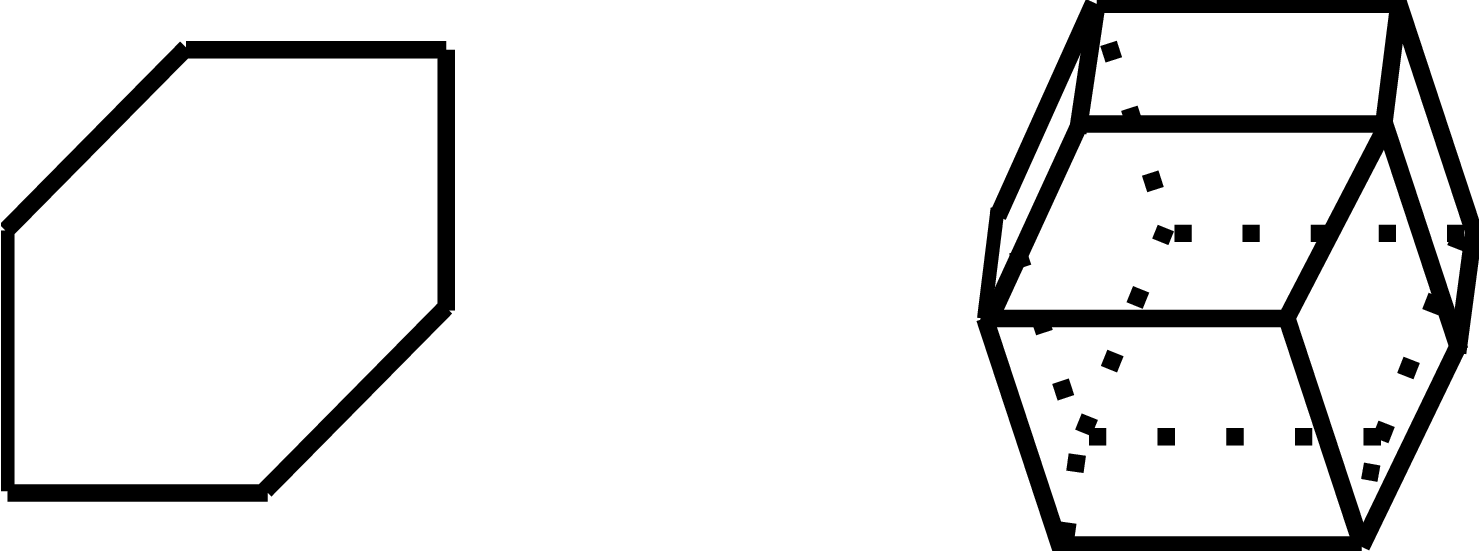}
 \end{center}
  \caption{2 and 3-dimensional zonotopes} 
\label{fig: zonotope}
\end{figure}
\end{Lem}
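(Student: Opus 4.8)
The statement is the higher-dimensional analogue of Lemma \ref{lem:invP-3fold}, so the natural plan is to imitate that proof almost verbatim, keeping careful track of the linear-algebra bookkeeping. First I would set up notation: write $v = (v_1,\ldots,v_l) \in \Z^l$ for a multi-index and $\langle v \rangle = \sum_{i=1}^l v_i y_i + y_{l+1}$ for the corresponding linear functional on $\R^{l+1}$, so that $P = \bigcap_{v \in \Z^l} \{\langle v \rangle \geq c_v\}$ with $c_v = -\sum_{i} v_i(v_i-1) - \sum_{i<j} v_i v_j$. The claim is that each generator $e_i$ of $\Z^l$, acting on $\R^{l+1}$ by the stated affine transformation $A_i$, satisfies $A_i(P) = P$. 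Since the $A_i$ are invertible affine maps it suffices to show $A_i^{-1}$ maps the defining half-space $\{\langle v \rangle \geq c_v\}$ to $\{\langle v' \rangle \geq c_{v'}\}$ for some permutation $v \mapsto v'$ of $\Z^l$ depending on $i$; because this is a permutation, the intersection is preserved.

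Concretely, the second step is to compute $\langle v \rangle \circ A_i$. Writing $A_i(y_1,\ldots,y_l,y_{l+1}) = (y_1 - 2\delta,\ldots,y_i - 2, \ldots, y_l - \delta, \ldots, y_{l+1} - y_i + 2)$ — where I must be careful that the linear part sends $y_{l+1} \mapsto y_{l+1} - y_i$ and the constant shift is $-2e_i - \sum_{j \neq i,l+1} e_j + 2e_{l+1}$ — one finds that $\langle v \rangle \circ A_i = \langle v + e_i \rangle + (\text{constant depending only on } v \text{ and } i)$. That is, the \emph{linear part} of the substitution realizes the shift $v \mapsto v + e_i$ on the index lattice (because the $-y_i$ appearing in the new $y_{l+1}$-slot contributes an extra $y_i$), and everything else is a constant. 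The heart of the matter is then purely a statement about the discrete function $c_v$: I must verify the cocycle-type identity
\begin{equation}
c_{v+e_i} - c_v = -\big(\text{the constant produced by } \langle v \rangle \circ A_i\big),
\end{equation}
so that $\{\langle v \rangle \geq c_v\}$ pulls back under $A_i^{-1}$ exactly to $\{\langle v + e_i\rangle \geq c_{v+e_i}\}$, and the map $v \mapsto v + e_i$ is a bijection of $\Z^l$. Computing $c_{v+e_i} - c_v$ from the explicit formula $c_v = -\sum_j v_j(v_j-1) - \sum_{j<k} v_j v_k$ gives $-2v_i - \sum_{j \neq i} v_j$ plus a numerical constant, and the task is to match this against the constant shift $-\langle -2e_i - \sum_{j\neq i} e_j\rangle$ coming from $A_i$ together with the bookkeeping term from $y_{l+1} \mapsto y_{l+1} - y_i$.

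I expect the main obstacle to be purely clerical: getting the signs and the off-diagonal $\sum_{i<j}$ contribution in $c_v$ to line up with the constant-shift vector $-2e_i - \sum_{j\neq i,l+1} e_j + 2 e_{l+1}$ appearing in the $\Z^l$-action, and in particular checking that the affine parts of $A_i$ and $A_j$ are genuinely \emph{compatible} (i.e. the collection $\{A_i\}$ generates a group action on $\R^{l+1}$, so that the composed shifts $A_i A_j$ realize $v \mapsto v + e_i + e_j$ with the correct constant — this amounts to $c_v$ being an honest quadratic "quasi-period" function, which it is by construction since its Hessian is the fixed symmetric matrix $-(\text{Id} + \text{all-ones})$). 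Once the single-generator identity is checked, the group-compatibility is automatic from associativity of the half-space permutations. I would also remark, as in the $l=2$ case, that this $P$ is precisely the dual polyhedral set whose normal fan is the chosen $\Z^l$-invariant triangulation of the hypercubic tiling, so Proposition \ref{prop:G-metric} then yields a $\Z^l$-invariant K\"ahler metric on a toric neighborhood $X^o$, and $X_{(d_1,\ldots,d_l)} = X^o / (d_1\Z \times \cdots \times d_l\Z)$ acquires its K\"ahler structure — but that is a corollary, not part of this lemma's proof.
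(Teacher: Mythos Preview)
Your approach is correct and is exactly the natural direct verification: show that each generator $A_i$ permutes the defining half-spaces $\{\langle v\rangle \ge c_v\}$ by a bijection of the index set $\Z^l$. The paper in fact states this lemma (like its $l=2$ analogue, Lemma~\ref{lem:invP-3fold}) without proof, treating it as a routine computation, so your write-up would supply what the paper omits.

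One clerical correction worth flagging, since you explicitly worried about signs: with the action as written, the linear part of $A_i$ sends $y_{l+1}\mapsto y_{l+1}-y_i$, so composing gives
\[
\langle v\rangle\circ A_i \;=\; \langle v-e_i\rangle \;+\; \Bigl(2 - v_i - \textstyle\sum_{j} v_j\Bigr),
\]
i.e.\ the induced shift on the index lattice is $v\mapsto v-e_i$, not $v\mapsto v+e_i$. The required identity is then $c_{v-e_i} = c_v - 2 + v_i + \sum_j v_j$, which one checks directly from $c_v = -\sum_j v_j(v_j-1) - \sum_{j<k} v_j v_k$ (the difference $c_v - c_{v-e_i}$ equals $-2(v_i-1) - \sum_{j\ne i} v_j$). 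Since $v\mapsto v-e_i$ is still a bijection of $\Z^l$, your argument goes through unchanged; only the bookkeeping label flips. Your remark that compatibility of the $A_i$ follows from $c_v$ having constant Hessian is also correct and is the conceptual reason the computation closes up.
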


We note that in contrast to the square case ($3$-fold case), there is no canonical choice of a triangulation of the hypercube in the high dimensions. 
In fact such a choice corresponds to how a family of $(d_1,\ldots,d_l)$-polarized abelian varieties extends 
over the large complex structure limit on the Voronoi compactification of the complex moduli space of the $(d_1,\ldots,d_l)$-polarized abelian varieties. 

We define the curve classes $C_{i,j}^{(v)}$ of the corresponding toric Calabi--Yau manifold of infinite-type for $1 \le i \le j \le l$ and $v \in \Z^l$ as follows. 
For $i=j$, the curve $C_i^{(v)} := C_{i,i}^{(v)}$ is characterized by
$$
C_i^{(v)} \cdot D_{v + 2 e_i} =  C_i^{(v)} \cdot D_{v} = 1; \, C_i^{(v)} \cdot D_{v + e_i} = -2; \, C_i^{(v)} \cdot D_w = 0 \textrm{ for } w \not= v, v+e_i, v+ 2 e_i.
$$
This is analogous to $C_\rho$ and $C_\tau$ in the case of $X_{1,1}$.
For $i < j$, the curve $C_{i,j}^{(v)}$ is characterized by
$$
C_{i,j}^{(v)} \cdot D_{v + e_i + e_j} = D_{v} = 1, \,  C_{i,j}^{(v)} \cdot D_{v + e_i} = C_{i,j}^{(v)} \cdot D_{v+e_j} = -1.
$$
This is analogous to $C_\sigma$ in the case of $X_{1,1}$.

Note that $C_i$ always represents an effective curve class; while for $i<j$, whether $C_{i,j}$ is an effective curve class depends on the actual crepant resolution. 
In other words for $i<j$, $q_{i,j}^{(v)}$ are local coordinates defined on a punctured neighborhood, which is isomorphic to $(\C^\times)^{\frac{l(l+1)}{2}}$), 
of the large volume limit (which may not extend to the limit).


\begin{Thm} \label{thm:mir_X_1}
The SYZ mirror of the local Calabi--Yau $(l+1)$-fold $X_{1,\ldots,1}$ is given by $uv = F^\open(z_1,\dots,z_l;q)$ where
$$
F^\open(z_1,\ldots,z_l;q)= \Delta_l (q) \cdot 
\Theta_l
\begin{bmatrix}
                0 \\
                \left(- \frac{\tau_{1,1}}{2},\dots,- \frac{\tau_{l,l}}{2}\right)
\end{bmatrix}
\left({\bf \zeta};\Omega\right),
$$
$z_i = e^{2\pi i \zeta_i}$, $q = \left(q_{i,j} = e^{2\pi i \tau_{i,j}} \right)_{i \le j}$, $\Omega$ is the symmetric $l$-by-$l$ matrix $(\tau_{i,j})_{i,j=1}^l$ with $\tau_{j,i} = \tau_{i,j}$, and 
$$
\Delta_{l}(q)
= \exp \left(\sum_{n \geq 2} \frac{(-1)^n}{n} 
\sum_{\substack{({\bf m}_i=(m_i^j)\in \Z^{l} \setminus 0)_{i=1}^n\\ \textrm{with } \sum_{j=1}^n {\bf m}_j =0}}
\exp\left(\sum_{k=1}^n \pi i \, {\bf m}_k\cdot \Omega \cdot {\bf m}^T_k \right) \right).
$$
\end{Thm}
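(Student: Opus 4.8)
The plan is to run the argument of Theorem~\ref{thm:CY3/Z^2} with the rank-two lattice of directions replaced by $\Z^l$. First I would combine Theorem~\ref{thm:G-SYZ} with the $\Z^l$-symmetry of $X_{1,\ldots,1}=X^o_\Sigma/\Z^l$: by Lemma~\ref{lem:G-oGW} the series $\sum_\alpha n_{\beta_v+\alpha}q^\alpha$ is independent of the basic disc class $\beta_v$, so Theorem~\ref{thm:SYZ} gives
$$uv=F^\open=\Delta_l(q)\sum_{{\bf m}\in\Z^l}q^{C_{\bf m}}\,z^{\bf m},\qquad \Delta_l(q):=\sum_{\alpha\in H_2^{\mathrm{eff}}(X,\Z)}n_{\beta_0+\alpha}\,q^\alpha,$$
where $z^{\bf m}=z_1^{m_1}\cdots z_l^{m_l}$ and $C_{\bf m}:=\beta_{({\bf m},1)}-\beta_0-\sum_{k=1}^l m_k(\beta_{e_k}-\beta_0)\in H_2(X,\Z)$.

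Second, as in Theorem~\ref{thm:CY3/Z^2}, I would pin down $C_{\bf m}$ by its intersection numbers with the toric prime divisors: $C_{\bf m}\cdot D_{({\bf m},1)}=1$, $C_{\bf m}\cdot D_{(e_k,1)}=-m_k$, $C_{\bf m}\cdot D_{(0,1)}=\big(\sum_k m_k\big)-1$, and $C_{\bf m}\cdot D=0$ for every other toric divisor $D$. Since a class in $H_2(X,\Z)$ is determined by its intersection numbers with all toric divisors (via the exact sequence $0\to H_2(X,\Z)\to H_2(X,T)\to H_1(T,\Z)\to 0$), comparing with the curves $C_i$ and $C_{i,j}$ of the $\Z^l$-invariant triangulation yields
$$q^{C_{\bf m}}=\prod_{i=1}^l q_{i,i}^{\,m_i(m_i-1)/2}\prod_{1\le i<j\le l}q_{i,j}^{\,m_i m_j}.$$
Writing $q_{i,j}=e^{2\pi i\tau_{i,j}}$ and $\Omega=(\tau_{i,j})_{i,j=1}^l$, the exponent of $q^{C_{\bf m}}$ is $\pi i\,{\bf m}\Omega{\bf m}^T-\pi i\sum_i\tau_{i,i}m_i$, so
$$\sum_{{\bf m}\in\Z^l}q^{C_{\bf m}}z^{\bf m}=\sum_{{\bf m}\in\Z^l}\exp\!\Big(\pi i\,{\bf m}\Omega{\bf m}^T+2\pi i\,{\bf m}\cdot\big({\bf \zeta}-\tfrac12(\tau_{1,1},\dots,\tau_{l,l})\big)\Big),$$
which is exactly $\Theta_l\!\begin{bmatrix}0\\(-\tfrac{\tau_{1,1}}{2},\dots,-\tfrac{\tau_{l,l}}{2})\end{bmatrix}\!({\bf \zeta};\Omega)$ in the conventions of the Appendix.

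Third, to identify $\Delta_l(q)$ I would invoke the Gross--Siebert normalization of Theorem~\ref{thm: GS norm}: $\log F^\open=\sum_{n\ge1}\frac{(-1)^{n-1}}{n}(F^\open-1)^n$ has no term of the form $a\cdot q^C$ with $C\in H_2(X,\Z)\setminus\{0\}$. Since $\log F^\open=\log\Delta_l(q)+\log\big(\sum_{\bf m}q^{C_{\bf m}}z^{\bf m}\big)$ and $\log\Delta_l(q)$ involves only $q$, this forces $\log\Delta_l(q)$ to equal the negative of the $z^0$-part of $\log\big(\sum_{\bf m}q^{C_{\bf m}}z^{\bf m}\big)$. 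Expanding this logarithm and collecting the $z^0$-contributions of the $n$-fold products, indexed by tuples $({\bf m}_1,\dots,{\bf m}_n)$ of nonzero vectors with $\sum_k{\bf m}_k=0$ (there is no $n=1$ term, as $z^0$ would force ${\bf m}_1=0$), and using that $\sum_k{\bf m}_k=0$ cancels the linear terms $-\pi i\sum_i\tau_{i,i}\sum_k m_k^i$, I obtain precisely the stated formula for $\Delta_l(q)$. Combining the three steps gives the asserted expression for $F^\open$.

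The main obstacle is the second step: verifying that the identification of the curve classes $C_{\bf m}$, and hence of the monomials $q^{C_{\bf m}}$, is valid uniformly in ${\bf m}$ and independent of the chosen $\Z^l$-invariant triangulation of the hypercubic tiling. Unlike the three-dimensional case there is no canonical triangulation, the classes $C_{i,j}$ for $i<j$ need not be effective (they are only local coordinates on a punctured neighborhood of the large volume limit), and the linear relations among the $C_i$ and $C_{i,j}$ are more intricate; the intersection-number characterization is what lets one bypass the explicit combinatorics, but one must still check the relevant intersection numbers of $C_i$ and $C_{i,j}$ with the toric divisors and that the resulting exponents make sense in the K\"ahler moduli ring of the quotient. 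A secondary point, handled exactly as in the convergence lemma preceding Proposition~\ref{prop:modular}, is the absolute convergence of the series defining $\Delta_l(q)$ on the Siegel upper half-space, which is what allows one to regard $F^\open$ as a genuine section of a $(1,\ldots,1)$-polarization rather than merely a formal series.
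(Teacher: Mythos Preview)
Your proposal is correct and follows essentially the same three-step structure as the paper's proof, which is itself modeled on the 3-fold case of Theorem~\ref{thm:CY3/Z^2}. The one minor difference is in the execution of the second step: the paper writes down an explicit chain
$$
\cC^{(a_1,\ldots,a_{l})}
= \sum_{i=1}^{l} \left(\sum_{k=1}^{a_i-1} (a_1 C_{1,i}^{(v_{i,k})} + \ldots  + a_{i-1} C_{i-1,i}^{(v_{i,k})} + k C_{i,i}^{(v_{i,k})}) + \sum_{p=1}^i a_p C^{(w_i)}_{p,i+1}  \right)
$$
along a path from the origin to $(a_1,\ldots,a_l)$ obtained by successive 2-dimensional projections (reducing to the honeycomb picture), whereas you invoke the intersection-number characterization directly and solve for the exponents of $q_{i,j}$. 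Since in the $\Z^l$-quotient the classes $C_{i,j}^{(v)}$ are all identified and the $\binom{l+1}{2}$ classes $C_{i,j}$ span $H_2(X_{1,\ldots,1},\Q)$, your linear-algebra shortcut gives the same monomial $q^{C_{\bf m}}$ without tracking the path; the paper's explicit chain has the advantage of making manifest that the class is effective up to the $C_{i,j}$ with $i<j$, which is the triangulation-dependence you flag as the main obstacle.
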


\begin{proof}
The proof is almost identical to the 3-fold case and we shall be very brief. 
The basic idea is to consider 2-dimensional projections and reduce to the 3-fold case (Figure \ref{fig:Cubic Tiling}).   
The key is to express the curve class
$$\cC^{(a_1,\ldots,a_{l})} := \beta_{((a_1,\ldots,a_{l}),1)} - \beta_{(0,1)} -  \sum_{i=1}^{l} a_i \left( \beta_{(e_i,1)} - \beta_{(0,1)} \right) $$
in terms of the curve classes $C_{ij}$ defined above. 
It can be verified that
$$
\cC^{(a_1,\ldots,a_{l})} 
= \sum_{i=1}^{l} \left(\sum_{k=1}^{a_i-1} (a_1 C_{1,i}^{(v_{i,k})} + \ldots  + a_{i-1} C_{i-1,i}^{(v_{i,k})} + k C_{i,i}^{(v_{i,k})}) + \sum_{p=1}^i a_p C^{(w_i)}_{p,i+1}  \right)
$$
where $C^{(v)}_{p,l} := 0$ for all $p$ and $v$, $v_{i,k} = (0,\ldots,0,a_i-k-1,a_{i+1},\ldots,a_{l})$, $w_i = (0,\ldots,0,a_{i+1}-1,a_{i+2},\ldots,a_{l})$.
\begin{figure}[htbp]
 \begin{center} 
  \includegraphics[width=80mm]{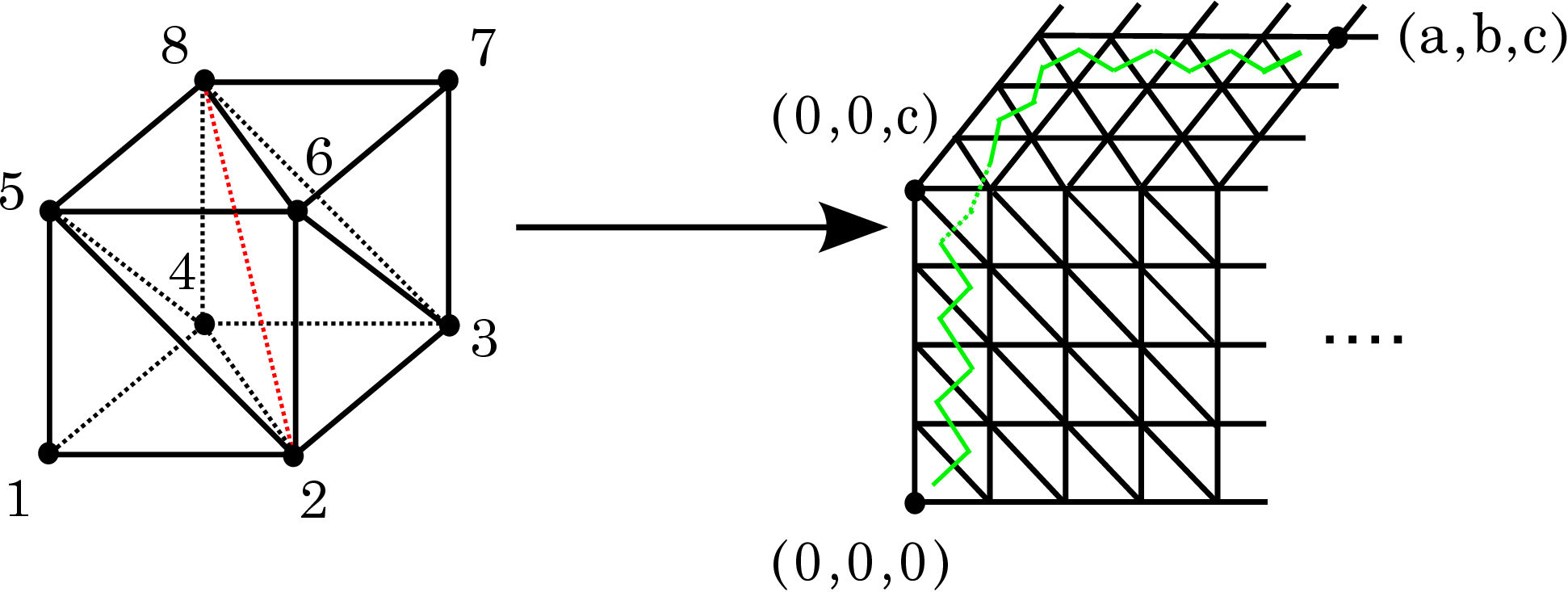}
 \end{center}
 \caption{Cubic tiling and curve connecting the origin and $(a,b,c)$}
\label{fig:Cubic Tiling}
\end{figure}
\end{proof}

%


Now we state the result for the general case $(d_1,\ldots,d_l) \in \mathbb{N}^l$ but omit the proof.
\begin{Thm} \label{thm:SYZ-genA}
The SYZ mirror of $X_{(d_1,\ldots,d_l)}$ is given by the conic fibration $uv = F^\open$ where 
$$
F^\open = \sum_{a_1,\ldots,a_l = 0}^{d_1-1,\ldots,d_l-1}  K_{(a_1,\ldots,a_l)} \cdot \Delta_{(a_1,\ldots,a_l)} \cdot \Theta_{l}',
$$
$\Delta_{(a_1,\ldots,a_l)} = \sum_\alpha n_{\beta_{a_1,\ldots,a_l} + \alpha} q^\alpha$, $\Theta_l'$ is the Riemann theta function with characteristics 
$$
\Theta_l \begin{bmatrix}
\left(\frac{a_1}{d_1}, \ldots, \frac{a_l}{d_l}\right)  \\
\left(\frac{-d_1 \tau_1}{2} + {\displaystyle\sum_{k=0}^{d_1-1}} k \tau_{1,(-1-k,0,\ldots,0)}, \ldots,\frac{-d_l \tau_l}{2} + {\displaystyle\sum_{k=0}^{d_l-1}} k \tau_{l,(0,\ldots,0,-1-k)}\right) 
\end{bmatrix}
\left(d_1 \cdot \zeta_1,\ldots, d_l \cdot \zeta_l; \Omega\right)
$$
where
$$
\Omega:=\begin{bmatrix}
                d_1 \tau_1 & \sigma_{(1,2)} & \ldots & \sigma_{(1,l)} \\
               \sigma_{(1,2)} & d_2 \tau_2 & \ldots & \sigma_{(2,l)} \\
							 \vdots & \vdots & \ddots & \vdots \\
							 \sigma_{(1,l)} & \sigma_{(2,l)} & \ldots & d_l \tau_l
\end{bmatrix}
$$
and $K_{(a_1,\ldots,a_l)}$ is the following quantity which is independent of $\zeta_i$:
\begin{align*}
K_{(a_1,\ldots,a_l)} =& \left(\prod_{i=1}^l Q_i^{-\frac{a_i^2}{2 d_i} + \frac{a_i}{2}}\right) \cdot \left( \prod_{1\leq i < j \leq l} Q_{(i,j)}^{-\frac{a_i a_j}{d_i d_j}} \right) \cdot \left( \prod_{i=1}^{l} \left(\prod_{k=0}^{d_i-1} Q_{i,(-1-k) \ve_i}^k \right)^{-\frac{a_i}{d_i}} \right) \\
&\cdot \left( \prod_{i=1}^l  \prod_{k=0}^{a_i-1} Q_{i,(a_i-1-k) \ve_i}^k \right) \left( \prod_{0 \leq k < i \leq l} \left(\prod_{j=0}^{a_i-1} Q_{(k,i),(0,\ldots,0,j,a_{i+1},\ldots,a_{l})}\right)^{a_k} \right).
\end{align*}
In the above $\tau_i = \sum_{k=0}^{d_i-1} \tau_{i,(a_1,\ldots,a_{i-1},k,a_{i+1},\ldots,a_l)}$ and 
$$\sigma_{(i,j)} = d_i \sum_{k=0}^{d_j-1} \sigma_{(i,j),(a_1,\ldots,a_{j-1},k,a_{j+1},\ldots,a_l)} = d_j \sum_{k=0}^{d_i-1} \sigma_{(i,j),(a_1,\ldots,a_{i-1},k,a_{i+1},\ldots,a_l)}$$ 
which are independent of $a_1,\ldots,a_l$. 
We set
\begin{align}
Q_i &:= \exp 2\pi i \tau_i, \ \ \ \ \ \ \ \ \ \ Q_{i,(a_1,\ldots,a_l)} := \exp 2\pi i \tau_{i,(a_1,\ldots,a_l)}\notag \\
Q_{(i,j)} &:= \exp 2\pi i \sigma_{(i,j)}, \ \ \ Q_{(i,j),(a_1,\ldots,a_l)} := \exp 2\pi i \sigma_{(i,j),(a_1,\ldots,a_l)}.\notag 
\end{align} 

In particular, the divisor $F^\open(z_1,\dots,z_l;q)=0$ defines the $(d_1,\dots,d_l)$-polarization of the ambient abelian variety. 
\end{Thm}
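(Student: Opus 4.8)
The plan is to imitate the proofs of Theorems \ref{thm:CY3/Z^2}, \ref{mainthm: 3-fold} and \ref{thm:mir_X_1}, reducing the assertion to a bookkeeping identity for toric curve classes followed by a standard fact about theta functions with rational characteristics. By Theorem \ref{thm:G-SYZ} the SYZ mirror of $X_{(d_1,\ldots,d_l)} = X^o_\Sigma/(d_1\Z\times\cdots\times d_l\Z)$ is $uv = F^\open$ with
\[
F^\open = \sum_{v \in \Z^l} \left(\sum_{\alpha \in H_2^{\mathrm{eff}}(X,\Z)} n_{\beta_{(v,1)}+\alpha}\, q^\alpha \right) q^{\cC^{(v)}} z^v,
\qquad \cC^{(v)} := \beta_{(v,1)} - \beta_0 - \sum_{i=1}^l v_i\bigl(\beta_{(e_i,1)}-\beta_0\bigr) \in H_2(X,\Z),
\]
and by Lemma \ref{lem:G-oGW} the inner generating function depends on $v$ only through its class $(a_1,\ldots,a_l) \in \Z_{d_1}\times\cdots\times\Z_{d_l}$; write it as $\Delta_{(a_1,\ldots,a_l)}$, which by Proposition \ref{prop:wc} and Theorem \ref{thm: GS norm} has constant term $1$. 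Splitting $v = (c_1 d_1 + a_1,\ldots,c_l d_l + a_l)$, everything reduces to an explicit formula for $q^{\cC^{(v)}}$.

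First I would establish the decomposition of $\cC^{(v)}$ into the toric curve classes $C_i^{(w)}$ and $C_{i,j}^{(w)}$ ($i<j$) introduced before Theorem \ref{thm:mir_X_1}. Starting from the identity for $\cC^{(a_1,\ldots,a_l)}$ in the proof of Theorem \ref{thm:mir_X_1} and using the $\Z^l$-symmetry to fold the indices $w$ into their residue classes — which produces correction terms governed by floor functions of the coordinates, generalizing the quantities $\epsilon_k,\delta_l$ in the proof of Theorem \ref{mainthm: 3-fold} — one arrives at a formula of the form $\cC^{(v)} = \cC^{\mathrm{main}}_{(c_1,\ldots,c_l),\,(a_1,\ldots,a_l)} + \cC^{\mathrm{res}}_{(a_1,\ldots,a_l)}$, where $\cC^{\mathrm{res}}$ is independent of $(c_1,\ldots,c_l)$ and $\cC^{\mathrm{main}}$, after completing the square in the variables $c_i + a_i/d_i$, is a positive-definite quadratic form with Gram matrix $\Omega$ plus linear-in-$c$ terms. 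The candidate identity is verified by checking that both sides have the same intersection number with every toric prime divisor $D_w$; since a class in $H_2(X,\Z)$ is detected by its intersection pairing with all toric divisors (via $0 \to H_2(X,\Z)\to H_2(X,T)\to H_1(T,\Z)\to 0$, as in the proof of Lemma \ref{lem:rel}), this determines the class uniquely. One should keep in mind that for $i<j$ the classes $C_{i,j}^{(w)}$ need not be effective for the chosen zonotopal triangulation, so the manipulations take place in the Laurent ring over a punctured neighborhood $(\C^\times)^{l(l+1)/2}$ of the large volume limit; this is harmless for the identity itself.

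Next, substituting $q^{\cC^{(v)}} = q^{\cC^{\mathrm{res}}_{(a_1,\ldots,a_l)}}\, q^{\cC^{\mathrm{main}}}$ into $F^\open$ and summing over $(c_1,\ldots,c_l) \in \Z^l$ for each fixed $(a_1,\ldots,a_l)$, the factor $q^{\cC^{\mathrm{res}}}$ together with the linear-in-$c$ remnants and the shift by $a_i/d_i$ collect into the prefactor $K_{(a_1,\ldots,a_l)}$ as written, while the completed quadratic form yields exactly the genus $l$ Riemann theta function $\Theta_l'$ with first characteristic $(a_1/d_1,\ldots,a_l/d_l)$, second characteristic as stated, argument $(d_1\zeta_1,\ldots,d_l\zeta_l)$ and period matrix $\Omega$; that $\Omega \in \mathfrak{H}_l$ follows exactly as in Lemma \ref{lem: omega}, the entries of $\mathrm{Im}\,\Omega$ being symplectic areas of effective curve classes $C_i$ and their sums. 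Finally, by the standard theory of theta functions with rational characteristics (see the appendix of this paper, and Birkenhake--Lange), the functions $\Theta_l\bigl[(a_1/d_1,\ldots,a_l/d_l);\,\ast\bigr](d_1\zeta_1,\ldots,d_l\zeta_l;\Omega)$ with $(a_1,\ldots,a_l) \in \Z_{d_1}\times\cdots\times\Z_{d_l}$ form a basis of the space of global sections of the ample line bundle defining the $(d_1,\ldots,d_l)$-polarization on the abelian variety $(\C^\times)^l/\Z^l$ determined by $\Omega$; since the $\Delta_{(a_1,\ldots,a_l)}$ are nonzero (unit constant term), $F^\open$ is a nonzero such section, so its zero divisor lies in the corresponding complete linear system and realizes the $(d_1,\ldots,d_l)$-polarization, as claimed.

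The main obstacle is the first step: the purely combinatorial identity for $\cC^{(v)}$ and its rearrangement into completed-square form. In arbitrary dimension with arbitrary $(d_1,\ldots,d_l)$ this requires careful control of the nested floor-function corrections (generalizing $\epsilon_k,\delta_l$) and of all the mixed-index curves $C_{i,j}^{(w)}$; once it is in hand, matching the result against the definitions of $K_{(a_1,\ldots,a_l)}$, $\Delta_{(a_1,\ldots,a_l)}$ and the characteristics of $\Theta_l'$ is routine but lengthy. A secondary subtlety, already flagged in the text, is that the hypercube has no canonical unimodular triangulation for $l \geq 3$, so one must fix the zonotopal triangulation throughout and note that a different choice only reparametrizes the large complex structure limit (equivalently, changes the chamber of the secondary fan of $\overline{\mathcal{A}}_l$ one approaches), leaving the statement intact.
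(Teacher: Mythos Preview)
Your approach is correct and is exactly the natural generalization the paper has in mind: the paper in fact \emph{omits} the proof of Theorem \ref{thm:SYZ-genA}, stating only the result, so there is nothing to compare against directly. Your outline faithfully combines the ingredients of the proofs of Theorems \ref{thm:CY3/Z^2}, \ref{mainthm: 3-fold} and \ref{thm:mir_X_1} --- the curve-class identity from the $(1,\ldots,1)$ case, the $\epsilon_k,\delta_l$ floor-function bookkeeping and the $\cC^{\mathrm{main}}/\cC^{\mathrm{res}}$ splitting from the $(p,q)$ case, and the completion of the square to extract the theta function --- which is precisely the pattern the paper establishes and intends the reader to extrapolate.
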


By Theorem \ref{thm:open-mir-thm}, the generating functions $\Delta_{(a_1,\ldots,a_l)}$ of the open Gromov--Witten invariants can be computed by the mirror map. 
They are higher-dimensional analogs of the Dedekind eta function and multi-variable theta functions (c.f. Proposition \ref{prop:modular}). 
As is the 3-fold case, the function $\Delta_{(a_1,\ldots,a_l)}$ admits an interesting asymptotic behavior given by the product of the Dedekind eta functions. 
We wish that they produce an interesting new class of higher genus Siegel modular forms.

The fiber-base mirror duality still holds in higher dimensions. 
Namely, the generic fiber of $X_{(d_1,\ldots,d_l)}\to \D$, which is the product of isogeneous elliptic curves $\C^\times/t^{d_i \Z}$ $(i=1,\dots,l)$, 
and the base of conic fibration of the SYZ mirror of $X_{(d_1,\ldots,d_l)}$, which is a $(d_1,\ldots,d_l)$-polarized abelian variety, are mirror symmetric. 
For instance, this mirror correspondence has been verified in the work of Golyshev--Lunts--Orlov \cite[Proposition 9.6.1 and Corollary 9.6.3]{GLO}. 

In light of the $3$-fold case $w:X_{(d_1,\ldots,d_l)} \to \D$ should serve as the Landau--Ginzburg mirror 
of the hypersurface in the mirror abelian variety defined by the $(d_1,\ldots,d_l)$-polarization. 
Mirror symmetry for the varieties of general type is still lurking and only partially explored area, and deserves further explorations.

\section{Speculation} \label{section: speculation}

The mirror correspondence studied in this article has natural generalizations. 
We propose the following mirror correspondence, replacing the abelian varieties by more general Calabi--Yau manifolds.   
Let $(X,Y)$ be a mirror pair of Calabi--Yau manifolds. 
For simplicity let us assume that the complex moduli space of $X$ and the K\"ahler moduli space of $Y$ are 1-dimensional\footnote{
In general we consider a $1$-dimensional family of Calabi--Yau manifolds and a polarized mirror Calabi--Yau manifold 
in such a ways that the deformation direction in the complex moduli space corresponds to the polarization direction in the K\"ahler moduli space of the mirror.}. 
We consider a conic fibration $g:\mathcal{Y}\rightarrow Y$ degenerating along a smooth divisor $L$ which gives the ample generator of $\mathrm{Pic}(Y)\cong \Z$, 
and the degeneration family $f:\mathcal{X}\rightarrow \mathbb{D}$ near the large complex structure limit of $X$ where the central fiber $X_0$ is the only singular fiber. 
We anticipate that the total spaces $\mathcal{X}$ and $\mathcal{Y}$ form a mirror pair of Calabi--Yau manifolds\footnote{
Note that the total space $\mathcal{Y}$ is always taken to be a Calabi--Yau manifold.}. 
Moreover, the critical loci $\mathrm{Crit}(f)$ furnished with the perverse structure coming form vanishing cycles (or the Landau--Ginzburg model $(\mathcal{X},f)$) 
and the discriminant loci $L = \mathrm{Disc}(g)$ form a mirror pair of varieties of general type.  

We can make sense of the above conjecture by homological mirror symmetry, 
namely the derived category of sheaves on $\mathcal{X}$ supported at $X_0$ (or the matrix factorization category of the Landau--Ginzburg model $(\mathcal{X},f)$ instead) 
is quasi-equivalent to the (split closure of) derived Fukaya subcategory of $\mathcal{Y}$ generated by the Lagrangians coming from $L$ (or the derived Fukaya category of $L$ instead resp.). 
We conclude by the following table.
\begin{center}
 \begin{tabular}{c|c|c} 
 $(n+1)$-dim  & total space CY manifold $\mathcal{X}$   & total space CY manifold $\mathcal{Y}$ \\ \hline
 $n$-dim        & fiber CY manifold $X$ &  base CY manifold $Y$ \\ \hline
$(n-1)$-dim & perverse critical loci $\mathrm{Crit}(f)$ & dicsriminant loci $\mathrm{Disc}(g)$\\  
 \end{tabular}
 \end{center}
For instance, let $Y$ be a $\langle 2n\rangle$-polarized K3 surface\footnote{
For $1\le n \le 4$, such a K3 surface is generically a complete intersection in a weighted projective space 
$\PP_{(1^3,3)}\cap (6)$, $\PP_{(1^4)}\cap (4)$, $\PP_{(1^5)}\cap (2,3)$ and $\PP_{(1^6)}\cap (2^3)$ respectively. 
For $5\le n \le 9$, Mukai showed that it is generically a complete intersection in a homogeneous space.}  
and $X$ its mirror K3 surface in the sense of Dolgachev \cite{Dol} for $1 \le n \le 4$. 
For the mirror family $f:\mathcal{X}\rightarrow \mathbb{D}$ near the large complex structure limit,  
the configuration $\mathrm{Crit}(f)$ of $\PP^1$'s is given in Figure \ref{fig: K3Graph}, as mirror symmetry for the genus $n+1$ curve $L$ expects \cite{Sei,Efi}. 
\begin{figure}[htbp]
 \begin{center} 
  \includegraphics[width=80mm]{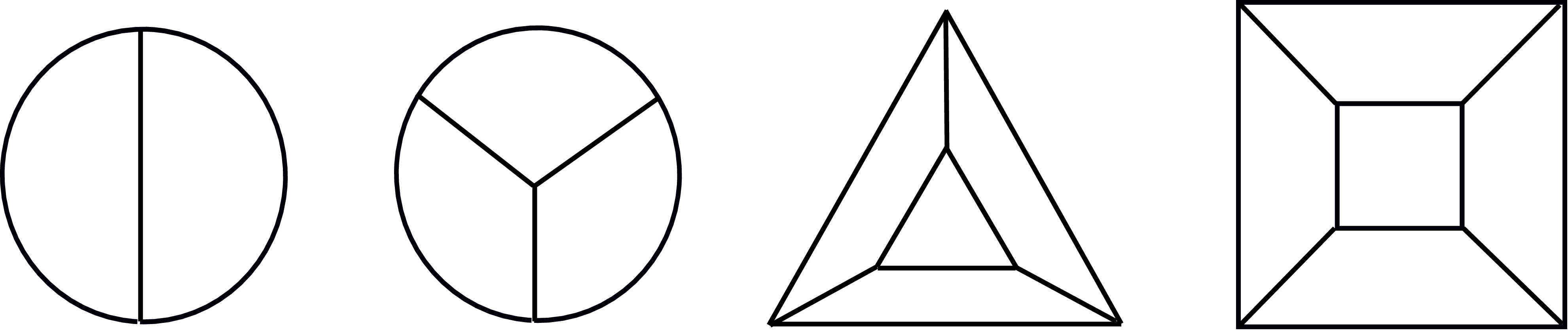}
 \end{center}
 \caption{Configuration of $\PP^1$'s mirror to genus $n+1$ curve $L$ for $(1\le n \le 4)$}
\label{fig: K3Graph}
\end{figure}
The total space $\mathcal{X}$ can also be taken to be a Calabi--Yau 3-fold (the Kulikov model of type III).

\section{Appendix} \label{abelian varieties}
We consider an $m$-dimensional complex torus $X=\C^m/\Lambda$. 
Here $e_1,\dots, e_m$ be a complex basis of $\C^m$ 
and $\Lambda$ be the lattice generated by the $2m$ independent vectors $\lambda_i=\sum\widetilde{\omega}_{\alpha i} e_\alpha$ in $\R^{2m}\cong \C^m$. 
We define the differentials $dz_\alpha$ and $dx_i$ in such a way that $\int_{e_\beta}dz_\alpha=\delta_{\alpha \beta}$ and $\int_{\lambda_j}dx_i=\delta_{ij}$ hold.  
The $m\times 2m$ matrix $\widetilde{\Omega}=(\widetilde{\omega}_{\alpha i})$ is called the period matrix and the lattice $\Lambda$ is generated by the $2m$ columns of $\widetilde{\Omega}$. 
The Kodaira embedding theorem asserts that the complex torus $X$ is an abelian variety if and only if it admits a Hodge form (an integral closed positive $(1,1)$-form)
$$
\omega=i\sum_{\alpha,\beta}h_{\alpha,\beta}dz_\alpha \wedge d\overline{z}_\beta. 
$$
We may pick a new basis of $\C^m$ and $\Lambda$, not in a unique way, such that
$$
\widetilde{\Omega}=\begin{bmatrix}
                \delta_1   &  & 0   &  \\
                 & \ddots &  &   \Omega\\
                 0 & & \delta_m  &
\end{bmatrix},
$$
where $\Omega=(\omega_{ij}) \in \mathfrak{H}_m$ and integers $\delta_i\ge 1 (1\le i \le m)$ such that $\delta_i|\delta_{i+1}$. 
Here $\mathfrak{H}_m$ is the Siegel upper half-space of degree $m$ defined as
$$
\mathfrak{H}_m:=\{\Omega \in \mathrm{M}_m(\C) \ | \ \Omega^t=\Omega, \ \mathrm{Im}(\Omega)>0 \}.
$$
In these new coordinates, $\omega$ takes of the form
$$
\omega=\sum_\alpha \delta_\alpha dx_\alpha \wedge dx_{m+\alpha}. 
$$
The cohomology class $[\omega]$ of the Hodge form, or equivalently the sequence of integers $(\delta_1,\dots,\delta_m)$, 
provides the so-called $(\delta_1,\dots,\delta_m)$-polarization of the abelian variety $X$.  
The sequence $(\delta_1,\dots,\delta_m)$ is an invariant of the cohomology class $[\omega]$ and independent of the choice of a basis. 
When $\delta_1=\dots=\delta_m=1$, the abelian variety $X$ is called principally polarized.  
By abuse of notation, we {\it do not} impose the divisibility condition $\delta_i|\delta_{i+1}$ in this article, 
but you can always find a new basis with respect which the corresponding sequence $(\delta'_1,\dots,\delta'_m)$ satisfies the divisibility condition.  
  
For $\bfa,\bb \in \R^m$, the genus $m$ Riemann theta function with characteristic
$
\begin{bmatrix}
                \bfa \\
                \bb
\end{bmatrix}$  
is defined by 
$$
\Theta_m
\begin{bmatrix}
                \bfa \\
                \bb
\end{bmatrix}
(\zz; \Omega):=\sum_{\nn \in \Z^m}\exp 2\pi i\left(\frac{1}{2}(\nn+\bfa)\cdot\Omega(\nn+\bfa)+ (\nn+\bfa)\cdot(\zz+\bb)\right), 
$$
where $\zz \in \C^m, \ \Omega\in \mathfrak{H}_m$. 
We allow the shift $\bb$ to be in $\C^m$ for simplicity of notations in this article. 
We also denote $\Theta_m\begin{bmatrix}
                0 \\
                0
\end{bmatrix}
(\zz; \Omega)$ by $\Theta_m(\zz; \Omega)$. 

Let $L$ be the line bundle associated to the Hodge form $\omega$. 
It is known that $H^0(X,L)$ has a basis given by the theta functions 
$$
\Theta_m
\begin{bmatrix}
                (\frac{i_1}{\delta_1},\dots,\frac{i_m}{\delta_m}) \\
                0
\end{bmatrix}
(\zz; \Omega), \ \ \ (0 \le i_k \le \delta_k-1). 
$$
It is also useful to realize $X$ as $(\C^\times)^m/\Z^m$ via the shifted exponential map 
$$
\exp:\C^m \rightarrow (\C^\times)^m, \ \ \  (z_1,z_2,\dots,z_m) \mapsto (e^{2\pi i\delta_1z_1},e^{2\pi i\delta_2z_2},\dots,e^{2\pi i\delta_mz_m}). 
$$ 
Then $X$ can be thought as a quotient of $(\C^\times)^m$ by the equivalent relations for $(y_1,y_2,\dots,y_m) \in (\C^\times)^m$: 
$$
(y_1,y_2,\dots,y_m) \sim (e^{2\pi i\frac{\omega_{i1}}{\delta_1}}y_1,e^{2\pi i\frac{\omega_{i2}}{\delta_2}}y_2,\dots,e^{2\pi i\frac{\omega_{im}}{\delta_m}}y_m), \ \ \ (1 \le i \le m). 
$$ 

\par\noindent{\scshape \small
Department of Mathematics, Kyoto University\\
Kitashirakawa-Oiwake, Sakyo, Kyoto, 606-8502, Japan}
\par\noindent{\ttfamily akanazawa@math.kyoto-u.ac.jp}\\

\par\noindent{\scshape \small
Department of Mathematics and Statistics, Boston University\\
111 Cummington Mall, Boston MA 02215 USA}
\par\noindent{\ttfamily lau@math.bu.edu}

\end{document}